\setlist[itemize]{labelindent=\parindent,leftmargin=*,ref = (\arabic*)}
\setlist[enumerate]{leftmargin=*,label= (\arabic*),ref = (\arabic*)}
\newtheorem{theorem}{Theorem}[section]
\newtheorem{lemma}[theorem]{Lemma}
\newtheorem{corollary}[theorem]{Corollary}
\newtheorem{proposition}[theorem]{Proposition}
\newtheorem{alphthm}{Theorem}			
\newtheorem{alphprop}[alphthm]{Proposition}
\theoremstyle{definition}
\newtheorem{definition}[theorem]{Definition}
\newtheorem{example}[theorem]{Example}
\newtheorem{alphdef}[alphthm]{Definition}			
\theoremstyle{remark}
\newtheorem{remark}[theorem]{Remark}
\numberwithin{equation}{section}
\def\NN{\mathbb{N}}
\def\RR{\mathbb{R}}
\def\CC{\mathbb{C}}
\def\G{\mathcal{G}}
\def\Gz{\mathcal{G}^{(0)}}
\def\A{\mathcal{A}}
\def\L{\mathscr{L}}
\def\B{\mathscr{B}}
\def\K{\mathscr{K}}
\def\s{\mathrm{s}}
\def\r{\mathrm{r}}
\def\supp{\mathrm{supp}}
\def\diam{\mathrm{diam}}
\def\res{\mathrm{Res}}
\def\Ad{\mathrm{Ad}}
\def\Nd{\mathcal{N}}
\begin{document}

\title{Quasi-locality for \'{e}tale groupoids}
\author{Baojie Jiang}
\author{Jiawen Zhang}
\author{Jianguo Zhang}

\address[Baojie Jiang]{College of Mathematics and Statistics, Chongqing University, Chongqing 401331, China.}
\email{jiangbaojie@gmail.com}

\address[Jiawen Zhang]{School of Mathematical Sciences, Fudan University, 220 Handan Road, Shanghai, 200433, China.}
\email{jiawenzhang@fudan.edu.cn}

\address[Jianguo Zhang]{School of Mathematics and Statistics,
    Shaanxi Normal University, Xi’an 710119, China.}
\email{jgzhang@snnu.edu.cn}

\thanks{Baojie Jiang was supported by NSFC12001066 and NSFC12071183. Jiawen Zhang was supported by NSFC11871342. Jianguo Zhang was supported by NSFC12171156, 12271165.}

\begin{abstract}
    Let $\G$ be a locally compact \'{e}tale groupoid and $\L(L^2(\G))$ be the $C^*$-algebra of adjointable operators on the Hilbert $C^*$-module $L^2(\G)$. In this paper, we discover a notion called quasi-locality for operators in $\L(L^2(\G))$, generalising the metric space case introduced by Roe. Our main result shows that when $\G$ is additionally $\sigma$-compact and amenable, an equivariant operator in $\L(L^2(\G))$ belongs to the reduced groupoid $C^*$-algebra $C^*_r(\G)$ if and only if it is quasi-local. This provides a practical approach to describe elements in $C^*_r(\G)$ using coarse geometry. Our main tool is a description for operators in $\L(L^2(\G))$ via their slices with the same philosophy to the computer tomography. As applications, we recover a result by \v{S}pakula and the second-named author in the metric space case,
    and deduce new characterisations for reduced crossed products and uniform Roe algebras for groupoids.
\end{abstract}


\date{\today}

\maketitle

\textit{Keywords: Groupoid $C^*$-algerbas, Quasi-locality, Amenability, (Uniform) Roe algebras.}

\section{introduction}\label{sec.introduction}

In the last few decades, there has been increased interest in the study of $C^*$-algebras associated to groupoids. The story should be traced back to Renault~\cite{R1980-book-a}, who introduced the notion of groupoid $C^*$-algebras as well as several counterparts for locally compact topological groupoids. These constructions are extremely powerful weapons to produce $C^*$-algebras, and provide a unified approach to a number of classic $C^*$-algebras including group $C^*$-algebras, crossed products, $C^*$-algebras associated to foliations, uniform Roe algebras, \emph{etc}.

The groupoid $C^*$-algebra also provides a useful and concrete model for many classes of $C^*$-algebras, which is helpful to analyse their $C^*$-algebraic structures. Along this way, Renault~\cite{R1980-book-a, zbMATH05573102} introduced the notion of Cartan subalgebras in $C^*$-algebras, motivated by Feldman and Moore's works~\cite{zbMATH03575853} on von Neumann algebras. Since then, the work of Kumjian, Renault, Barlak, Li and others (\emph{e.g.}, \cite{BarlakLi2017,BarlakLi2020,kumjian_1986,Li2020,zbMATH07076096,zbMATH05573102}) showed that a large class of $C^*$-algebras can be realised as twisted groupoid $C^*$-algebras, based on the theory of Cartan subalgebras. Consequently, this allows people to use the groupoid models to study their ideal structure, nuclearity, the Universal Coefficient Theorem (UCT), \emph{etc}.

On the other hand, the groupoid $C^*$-algebra plays a central role in (higher) index theory as a proxy for the algebra of continuous functions and its $K$-theory is the natural locus for generalised indices (see, \emph{e.g.}, \cite{zbMATH01568733}).
For example, the equivariant index of a lifted elliptic differential operator on the universal cover of a closed manifold $M$ belongs to the $K$-theory of the group $C^*$-algebra of $\pi_1(M)$~\cite{zbMATH03855929, zbMATH03834849}; while the index of an elliptic differential operator on an open manifold belongs to the $K$-theory of its Roe algebra \cite{roe1988:index-thm-on-open-mfds}, which is isomorphic to certain crossed product of the groupoid $C^*$-algebra for the associated coarse groupoid \cite{STY-article-2002a}.

These indices provide geometric and topological information of the underlying manifold. Hence the computation for the $K$-theory of groupoid $C^*$-algebras lies at the heart of (higher) index theory, and the famous Baum-Connes conjecture provides an efficient and practical approach (see~\cite{zbMATH01853001} for an excellent introduction, and  \cite{zbMATH07217273} for a more recent survey).
A number of excellent works have been made around this conjecture in the last few decades (see a compresensive list of reference in \cite{zbMATH07217273}), while the general case is still widely open. This suggests that studying the structure of groupoid $C^*$-algebras will help to understand the indices of various operators, and hence to enhance our understanding of geometric properties of the underlying manifolds.

In spite of the importance of groupoid $C^*$-algebras, it is usually hard to determine whether a given element belongs to this algebra. According to the definition (see Section~\ref{ssec.groupoidAlgebra}), we have to construct a sequence of compactly supported functions on the groupoid to approximate the given element in operator norm, while unfortunately there is no routine procedure for the construction.

The aim of this paper is to provide a practical approach to characterise elements in groupoid $C^*$-algebras using coarse geometry. For simplicity, we will only focus on locally compact \'{e}tale groupoids (see Section \ref{ssec.groupoid} for a precise definition).

\subsection{A motivating example: the coarse groupoid}\label{ssec:motivating example}

We start with an illuminating example which is the motivation of this work. Let $(X,d)$ be a discrete metric space with bounded geometry. Regarding operators on $\ell^2(X)$ as $X$-by-$X$ matrices, we say that such an operator has \emph{finite propagation} if the non-zero entries appear only in an \emph{entourage}, \emph{i.e.}, a band of finite width (measured by the metric $d$ on $X$) around the main diagonal (see Section \ref{ssec.motivation} for full details). The finite propagation operators form a $\ast$-subalgebra $\CC_u[X]$ of $\B(\ell^2(X))$, and its closure $C^*_u(X)$ is called the \emph{uniform Roe algebra of $X$}, introduced by Roe in \cite{roe1988:index-thm-on-open-mfds}.

To characterise operators in $C^*_u(X)$, Roe \cite{roe1988:index-thm-on-open-mfds} also introduced a notion of quasi-locality for operators in $\B(\ell^2(X))$. Roughly speaking, an operator is called \emph{quasi-local} if for any $\varepsilon>0$ we can find an entourage such that for any block sitting outside this entourage, its restriction on the block has norm less than $\varepsilon$ (see Section \ref{ssec.motivation}). The collection of all quasi-local operators on $\ell^2(X)$ forms a $C^*$-algebra, called the \emph{uniform quasi-local algebra of $X$} and denoted by $C^*_{uq}(X)$. 

Clearly $C^*_u(X)$ is a subalgebra of $C^*_{uq}(X)$, and it is proved by \v{S}pakula and the second-named author \cite{SpakulaZhang} that they coincide when $X$ has Property A. Quasi-locality is a coarse geometric property in the sense that the algebra $C^*_{uq}(X)$ is preserved under coarse equivalence and moreover, this property is easier to verify than the case of the uniform Roe algebra. Therefore, the quasi-local characterisation is crucial in the work of Engel \cite{engel2015index, EngelRoughIndex} on the index theory of pseudo-differential operators, and also in the work of White and Willett \cite{MR4167028} on classifying Cartan subalgebras of Roe algebras, with some applications on the associated rigidity problem as well.

To build a bridge to the groupoid theory, Skandalis, Tu and Yu \cite{STY-article-2002a} introduced a locally compact \'{e}tale groupoid for $(X,d)$, called the \emph{coarse groupoid} $G(X)$ (see Example \ref{ssec:coarse groupoids pre}). Note that any compact subset in $G(X)$ is contained in the closure of some entourage, and hence it is clear that $C_c(G(X))$ is $\ast$-isomorphic to $\CC_u[X]$. Moreover taking closures, the reduced groupoid $C^*$-algebra $C^*_r(G(X))$ is $C^*$-isomorphic to the uniform Roe algebra $C^*_u(X)$. This suggests us to consider compactly supported operators in the place of finite propagation operators for general groupoids, and hence the reduced groupoid $C^*$-algebra should correspond to certain version of the uniform Roe algebra. Also recall from \cite{STY-article-2002a} that $X$ has Yu's Property A if and only if $G(X)$ is amenable.

Now a natural question is to ask how to recover the notion of quasi-locality for operators in $\B(\ell^2(X))$ using the language of coarse groupoids. This will also be our starting point to introduce quasi-locality for general locally compact \'{e}tale groupoids. Parallel to the situation of uniform Roe algebras, the practical approach we are looking for to describe elements in groupoid $C^*$-algebras will be achieved by the coarse geometric property of quasi-locality.

\subsection{Our strategy for quasi-locality}\label{ssec:strategy}

To introduce the notion of quasi-locality for general groupoids, we would like to mimic the approach for metric spaces. Recall from Section \ref{ssec:motivating example} that for a discrete metric space $(X,d)$ with bounded geometry, we have a natural ambient $C^*$-algebra $\B(\ell^2(X))$. For operators in $\B(\ell^2(X))$, we have the notion of finite propagation and quasi-locality to form the associated $C^*$-algebras. The main difficulty to generalise the notion of quasi-locality to general groupoids is that firstly we have to draw the border of a proper class of operators so that the notion of quasi-locality thereon has the chance to recover the reduced groupoid $C^*$-algebra.


Let us explain our strategy in details. For a locally compact \'{e}tale groupoid $\G$, we follow the steps below:

\textbf{Step I.} We need to choose a suitable candidate for the ambient $C^*$-algebra $\A$. The criterion is that $\A$ should be large enough to include the $\ast$-algebra $C_c(\G)$, while not too large so that a well-defined notion of quasi-locality has the chance to characterise the reduced groupoid $C^*$-algebra $C^*_r(\G)$ when $\G$ is ``well-behaved''. Moreover in the case of coarse groupoid, the ambient $C^*$-algebra $\A$ should be (\emph{almost}\footnote{As we shall see in Example \ref{eg:coarse groupoid for LL2G}, $\A$ is smaller than $\B(\ell^2(X))$. However this does not cause any trouble when considering quasi-local operators (see Example \ref{ex:coarse groupoid again}).}) the same as $\B(\ell^2(X))$.

\textbf{Step II.} We would like to define the notion of compact support and quasi-locality for operators in the ambient $C^*$-algebra $\A$ chosen in \textbf{Step I} so that compactly supported operators are nothing but those from $C_c(\G)$. Moreover in the case of the coarse groupoid, the notion of quasi-locality should recover the uniform quasi-local algebra $C^*_{uq}(X)$ introduced by Roe (see also \cite{li2021quasi}).

\textbf{Step III.} Finally we aim to show that when $\G$ is amenable (which is equivalent to Property A in the case of the coarse groupoid), elements in the reduced groupoid $C^*$-algebra $C^*_r(\G)$ can be characterised by the property of quasi-locality, which fulfils our task.


\subsection{Our main contributions}\label{ssec:intro main contributions}
Following the strategy from Section \ref{ssec:strategy}, now we introduce our main contributions to achieve each step above. Assume that $\G$ is a locally compact \'{e}tale groupoid with unit space $\Gz$. For each $x\in \Gz$, denote $\G_x$ the set consisting of elements in $\G$ with source $x$.

For \textbf{Step I}, there are two natural candidates for the ambient $C^*$-algebras coming from the definition of the reduce groupoid $C^*$-algebra. Recall that the $\ast$-algebra $C_c(\G)$ can be faithfully represented as adjointable operators on the Hilbert $C_0(\Gz)$-module $L^2(\G)$, and the reduced groupoid $C^*$-algebra $C^*_r(\G)$ is the closure of $C_c(\G)$ with respect to the operator norm in $\L(L^2(\G))$. Considering $\G$ as a disjoint union of its source fibres $\{\G_x\}_{x\in \Gz}$, this norm can also be recovered by representing $C_c(\G)$ in $\prod_{x\in \Gz} \B(\ell^2(\G_x))$ (see Section \ref{ssec.groupoidAlgebra} for full details). This suggests that either $\L(L^2(\G))$ or $\prod_{x\in \Gz} \B(\ell^2(\G_x))$ might be a suitable candidate for the ambient $C^*$-algebra.

By checking the case of the coarse groupoid, we realise that $\prod_{x\in \Gz} \B(\ell^2(\G_x))$ is too large due to the lack of control on different coordinates, and $\L(L^2(\G))$ seems more suitable because it contains certain topological information of $\G$. However, we are in a dilemma since elements in $\prod_{x\in \Gz} \B(\ell^2(\G_x))$ are much easier to describe than those in $\L(L^2(\G))$.

To overcome the issue, note that there is a natural embedding $\Phi: \L(L^2(\G)) \to \prod_{x\in \Gz} \B(\ell^2(\G_x))$, called the \emph{slicing map} (see (\ref{eq.PhiMap})). We realise that operators in the image of $\Phi$ satisfy a condition which can be regarded as a vector-wise version of certain quasi-locality (Definition \ref{defn:vectorwise quasiloc}). Hence consulting the notion of operator fibre space introduced by Austin and the second-named author, we achieve the following (see Section \ref{ssec:operators in LL2G} for precise definitions):

\begin{alphthm}[Theorem \ref{cor:char for LL2G}]\label{introthm:char for LL2G}
    Let $\G$ be a locally compact \'{e}tale groupoid with unit space $\Gz$. For $(T_{x})_{x\in\Gz} \in \prod_{x\in\Gz}\B\left(\ell^2(\G_{x})\right)$, the following are equivalent:
    \begin{enumerate}
        \item $(T_{x})_{x\in\Gz}$ belongs to $\Phi\big(\L(L^2(\G))\big)$;
        \item the map $x\mapsto T_x$ is a continuous section of the associated operator fibre space, and $(T_{x})_{x\in\Gz}, (T^*_{x})_{x\in\Gz}$ are vector-wise uniformly quasi-local.
    \end{enumerate}
\end{alphthm}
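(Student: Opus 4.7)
\medskip

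The plan is to prove the two implications separately, each by directly analysing the fibrewise action.

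For $(1)\Rightarrow(2)$, I would start with $T\in\L(L^2(\G))$ and its family of slices $T_x = \Phi(T)_x$. Continuity of the section $x\mapsto T_x$ in the operator fibre space is essentially a tautology once one unpacks definitions: by construction of $\Phi$, we have $(T\xi)(x)=T_x\xi(x)$ for each continuous $L^2$-section $\xi\in L^2(\G)$, and $T\xi$ is again a continuous section of the Hilbert field $\{\ell^2(\G_x)\}_{x\in\Gz}$; this is precisely the defining property of a continuous section of the operator fibre space (acting on a dense family of continuous sections of the Hilbert field). Vector-wise uniform quasi-locality of $(T_x)$ (and symmetrically of $(T_x^*)$, since $T^*\in\L(L^2(\G))$ too) then has to be extracted from the Hilbert module structure. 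I would approach this by exploiting $C_0(\Gz)$-linearity together with the cut-off functions available on $\Gz$: for a given $\varepsilon>0$, the continuity of the assignment $\xi\mapsto T\xi$ on $L^2(\G)$ combined with approximating the identity by compactly supported multipliers should produce the compact set in $\G$ which witnesses quasi-locality.

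For $(2)\Rightarrow(1)$, given a family $(T_x)_{x\in\Gz}$ satisfying the two properties, I would \emph{define} the candidate operator by the obvious pointwise formula $(T\xi)(x):=T_x\xi(x)$ on a dense subspace (for instance $C_c$-sections). Continuity of the section $x\mapsto T_x$ ensures that $T\xi$ is again a continuous section, and uniform fibrewise boundedness (inherited from the product $\prod_{x\in\Gz}\B(\ell^2(\G_x))$) gives $\|T\xi\|\le\|(T_x)\|\cdot\|\xi\|$, so $T$ extends to a bounded $C_0(\Gz)$-linear map on $L^2(\G)$. The nontrivial point is \emph{adjointability}: to produce an adjoint I would show that the same pointwise recipe applied to $(T_x^*)$ gives a bounded $C_0(\Gz)$-linear map $S$ on $L^2(\G)$, and verify the identity $\langle T\xi,\eta\rangle = \langle\xi,S\eta\rangle$ in $C_0(\Gz)$ by testing fibrewise. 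By symmetry, the continuity of $x\mapsto T_x^*$ as a section of the operator fibre space and the vector-wise quasi-locality of $(T_x^*)$ play exactly the role for $S$ that their counterparts did for $T$.

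I expect the main obstacle to lie in the forward direction, namely in showing that any $T\in\L(L^2(\G))$ has slices whose quasi-locality is genuinely \emph{uniform} across the fibres. The danger is that fibrewise control obtained from $C_0(\Gz)$-cut-offs could deteriorate as $x$ varies; overcoming this will hinge on using the \emph{operator} norm on $L^2(\G)$ (as opposed to the fibrewise supremum, which is what lives in $\prod\B(\ell^2(\G_x))$) to transfer a single global approximation into one that works vector-wise uniformly. Once this uniformity statement is in hand, the reverse direction is a gluing argument of a more routine flavour, with the continuous section hypothesis guaranteeing that the pointwise assembly lands in $L^2(\G)$ rather than in a wilder space of sections.
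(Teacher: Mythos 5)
Your overall architecture matches the paper's: slice $T$ via $\Phi$, reduce adjointability to showing that both families $(T_x(\xi|_{\G_x}))_{x}$ and $(T_x^*(\xi|_{\G_x}))_{x}$ lie in $\phi(L^2(\G))$ for $\xi\in C_c(\G)$ (this is Lemma \ref{lem.compactToL2}), and in the forward direction extract the witnessing compact set from the fact that $T\xi\in L^2(\G)$ is approximable by a \emph{single} element $\eta$ of $C_c(\G)$, uniformly over all fibres --- taking $K=\supp(\eta)$ is exactly what defeats the ``deterioration as $x$ varies'' you worry about. One minor omission there: this approximation naturally yields only the one-sided tail estimate $\|\chi_{A_x}(T_x(\xi|_{\G_x}))\|<\varepsilon$ for $A_x\cap K=\emptyset$, whereas vector-wise uniform quasi-locality is the two-sided condition with $A_x\cap (KB_x)=\emptyset$; converting one into the other (Proposition \ref{lem.charOfBL2G2}, (2)$\Leftrightarrow$(3)) uses the decomposition of $\xi$ into functions supported on bisections, a routine step your sketch skips.

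The serious gap is in (2)$\Rightarrow$(1). You attribute the fact that the pointwise assembly $T\xi=(T_x(\xi|_{\G_x}))_x$ lands in $L^2(\G)$ to the continuity of the section $x\mapsto T_x$ together with uniform boundedness. That is not enough, and this is precisely the place where vector-wise uniform quasi-locality must be invoked. Since $L^2(\G)$ is the completion of $C_c(\G)$, membership requires approximability by compactly supported functions uniformly over the fibres; a uniformly bounded continuous family $(T_x)_x$ can push the mass of $\xi|_{\G_x}$ arbitrarily far out in the fibre as $x$ varies (for instance fibrewise shifts on the pair groupoid of $\NN$, where the unit space is discrete so every bounded section is continuous), producing a bounded continuous section of the Hilbert field that does \emph{not} belong to $\phi(L^2(\G))$ --- and indeed such a family is not in the image of $\Phi$. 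The paper closes this by using quasi-locality to find a compact $K\subseteq\G$ outside of which $T_x(\xi|_{\G_x})$ has uniformly small norm, and then setting $\zeta(\gamma)=\rho(\gamma)\cdot\big(T_{\s(\gamma)}(\xi|_{\G_{\s(\gamma)}})\big)(\gamma)$ for a cutoff $\rho\in C_c(\G)$ equal to $1$ on $K$: continuity of the section guarantees $\zeta\in C_c(\G)$, and the tail estimate guarantees $\sup_x\|T_x(\xi|_{\G_x})-\zeta|_{\G_x}\|<\varepsilon$. Your proof needs this step (and its adjoint counterpart) to make the reverse implication go through; without it the claim that $T$ even maps $C_c(\G)$ into $L^2(\G)$ is unjustified.
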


Theorem \ref{introthm:char for LL2G} provides a characterisation for operators in $\L(L^2(\G))$ via their slices with the same philosophy to the computer tomography, which describes an object using a family of its slices. This will be crucial in the proof of Theorem \ref{introthm:main result} below as well as in the computation of $\L(L^2(\G))$ for concrete examples. We also obtain a simplified version of Theorem \ref{introthm:char for LL2G} when the unit space contains a dense subset (see Proposition \ref{lem.charOfBL2G2-densesubset} and Corollary \ref{cor:char2 for LL2G}), which happens in a number of cases including coarse groupoids.

By checking coarse groupoids again, we realise that the algebra $\L(L^2(\G))$ might still be too large. Note that elements in $C^*_r(\G)$ admit an extra equivariant property: an operator in $\L(L^2(\G))$ is called \emph{$\G$-equivariant} if it commutes with elements in the image of the right regular representation (see Definition \ref{def.GEquivariant}). Denote the subalgebra of $\G$-equivariant operators in $\L(L^2(\G))$ by $\L(L^2(\G))^{\G}$. Thanks to Theorem \ref{introthm:char for LL2G}, we manage to compute $\L(L^2(\G))^{\G}$ for the coarse groupoid (see Example \ref{eg:coarse groupoid for LL2G}). Moreover, we obtain the following realisation for equivariant operators:

\begin{alphprop}[Proposition \ref{prop.leftConvolverAndGEquivariant}]\label{introprop:realisation of equivariance}
    Let $\G$ be a locally compact \'{e}tale groupoid. For any $\G$-equivariant $T \in \L(L^2(\G))$, there exists a unique function $f_T\in C_b(\G)$ such that $T$ is the convolution operator by $f_T$.
\end{alphprop}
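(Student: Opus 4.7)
The plan is to extract $f_T$ directly from the slices of $T$ and then verify its three required properties: boundedness, continuity, and acting as convolution. For each $\gamma \in \G$ with $x := \s(\gamma)$, define
\[
    f_T(\gamma) := (T_x\delta_x)(\gamma) = \langle T_x\delta_x,\, \delta_\gamma\rangle_{\ell^2(\G_x)},
\]
where $(T_x)_{x\in\Gz} = \Phi(T)$. Boundedness is immediate: since $\|\delta_x\|=\|\delta_\gamma\|=1$ and $\|T_x\|\le\|T\|$, the Cauchy--Schwarz inequality yields $|f_T(\gamma)|\le\|T\|$, so $f_T$ is bounded.

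The hard part will be showing that $f_T$ is continuous. The idea is to convert continuity of the scalar function $f_T$ on $\G$ into continuity of the $C_0(\Gz)$-valued inner product $\langle T\xi,\eta\rangle_{L^2(\G)}$ by choosing $\xi,\eta$ supported in suitable bisections. Fix $\gamma_0\in\G$, an open bisection $U\ni\gamma_0$, and the local inverse section $\sigma\colon \s(U)\to U$ of the source map. For $\psi\in C_c(\s(U))\subseteq C_c(\Gz)$ and $\phi\in C_c(U)$, viewed as elements of $C_c(\G)\subseteq L^2(\G)$, a direct fibrewise check gives
\[
    \langle T\psi,\phi\rangle_{L^2(\G)}(x) \;=\; \psi(x)\,\overline{\phi(\sigma(x))}\,f_T(\sigma(x)) \qquad (x\in \s(U)).
\]
The left-hand side lies in $C_0(\Gz)$ since $T\in\L(L^2(\G))$, so the right-hand side is continuous in $x$; taking $\psi$ and $\phi$ equal to $1$ near $\s(\gamma_0)$ and $\gamma_0$ respectively, and using that $\sigma$ is a homeomorphism onto $U$, one concludes that $f_T$ is continuous at $\gamma_0$. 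Since bisections cover $\G$, we obtain $f_T\in C_b(\G)$.

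For the convolution identity, the key input is $\G$-equivariance. Let $R_\beta\colon \ell^2(\G_{\r(\beta)})\to\ell^2(\G_{\s(\beta)})$ be the right translation unitary, so that $\delta_\beta = R_\beta\delta_{\r(\beta)}$. Equivariance gives $T_{\s(\beta)}R_\beta = R_\beta T_{\r(\beta)}$, and unwinding the action of $R_\beta$ yields the matrix coefficient identity
\[
    (T_x)_{\gamma,\beta} \;=\; \langle T_x\delta_\beta,\delta_\gamma\rangle \;=\; f_T(\gamma\beta^{-1})
\]
whenever $\gamma,\beta\in\G_x$. Hence for $\xi\in C_c(\G)$ and $\gamma\in\G$ with $x=\s(\gamma)$,
\[
    (T\xi)(\gamma) \;=\; (T_x\xi_x)(\gamma) \;=\; \sum_{\beta\in\G_x} f_T(\gamma\beta^{-1})\,\xi(\beta) \;=\; (f_T * \xi)(\gamma),
\]
a finite sum. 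Density of $C_c(\G)$ in $L^2(\G)$ together with boundedness of $T$ extend this identity to all of $L^2(\G)$, so $T$ is convolution by $f_T$. Uniqueness is clear: any such $f_T$ is recovered from the matrix coefficients of the slices via $f_T(\gamma) = (T_{\s(\gamma)})_{\gamma,\s(\gamma)}$, and the slice map $\Phi$ is injective by Theorem \ref{introthm:char for LL2G}.
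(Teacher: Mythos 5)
Your proof is correct and follows the same overall strategy as the paper's: you define $f_T(\gamma)=\big(\Phi_{\s(\gamma)}(T)\delta_{\s(\gamma)}\big)(\gamma)$ exactly as in (\ref{EQ:function fT}), use equivariance of the slices (Lemma \ref{equivariant-fiber}) to obtain the matrix-coefficient identity $(T_x)_{\gamma,\beta}=f_T(\gamma\beta^{-1})$, which is the paper's identity (\ref{EQ:function fT equiv}), and then unwind the convolution formula. The one step you handle genuinely differently is continuity: the paper simply quotes that $x\mapsto\Phi_x(T)$ is a continuous section of the operator fibre space (Corollary \ref{lem.charOfBL2G1} together with Lemma \ref{lem.charOfGammaB}), whereas you give a self-contained argument by testing $T$ against $\psi\in C_c(\s(U))$ and $\phi\in C_c(U)$ for an open bisection $U$ and reading off continuity of $f_T\circ\sigma$ from the $C_0(\Gz)$-valuedness of $\langle T\psi,\phi\rangle$. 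This is more elementary and avoids the fibre-space machinery, at the cost of redoing in this special case what Lemma \ref{lem.charOfGammaB} already packages; the computation is valid (note $\s(U)$ is open because $\s$ is a local homeomorphism). Two cosmetic points: your inner products $\langle T_x\delta_x,\delta_\gamma\rangle$ and the displayed formula for $\langle T\psi,\phi\rangle(x)$ are conjugated relative to the paper's convention (linear in the second variable), which does not affect any conclusion; and the full Proposition \ref{prop.leftConvolverAndGEquivariant} additionally records the left-convolver conditions \ref{item.leftConvolver1}--\ref{item.leftConvolver2} and the identity $f_{T^*}=(f_T)^*$, which you do not need for the statement as quoted but which the paper uses later.
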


The clues above suggest that $\L(L^2(\G))^{\G}$ should be a suitable candidate for the ambient $C^*$-algebra. We remark that Proposition \ref{introprop:realisation of equivariance} will also be crucial to prove Theorem \ref{introthm:main result} below since it transfers operators to functions.

Next, we move to \textbf{Step II}. Recall that we need to define the notion of compact support and quasi-locality for operators in the ambient $C^*$-algebra $\L(L^2(\G))^{\G}$ so that those with compact support recover the algebra $C_c(\G)$. Indeed, the definition we discover can be naturally defined on the larger algebra $\L(L^2(\G))$. Inspired by the case of coarse groupoids, we introduce the following key notion of this paper:

\begin{alphdef}[Definition \ref{defn:separated} and Definition \ref{def.operatorCompactSupportAndQuasiLocal}]\label{introdefn:ql}
    Let $\G$ be a locally compact \'etale groupoid and $T\in \L(L^2(\G))$. We define the following:
    \begin{enumerate}
        \item For a subset $K \subseteq \G$, functions $f,g\in C_{b}(\G)$ are called \emph{$K$-separated} if we have $\big(K\cdot \supp(f)\big)\cap \supp(g)=\emptyset$ and $\supp(f)\cap \big(K\cdot \supp(g)\big)=\emptyset$.
        \item  $T$ is called \emph{compactly supported} if there exists a compact subset $K \subseteq \G$ such that $gTf=0$ for any $K$-separated functions $f,g\in C_{b}(\G)$.
        \item $T$ is called \emph{quasi-local} if for any $\varepsilon>0$, there exists a compact subset $K\subset \G$ such that for any $K$-separated functions $f,g\in C_{b}(\G)$ we have $\|gTf\| < \varepsilon\|g\|_{\infty}\|f\|_{\infty}$.
    \end{enumerate}
\end{alphdef}

Thanks to Theorem \ref{introthm:char for LL2G}, we obtain a characterisation for compactly supported and quasi-local operators in $\L(L^2(\G))$ using their slices (see Proposition \ref{compactfiberwisely} and Corollary \ref{cor:char for quasi-locality}). In particular for a quasi-local operator, each of its slice is an ordinary quasi-local operator on a discrete metric space. Hence our notion of quasi-locality reflects the coarse geometry of the underlying metric family.
Applying this characterisation to the case of the coarse groupoid $G(X)$ for some metric space $X$, we verify that Definition \ref{introdefn:ql} is compatible with the notion of finite propagation and quasi-locality for operators in $\B(\ell^2(X))$ as desired (see Example \ref{ex:coarse groupoid again}).

Moreover under the assumption of equivariance, we reach the following as a consequence of Proposition \ref{introprop:realisation of equivariance}:

\begin{alphprop}[Proposition \ref{Roereduced}]\label{introprop:compatibility}
    Let $\G$ be a locally compact \'{e}tale groupoid. A $\G$-equivariant operator $T \in \L(L^2(\G))$ is compactly supported \emph{if and only if} $T$ belongs to the image of $C_c(\G)$ under the left regular representation. Hence the reduced groupoid $C^*$-algebra $C^*_r(\G)$ can be recovered as the norm closure of all $\G$-equivariant compactly supported operators in $\L(L^2(\G))$.
\end{alphprop}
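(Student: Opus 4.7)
The plan is to prove the two directions separately and then obtain the $C^*_r(\G)$ statement by taking closures. The forward direction is a direct convolution calculation; the converse is the heart of the proof, and uses Proposition \ref{introprop:realisation of equivariance} to translate operator questions into statements about a continuous function on $\G$, followed by a separation argument that exploits the étale structure.

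For the easy direction, assume $T$ is the image of some $h \in C_c(\G)$ under the left regular representation and put $K := \supp(h)$. For any $K$-separated $f, g \in C_{b}(\G)$ and $\gamma \in \G$, the étale convolution
\[
(h * f)(\gamma) \;=\; \sum_{\alpha\in \G^{r(\gamma)}} h(\alpha)\, f(\alpha^{-1}\gamma)
\]
can be non-zero only if $\gamma \in K \cdot \supp(f)$. Since $(K \cdot \supp(f)) \cap \supp(g) = \emptyset$, multiplying by $g$ kills the whole function. Hence $gTf = 0$.

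For the converse, let $T$ be $\G$-equivariant and compactly supported, witnessed by compact $K$. A short manipulation shows that $K$-separation is equivalent to $(K\cup K^{-1})$-separation, so we may replace $K$ by $K\cup K^{-1}$ and assume $K$ is symmetric. By Proposition \ref{introprop:realisation of equivariance}, $T$ is convolution by a unique $f_T \in C_b(\G)$, and the task reduces to showing $\supp(f_T) \subseteq K$. Suppose for contradiction that there is $\gamma_0 \notin K$ with $f_T(\gamma_0) \neq 0$; set $x_0 := s(\gamma_0)$. The main topological claim is that one can find open neighborhoods $W$ of $x_0$ in $\Gz$ and $V$ of $\gamma_0$ in $\G$ with $(K \cdot W) \cap V = \emptyset$ and $W \cap (K \cdot V) = \emptyset$. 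Both are proved by nets: a failure of the first would yield $\alpha_i \in K$, $\beta_i \to x_0$, $\alpha_i \beta_i \to \gamma_0$; compactness of $K$ and continuity of multiplication force a limit $\alpha \in K$ with $\alpha = \gamma_0$, contradicting $\gamma_0 \notin K$. A failure of the second would analogously produce $\gamma_0^{-1} \in K$, excluded by the symmetry of $K$.

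Now pick, by Urysohn's lemma, $f, g \in C_c(\G) \subseteq C_b(\G)$ with $\supp(f) \subseteq W$, $\supp(g) \subseteq V$, and $f(x_0) = g(\gamma_0) = 1$. These $f, g$ are $K$-separated. At $\gamma = \gamma_0$, the only $\alpha \in \G^{r(\gamma_0)}$ for which $\alpha^{-1}\gamma_0 \in \supp(f) \subseteq W \subseteq \Gz$ is $\alpha = \gamma_0$, because $W$ consists of units and the unique unit with source $x_0$ is $x_0$ itself. Hence the convolution sum collapses to
\[
(gTf)(\gamma_0) \;=\; g(\gamma_0)\, f_T(\gamma_0)\, f(x_0) \;=\; f_T(\gamma_0) \;\neq\; 0,
\]
contradicting compact support of $T$. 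Therefore $\supp(f_T) \subseteq K$, so $f_T \in C_c(\G)$. The final statement is then immediate: the image of $C_c(\G)$ under the left regular representation lies in $\L(L^2(\G))^\G$ and, by the equivalence just proved, equals the set of $\G$-equivariant compactly supported operators; taking norm closures recovers $C^*_r(\G)$ by definition. The step I expect to be the main obstacle is the topological separation claim, where one must turn $\gamma_0 \notin K$ (a point-set statement) into genuine neighborhood separation compatible with both sides of the multiplication — the symmetry reduction for $K$ is essential precisely here.
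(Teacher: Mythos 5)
Your proof is correct, and both directions rest on the same pillars as the paper's: the forward direction is the same convolution-support computation, and the converse likewise begins by invoking Proposition \ref{prop.leftConvolverAndGEquivariant} to write $T=\Lambda(f_T)$ and then reduces to showing $\supp(f_T)$ is contained in the symmetrisation $K\cup K^{-1}$. Where you diverge is in how that support containment is extracted from $gTf=0$: the paper simply feeds the matrix-coefficient formula $f_T(\gamma)=\langle \delta_\gamma, \Phi_{\s(\gamma)}(T)\delta_{\s(\gamma)}\rangle$ into the already-established fibre-wise characterisation of compact support (Proposition \ref{compactfiberwisely}, applied with $A_x=\{\gamma\}$ and $B_x=\{\s(\gamma)\}$), whereas you reprove exactly the needed instance by hand via a topological separation of $\gamma_0$ from $K$ and a pair of Urysohn functions. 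Your route is self-contained and makes visible why the symmetrisation of $K$ is forced (the condition $W\cap(K\cdot V)=\emptyset$ amounts to $\gamma_0^{-1}\notin K$), at the cost of redoing a special case of machinery the paper already has; the paper's route is shorter but leans on Section 4. Two small points to tidy up: (i) $gTf$ is the operator $M(g)TM(f)$, so the displayed identity $(gTf)(\gamma_0)=f_T(\gamma_0)$ should be read as evaluating $M(g)TM(f)\xi$ at $\gamma_0$ for a test vector $\xi\in C_c(\G)$ with $\xi(x_0)=1$ and $f\xi=f$ (equivalently, $g\cdot(f_T\ast f)$ at $\gamma_0$) — the collapse of the sum to the single term $\alpha=\gamma_0$ then goes through exactly as you wrote; (ii) your first separation claim is immediate without nets, since $K\cdot W\subseteq K$ and one may take $V\subseteq\G\setminus K$ outright. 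Neither affects correctness.
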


Hence we fulfil the task of \textbf{Step II}.
For later use, we denote by $\CC_u[\G]$ the $\ast$-subalgebra in $\L(L^2(\G))$ consisting of all compactly supported operators and define the \emph{uniform Roe algebra $C^*_u(\G)$ of $\G$} to be its norm closure in $\L(L^2(\G))$. Also denote by $C^{*}_{uq}(\G)$ the set of all quasi-local operators in $\L(L^2(\G))$, which forms a $C^*$-algebra and is called the \emph{uniform quasi-local algebra of $\G$}. We also consider their equivariant counterparts and denote by $\CC_u[\G]^{\G}$, $C^*_u(\G)^{\G}$ and $C^{*}_{uq}(\G)^{\G}$ the subalgebras consisting of equivariant operators in $\CC_u[\G]$, $C^*_u(\G)$ and $C^{*}_{uq}(\G)$, respectively.

Having established all necessary ingredients, finally we reach \textbf{Step III} and manage to prove the following main result of this paper:

\begin{alphthm}[Theorem \ref{Main-theorem}]\label{introthm:main result}
    Let $\G$ be a locally compact, $\sigma$-compact and \'{e}tale groupoid.
    If $\G$ is amenable, then we have $C^{*}_r(\G)= C^*_u(\G)^{\G} = C^{*}_{uq}(\G)^{\G}$.
\end{alphthm}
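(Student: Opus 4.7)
The plan is to prove the theorem by establishing the three corresponding inclusions separately. The equality $C^{*}_r(\G) = C^{*}_u(\G)^{\G}$ is essentially a restatement of Proposition \ref{Roereduced}, which identifies $\CC_u[\G]^{\G}$ with the image of $C_c(\G)$ under the left regular representation. Taking norm closures gives $C^{*}_r(\G) = \overline{\CC_u[\G]^{\G}} = C^{*}_u(\G)^{\G}$, using that equivariance (commutation with the right regular representation) is preserved under norm limits. The inclusion $C^{*}_u(\G)^{\G} \subseteq C^{*}_{uq}(\G)^{\G}$ is immediate from the definitions: any compactly supported operator is trivially quasi-local (take the same compact set $K$ for every $\varepsilon$), and the quasi-local algebra is norm closed.

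The substantive content lies in the reverse inclusion $C^{*}_{uq}(\G)^{\G} \subseteq C^{*}_r(\G)$, which is where amenability and $\sigma$-compactness enter. Let $T \in C^{*}_{uq}(\G)^{\G}$. By Proposition \ref{prop.leftConvolverAndGEquivariant}, there exists a unique $f_T \in C_b(\G)$ with $T$ equal to the convolution by $f_T$, which I will denote $\lambda(f_T)$. Amenability of $\G$, combined with $\sigma$-compactness to replace nets by sequences, supplies a sequence $(h_n) \subseteq C_c(\G)$ of continuous compactly supported positive-type functions on $\G$ such that $h_n \to 1$ uniformly on compact subsets. Set $T_n := \lambda(h_n f_T)$; since $h_n f_T \in C_c(\G)$, each $T_n$ lies in $C^{*}_r(\G)$, so it suffices to show $\|T - T_n\| \to 0$.

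This norm convergence is the main obstacle, and I would approach it through the slice-wise description of $\L(L^2(\G))$ given by Theorem \ref{cor:char for LL2G}, together with the description of quasi-locality from Corollary \ref{cor:char for quasi-locality}, which together reduce the problem to a family of Schur-type estimates on the source fibres $\{\ell^2(\G_x)\}_{x \in \Gz}$. Given $\varepsilon > 0$, quasi-locality of $T$ produces a compact set $K \subseteq \G$ such that $\|gTf\| < \varepsilon \|g\|_\infty \|f\|_\infty$ for every pair of $K$-separated $f, g \in C_b(\G)$, while amenability allows one to choose $n$ with $1 - h_n \leq \varepsilon$ on a suitable compact enlargement of $K$. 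A continuous partition of unity on $\G$ subordinate to this enlargement and its complement then splits $T - T_n = \lambda((1 - h_n)f_T)$ into a near piece, controlled by $\varepsilon$ via the positive-type multiplier bound associated with $h_n$, and a far piece, controlled by $\varepsilon$ via the quasi-locality estimate applied to $K$-separated cut-offs.

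The technically delicate point is the near-piece bound: passing from the pointwise inequality $|1 - h_n| \leq \varepsilon$ on a compact subset of $\G$ to an operator-norm bound for the corresponding piece of $T - T_n$, with constants independent of $f_T$. This is where the positive-type hypothesis on $h_n$ is essential, providing a Schur-type control of the multiplier norm, and where the slice-wise description is used crucially to reduce to vector-wise estimates on the fibres. The argument should be viewed as a groupoid generalisation of the metric-space techniques of \v{S}pakula and the second-named author, with Theorem \ref{cor:char for LL2G} playing the role of the reduction to matrix entries in the uniform Roe algebra setting.
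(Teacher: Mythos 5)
Your overall shape is right: the easy inclusions, and the reduction via Proposition \ref{prop.leftConvolverAndGEquivariant} to approximating $T=\Lambda(f_T)$ by $\Lambda(h\cdot f_T)$ with $h$ a compactly supported positive-type function supplied by amenability, is exactly how the paper proceeds. But there are two gaps. The minor one: your claim that $C^*_u(\G)^{\G}$ equals the norm closure of $\CC_u[\G]^{\G}$ does not follow from Proposition \ref{Roereduced}. The closure of the equivariant part is contained in the equivariant part of the closure, but the reverse inclusion is precisely what the remark following Proposition \ref{Roereduced} flags as unclear in general. The paper obtains $C^*_r(\G)=C^*_u(\G)^{\G}$ only as a byproduct of the full circle of inclusions $C^*_r(\G)\subseteq C^*_u(\G)^{\G}\subseteq C^*_{uq}(\G)^{\G}\subseteq C^*_r(\G)$, so the hard inclusion cannot be bypassed for this equality either.

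The major gap is your control of the ``far piece''. Since $h$ is compactly supported, $(1-h)f_T$ coincides with $f_T$ outside a compact set, so your far piece is essentially the convolution operator whose kernel is $f_T$ truncated to the complement of a compact neighbourhood of the unit space. Quasi-locality gives $\|gTf\|<\varepsilon\|g\|_\infty\|f\|_\infty$ for each individual pair of $K$-separated cut-offs, but it does \emph{not} bound the norm of the operator obtained by retaining all far-off-diagonal blocks simultaneously; if it did, every quasi-local operator would trivially be a norm limit of compactly supported ones and neither amenability nor Property A would be needed (contradicting the fact that $C^*_u(X)=C^*_{uq}(X)$ is open for general metric spaces). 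As stated, bounding your far piece is essentially equivalent to the theorem itself. The paper never isolates a far piece of the kernel: it bounds $\|m_{\widetilde{k}_x}(\Phi_x(T))-\Phi_x(T)\|$ in one stroke via the localisation lemma (Lemma \ref{lem:Lemma 5.2 in SZ20}, a family version of Lemma 5.2 of \cite{SpakulaZhang}, resting on uniform Property A, which is where amenability enters through Lemma \ref{lem:amen implies unif. A}): the norm of the difference operator is almost attained on a unit vector $v_x$ of uniformly bounded support, and on the relevant matrix entries the $(\widetilde{n},\widetilde{\varepsilon})$-variation of $\widetilde{k}_x$ together with Cauchy--Schwarz and bounded geometry gives smallness. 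Quasi-locality of $T$ is used only to produce the Lipschitz-commutant property $\Phi_x(T)\in Commut_{\G_x}(L,\cdot)$ that feeds into that lemma, and to control the corner terms when passing from $k_x$ to the modified kernel $\widetilde{k}_x$. Your near-piece bound has a related, more repairable, issue: pointwise smallness of $\widetilde{k}_x-1$ does not by itself bound a Schur multiplier norm; the paper only ever exploits it on the localised vector $v_x$, where bounded geometry makes Cauchy--Schwarz effective.
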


Theorem \ref{introthm:main result} shows that for a $\G$-equivariant operator in $\L(L^2(\G))$, it is quasi-local if and only if it belongs to the reduced groupoid $C^*$-algebra $C^*_r(\G)$. Note that quasi-locality reflects the coarse geometry of the underlying object and is easier to verify than the situation of $C^*_r(\G)$, hence Theorem \ref{introthm:main result} provides a practical and coarse geometric approach to characterise elements in the reduced groupoid $C^*$-algebra for amenable groupoids as desired.

As an example when $\G$ is the coarse groupoid associated to a discrete metric space with bounded geometry, Theorem \ref{Main-theorem} recovers the main result of \cite{SpakulaZhang} in the Hilbert space case (see Section \ref{ssec:coarse groupoids}). Moreover when $\G$ is the transformation groupoid associated to a discrete group acting on a compact Hausdorff space, we obtain a new characterisation for elements in the reduced crossed product (see Section \ref{ssec:transformation groupoid}).

The proof of Theorem \ref{introthm:main result} relies heavily on the coarse geometry of groupoids (see~\cite{MaWu} and~\cite{zbMATH07009242}). Roughly speaking, we assign a length function on the groupoid thanks to the $\sigma$-compactness, which induces a metric on each source fibre. Hence we can appeal to the weapon of coarse geometry for a family of metric spaces, and consult the idea of \cite[Theorem 3.3]{SpakulaZhang}. Here we would like to clarify that the proof for Theorem \ref{introthm:main result} is \emph{not} merely a family version of that for \cite[Theorem 3.3]{SpakulaZhang} since we have certain continuity restriction from the topology of the groupoid, and technical analysis is also required especially when the unit space is no longer compact.

\subsection{Beyond equivariance: uniform Roe algebras}

Note that Theorem \ref{introthm:main result} only discusses the situation of equivariant operators, while our notion of quasi-locality (Definition \ref{introdefn:ql}) is designed for general operators in $\L(L^2(\G))$. Hence it is natural to ask whether we can obtain a similar result for the non-equivariant case.

As revealed by the work of Anantharaman-Delaroche \cite{ADExact}, we can transfer the general case to the equivariant one by considering the semi-direct product. More precisely, for a locally compact \'{e}tale groupoid $\G$ we consider the fibrewise Stone-\v{C}ech compactification $\beta_{\r} \G$ and the associated semi-direct product $\beta_{\r} \G \rtimes \G$ (see Section \ref{sssec:groupoid actions}). Anantharaman-Delaroche showed in \cite{ADExact} that the reduced groupoid $C^*$-algebra $C^*_r(\beta_{\r} \G \rtimes \G)$ is $C^*$-isomorphic\footnote{More precisely, Anantharaman-Delaroche \cite{ADExact} defined the uniform Roe algebra $C^*_u(\G)$ in a different way, while we show in Lemma \ref{lem:AD=Cu} that it coincides with our definition.} to the uniform Roe algebra $C^*_u(\G)$.

Furthermore, we construct an embedding from $\L(L^2(\beta_{\r} \G \rtimes \G))^{\beta_{\r} \G \rtimes \G}$ to $\L(L^2(\G))$ in Section \ref{ssec:eq to neq} (see Corollary \ref{cor:Theta conclusion}), and apply Theorem \ref{introthm:char for LL2G} to show that it provides a $C^*$-isomorphism between the equivariant uniform quasi-local algebra of $\beta_{\r} \G \rtimes \G$ and the uniform quasi-local algebra of $\G$. Finally thanks to Theorem \ref{introthm:main result}, we obtain the following concrete description for elements in the uniform Roe algebra $C^*_u(\G)$ (see Section \ref{sssec:groupoid actions} for precise definitions):

\begin{alphthm}[Theorem \ref{thm:general case}]\label{introthm:main result general case}
    Let $\G$ be a locally compact, $\sigma$-compact and \'{e}tale groupoid. Suppose $\G$ is either strongly amenable at infinity, or secondly countable weakly inner amenable and $C^*$-exact. Then we have $C^*_u(\G) = C^*_{uq}(\G)$.
\end{alphthm}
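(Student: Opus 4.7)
The strategy is to reduce the non-equivariant statement to the equivariant case already settled in Theorem \ref{introthm:main result}, by passing to the semi-direct product $\mathcal{H} := \beta_{\r} \G \rtimes \G$. Two ingredients from the excerpt are the backbone of the argument: the Anantharaman-Delaroche identification $C^*_r(\mathcal{H}) \cong C^*_u(\G)$ (together with Lemma \ref{lem:AD=Cu}, which reconciles her definition of $C^*_u(\G)$ with ours), and the embedding $\L(L^2(\mathcal{H}))^{\mathcal{H}} \hookrightarrow \L(L^2(\G))$ of Corollary \ref{cor:Theta conclusion}, which restricts to a $C^*$-isomorphism $C^*_{uq}(\mathcal{H})^{\mathcal{H}} \cong C^*_{uq}(\G)$. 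Once the equivariant identity $C^*_r(\mathcal{H}) = C^*_{uq}(\mathcal{H})^{\mathcal{H}}$ is in hand, transporting it through these two isomorphisms immediately yields $C^*_u(\G) = C^*_{uq}(\G)$.

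To invoke Theorem \ref{introthm:main result} on $\mathcal{H}$, I first check that $\mathcal{H}$ satisfies its hypotheses. Local compactness and \'etaleness of $\mathcal{H}$ are standard for semi-direct products of \'etale groupoids with their fibrewise Stone-\v{C}ech compactifications, and $\sigma$-compactness of $\mathcal{H}$ follows from $\sigma$-compactness of $\G$ via the fibred structure of $\beta_{\r}\G$ over $\Gz$. The crucial hypothesis is amenability of $\mathcal{H}$, and this is precisely what is encoded in the two alternative assumptions. By definition, ``$\G$ is strongly amenable at infinity'' means $\mathcal{H} = \beta_{\r}\G \rtimes \G$ is amenable, so Theorem \ref{introthm:main result} applies verbatim in the first case. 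In the second case, one invokes the equivalence (essentially due to Anantharaman-Delaroche) asserting that for a second countable locally compact \'etale groupoid which is weakly inner amenable, $C^*$-exactness is equivalent to strong amenability at infinity; this again forces $\mathcal{H}$ to be amenable.

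Combining everything, Theorem \ref{introthm:main result} delivers $C^*_r(\mathcal{H}) = C^*_u(\mathcal{H})^{\mathcal{H}} = C^*_{uq}(\mathcal{H})^{\mathcal{H}}$, and then the isomorphisms above turn this identity into $C^*_u(\G) = C^*_{uq}(\G)$. I expect the main obstacle to lie not in the amenability step but in the bookkeeping behind Corollary \ref{cor:Theta conclusion}: one must verify that the embedding $\Theta$ sends the three layers $\CC_u[\mathcal{H}]^{\mathcal{H}}$, $C^*_u(\mathcal{H})^{\mathcal{H}}$ and $C^*_{uq}(\mathcal{H})^{\mathcal{H}}$ isomorphically onto $\CC_u[\G]$, $C^*_u(\G)$ and $C^*_{uq}(\G)$ respectively. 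The argument here should rely on Theorem \ref{introthm:char for LL2G}, using that equivariant operators on $\mathcal{H}$ are determined by their slices over the (dense) unit subspace $\Gz \subseteq \beta_{\r}\G$, so that the compact support and quasi-locality conditions of Definition \ref{introdefn:ql} transfer cleanly between the two groupoids. A secondary delicate point is the precise formulation and citation of the weakly-inner-amenable-plus-$C^*$-exact criterion, since this equivalence must be applied in exactly the form guaranteeing amenability of $\mathcal{H}$.
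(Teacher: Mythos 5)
Your proposal is correct and follows essentially the same route as the paper: pass to $\beta_{\r}\G\rtimes\G$, use the Anantharaman--Delaroche identification $C^*_r(\beta_{\r}\G\rtimes\G)\cong C^*_u(\G)$ together with the restriction of $\Theta$ to a $C^*$-isomorphism $C^*_{uq}(\beta_{\r}\G\rtimes\G)^{\beta_{\r}\G\rtimes\G}\cong C^*_{uq}(\G)$ (the ``bookkeeping'' you flag is exactly what Proposition~\ref{prop:Theta quasi-local} and Lemmas~\ref{lem:p is bijective}--\ref{lem:cuql of sections} carry out via the slicing maps), and then apply Theorem~\ref{Main-theorem} to the amenable groupoid $\beta_{\r}\G\rtimes\G$, with amenability supplied either by the definition of strong amenability at infinity or by Proposition~\ref{exact prop 1}. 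The only minor imprecision is attributing the quasi-local isomorphism to Corollary~\ref{cor:Theta conclusion} rather than to Proposition~\ref{prop:Theta quasi-local}, which does not affect the argument.
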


\subsection{Organisation}

The paper is organised as follows. In Section \ref{sec.Preliminaries}, we recall necessary notions in groupoid theory and coarse geometry together with several examples. Section \ref{sec.charForOperator} is devoted to \textbf{Step I} where we provide the required characterisation for operators in the Hilbert $C^*$-module $\L(L^2(\G))$ (Theorem \ref{introthm:char for LL2G}), discuss the notion of equivariance and finally prove Proposition \ref{introprop:realisation of equivariance}. In Section \ref{sec.quasi}, we introduce the key notion of compact support and quasi-locality (Definition \ref{introdefn:ql}) and recover the algebra $C_c(\G)$ (\emph{i.e.}, Proposition \ref{introprop:compatibility}), which fulfils \textbf{Step II}. Section \ref{sec.mainThm} is devoted to \textbf{Step III} where we prove Theorem \ref{Main-theorem}, and as applications we provide several examples in Section \ref{sec.example}. Finally in Section \ref{sec:beyond equivariance}, we consider the general case and prove Theorem \ref{introthm:main result general case}.

\subsection*{Acknowledgement}
We appreciate the anonymous referee for a number of valuable suggestions to improve the paper, especially for suggesting us to use the notion of interior tensor product of Hilbert $C^*$-modules (see Remark \ref{rem:tensor product 1} and \ref{rem:Theta0512}).

\subsection*{Statements and Declarations}
The authors have no relevant financial or non-financial interests to disclose. The authors have no competing interests to declare that are relevant to the content of this article. All authors certify that they have no affiliations with or involvement in any organization or entity with any financial interest or non-financial interest in the subject matter or materials discussed in this manuscript. The authors have no financial or proprietary interests in any material discussed in this article.

\section{Preliminaries}\label{sec.Preliminaries}

\subsection{Standard notation} Here we collect the notation used throughout the paper.

To simplify the terminology, we always assume that locally compact spaces are Hausdorff. Given a locally compact space $X$, we denote by $C(X)$ the set of complex-valued continuous functions on $X$, and by $C_b(X)$ the subset of bounded continuous functions on $X$. Recall that the \textit{support} of a function $f\in C(X)$ is the closure of $\{ x\in X: f(x)\neq 0\}$, written as $\supp(f)$, and denote by $C_c(X)$ the set of continuous functions with compact support. We also denote by $C_0(X)$ the set of continuous functions vanishing at infinity, which is the closure of $C_c(X)$ with respect to the supremum norm $\|f\|_\infty:=\sup\{|f(x)|: x\in X\}$.

When $X$ is discrete, denote $\ell^\infty(X):=C_b(X)$ and $\ell^2(X)$ the Hilbert space of complex-valued square-summable functions on $X$. To be compatible with the setting of Hilbert $C^*$-modules, we always assume that the Hilbert space inner product $\langle \eta, \xi \rangle$ will be taken to be linear in the variable $\xi$ and conjugate-linear in $\eta$. Denote by $\B(\ell^2(X))$ the $C^*$-algebra of all bounded linear operators on $\ell^2(X)$, and $\K(\ell^2(X))$ the $C^*$-subalgebra of all compact operators.

\subsection{Basic notions for Groupoids}\label{ssec.groupoid}

Let us start with some basic notions and terminology on groupoids. For details we refer to \cite{R1980-book-a}.

Recall that a \emph{groupoid} is a small category, in which every morphism is invertible. Roughly speaking, a groupoid consists of a set $\G$, a subset $\Gz$ called the \emph{unit space}, two maps $\s, \r: \G \to \Gz$ called the \emph{source} and \emph{range} maps respectively, a \emph{composition law}:
\[
    \G^{(2)}\coloneqq \{(\gamma_1,\gamma_2) \in \G \times \G: \s(\gamma_1)=\r(\gamma_2)\}\ni(\gamma_1,\gamma_2) \mapsto \gamma_1\gamma_2 \in \G,
\]
and an \emph{inverse} map $\gamma \mapsto \gamma^{-1}$. These operations satisfy a couple of axioms, including the associativity law and the fact that elements in $\Gz$ act as units. For $x \in \Gz$, denote $\G^x:=\r^{-1}(x)$ and $\G_x:=\s^{-1}(x)$.
A subset $Y \subseteq \Gz$ is called \emph{invariant} if $\r^{-1}(Y)=\s^{-1}(Y)$. 
For $A, B \subseteq \G$, we denote
\begin{align*}
    A^{-1} & \coloneqq \{ \gamma\in\G: \gamma^{-1} \in A \},                                                                                                  \\
    AB     & \coloneqq \{ \gamma\in\G: \gamma=\gamma_{1}\gamma_{2}\text{ where }\gamma_{1} \in A,\gamma_{2}\in B \text{ and } \s(\gamma_1) = \r(\gamma_2) \}.
\end{align*}
We say that $A \subseteq \G$ is \emph{symmetric} if $A=A^{-1}$.



A groupoid $\G$ is \textit{principal} if the map $(\s,\r)$ from $\G$ into $\Gz\times\Gz$ is one-to-one, and $\G$ is \textit{transitive} if the map $(\s,\r)$ is onto.

A \textit{locally compact groupoid} is a groupoid $\G$ endowed with a locally compact topology for which composition, inversion, source and range maps are continuous with respect to the induced topologies.

We say that a locally compact groupoid $\G$ is \textit{\'{e}tale} if the range (and hence the source) map is a local homeomorphism, \emph{i.e.}, for any $\gamma \in \G$ there exists a neighbourhood $U$ of $\gamma$ such that $\r(U)$ is open and $\r|_U$ is a homeomorphism. In this case, fibers $\G_x$ and $\G^x$ with the induced topologies are discrete and $\Gz$ is open in $\G$. Throughout the paper, we shall limit ourselves to the \'{e}tale case.


For a locally compact groupoid $\G$, a subset $A \subseteq \G$ is called a \textit{bisection} if the restrictions of $\s, \r$ to $A$ are injective.
It follows from definition that for a locally compact \'{e}tale groupoid $\G$, all open bisections form a basis for the topology of $\G$. As a direct consequence, we obtain that for such a groupoid $\G$, any function $f \in C_c(\G)$ can be written as a linear combination of continuous functions whose supports are contained in open precompact bisections.

\subsection{Groupoid $C^*$-algebras and Hilbert $C^*$-modules}\label{ssec.groupoidAlgebra}

Here we recall the notion of reduced groupoid $C^*$-algebras in the \'{e}tale case. Given a locally compact \'{e}tale groupoid $\G$, note that the space $C_c(\G)$ can be formed into a $*$-algebra with the following operations: for $f,g\in C_c(\G)$,
\begin{align}
    (f*g)(\gamma) & \coloneqq \sum_{\alpha\in \G_{s(\gamma)}}f(\gamma\alpha^{-1})g(\alpha), \label{EQ:convolution} \\
    f^*(\gamma)   & \coloneqq \overline{f(\gamma^{-1})}. \label{EQ:star}
\end{align}
The function $f \ast g \in C_c(\G)$ is called the \emph{convolution} of $f$ and $g$.

Recall that for each $x\in\Gz$ the \emph{left regular representation at $x$}, denoted by $\lambda_x: C_c(\G) \to \B(\ell^2(\G_x))$, is defined as follows:
\begin{equation}\label{EQ:reduced algebra defn}
    \lambda_x(f)(\xi)(\gamma)=\sum_{\alpha\in \G_x}f(\gamma\alpha^{-1})\xi(\alpha), \quad \text{where } f\in C_{c}(\G) \text{ and }\xi\in\ell^{2}(\G_{x}).
\end{equation}
It is routine to check that $\lambda_x$ is a well-defined $\ast$-homomorphism.
The \textit{reduced norm} on $C_{c}(\G)$ is defined by
\[
    \| f \|_{r}\coloneqq \sup_{x\in \Gz} \| \lambda_{x}(f) \|,
\]
and the \textit{reduced groupoid $C^{*}$-algebra} $C^{*}_r(\G)$ is defined to be the completion of the $*$-algebra $C_{c}(\G)$ with respect to the reduced norm $\| \cdot \|_{r}$. It is clear that each left regular representation $\lambda_x$ can be extended automatically to a $C^*$-homomorphism $\lambda_x: C^*_r(\G) \to \B(\ell^2(\G_x))$.

In the following, we would like to provide an alternative approach to define the reduced groupoid $C^*$-algebras using the language of Hilbert $C^*$-modules, which motivates our definition of quasi-locality for groupoids. Here we recall the necessary notions on Hilbert $C^*$-modules, and guide readers to \cite{L1995-book-a} for details.

\begin{definition}[\cite{L1995-book-a}]\label{def.HilbertCModule}
    Let $A$ be a $C^*$-algebra.
    An \textit{inner-product $A$-module} is a linear space $E$ which is a right $A$-module (with compatible scalar multiplication: $\lambda(\xi a) = (\lambda\xi)a = \xi(\lambda a)$ for $\xi\in E$, $a\in A$ and $\lambda\in\CC$), together with a map $(\xi,\eta)\mapsto \langle \xi , \eta \rangle$ from $E\times E$ to $A$ such that
    \begin{enumerate}
        \item $\langle \xi , \alpha \eta +\beta \zeta \rangle = \alpha \langle \xi , \eta \rangle+\beta \langle \xi , \zeta \rangle$ for $\xi,\eta,\zeta\in E$ and $\alpha,\beta\in \CC$;
        \item $\langle \xi , \eta a \rangle = \langle \xi , \eta \rangle a$ for $\xi,\eta\in E$ and $a\in A$;
        \item $\langle \xi , \eta \rangle = \langle \eta , \xi \rangle^*$ for $\xi,\eta\in E$;
        \item $\langle \xi , \xi \rangle \geq 0$ for $\xi\in E$, and $\langle \xi , \xi \rangle=0$ if and only if $\xi=0$.
    \end{enumerate}
\end{definition}

For $\xi\in E$, we define its norm $\| \xi \|_E := \| \langle \xi , \xi \rangle \|^{1/2}$, also denoted by $\|\xi\|$ when there is no ambiguity.
An inner-product $A$-module which is complete with respect to this norm is called a \textit{Hilbert $A$-module}.

Recall that an inner-product $A$-module $E$ admits an $A$-valued norm $|\cdot |$ given by $|\xi| = \langle \xi , \xi \rangle^{1/2}$. Note that $|\langle \xi , \eta \rangle|\leq \| \xi \| \cdot |\eta|$ for $\xi,\eta \in E$ (see \cite[Proposition 1.1]{L1995-book-a}), and hence $|\xi a|\leq \| \xi \|\cdot |a|$ for $\xi\in E$ and $a\in A$.

\begin{definition}\label{defn:adjointable operators}
    Let $A$ be a $C^*$-algebra and $E$ be a Hilbert $A$-module. A map $T: E \rightarrow E$ is called \emph{adjointable} if there exists a map $T^*\colon E\rightarrow E$ such that
    \[
        \langle \eta , T(\xi) \rangle=\langle T^*(\eta), \xi \rangle\text{ for all }\xi,\eta\in E.
    \]
    It is easy to see that adjointable operators are $A$-linear and bounded with respect to the norm on $E$. Denote by $\L(E)$ the set of all adjointable operators on $E$, which forms a $C^*$-algebra.
\end{definition}

The following standard result will be used in the sequel:

\begin{proposition}[{\cite[Proposition 1.2]{L1995-book-a}}]\label{prop.Proposition 1.2}
    Let $E$ be a Hilbert $A$-module.
    For $T\in\L(E)$ and $\xi\in E$, we have $|T(\xi)|^2\leq \| T \|^2 |\xi|^2$ and $|T(\xi)|\leq \| T \| \cdot |\xi|$.
\end{proposition}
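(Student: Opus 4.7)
The plan is to deduce both inequalities from positivity considerations inside the $C^*$-algebra $\L(E)$ and then inside $A$. The key observation is that for any $T \in \L(E)$, the element $T^*T$ is positive in $\L(E)$ with $\|T^*T\| = \|T\|^2$, so $\|T\|^2 \mathrm{Id}_E - T^*T$ is a positive element of $\L(E)$. Since $\L(E)$ is a $C^*$-algebra, this positive element admits a square root: there exists $S \in \L(E)$ with $S^*S = \|T\|^2 \mathrm{Id}_E - T^*T$.

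Given such an $S$, the first inequality is a one-line computation: for any $\xi \in E$,
\[
    \|T\|^2 \langle \xi, \xi \rangle - \langle T\xi, T\xi \rangle = \langle \xi, (\|T\|^2 \mathrm{Id}_E - T^*T)\xi\rangle = \langle \xi, S^*S\xi \rangle = \langle S\xi, S\xi \rangle \geq 0,
\]
where the final inequality is taken in $A$ using axiom (4) of Definition~\ref{def.HilbertCModule}. Rewriting this as $|T\xi|^2 \leq \|T\|^2 |\xi|^2$ gives the first claim.

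For the second inequality, I would invoke operator monotonicity of the square root function on the positive cone of any $C^*$-algebra: if $0 \leq a \leq b$ in $A$, then $a^{1/2} \leq b^{1/2}$. Applying this to $a = |T\xi|^2$ and $b = \|T\|^2 |\xi|^2$ inside $A$ yields $|T\xi| \leq \|T\| \cdot |\xi|$, since $(\|T\|^2 |\xi|^2)^{1/2} = \|T\| \cdot |\xi|$ (noting that $\|T\|$ is a nonnegative scalar and $|\xi|$ is a positive element of $A$, so the two factors commute).

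The only nontrivial input is the operator monotonicity of the square root, which is a standard fact about $C^*$-algebras (and the main obstacle in the sense that step one alone would not deliver the stated inequality on the $A$-valued norms themselves). Everything else is a direct unpacking of the inner-product axioms together with the fact that positive elements in a $C^*$-algebra admit square roots, which are exactly the two structural features we already have at our disposal for $\L(E)$ and $A$.
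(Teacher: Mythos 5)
Your argument is correct. The paper gives no proof of this statement — it is quoted verbatim from Lance's book with a citation — and your two steps (positivity of $\|T\|^2\,\Id_E - T^*T$ in the unital $C^*$-algebra $\L(E)$, hence a factorisation $S^*S$ yielding the squared inequality, followed by operator monotonicity of the square root inside $A$ to remove the squares) reproduce the standard textbook argument, with the one genuinely nontrivial input correctly identified as the operator monotonicity of $t\mapsto t^{1/2}$.
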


Now we consider the Hilbert $C^*$-module associated to a locally compact \'{e}tale groupoid $\G$.
Let $L^2(\G)$ be the Hilbert $C^*$-module over $C_0(\Gz)$ obtained by taking completion of $C_c(\G)$ with respect to the $C_0(\Gz)$-valued inner product:
\[
    \langle \eta,\xi \rangle(x)\coloneqq \sum_{\gamma\in \G_{x}}\overline{\eta(\gamma)}\xi(\gamma),
\]
and the right $C_0(\Gz)$-module structure is given by
\[
    (\xi f)(\gamma)\coloneqq \xi(\gamma) f(\s(\gamma)).
\]
Due to \'{e}taleness, for any $\xi \in C_c(\G)$ we have
\[
    \|\xi\|^2_{L^2(\G)}=\sup_{x\in\Gz}\sum_{\gamma\in\G_x}|\xi(\gamma)|^2 \geq \sup_{\gamma \in \G} |\xi(\gamma)|^2 = \|\xi\|^2_\infty.
\]
Hence we can regard $L^2(\G)$ as a subspace in $C_0(\G)$.

Note that all the regular representations $\lambda_x$ defined in (\ref{EQ:reduced algebra defn}) can be put together to form a single representation $\Lambda: C_c(\G) \to \L(L^2(\G))$ by the formula:
\begin{equation}\label{EQ:Lambda}
    \big(\Lambda (f)\xi\big)(\gamma)\coloneqq \sum_{\alpha \in \G_{\s(\gamma)}} f(\gamma \alpha^{-1})\xi(\alpha)=((\lambda_{\s(\gamma)}f)\xi|_{\G_{\s(\gamma)}})(\gamma).
\end{equation}
And it is easy to check that for $f \in C_c(\G)$, we have $\|f\|_r=\|\Lambda(f)\|_{\L(L^2(\G))}$. Therefore, $\Lambda$ can be extended to a faithful representation $\Lambda: C^*_r(\G) \to \L(L^2(\G))$, which is called the \emph{left regular representation}.

It follows that the convolution operator $\ast: C_c(\G) \times C_c(\G) \to C_c(\G)$ can be extended to an operator (using the same notation) $\ast: C_c(\G) \times L^2(\G) \to L^2(\G)$ by (\ref{EQ:convolution}), and by definition $\Lambda(f) \xi = f \ast \xi$ for $f\in C_c(\G)$ and $\xi \in L^2(\G)$.

There is a special class of operators in $\L(L^2(\G))$, called multiplication operators. For $g\in C_b(\G)$, the \emph{multiplication operator} $M(g): L^2(\G) \to L^2(\G)$ is defined by $(M(g)\xi)(\gamma)=g(\gamma)\xi(\gamma)$ for $\xi \in L^2(\G)$ and $\gamma\in \G$. It is clear that $M(g) \in \L(L^2(\G))$, and we abbreviate $M(g)\xi$ by $g\xi$.

For later use, here we briefly recall the notion of interior tensor products for Hilbert $C^*$-modules. Readers might go to \cite[Chapter 4]{L1995-book-a} for more details.


Let $A, B$ be $C^{*}$-algebras, $E$ be a Hilbert $A$-module and $F$ be a Hilbert $B$-module. Let $\rho: A
\rightarrow\L(F)$ be a $*$-homomorphism, which turns $F$ into a left $A$-module. Denote the algebraic tensor product of $E$ and $F$ over $A$ by $E\otimes_{A}F$, which is a right $B$-module. According to \cite[Proposition 4.5]{L1995-book-a}, $E\otimes_{A}F$ is an inner-product $B$-module under the inner product given on simple tensors by
\[
\langle x_{1}\otimes y_{1},x_{2}\otimes y_{2}\rangle = \langle y_{1},\rho(\langle x_{1}, x_{2}\rangle)(y_{2})\rangle\quad \mbox{where} \quad x_{1},x_{2}\in E \mbox{~and~} y_{1},y_{2}\in F.
\]
The completion of the inner-product $B$-module $E\otimes_{A}F$ is a Hilbert $B$-module, denoted by $E\otimes_{\rho}F$, which is called the \textit{interior (\emph{or} internal) tensor product} of $E$ and $F$ with respect to $\rho$.

Furthermore, given $T\in\L(E)$, we define the tensor product $T\otimes 1$ on the algebraic tensor product $E\otimes_{A}F$ by setting $(T\otimes 1)(x\otimes y) := T(x)\otimes y$ for $x\in E$ and $y\in F$. One can easily check that this is well-defined and can be extended to an operator in $\L(E\otimes_{\rho}F)$, denoted by $T\otimes_{\rho} 1$.


\subsection{Amenable groupoids}\label{ssec.amenable}

Amenable groupoids comprise a large class of groupoids with relatively nice properties. Here we only focus on the case of \'{e}taleness, in which the notion of amenability behave quite well. A standard reference is \cite{ADR00} and another reference for the \'{e}tale case is \cite[Chapter 5.6]{BrownOzawa}.


\begin{definition}[{\cite[Chapter 2.2]{ADR00}}]\label{defn:amenable groupoids}
    A locally compact \'{e}tale groupoid $\G$ is said to be \emph{amenable} if for any $\varepsilon>0$ and compact subset $K \subseteq \G$, there exists a non-negative function $f \in C_c(\G)$ such that for any $\gamma \in K$ we have:
    \begin{itemize}
        \item $\left| 1- \sum_{\beta \in \G_{\r(\gamma)}} f(\beta) \right| < \varepsilon$;
        \item $\sum_{\beta \in \G_{\r(\gamma)}} |f(\beta) - f(\beta \gamma)| < \varepsilon$.
    \end{itemize}
\end{definition}

Using a standard normalisation argument, we have the following:

\begin{lemma}\label{lem:equiv. for amenability}
    A locally compact \'{e}tale groupoid $\G$ is amenable \emph{if and only if} for any $\varepsilon>0$ and compact subset $K \subseteq \G$, there exists a non-negative function $f \in C_c(\G)$ such that $\sum_{\beta \in \G_x} f(\beta) \leq 1$ for any $x\in \Gz$, and for any $\gamma \in K$ we have:
    \begin{itemize}
        \item $\sum_{\beta \in \G_{\r(\gamma)}} f(\beta) =1$;
        \item $\sum_{\beta \in \G_{\r(\gamma)}} |f(\beta) - f(\beta \gamma)| < \varepsilon$.
    \end{itemize}
\end{lemma}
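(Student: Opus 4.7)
The $(\Leftarrow)$ direction is immediate: any function $f$ satisfying the stronger conditions in the lemma automatically satisfies Definition \ref{defn:amenable groupoids}, since $|1 - \sum_{\beta\in \G_{\r(\gamma)}} f(\beta)| = 0 < \varepsilon$ for $\gamma\in K$, and the Fï¿½lner-type inequality is identical. So the substance is the $(\Rightarrow)$ direction, which is a normalisation argument. My plan is to start with a function $f_0$ satisfying Definition \ref{defn:amenable groupoids} for a suitable enlargement of $K$ and a sufficiently small parameter $\varepsilon'>0$, and then rescale it fibrewise by a carefully chosen continuous function on $\Gz$.

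More precisely, given $\varepsilon>0$ and a compact set $K\subseteq \G$, I would first replace $K$ by the compact set $K_2 \coloneqq K \cup \r(K) \cup \s(K) \subseteq \G$ (using that $\Gz$ is open in $\G$). Pick $\varepsilon'>0$ small (to be chosen so that a simple expression in $\varepsilon'$ is $<\varepsilon$; an estimate of order $3\varepsilon'$ will arise from the triangle inequality below). Apply Definition \ref{defn:amenable groupoids} to $K_2$ and $\varepsilon'$ to get a non-negative $f_0 \in C_c(\G)$. Write $F_0(x) \coloneqq \sum_{\beta\in \G_x} f_0(\beta)$; this is a continuous, compactly supported function on $\Gz$ because $\r$ is a local homeomorphism and $\supp(f_0)$ is compact. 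Applying the first bullet of Definition \ref{defn:amenable groupoids} both to $\gamma\in K$ and to the unit $\gamma = x$ for $x\in \r(K)\cup \s(K) \subseteq K_2$ yields $F_0(\r(\gamma)), F_0(\s(\gamma)) \in (1-\varepsilon', 1+\varepsilon')$ for every $\gamma\in K$.

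Next, I would build a continuous function $h$ on $\Gz$ with $h = F_0$ on $\r(K)\cup \s(K)$, $h \geq F_0$ everywhere, and $h \geq c > 0$ for a positive constant $c$. Using Urysohn's lemma, pick $\psi \in C_c(\Gz,[0,1])$ with $\psi \equiv 1$ on $\r(K) \cup \s(K)$ and $\supp(\psi)$ contained in an open neighbourhood $U$ on which $F_0 > 1/2$ (which exists by continuity once $\varepsilon' < 1/2$). Set
\[
    h(x) \coloneqq \psi(x) F_0(x) + (1-\psi(x))\max(F_0(x), 1).
\]
Then $h$ is continuous, $h \geq 1/2$ everywhere, $h \geq F_0$ everywhere, and $h = F_0$ on $\r(K) \cup \s(K)$. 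Define $f(\gamma) \coloneqq f_0(\gamma)/h(\s(\gamma))$, which lies in $C_c(\G)$ and is non-negative. Since $h(\s(\beta)) = h(x)$ for every $\beta\in \G_x$, we have $\sum_{\beta\in \G_x} f(\beta) = F_0(x)/h(x) \leq 1$; and for $\gamma \in K$ this equals $F_0(\r(\gamma))/h(\r(\gamma)) = 1$ by the defining property of $h$.

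It remains to verify the Fï¿½lner inequality for $f$. Writing $a = 1/h(\r(\gamma))$ and $b = 1/h(\s(\gamma))$, and using $f(\beta) = a f_0(\beta)$, $f(\beta\gamma) = b f_0(\beta\gamma)$ for $\beta \in \G_{\r(\gamma)}$ (so that $\s(\beta)=\r(\gamma)$ and hence $\s(\beta\gamma) = \s(\gamma)$), the triangle inequality gives
\[
    \sum_{\beta \in \G_{\r(\gamma)}}|f(\beta) - f(\beta\gamma)| \leq a\sum_{\beta} |f_0(\beta) - f_0(\beta\gamma)| + |a-b|\cdot F_0(\s(\gamma)).
\]
Using $a,b \leq 1/(1-\varepsilon')$, $F_0(\s(\gamma)) < 1+\varepsilon'$, the Fï¿½lner estimate on $f_0$, and $|a-b| \leq |F_0(\s(\gamma)) - F_0(\r(\gamma))|/(1-\varepsilon')^2 < 2\varepsilon'/(1-\varepsilon')^2$, the right-hand side is bounded by a quantity tending to $0$ as $\varepsilon' \to 0$. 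Choosing $\varepsilon'$ small enough so that this bound is less than $\varepsilon$ completes the proof. The only mildly technical step is the construction of $h$, which must simultaneously achieve continuity, the equality $h = F_0$ on $\r(K)\cup\s(K)$, and positivity; the rest is bookkeeping with the triangle inequality.
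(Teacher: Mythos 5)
Your proof is correct, and it is precisely the ``standard normalisation argument'' that the paper invokes without writing out: enlarge $K$ to include $\r(K)\cup\s(K)$ so that the fibre sums $F_0(\r(\gamma))$ and $F_0(\s(\gamma))$ are pinned near $1$, interpolate $F_0$ with $\max(F_0,1)$ via a Urysohn function to get a continuous, bounded-below normaliser $h$ agreeing with $F_0$ on $\r(K)\cup\s(K)$ and dominating it everywhere, and control the new F\o lner sum by the triangle inequality with an $O(\varepsilon')$ bound. The only cosmetic remark is that the continuity of $F_0$ comes from $\s$ (not $\r$) being a local homeomorphism, since the sum runs over $\G_x=\s^{-1}(x)$; both maps are local homeomorphisms here, so nothing is affected.
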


We also need another description for amenability in terms of positive type functions. Recall that for a locally compact \'{e}tale groupoid $\G$, a function $h$ on $\G$ is of \textit{positive type} if for any $x\in\Gz$, $n\in\NN$, $\gamma_{1}, \ldots,\gamma_{n}\in\G_{x}$ and $\lambda_1,\ldots,\lambda_n\in\CC$, we have the following:
\[
    \sum_{i,j=1}^{n}\overline{\lambda_i}\lambda_jh(\gamma_{i}\gamma_{j}^{-1})\geq 0.
\]
We remark that if $h$ is of positive type, then $h(\gamma^{-1}) = \overline{h(\gamma)}$ for $\gamma \in \G$.

Combining Lemma \ref{lem:equiv. for amenability} with \cite[Proposition 2.2.13]{ADR00}, we obtain the following:

\begin{lemma}\label{lem:p.d.functions for amenability}
    A locally compact \'{e}tale groupoid $\G$ is amenable \emph{if and only if} for any $\varepsilon>0$ and compact subset $K \subseteq \G$, there exists a non-negative function $h \in C_c(\G)$ of positive type such that
    \begin{itemize}
        \item $h(x)\leq 1$ for any $x\in \Gz$, and $h(x)=1$ for any $x\in K \cap \Gz$;
        \item $|1- h(\gamma)|<\varepsilon$ for any $\gamma \in K$.
    \end{itemize}
\end{lemma}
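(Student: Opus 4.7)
The statement is an equivalence, with one direction amounting to translating the Reiter-type condition of Lemma~\ref{lem:equiv. for amenability} into a statement about positive type functions, and the other direction being the corresponding converse (the substance of \cite[Proposition 2.2.13]{ADR00}). I would prove the forward implication by hand and, for the backward direction, reduce to the Reiter condition via a GNS/Cauchy--Schwarz argument in the spirit of \cite{ADR00}.

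\textbf{Forward direction.} Given $\varepsilon>0$ and a compact $K\subseteq\G$, first replace $K$ by the compact symmetric set $K\cup K^{-1}\cup (K\cap\Gz)$. Apply Lemma~\ref{lem:equiv. for amenability} with parameter $2\varepsilon$ to obtain a non-negative $f\in C_c(\G)$ with $\sum_{\beta\in\G_x}f(\beta)\le 1$ for all $x\in\Gz$, equal to $1$ when $x\in\r(K)\cup\s(K)$, and with $\sum_{\beta\in\G_{\r(\gamma)}}|f(\beta)-f(\beta\gamma)|<2\varepsilon$ for $\gamma\in K$. Set $\xi(\gamma):=\sqrt{f(\gamma^{-1})}$, which lies in $C_c(\G)$, and define
\[
h(\gamma):=\sum_{\alpha\in\G_{\s(\gamma)}}\sqrt{f(\alpha)\,f(\alpha\gamma^{-1})}.
\]
Then $h$ is non-negative by construction, and $\supp(h)\subseteq \supp(f)^{-1}\cdot\supp(f)$, so $h\in C_c(\G)$ (continuity follows from local finiteness of the sum, a consequence of \'etaleness). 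For positive type, for $\gamma_1,\dots,\gamma_n\in\G_x$ and $\lambda_i\in\CC$ the substitution $\alpha=\gamma_j\beta$ (valid after restricting to the common source $x$) yields
\[
\sum_{i,j}\overline{\lambda_i}\lambda_j\,h(\gamma_i\gamma_j^{-1})=\sum_{\beta\in\G^{x}}\Big|\sum_i\lambda_i\,\xi(\gamma_i\beta^{-1})\Big|^{2}\ge 0,
\]
where we use that $\xi$ is real-valued.

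\textbf{Checking the two bounds.} For $x\in\Gz$ one has $h(x)=\sum_{\alpha\in\G_x}f(\alpha)$, which is $\le 1$ always and $=1$ when $x\in K\cap\Gz$. For $\gamma\in K$, symmetry of $K$ forces $\sum_{\alpha\in\G_{\s(\gamma)}}f(\alpha)=1=\sum_{\alpha\in\G_{\s(\gamma)}}f(\alpha\gamma^{-1})$ (reindexing via $\beta=\alpha\gamma^{-1}$), so
\[
1-h(\gamma)=\tfrac{1}{2}\sum_{\alpha\in\G_{\s(\gamma)}}\big(\sqrt{f(\alpha)}-\sqrt{f(\alpha\gamma^{-1})}\big)^{2}.
\]
Applying the elementary inequality $(\sqrt a-\sqrt b)^{2}\le|a-b|$ and the reindexing $\beta=\alpha\gamma^{-1}\in\G_{\r(\gamma)}$, this is bounded by $\tfrac12\sum_{\beta\in\G_{\r(\gamma)}}|f(\beta)-f(\beta\gamma)|<\varepsilon$, as required.

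\textbf{Backward direction and main obstacle.} Given such $h$ for every $(\varepsilon,K)$, the goal is to produce the Reiter-type $f$ of Lemma~\ref{lem:equiv. for amenability}. This is the non-trivial direction and is the content of \cite[Proposition~2.2.13]{ADR00}: one passes through the GNS construction associated with the positive type function $h$ to obtain a continuous section of unit vectors in an associated Hilbert bundle, whose squared modulus $f_\gamma=|\xi|^{2}$ then satisfies an approximate invariance controlled by $\|{\pi(\gamma)\xi-\xi}\|^{2}=2\,\mathrm{Re}(1-h(\gamma))$; a standard Cauchy--Schwarz manipulation turns this $\ell^{2}$-estimate into the $\ell^{1}$-estimate of Lemma~\ref{lem:equiv. for amenability}. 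The main technical obstacle is precisely this conversion: extracting an approximately invariant $L^{1}$-function from a nearly-constant positive type function requires honest use of the groupoid GNS/Hilbert-bundle machinery from \cite{ADR00}, and the mild normalisation $h\le 1$ on $\Gz$ with $h=1$ on $K\cap\Gz$ must be tracked through the extraction but does not affect the substance.
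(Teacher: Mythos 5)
Your proposal is correct and takes essentially the same route as the paper, whose entire proof is the citation ``combine Lemma~\ref{lem:equiv. for amenability} with \cite[Proposition 2.2.13]{ADR00}'': your explicit forward construction $h=(\sqrt{f})^{*}\ast\sqrt{f}$ is exactly the standard argument behind that citation, and it correctly tracks the extra normalisations $h\le 1$ on $\Gz$ and $h=1$ on $K\cap\Gz$ coming from Lemma~\ref{lem:equiv. for amenability}. The only slip is notational: in the positive-type computation the sum should run over $\beta\in\G_{x}$ rather than $\G^{x}$, so that $\gamma_i\beta^{-1}$ is defined.
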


\subsection{Notions from coarse geometry}\label{ssec.motivation}

Now we collect necessary notions from coarse geometry. For a discrete metric space $(X,d)$, denote the closed ball by $B_{X}(x,r):=\{y\in X: d(x,y)\leq r\}$ for $x\in X$ and $r\geq 0$. We say that $X$ has \emph{bounded geometry} if for every $r>0$, the number $\sup_{x\in X}\sharp B_{X}(x,r)$ is finite. Here we use the notation $\sharp S$ to denote the cardinality of a set $S$. For a subset $A\subseteq X$, we denote $\chi_{A}$ the characteristic function of $A$. For a point $x\in X$, also denote $\delta_x:=\chi_{\{x\}}$.

Let us start with the notion of Property A, which was introduced by Yu in \cite{Yu2000}. Property A plays a key role in Yu's study on the coarse Baum-Connes conjecture, and it admits a number of equivalent definitions. Here we focus on the one in terms of kernels for later use.

For a set $X$, a \emph{kernel} on $X$ is a function $k\colon X\times X\rightarrow\RR$. We say that a kernel $k$ on $X$ is of \textit{positive type} if for any $n\in\NN$, $x_{1},\ldots,x_{n}\in X$ and any $\lambda_1,\ldots,\lambda_{n}\in\RR$, we have the following:
\[
    \sum_{i,j=1}^{n}\lambda_{i}\lambda_{j}k(x_{i},x_{j})\geq 0.
\]
We say that a kernel $k$ has \textit{finite propagation} if there is $S>0$ such that $k(x,y) = 0$ whenever $d(x,y)>S$, $k$ is \textit{normalised} if $k(x,x) =1$ for every $x\in X$, and $k$ is \emph{symmetric} if $k(x,y)=k(y,x)$ for any $x,y\in X$. For $R>0$ and $\varepsilon>0$, we say that a kernel $k$ has \textit{$(R,\varepsilon)$-variation} if $|1-k(x,y)|<\varepsilon$ whenever $d(x,y)<R$.


Associated to a normalised, symmetric and positive type kernel $k$ on $X$, we have the so-called \textit{Schur multiplier} $m_k: \B(\ell^{2}(X)) \rightarrow \B(\ell^{2}(X))$. First note that every bounded linear operator $T\in\B(\ell^{2}(X))$ can be written as an $X$-by-$X$ matrix $T =(T_{x,y})_{x,y\in X}$ with coefficient $T_{x,y} := \langle \delta_x,T(\delta_y) \rangle \in \CC$. The Schur multiplier $m_k$ is defined by
\[
    m_{k}(T) \coloneqq (k(x,y)T_{x,y})_{x,y\in X}
\]
in the matrix form. It is a standard result that $m_k$ is well-defined linear bounded operator with norm $1$ (see, \emph{e.g.}, \cite[Theorem C.1.4]{BHV2008-book-a} or \cite[Theorem 5.1]{P2001-book-a}), \emph{i.e.}, $\|m_k(T)\| \leq \|T\|$ for $T \in \B(\ell^{2}(X))$.

Now we recall the definition of Property A (see, \emph{e.g.}, \cite[Proposition 3.2(v)]{T-article-2001a} or \cite[Proposition 1.2.4]{WillettPropertyA} for the equivalence to Yu's original definition):

\begin{definition}[\cite{Yu2000}]\label{defn:property A}
    Let $X$ be a discrete metric space with bounded geometry. We say that $X$ has \textit{Property A} if for any $R>0$ and $\varepsilon>0$, there exists a normalised, finite propagation, symmetric kernel $k$ on $X$ of positive type and $(R,\varepsilon)$-variation.
\end{definition}

In the following, we would like to recall $C^*$-algebras associated to a metric space. Let $(X,d)$ be a discrete metric space. Recall that there is a $\ast$-representation $M: \ell^{\infty}(X) \to \B(\ell^2(X))$ defined by point-wise multiplication, \emph{i.e.}, $(M(f)\xi)(x):=f(x)\xi(x)$ for $f\in\ell^{\infty}(X), \xi\in\ell^{2}(X)$ and $x\in X$. To simplify the notation, we will write $f\xi$ instead of $M(f)\xi$ in the sequel.


\begin{definition}\label{defn:finite ppg}
    For a bounded linear operator $T\in \B(\ell^{2}(X))$, we say that $T$ has \textit{finite propagation} if there exists $R>0$ such that for any $f_1,f_2\in\ell^{\infty}(X)$ with $d(\supp(f_{1}),\supp(f_{2}))>R$, then $f_{1}Tf_{2} = 0$. Equivalently, there exists $R>0$ such that $T_{x,y}=0$ for $x,y \in X$ with $d(x,y)>R$.
\end{definition}

\begin{definition}\label{defn:unif. Roe}
    Let $(X,d)$ be a discrete metric space with bounded geometry. The set of all finite propagation operators in $\B(\ell^{2}(X))$ forms a $\ast$-algebra, called the \emph{algebraic uniform Roe algebra} of $X$ and denoted by $\CC_{u}[X]$. The \emph{uniform Roe algebra} of $X$ is defined to be the operator norm closure of $\CC_{u}[X]$ in $\B(\ell^{2}(X))$, which is a $C^*$-algebra and denoted by $C^{*}_{u}(X)$.
\end{definition}

Now we move to the case of quasi-locality, which was introduced by Roe in \cite{roe1988:index-thm-on-open-mfds}.

\begin{definition}\label{defn:quasi-locality}
    For a bounded linear operator $T\in \B(\ell^{2}(X))$ and $\varepsilon>0$, we say that $T$ has \textit{finite $\varepsilon$-propagation} if there exists $R>0$ such that for any $f_{1},f_{2}\in\ell^{\infty}(X)$ with $d(\supp(f_{1}),\supp(f_{2}))>R$, then $\|f_{1}Tf_{2}\|<\varepsilon\|f_{1}\|_{\infty}\|f_{2}\|_{\infty}$.
    We say that $T$ is \textit{quasi-local} if $T$ has finite $\varepsilon$-propagation for any $\varepsilon>0$.
\end{definition}

Similar to the case of Roe algebras, we form the following:

\begin{definition}[\cite{li2021quasi}]\label{defn:unif. quasilocal}
    Let $(X,d)$ be a discrete metric space with bounded geometry. The set of all quasi-local operators in $\B(\ell^{2}(X))$ forms a $C^*$-algebra, called the \emph{uniform quasi-local algebra} of $X$ and denoted by $C^*_{uq}(X)$.
\end{definition}



It is clear that finite propagation operators are quasi-local, hence taking closures, we obtain that $C^*_u(X)$ is a $C^*$-subalgebra in $C^*_{uq}(X)$. For the converse, there have been a number of progresses mostly in the recent years \cite{EngelRoughIndex, LangeRabinovich, li2019quasi, SpakulaTikuisis, SpakulaZhang}. Currently the most general answer is due to \v{S}pakula and the second-named author \cite{SpakulaZhang} as follows, while the general case is still open.

\begin{proposition}[{\cite[Theorem 3.3]{SpakulaZhang}}]\label{prop:SZ}
    Let $(X,d)$ be a metric space with bounded geometry and Property A, then the uniform Roe algebra $C^*_u(X)$ coincides with the uniform quasi-local algebra $C^*_{uq}(X)$.
\end{proposition}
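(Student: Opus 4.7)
The inclusion $C^*_u(X) \subseteq C^*_{uq}(X)$ follows straight from the definitions: a propagation-$R$ operator has finite $\varepsilon$-propagation (namely $R$) for every $\varepsilon>0$. All the content lies in the reverse inclusion, so given $T \in C^*_{uq}(X)$ of norm one and $\varepsilon>0$, I aim to produce a finite-propagation operator within $\varepsilon$ of $T$ in operator norm.

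The engine of approximation will be Property A. For each scale $R>0$ and tolerance $\delta>0$, Property A supplies a normalised, symmetric, positive-type kernel $k$ on $X$ with finite propagation $S=S(R,\delta)$ and $(R,\delta)$-variation. The induced Schur multiplier $m_k$ on $\B(\ell^2(X))$ is contractive, and $m_k(T)$ automatically inherits propagation $S$ from $k$, hence lies in $\CC_u[X]$. It remains to show that for suitable choices of $R$ and $\delta$, the error $\|T - m_k(T)\|$ can be made smaller than $\varepsilon$.

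I plan to decompose $T - m_k(T)$, whose matrix entries are $(1-k(x,y))T_{x,y}$, into a near part $U$ (entries with $d(x,y)\leq R$) and a far part $V$ (entries with $R<d(x,y)\leq S$). On the near part the $(R,\delta)$-variation gives $|U_{x,y}|\leq\delta|T_{x,y}|$; combined with propagation $\leq R$ and the bounded geometry constant $N_R=\sup_{x}\sharp B(x,R)$, a Schur test yields $\|U\|\leq \delta N_R\|T\|$, which is small once $\delta$ is chosen small relative to $R$.

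The real work is the far part $V$, which is where quasi-locality must be used, and this is the main technical obstacle. Here I plan to invoke a colouring argument afforded by bounded geometry: partition $X=X_1\sqcup\cdots\sqcup X_M$ into finitely many colour classes so that any two distinct points of the same colour are separated by more than $S$. Decomposing $V=\sum_{i,j}\chi_{X_i}V\chi_{X_j}$, each block has row and column supports at mutual distance greater than $R$ (for $i\neq j$ from the band support of $V$; for $i=j$ from the colouring, noting the diagonal is absent from $V$). Since $|1-k|\leq 2$, each block is norm-bounded by twice the corresponding matrix block of $T$ between these separated supports, which is small by quasi-locality, so summing by the triangle inequality over the $M^2$ blocks yields a bound on $\|V\|$ dictated by the quasi-locality of $T$ at scale $R$ with tolerance $\sim\varepsilon/M^2$. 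The delicate point is the circular dependence of parameters: $M$ depends on $S$, which depends on the Property-A data $(R,\delta)$, which in turn must be matched to the quasi-locality profile of $T$. Resolving this requires a careful iterative choice---first select a tentative $R$, then apply Property A to obtain $(S,M)$, then retune $R$ via quasi-locality so that $M^2$ times the quasi-local tolerance is $<\varepsilon/2$, and finally choose $\delta$ to control the near part---together with the observation that these choices can be made consistently because Property A holds at every scale.
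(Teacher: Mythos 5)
Your overall strategy (approximate $T$ by $m_k(T)$ for a Property A kernel $k$, and split the error into a near band controlled by the variation and a far band to be controlled by quasi-locality) is the natural first attempt, and your treatment of the inclusion $C^*_u(X)\subseteq C^*_{uq}(X)$ and of the near part $U$ is fine. The gap is in the far part $V$, and it is twofold. First, the separation claim for the off-diagonal colour blocks is false: for $i\neq j$ the colour classes $X_i$ and $X_j$ are interleaved throughout $X$ and in general satisfy $d(X_i,X_j)=0$, so $\chi_{X_i}V\chi_{X_j}$ is not of the form ``$f_1Tf_2$ with supports at distance $>R$'' and Definition \ref{defn:quasi-locality} says nothing about it directly. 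The fact that every individual nonzero entry of $V$ sits at a position $(x,y)$ with $d(x,y)>R$ does not make the row and column supports of a block metrically separated; conflating entry-wise with set-wise separation would make the whole theorem trivial (every band truncation of a quasi-local operator would be small, with no Property A needed), which is not the case. One can repair the decomposition by cutting each block further into single rows $\chi_{\{x\}}V\chi_{X_j\cap(B(x,S)\setminus B(x,R))}$, whose supports genuinely are $R$-separated, but the triangle inequality then forces a factor on the order of $M^2$, where $M$ is the number of colours of an $S$-separated colouring.

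Second, and more seriously, the resulting circularity in the parameters is genuine and the ``iterative retuning'' you describe does not terminate. You need the quasi-locality tolerance $\varepsilon'$ at separation scale $R$ to satisfy $\varepsilon'\lesssim\varepsilon/M^2$, where $M$ grows with the kernel's propagation $S=S(R,\delta)$; but quasi-locality only guarantees tolerance $\varepsilon'$ at separations exceeding some $R(\varepsilon')$, which grows as $\varepsilon'\to 0$. This yields a requirement of the form $R\geq f(R)$ with $f$ increasing (and typically superlinear, e.g.\ on spaces of exponential growth where $M$ grows exponentially in $S$), which need not admit a solution: re-choosing $R$ changes $S$, hence $M$, hence the required $\varepsilon'$, hence the required $R$, and the loop does not close. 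This is precisely why the proof of \cite[Theorem 3.3]{SpakulaZhang} (mirrored in the proof of Theorem \ref{Main-theorem} in this paper) takes a different route: one first shows that a quasi-local $T$ lies in $Commut_{X}(L,\varepsilon/48)$ for some $L$ depending only on $T$ and $\varepsilon$, then invokes the concentration lemma (the analogue of Lemma \ref{lem:Lemma 5.2 in SZ20}) to produce, \emph{before any kernel is chosen}, a diameter bound $s$ depending only on $(\varepsilon,L,\|T\|)$ and the Property A and bounded geometry data, such that the error operator $m_{\widetilde{k}}(T)-T$ nearly attains its norm on a unit vector supported in a set of diameter at most $s$. Only then is the kernel chosen, with variation scale larger than $s+1/L$, so that every matrix entry of $1-\widetilde{k}$ that such a concentrated vector can see is small. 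This reversal of quantifiers is the missing idea; without it the near/far band decomposition cannot be closed.
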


\subsection{Examples}\label{ssec:pre ex}

We collect several classic examples of groupoids.

\subsubsection{Discrete groups}\label{sssec:groups}
Let $\Gamma$ be a discrete group. Then $\Gamma$ is a locally compact \'{e}tale groupoid whose unit space consists of a single point $1_{\Gamma}$, \emph{i.e.}, the unit of $\Gamma$. Obviously, $\Gamma$ is amenable as a groupoid if and only if $\Gamma$ is amenable as a group.

In this case, the $\ast$-algebra $C_c(\Gamma)$ is just the group algebra $\CC[\Gamma]$ and the left regular representation at $1_\Gamma$ defined in (\ref{EQ:reduced algebra defn}) coincides with the left regular representation of the group $\Gamma$. Taking completion, the reduced groupoid $C^*$-algebra of $\Gamma$ is the same as the reduced group $C^*$-algebra $C^*_r(\Gamma)$. We also record that the Hilbert $\CC$-module $L^2(\Gamma)$ is nothing but the Hilbert space $\ell^2(\Gamma)$, and hence $\L(L^2(\Gamma)) = \B(\ell^2(\Gamma))$.

\subsubsection{Pair groupoids}\label{sssec:pair groupoids}
Let $X$ be a set. The \emph{pair groupoid} of $X$ is $X \times X$ as a set, whose unit space is $\{(x,x) \in X \times X: x\in X\}$ and identified with $X$ for simplicity. The source map is the projection onto the second coordinate and the range map is the projection onto the first coordinate. The composition is given by $(x,y) \cdot (y,z)=(x,z)$ for $x,y,z \in X$. When $X$ is a discrete Hausdorff space, then $X \times X$ is a locally compact \'{e}tale groupoid. It can be checked easily that the pair groupoid $X \times X$ is always amenable.

Each $f \in C_c(X \times X)$ can be regarded as an $X$-by-$X$ matrix with finitely many non-zero matrix entries, which induces a map
\begin{equation}\label{EQ:theta}
    \theta: C_c(X \times X) \longrightarrow \B(\ell^2(X))
\end{equation}
by setting the matrix coefficients $\theta(f)_{x,y}:=f(x,y)$ for $x,y \in X$. It is obvious that $\theta$ is a $\ast$-monomorphism, and induces a $C^*$-isomorphism
\begin{equation}\label{EQ:Theta pair}
    \Theta: C^*_r(X \times X) \cong \K(\ell^2(X)).
\end{equation}

\subsubsection{Discrete metric spaces}\label{ssec:coarse groupoids pre}
This example is the motivation for the entire work.

Let $(X,d)$ be a discrete metric space with bounded geometry. The coarse groupoid $G(X)$ on $X$ was introduced by Skandalis, Tu, and Yu in \cite{STY-article-2002a} (see also \cite[Chapter 10]{R2003-book-a}) to relate coarse geometry to the theory of
groupoids. As a topological space,
\[
    G(X):=\bigcup_{r>0}{\overline{E_r}}^{\beta (X \times X)} \subseteq \beta (X \times X),
\]
where $E_r:=\{(x,y) \in X\times X: d(x,y) \leq r\}$ and $\beta (X \times X)$ is the Stone-\v{C}ech compactification of $X \times X$. Note that $X \times X$ is the pair groupoid with source and range maps $\s(x,y)=y$ and $\r(x,y)=x$ (see Section \ref{sssec:pair groupoids}). These maps extend to maps $\beta(X \times X) \to \beta X$, and hence to maps $G(X) \to \beta X$, still denoted by $\r$ and $\s$.

Consider the map $(\r,\s): G(X) \to \beta X \times \beta X$. It was shown in \cite[Lemma 2.7]{STY-article-2002a} that $(\r,\s)$ is injective, and hence $G(X)$ can be endowed with a groupoid structure induced by the pair groupoid $\beta X \times \beta X$, called the \emph{coarse groupoid} of $X$. It was also shown in \cite[Proposition 3.2]{STY-article-2002a} that the coarse groupoid $G(X)$ is \'{e}tale, locally compact and principal. Clearly, the unit space of $G(X)$ is $\beta X$. We denote by $\partial_\beta X:=\beta X \setminus X$ the Stone-\v{C}ech boundary of $X$. It was shown in \cite[Theorem 5.3]{STY-article-2002a} that the coarse groupoid $G(X)$ is amenable if and only if $X$ has Property A.

Each $f \in C_c(G(X))$ is a continuous function supported in $\overline{E_r}$ for some $r>0$; equivalently, we can interpret $f$ as a bounded continuous function on $E_r$.  Define
\[
    \theta: C_c(G(X)) \longrightarrow \B(\ell^2(X))
\]
by setting the matrix coefficients $\theta(f)_{x,y}:=f(x,y)$ for $x,y\in X$, which extends the map defined in (\ref{EQ:theta}). Hence we take the liberty of using the same notation. It is clear that $\theta$ provides a $\ast$-isomorphism from $C_c(G(X))$ to $\mathbb{C}_u[X]$. Moreover, it was shown in \cite[Proposition 10.29]{R2003-book-a} that $\theta$ extends to a $C^*$-isomorphism
\begin{equation}\label{EQ:Theta for coarse groupoid}
    \Theta: C^*_r(G(X)) \longrightarrow C^*_u(X).
\end{equation}
This map extends the isomorphism in (\ref{EQ:Theta pair}), and hence again we abuse the notation.

\subsubsection{Group actions}\label{sssec:transformation}
Let $X$ be a locally compact space and $\Gamma$ be a countable discrete group with an action $\Gamma\curvearrowright X$ by homeomorphisms. The \textit{transformation groupoid} $X\rtimes \Gamma$ is $X \times \Gamma$ as a topological space, whose unit space is $X \times \{1_\Gamma\}$ and identified with $X$ for simplicity. The groupoid structure is given by $\s((x,\gamma))=\gamma^{-1}x$, $\r((x,\gamma))=x$ and $(x,\gamma)(\gamma^{-1}x,\gamma')=(x, \gamma\gamma')$. Then $X\rtimes \Gamma$ is a locally compact \'{e}tale groupoid. It is not hard to check that $X \rtimes \Gamma$ is amenable if and only if the action is amenable (see, \emph{e.g.}, \cite[Chapter 4]{BrownOzawa}).

Denote by $C_c(\Gamma,C_0(X))$ the $\ast$-algebra consisting of all finitely supported functions on $\Gamma$ with values in $C_0(X)$ (see \cite[Section 4.1]{BrownOzawa} for details). The \emph{reduced crossed product} $C_0(X)\rtimes_r \Gamma$ is its norm closure with respect to a regular representation of $C_c(\Gamma,C_0(X))$ (see \cite[Definition 4.1.4]{BrownOzawa}). We have the following $\ast$-isomorphism:
\[
    \theta: C_c(X \rtimes_r \Gamma) \longrightarrow C_c(\Gamma, C_c(X))
\]
given by
\[
    F \mapsto \sum_{\gamma \in \Gamma} f_\gamma \gamma, \quad \mbox{where~} f_\gamma \in C_c(X) \mbox{~is~defined~by~} f_\gamma(x)=F(x,\gamma).
\]
It is known that $\theta$ can be extended to a $C^*$-isomorphism:
\begin{equation}\label{iso reduced product}
    \Theta: C^*_r(X \rtimes \Gamma) \longrightarrow C_0(X) \rtimes_r \Gamma.
\end{equation}
Finally, we remark that $L^2(X \rtimes \Gamma) \cong \ell^2(\Gamma) \otimes C_0(X)$.

\section{Characterisations for operators in $\L(L^2(\G))$ and equivariance}\label{sec.charForOperator}


In this section, we aim to provide a detailed and practical approach to describe operators in $\L(L^2(\G))$ and discuss the notion of equivariance, which will play an important role in the sequel.
We will start with characterisations for vectors in $L^2(\G)$, and then move to operators.

Throughout this section, we assume that $\G$ is a locally compact \'{e}tale groupoid with unit space $\Gz$.

\subsection{Vectors in $L^2(\G)$}\label{ssec.fibreSpace}

Let us start by analysing vectors in the Hilbert $C_0(\Gz)$-module $L^2(\G)$ in terms of their ``slices''.

Recall from Section \ref{ssec.groupoidAlgebra} that vectors in $L^2(\G)$ can be regarded as continuous functions on $\G$ vanishing at infinity. Hence for each $x\in \Gz$, we have the restriction map
\[
    \phi_x: L^2(\G) \longrightarrow \ell^2(\G_x) \quad \text{by} \quad \phi_x(\xi)=\xi|_{\G_x}.
\]

\begin{lemma}\label{lem:surjectivity for phi_x}
    With the notation as above, the map $\phi_x$ is surjective.
\end{lemma}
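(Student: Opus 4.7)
\textbf{Proof proposal for Lemma \ref{lem:surjectivity for phi_x}.}

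The plan is to start from an arbitrary $\eta \in \ell^2(\G_x)$ and build a lift $\xi \in L^2(\G)$ as an infinite sum of compactly supported functions each localised around a single point of $\G_x$. The étale hypothesis enters twice: first to ensure that $\G_x$ is a discrete (closed) subset of $\G$, so that points can be separated by small bisection neighbourhoods, and second through the bisection property, which gives precisely the book-keeping we need to bound $\|\xi\|_{L^2(\G)}$ by $\|\eta\|_{\ell^2(\G_x)}$.

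Concretely, write $\eta = \sum_n c_n\delta_{\gamma_n}$ where $\{\gamma_n\}$ is an enumeration of $\supp(\eta)\subseteq \G_x$ (at most countable). I would construct, inductively in $n$, a precompact open bisection $V_n\subseteq \G$ containing $\gamma_n$ such that:
\begin{enumerate}
    \item $\overline{V_n}\cap \G_x=\{\gamma_n\}$;
    \item $V_n \cap V_k=\emptyset$ for all $k<n$.
\end{enumerate}
The inductive step is possible because $\gamma_n$ lies in the open set $\G\setminus\bigl(\overline{V_1}\cup\cdots\cup\overline{V_{n-1}}\bigr)$ (by property (1) of the earlier $V_k$'s together with $\gamma_n\ne\gamma_k$), and because one can then shrink any bisection neighbourhood of $\gamma_n$ inside that open set: since $\G_x$ is discrete in $\G$ and $\overline{V_n}$ is compact, $\overline{V_n}\cap\G_x$ is finite, and a further Hausdorff-plus-local-compactness shrinking separates $\gamma_n$ from the finitely many other intersection points.

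Once the $V_n$'s are fixed, Urysohn's lemma gives $f_n\in C_c(\G)$ with $\supp(f_n)\subseteq V_n$, $0\le f_n\le 1$, and $f_n(\gamma_n)=1$. Set $\xi_N:=\sum_{n=1}^{N}c_n f_n\in C_c(\G)$. For $M<N$, the disjointness of the $V_n$'s and the bisection property (each $V_n$ meets each fibre $\G_y$ in at most one point) give, for every $y\in\Gz$,
\[
    \sum_{\gamma\in\G_y}|\xi_N(\gamma)-\xi_M(\gamma)|^2 \;=\; \sum_{n=M+1}^{N}|c_n|^2\,\bigl|f_n(\gamma_n^{y})\bigr|^2 \;\le\; \sum_{n=M+1}^{N}|c_n|^2,
\]
where $\gamma_n^{y}$ is the unique element of $V_n\cap\G_y$ when it exists. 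Taking suprema over $y$ shows $\{\xi_N\}$ is Cauchy, so it converges to some $\xi\in L^2(\G)$. Because $L^2(\G)\hookrightarrow C_0(\G)$ continuously, $\xi_N\to\xi$ pointwise on $\G$; evaluating at each $\gamma\in\G_x$ (using $V_n\cap\G_x=\{\gamma_n\}$) yields $\xi(\gamma_m)=c_m$ and $\xi(\gamma)=0$ for $\gamma\in\G_x\setminus\{\gamma_n\}$, i.e.\ $\phi_x(\xi)=\eta$.

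The only delicate point is the inductive construction of the $V_n$'s with property (1): one has to invoke local compactness to replace a putative bisection by one whose compact closure still meets $\G_x$ only in $\gamma_n$. The rest of the argument is then a routine Cauchy estimate exploiting the bisection property, which is precisely the mechanism by which étale-groupoid fibrewise $\ell^2$-norms are controlled by the Hilbert module norm on $L^2(\G)$.
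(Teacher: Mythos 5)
Your proof is correct. It differs from the paper's in its organisation: the paper first lifts only \emph{finitely} supported $\xi_x\in C_c(\G_x)$, by choosing open bisections $A_1,\dots,A_N$ through the finitely many support points all with the same source image $U\ni x$, multiplying by a single cutoff $\rho\in C_c(U)$ composed with $\s$, and then disposes of the general case by a density-plus-norm-control argument ($C_c(\G_x)$ is dense in $\ell^2(\G_x)$ and the explicit lift does not increase the norm). You instead treat an arbitrary $\eta\in\ell^2(\G_x)$ in one stroke, building an infinite series of bumps supported on pairwise disjoint precompact bisections $V_n$ with $\overline{V_n}\cap\G_x=\{\gamma_n\}$, and proving convergence by a Cauchy estimate. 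Both arguments rest on the same mechanism --- a bisection meets each fibre $\G_y$ in at most one point, so the fibrewise $\ell^2$-norm of the lift is dominated by that of the data --- but your version makes the quantitative norm control explicit inside the series, whereas the paper's version keeps the explicit construction finite and simple at the price of an extra approximation step (and of the mild bookkeeping that the $A_i$ should be taken pairwise disjoint, which the paper leaves implicit). The extra work in your approach is concentrated in the inductive shrinking of the $V_n$ so that their closures avoid the rest of the (closed, discrete) fibre $\G_x$ and each other; that step is carried out correctly using local compactness and the finiteness of $\overline{V_n}\cap\G_x$.
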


\begin{proof}
    Given $\xi_x \in C_c(\G_{x})$, we assume that $\supp(\xi_x) = \{ \gamma_{1},\ldots,\gamma_{N} \}$. Since $\G$ is \'{e}tale, there exists an open subset $U$ in $\Gz$ containing $x$ and an open bisection $A_i$ containing $\gamma_i$ for each $i=1,2,\ldots,N$ such that $\s(A_i)=U$.

    Take a function $\rho\in C_c(U) \subseteq C_c(\G)$ with range in $[0,1]$ and value $1$ at the given point $x$. Define a function $\xi: \G \to \CC$ as follows: for $\gamma \in \G$ we set
    \[
        \xi(\gamma) \coloneqq \begin{cases}
            \xi_x(\gamma_{i})\rho\left(\s(\gamma)\right), & \gamma\in A_{i} \text{ for } i=1,2,\ldots,N; \\
            0,                                            & \text{otherwise}.
        \end{cases}
    \]
    It is clear that $\xi \in C_c(\G)$ and $\phi_{x}(\xi) = \xi_x$, which implies that $\phi_{x}(C_{c}(\G)) = C_{c}(\G_{x})$. Since $\phi_x$ is contractive and $C_c(\G_x)$ is dense in $\ell^2(\G_x)$, we conclude the proof.
\end{proof}

Collecting $\phi_x$ together, we obtain the following \emph{slicing map}:
\[
    \phi: L^2(\G) \longrightarrow \prod_{x\in\Gz} \ell^2(\G_{x}) \quad \text{by} \quad \phi(\xi) \coloneqq \left(\phi_{x}(\xi)\right)_{x\in\Gz},
\]
which is clearly an isometric embedding. Here $\prod_{x\in\Gz} \ell^2(\G_{x})$ denotes the Banach space consisting of uniformly norm-bounded families $(\xi_x)_{x\in \Gz}$ with norm $\|(\xi_x)_x\|:=\sup_{x\in \Gz} \|\xi_x\|$.

The following result characterises elements in the image of $\phi$, which essentially comes from \cite{D1977-book-a, DD_1963_article_a, Takahashi1979}. For completeness, here we provide a proof.

\begin{lemma}\label{lem.charOfL2G}
    With the same notation as above, let $(\xi_{x})_{x\in\Gz}\in\prod_{x\in\Gz} \ell^2(\G_{x})$ be a family of vectors.
    Then the following are equivalent:
    \begin{enumerate}
        \item\label{item1.lem.charOfL2G} $\left(\xi_{x}\right)_{x\in\Gz}\in \phi\left(L^2(\G)\right)$;
        \item\label{item2.lem.charOfL2G} the function $x\mapsto \| \xi_x - \eta|_{\G_{x}} \|$ belongs to $C_0(\Gz)$ for all $\eta\in C_{c}(\G)$;
        \item\label{item3.lem.charOfL2G} the function $x\mapsto \left\langle \xi_x , \eta|_{\G_{x}} \right\rangle$ belongs to $C_0(\Gz)$ for all $\eta\in C_{c}(\G)$, and the function $x\mapsto \|\xi_x\|$ belongs to $C_0(\Gz)$.
    \end{enumerate}
\end{lemma}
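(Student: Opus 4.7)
My plan is to prove the cyclic chain $(1) \Rightarrow (2) \Rightarrow (3) \Rightarrow (1)$. The central tool is that the slicing map $\phi$ is isometric: for $\xi \in C_c(\G)$ one has
\[
\|\xi\|_{L^2(\G)}^2 = \|\langle\xi,\xi\rangle\|_\infty = \sup_{x\in\Gz} \|\phi_x(\xi)\|^2,
\]
so to establish (1) it will suffice to approximate $(\xi_x)_x$ uniformly over $x$ by $\phi(\eta_n)$ for suitable $\eta_n \in C_c(\G)$; the sequence $(\eta_n)$ will then automatically be Cauchy in $L^2(\G)$, and its limit will furnish the required preimage.

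The implications not involving (1) are routine. For $(1)\Rightarrow(2)$, with $\xi \in L^2(\G)$ satisfying $\phi(\xi)=(\xi_x)_x$ and $\eta \in C_c(\G)$, the $C_0(\Gz)$-valued inner product gives $\langle \xi - \eta, \xi - \eta\rangle(x) = \|\xi_x - \eta|_{\G_x}\|^2$; since taking square roots preserves $C_0(\Gz)$, this yields (2). For $(2)\Leftrightarrow(3)$ I will use the polarization identity
\[
\|\xi_x - t\eta|_{\G_x}\|^2 = \|\xi_x\|^2 - 2\Re\bigl(t\langle \xi_x,\eta|_{\G_x}\rangle\bigr) + |t|^2 \|\eta|_{\G_x}\|^2,
\]
combined with the fact that $\|\eta|_{\G_x}\|^2 = \langle \eta, \eta\rangle(x) \in C_0(\Gz)$: setting $\eta = 0$ handles $\|\xi_x\|$, while $t = 1$ and $t = i$ extract the real and imaginary parts of $\langle \xi_x, \eta|_{\G_x}\rangle$ as $C_0(\Gz)$-functions.

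The main work is $(3)\Rightarrow(1)$. Fix $\varepsilon > 0$. Since $x \mapsto \|\xi_x\| \in C_0(\Gz)$, pick a compact $K \subseteq \Gz$ with $\|\xi_x\| < \varepsilon$ outside $K$. For each $x_0 \in K$, Lemma \ref{lem:surjectivity for phi_x} supplies $\eta_{x_0} \in C_c(\G)$ with $\|\xi_{x_0} - \eta_{x_0}|_{\G_{x_0}}\| < \varepsilon$, and the implication $(3)\Rightarrow(2)$ already established yields an open neighbourhood $U_{x_0}$ of $x_0$ on which $\|\xi_x - \eta_{x_0}|_{\G_x}\| < 2\varepsilon$. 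I will cover $K$ by finitely many $U_1, \ldots, U_N$ and choose a partition of unity $\rho_1, \ldots, \rho_N \in C_c(\Gz)$ with $\supp \rho_i \subseteq U_i$, $\sum_i \rho_i \leq 1$ on $\Gz$, and $\sum_i \rho_i = 1$ on $K$; then set $\eta := \sum_i \eta_i \rho_i \in C_c(\G)$ via the right $C_0(\Gz)$-action, so that $\eta|_{\G_x} = \sum_i \rho_i(x)\,\eta_i|_{\G_x}$. The decomposition
\[
\xi_x - \eta|_{\G_x} = \Bigl(1 - \sum_i \rho_i(x)\Bigr)\xi_x + \sum_i \rho_i(x)\bigl(\xi_x - \eta_i|_{\G_x}\bigr),
\]
together with the observations that $\rho_i(x) > 0$ forces $x \in U_i$ (whence the $2\varepsilon$-bound) and that $\|\xi_x\| < \varepsilon$ outside $K$, yields $\sup_{x \in \Gz} \|\xi_x - \eta|_{\G_x}\| < 3\varepsilon$. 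Iterating with $\varepsilon \to 0$ produces the desired Cauchy sequence.

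I expect the main obstacle to be precisely this gluing: one must prevent the local approximants $\eta_i|_{\G_x}$ from accumulating mass on fibres over points outside $K$, where $\xi_x$ itself is small. The resolution is the right $C_0(\Gz)$-action $(\eta_i \rho_i)(\gamma) = \eta_i(\gamma)\rho_i(\s(\gamma))$, which confines each $\eta_i$'s contribution to fibres over $\supp \rho_i \subseteq U_i$, exactly where the prearranged $2\varepsilon$-bound holds.
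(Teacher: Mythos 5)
Your proof is correct and follows essentially the same route as the paper's: the equivalence of (2) and (3) via polarization, and the key implication back to (1) via the compact set where $\|\xi_x\|$ is small, the local approximants supplied by Lemma \ref{lem:surjectivity for phi_x}, and a partition-of-unity gluing using the right $C_0(\Gz)$-action. The only cosmetic differences are that you organise the argument as a cycle $(1)\Rightarrow(2)\Rightarrow(3)\Rightarrow(1)$ (routing the gluing step through the already-proved $(3)\Rightarrow(2)$) and that you derive $(1)\Rightarrow(2)$ directly from the $C_0(\Gz)$-valued inner product rather than by approximation from $C_c(\G)$.
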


\begin{proof}
    ``\ref{item1.lem.charOfL2G} $\Rightarrow$ \ref{item2.lem.charOfL2G}'': Let $\xi\in L^2(\G)$ and we consider $(\xi_x)_x = \phi(\xi)$. Note that condition \ref{item2.lem.charOfL2G} clearly holds when $\xi \in C_c(\G)$. Since $C_c(\G)$ is dense in $L^2(\G)$ with respect to the norm $\| \xi \| = \sup_{x\in\Gz}\| \xi|_{\G_{x}} \|$, condition \ref{item2.lem.charOfL2G} holds for general $\xi$ by a standard approximating argument.

    \noindent``\ref{item2.lem.charOfL2G} $\Rightarrow$ \ref{item1.lem.charOfL2G}": Taking $\eta=0$ in condition  \ref{item2.lem.charOfL2G}, we obtain that for any $\varepsilon>0$ there exists a compact subset $K \subseteq \Gz$ such that $\sup_{x\in \Gz \setminus K}\| \xi_{x}\|<\varepsilon$. For $x \in K$, it follows from Lemma \ref{lem:surjectivity for phi_x} that there exists $\eta^x\in C_{c}(\G)$ such that $\|\xi_x - \eta^x|_{\G_x}\| < \varepsilon$. By condition \ref{item2.lem.charOfL2G}, there exists an open neighbourhood $U_x$ of $x$ such that for $y \in U_x$ we have $\|\xi_y - \eta^x|_{\G_y}\| < \varepsilon$.

    Since $K$ is compact, there exist $x_1, \ldots, x_N \in K$ such that $K\subset \bigcup_{i=1}^{N} U_{x_i}$. Consider the open cover $\mathcal{U}\coloneqq \{\Gz \setminus K, U_{x_1}, \ldots, U_{x_N}\}$ of $\Gz$ and take a partition of unity $\{\rho_0, \rho_1, \ldots, \rho_N\}$ subordinate to $\mathcal{U}$ such that $\supp(\rho_0) \subseteq \Gz \setminus K$ and $\supp(\rho_i) \subseteq U_{x_i}$ for $i=1,\ldots, N$. Define
    \[
        \eta\coloneqq \sum_{i=1}^N \rho_i \cdot \eta^{x_i}.
    \]
    It is easy to check that $\eta \in C_c(\G)$ and $\sup_{x\in \Gz}\|\xi_x - \eta|_{\G_x}\|<\varepsilon$. Hence we conclude condition \ref{item1.lem.charOfL2G}.

    Finally, notice that ``\ref{item2.lem.charOfL2G} $\Rightarrow$ \ref{item3.lem.charOfL2G}" follows directly from the polarization identity, and ``\ref{item3.lem.charOfL2G} $\Rightarrow$ \ref{item2.lem.charOfL2G}" holds trivially. Hence we conclude the proof.
\end{proof}

\begin{remark}
    For those who are familiar with the notion of Hilbert bundles, we remark that the image of $\phi$ generates a continuous field of Hilbert spaces $\mathfrak{S}$ over $\Gz$ (see \cite[Proposition 10.2.3]{D1977-book-a}), and Lemma \ref{lem.charOfL2G} is designed to show that $L^2(\G)$ coincides with elements in $\mathfrak{S}$ vanishing at infinity. Furthermore, it follows from \cite[Theorem II.13.18]{FD_1988_book_a} that there is a unique topology on $\bigsqcup_{x\in\Gz} \ell^2(\G_{x})$ which makes it into a Hilbert bundle over $\Gz$ such that $L^2(\G)$ are continuous sections.
\end{remark}

\subsection{Operators in $\L(L^2(\G))$}\label{ssec:operators in LL2G}

Having established the characterisation for vectors in $L^2(\G)$ (see Lemma \ref{lem.charOfL2G}), now we move to the case of operators in $\L(L^2(\G))$.

First note that similar to the slicing map $\phi$, we can construct a slicing map on $\L(L^2(\G))$. More precisely, for $T\in\L(L^{2}(\G))$ and $x\in \Gz$ we define an operator $\Phi_x(T)$ on $\ell^2(\G_x)$ as follows:
\[
    \Phi_x(T)(\xi|_{\G_{x}}) \coloneqq T(\xi)|_{\G_{x}} \quad \text{for} \quad \xi\in L^2(\G).
\]
It follows from Proposition \ref{prop.Proposition 1.2} and Lemma \ref{lem:surjectivity for phi_x} that $\Phi_x(T)$ is well-defined and belongs to $\B(\ell^2(\G_x))$, called the \emph{slice of $T$ at $x$}. Collecting them together, we define the \emph{slicing map}:
\begin{equation}\label{eq.PhiMap}
    \Phi\colon \L(L^2(\G)) \longrightarrow \prod_{x\in\Gz}\B(\ell^2(\G_{x})) \quad \text{by} \quad \Phi(T) \coloneqq (\Phi_x(T))_{x\in\Gz},
\end{equation}
where $\prod_{x\in\Gz}\B(\ell^2(\G_{x}))$ denotes the direct product of $C^*$-algebras. It is easy to see that $\Phi$ is a $C^*$-monomorphism, and hence an isometric embedding.

\begin{remark}\label{rem:tensor product 1}
As pointed out by the anonymous referee, the maps $\phi_{x}$ and $\Phi_{x}$ constructed above can also be defined using the language of interior tensor products of Hilbert $C^*$-modules (recalled in Section \ref{ssec.groupoidAlgebra}). More precisely, for $x\in\Gz$ we consider the evaluation map $\mathrm{ev}_{x}\colon C_{0}(\Gz)\rightarrow\mathbb{C}$ given by $\mathrm{ev}_{x}(f)=f(x)$ for $f\in C_{0}(\Gz)$. Then the interior tensor product $L^{2}(\G)\otimes_{\mathrm{ev}_{x}} \mathbb{C}$ is a Hilbert space and isomorphic to $\ell^{2}(\G_{x})$. It is not hard to verify that for $\xi\in L^{2}(\G)$ and $T \in \L(L^2(\G))$, the vector $\phi_{x}(\xi)$ is nothing but the image of $\xi\otimes 1$ under this isomorphism, and the operator $\Phi_{x}(T)$ can be identified with $T\otimes_{\mathrm{ev}_{x}} 1$. However, we stick to the original definitions for $\phi_{x}$ and $\Phi_{x}$ since the tools of interior tensor products and related theories seem only responsible to simplify the definitions rather than the proofs.
\end{remark}


The rest of this subsection is devoted to establishing characterisations for the image of $\Phi$. Our philosophy is that although operators in $\L(L^2(\G))$ are hard to describe, the family of their slices (\emph{i.e.}, the image of $\Phi$) are usually more achievable and easy to handle. This idea will play a crucial role when we study the quasi-locality for groupoids later.

Let us start with the following elementary result:

\begin{lemma}\label{lem.compactToL2}
    Let $(T_{x})_{x\in\Gz}\in\prod_{x\in\Gz}\mathscr{B}\left(\ell^2(\G_{x})\right)$ be a family of operators.
    Then $\left(T_{x}\right)_{x\in\Gz}$ belongs to the image of $\Phi$ \emph{if and only if} for any $\xi\in C_{c}(\G)$, the families $(T_{x}(\xi|_{\G_{x}}))_{x\in\Gz}$ and $(T_{x}^{*}(\xi|_{\G_{x}}))_{x\in\Gz}$ belong to $\phi\left(L^2(\G)\right)$.
\end{lemma}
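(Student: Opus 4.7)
The plan is to exploit the injectivity of the slicing map $\phi$ on vectors (established in Lemma~\ref{lem.charOfL2G}) to reverse-engineer an operator on $L^2(\G)$ from a compatible family of slices. The forward implication is essentially a restatement of definitions: if $(T_{x})_{x\in\Gz} = \Phi(T)$ for some $T\in\L(L^2(\G))$, then for $\xi \in C_c(\G) \subseteq L^2(\G)$ we have $T(\xi) \in L^2(\G)$ and by definition $\Phi_x(T)(\xi|_{\G_x}) = T(\xi)|_{\G_x}$; so the family $(T_x(\xi|_{\G_x}))_x$ equals $\phi(T\xi)$ and therefore lies in $\phi(L^2(\G))$. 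Since $T^* \in \L(L^2(\G))$ as well, the adjoint statement follows in the same way.

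For the converse I would construct the desired $T \in \L(L^2(\G))$ directly. Since $(T_x)_x$ lies in the direct product of $C^*$-algebras, there is a uniform bound $C := \sup_{x\in\Gz}\|T_x\| < \infty$. On $C_c(\G)$, define $T\xi$ to be the unique vector in $L^2(\G)$ with $\phi(T\xi) = (T_x(\xi|_{\G_x}))_{x\in\Gz}$, which exists by hypothesis and is unique by injectivity of $\phi$; linearity in $\xi$ is inherited from the $T_x$'s. Boundedness is immediate from
\[
\|T\xi\|_{L^2(\G)} \;=\; \sup_{x\in\Gz} \|T_x(\xi|_{\G_x})\| \;\leq\; C \sup_{x\in\Gz}\|\xi|_{\G_x}\| \;=\; C\|\xi\|_{L^2(\G)},
\]
so $T$ extends uniquely to a bounded $\CC$-linear operator on $L^2(\G)$, still denoted $T$.

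It remains to verify that $T$ is $C_0(\Gz)$-linear and adjointable. For the former, note that $(\xi f)|_{\G_x} = f(x)\,\xi|_{\G_x}$ for $\xi \in C_c(\G)$ and $f \in C_0(\Gz)$, so $T_x((\xi f)|_{\G_x}) = f(x)\,T_x(\xi|_{\G_x}) = f(x)\,(T\xi)|_{\G_x} = ((T\xi)f)|_{\G_x}$; by injectivity of $\phi$, $T(\xi f) = (T\xi)f$, and this extends to all of $L^2(\G)$ by density. For adjointability, apply the same construction to the family $(T_x^*)_x$ (which also satisfies the hypothesis) to obtain a bounded operator $S$ on $L^2(\G)$; for $\xi,\eta \in C_c(\G)$ the computation
\[
\langle T\xi, \eta\rangle(x) \;=\; \langle T_x(\xi|_{\G_x}), \eta|_{\G_x}\rangle \;=\; \langle \xi|_{\G_x}, T_x^*(\eta|_{\G_x})\rangle \;=\; \langle \xi, S\eta\rangle(x)
\]
shows that $T^* = S$, so $T \in \L(L^2(\G))$. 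The identity $\Phi(T) = (T_x)_x$ holds on $C_c(\G)$ by construction and then on all of $L^2(\G)$ by continuity of both sides. I expect the only real subtlety to be confirming that the hypothesis indeed provides enough information to define $T$ simultaneously on every fibre in a coherent way; this is precisely where Lemma~\ref{lem.charOfL2G} and the surjectivity of each $\phi_x$ (Lemma~\ref{lem:surjectivity for phi_x}) enter, and once invoked, the remaining verifications are purely formal.
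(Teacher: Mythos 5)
Your proposal is correct and follows essentially the same route as the paper: define $T$ on $C_c(\G)$ via $\phi^{-1}$ applied to the family $(T_x(\xi|_{\G_x}))_x$, extend by the uniform bound, build $S$ from the adjoint family, and verify $\langle \eta, T\xi\rangle = \langle S\eta,\xi\rangle$ on $C_c(\G)$ to conclude adjointability. Your extra check of $C_0(\Gz)$-linearity is harmless but redundant, since adjointable maps are automatically module maps (as noted after Definition~\ref{defn:adjointable operators}).
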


\begin{proof}
    The necessity is clear, and hence we only focus on the sufficiency. First define a map $T: C_c(\G) \to L^2(\G)$ by
    \[
        T(\xi) \coloneqq \phi^{-1}\big((T_x(\xi|_{\G_x}))_x\big) \quad \text{for} \quad \xi \in C_c(\G),
    \]
    which is well-defined by assumption. It follows directly that $\|T(\xi)\| \leq \|(T_x)_x\| \cdot \|\xi\|$, which implies that $T$ can be extended to a bounded linear map $L^2(\G) \to L^2(\G)$, still denoted by $T$. Similarly, we have a bounded linear map $S: L^2(\G) \to L^2(\G)$ such that $S(\eta) = \phi^{-1}\big((T^*_x(\eta|_{\G_x}))_x\big)$ for $\eta \in C_c(\G)$. For $\xi, \eta \in C_c(\G)$, it is clear that $\langle\eta,  T(\xi) \rangle = \langle S(\eta),  \xi \rangle$. By continuity, we obtain that $S=T^*$ and hence conclude the proof.
\end{proof}

Combining Lemma \ref{lem.charOfL2G} with Lemma \ref{lem.compactToL2}, we obtain the following:

\begin{corollary}\label{lem.charOfBL2G1}
    For $\left(T_{x}\right)_{x\in\Gz}\in\prod_{x\in\Gz}\mathscr{B}\left(\ell^2(\G_{x})\right)$, the following are equivalent:
    \begin{enumerate}
        \item\label{item1.lem.charOfBL2G1} $\left(T_{x}\right)_{x\in\Gz}\in \Phi\left(\mathscr{L}\left(L^2(\G)\right)\right)$;
        \item\label{item2.lem.charOfBL2G1} the functions $x\mapsto \| T_{x}(\xi|_{\G_{x}}) - \eta|_{\G_{x}} \|$ and $x\mapsto \| T^*_{x}(\xi|_{\G_{x}}) - \eta|_{\G_{x}} \|$ belong to $C_0(\Gz)$ for all $\xi,\eta\in C_{c}(\G)$;
        \item\label{item3.lem.charOfBL2G1} the functions $x\mapsto \left\langle \eta|_{\G_{x}} , T_{x}(\xi|_{\G_{x}}) \right\rangle$, $x\mapsto \| T_{x}(\xi|_{\G_{x}}) \|$ and $x\mapsto \| T^*_{x}(\xi|_{\G_{x}}) \|$ belong to $C_0(\Gz)$ for all $\xi,\eta\in C_{c}(\G)$.
    \end{enumerate}
\end{corollary}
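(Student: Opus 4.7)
The plan is to deduce the corollary directly from Lemmas \ref{lem.compactToL2} and \ref{lem.charOfL2G}. By Lemma \ref{lem.compactToL2}, condition \ref{item1.lem.charOfBL2G1} is equivalent to saying that, for every $\xi \in C_c(\G)$, both families $(T_x(\xi|_{\G_x}))_{x\in\Gz}$ and $(T^*_x(\xi|_{\G_x}))_{x\in\Gz}$ lie in $\phi(L^2(\G))$. Hence it suffices to feed these two families into the two equivalent membership criteria for $\phi(L^2(\G))$ supplied by Lemma \ref{lem.charOfL2G}, and to verify that the resulting scalar conditions match \ref{item2.lem.charOfBL2G1} and \ref{item3.lem.charOfBL2G1}, respectively.

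For \ref{item1.lem.charOfBL2G1} $\Leftrightarrow$ \ref{item2.lem.charOfBL2G1}, I would apply Lemma \ref{lem.charOfL2G}\ref{item2.lem.charOfL2G} to the two families separately: the condition on $(T_x(\xi|_{\G_x}))_x$ produces the first function in \ref{item2.lem.charOfBL2G1}, and the condition on $(T^*_x(\xi|_{\G_x}))_x$ produces the second. This is an immediate translation requiring no extra argument. For \ref{item1.lem.charOfBL2G1} $\Leftrightarrow$ \ref{item3.lem.charOfBL2G1}, I would apply Lemma \ref{lem.charOfL2G}\ref{item3.lem.charOfL2G} instead. Applied to $(T_x(\xi|_{\G_x}))_x$, it yields that $x \mapsto \|T_x(\xi|_{\G_x})\|$ and $x \mapsto \langle T_x(\xi|_{\G_x}), \eta|_{\G_x}\rangle$ lie in $C_0(\Gz)$ for every $\eta \in C_c(\G)$; since $C_0(\Gz)$ is stable under complex conjugation, this inner-product condition is equivalent to the one stated in \ref{item3.lem.charOfBL2G1}. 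Applied to $(T^*_x(\xi|_{\G_x}))_x$, it further yields the norm condition on $x \mapsto \|T^*_x(\xi|_{\G_x})\|$, together with an inner-product condition on $x \mapsto \langle T^*_x(\xi|_{\G_x}), \eta|_{\G_x}\rangle = \langle \xi|_{\G_x}, T_x(\eta|_{\G_x})\rangle$; but the latter coincides with the inner-product condition already imposed, after interchanging the roles of $\xi$ and $\eta$, and is therefore redundant.

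I do not anticipate a genuine obstacle, as the argument amounts to a careful bookkeeping exercise once the two preceding lemmas are in hand. The only conceptual observation worth highlighting is the collapse of the adjoint-side inner-product condition into the one for $T$ itself, which explains why \ref{item3.lem.charOfBL2G1} lists a single inner-product function rather than a pair.
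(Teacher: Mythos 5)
Your proposal is correct and follows exactly the route the paper intends: the paper offers no written proof beyond the phrase ``Combining Lemma~\ref{lem.charOfL2G} with Lemma~\ref{lem.compactToL2}, we obtain the following,'' and your bookkeeping (including the conjugation symmetry of $C_0(\Gz)$ and the observation that the adjoint-side inner-product condition reduces to the original one after swapping $\xi$ and $\eta$) is precisely the omitted verification.
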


Corollary \ref{lem.charOfBL2G1} does provide a description for operators in $\L(L^2(\G))$. However, either condition (2) or (3) is still not easy to verify. We would like to explore a more delicate characterisation for elements in the image of $\Phi$. To fulfil the task, we would like to consult the machinery of operator fibre spaces introduced by Austin and the second-named author in \cite[Section 3]{AZ_2020_article_a}.

Consider the following disjoint union:
\begin{equation}\label{EQ:E}
    E \coloneqq \bigsqcup_{x\in\Gz} \B\left(\ell^2(\G_{x})\right).
\end{equation}
We write $\sigma_x$ for an element in $\B\left(\ell^2(\G_{x})\right) \subseteq E$ to indicate the fibre in which it lives.
Following \cite[Section 3]{AZ_2020_article_a}, we endow a topology on $E$ as follows: a net $\{ \sigma_{x_i} \}_{i\in I}$ converges to $\sigma_x$ in $E$ \emph{if and only if} $x_i\rightarrow x$ in $\Gz$ and for any $\gamma_i'\rightarrow \gamma'$, $\gamma_i''\rightarrow \gamma''$ in $\G$ with $\s(\gamma_i')=x_i=\s(\gamma_i'')$, we have
\[
    \left\langle \delta_{\gamma_i''} , \sigma_{x_i}(\delta_{\gamma_i'}) \right\rangle \rightarrow \left\langle \delta_{\gamma''} , \sigma_{x}(\delta_{\gamma'}) \right\rangle.
\]

\begin{definition}[{\cite[Definition 3.1]{AZ_2020_article_a}}]\label{defn:operator fibre space}
    For a locally compact \'{e}tale groupoid $\G$, the space $E$ defined in (\ref{EQ:E}) equipped with the above topology is called the \textit{operator fibre space} associated to $\G$.
\end{definition}

A \emph{section} of $E$ is a map $\sigma: \Gz \to E$ such that $\sigma(x) \in \B\left(\ell^2(\G_{x})\right)$ for $x\in \Gz$. Denote by $\Gamma_{b}(E)$ the set of continuous sections $\sigma$ of $E$ with $\sup_{x\in\Gz}\|\sigma(x)\|<\infty$. Equipped with pointwise operations and norm $\|\sigma\|:=\sup_{x\in\Gz}\|\sigma(x)\|$, $\Gamma_{b}(E)$ becomes a $C^*$-algebra. By definition, we have the inclusion map
\begin{equation}\label{EQ:iota}
    \iota\colon \Gamma_{b}(E) \hookrightarrow  \prod_{x\in\Gz}\mathscr{B}\left(\ell^2(\G_{x})\right), \quad \sigma \mapsto (\sigma(x))_{x\in \Gz}.
\end{equation}

The following lemma characterises continuous sections in terms of their slices, which can be deduced easily from the definition, hence we omit the proof.

\begin{lemma}\label{lem.charOfGammaB}
    For $\left(\sigma_{x}\right)_{x\in\Gz}\in \prod_{x\in\Gz}\B\left(\ell^2(\G_{x})\right)$, the following are equivalent:
    \begin{enumerate}
        \item\label{item1.lem.charOfGammaB} $\left(\sigma_{x}\right)_{x\in\Gz}\in \iota\left(\Gamma_{b}(E)\right)$;
        \item\label{item2.lem.charOfGammaB} the function $x\mapsto \left\langle \eta|_{\G_{x}} , \sigma_{x}(\xi|_{\G_{x}}) \right\rangle$ belongs to $C_0(\Gz)$ for all $\xi,\eta\in C_{c}(\G)$;
        \item\label{item3.lem.charOfGammaB} the function $\gamma\mapsto \left(\sigma_{\s(\gamma)}(\xi|_{\G_{\s(\gamma)}}) \right)(\gamma)$ is continuous on $\G$ for all $\xi \in C_{c}(\G)$;
        \item\label{item4.lem.charOfGammaB} the function $\gamma\mapsto \left(\sigma^*_{\s(\gamma)}(\xi|_{\G_{\s(\gamma)}}) \right)(\gamma)$ is continuous on $\G$ for all $\xi \in C_{c}(\G)$.
    \end{enumerate}
\end{lemma}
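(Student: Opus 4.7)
The plan is to reduce every condition to statements about matrix coefficients of $\sigma_x$ against $\delta$-functions in the fibres $\G_x$, and then match them against the defining topology of $E$. The universal device will be a partition-of-unity decomposition: any $\xi\in C_c(\G)$ can be written as a finite sum $\xi=\sum_k\xi_k$ where $\supp(\xi_k)$ is a compact subset of an open precompact bisection $U_k$; for such $\xi_k$ one has $\xi_k|_{\G_x}=\xi_k(\gamma_x^k)\delta_{\gamma_x^k}$ when $x\in\s(U_k)$, with $\gamma_x^k:=(\s|_{U_k})^{-1}(x)$ depending continuously on $x$, and $\xi_k|_{\G_x}=0$ otherwise. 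This collapses each Hilbert-space pairing to a single matrix coefficient, exactly the kind of datum controlled by the topology of $E$. An analogous decomposition will be applied to the second argument $\eta$.

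For the equivalence (1)$\Leftrightarrow$(3), set $F_\xi(\beta):=\bigl(\sigma_{\s(\beta)}(\xi|_{\G_{\s(\beta)}})\bigr)(\beta)$. Assuming (1) and given $\gamma_i\to\gamma$ in $\G$, I would reduce to a single $\xi_k$: on $\s(U_k)$, $F_{\xi_k}(\gamma_i)$ equals the scalar $\xi_k(\gamma^k_{\s(\gamma_i)})$ multiplied by the matrix coefficient $\langle\delta_{\gamma_i},\sigma_{\s(\gamma_i)}(\delta_{\gamma^k_{\s(\gamma_i)}})\rangle$, which converges by the definition of convergence in $E$ together with continuity of $\xi_k$. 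The delicate case $\s(\gamma)\notin\s(U_k)$ is handled by noting that any accumulation point of $\gamma^k_{\s(\gamma_i)}$ in $\G$ would have source $\s(\gamma)\notin\s(U_k)$ and hence could not lie in the compact set $\supp(\xi_k)\subseteq U_k$; therefore $\xi_k(\gamma^k_{\s(\gamma_i)})$ vanishes eventually. For the converse, given nets $\gamma'_i\to\gamma'$, $\gamma''_i\to\gamma''$ with $\s(\gamma'_i)=x_i=\s(\gamma''_i)$, I would choose $\xi\in C_c(\G)$ supported in an open precompact bisection around $\gamma'$ with $\xi(\gamma')=1$; then $\xi|_{\G_{x_i}}=\xi(\gamma'_i)\delta_{\gamma'_i}$ eventually, so the matrix coefficient equals $F_\xi(\gamma''_i)/\xi(\gamma'_i)$, and converges by (3) since $\xi(\gamma'_i)\to1$. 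The equivalence (3)$\Leftrightarrow$(4) is then automatic from the identity $\overline{\langle\delta_{\gamma''},\sigma_x(\delta_{\gamma'})\rangle}=\langle\delta_{\gamma'},\sigma_x^*(\delta_{\gamma''})\rangle$, which shows that the topology on $E$ is invariant under $\sigma\mapsto\sigma^*$ and reduces (4) to (3) applied to $\sigma^*$.

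For (2)$\Leftrightarrow$(3), decompose $\eta=\sum_j\eta_j$ with $\supp(\eta_j)$ compact in an open precompact bisection $V_j$. Assuming (3), the function $G_j(x):=\langle\eta_j|_{\G_x},\sigma_x(\xi|_{\G_x})\rangle$ reduces on $\s(V_j)$ to $\overline{\eta_j(\gamma^j_x)}F_\xi(\gamma^j_x)$ and vanishes elsewhere; continuity on $\s(V_j)$ is immediate, the boundary behaviour is handled by the same compactness argument as above, and the support of $G_j$ lies in the compact set $\s(\supp(\eta_j))$, so $G_j\in C_0(\Gz)$ outright. For the converse, given $\gamma\in\G$ I would pick $\eta\in C_c(\G)$ supported compactly in an open precompact bisection $V\ni\gamma$ with $\eta(\gamma)=1$, so that for $\gamma'\in V$ near $\gamma$ the identity $F_\xi(\gamma')=G(\s(\gamma'))/\overline{\eta(\gamma')}$ holds; continuity of $G$ from (2) together with non-vanishing of $\overline{\eta(\gamma')}$ near $\gamma$ yields local continuity of $F_\xi$, and since $\gamma$ is arbitrary, $F_\xi$ is continuous on $\G$. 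The main obstacle I expect throughout is the boundary behaviour of the bisection projections $\s(U_k)$ and $\s(V_j)$; keeping supports compactly inside the bisections turns this consistently into a clean vanishing statement.
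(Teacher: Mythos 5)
The paper gives no proof of this lemma (it is declared to follow ``easily from the definition'' and omitted), so there is nothing to compare against; your argument is a correct filling-in of the omitted details, and it uses exactly the mechanism the paper sets up elsewhere (decomposition of $C_c(\G)$-functions over open precompact bisections, reducing restrictions $\xi|_{\G_x}$ to scalar multiples of $\delta$-functions, as in Lemma~\ref{lem:surjectivity for phi_x} and the proof of Proposition~\ref{lem.charOfBL2G2}). The two points that actually require care --- the vanishing of $\xi_k(\gamma^k_{x_i})$ when the limit source leaves $\s(U_k)$, via compactness of $\supp(\xi_k)\subseteq U_k$, and the adjoint-invariance of the topology on $E$ obtained by swapping the roles of the two nets in the defining matrix-coefficient condition --- are both handled correctly.
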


Combining Corollary~\refeq{lem.charOfBL2G1} with Lemma~\ref{lem.charOfGammaB}, we obtain that $\Phi\left(\L\left(L^2(\G)\right)\right)\subset \iota\left(\Gamma_{b}(E)\right)$. In other words, the homomorphism $\Phi$ factors through $\Gamma_{b}(E)$, \emph{i.e.}, there exists a $C^*$-homomorphism $\widetilde{\Phi}: \L\left(L^2(\G)\right) \to \Gamma_b(E)$ such that the following diagram commutes:
\[
    \begin{tikzcd}[column sep=small]
        \L\left(L^2(\G)\right)  \arrow[rr, "\Phi"]\arrow[rd,"\widetilde{\Phi}"']& & \prod_{x\in\Gz}\mathscr{B}\left(\ell^2(\G_{x})\right).\\
        & \Gamma_b(E) \arrow[ru, hookrightarrow, "\iota"'] &
    \end{tikzcd}
\]

\begin{remark}
    We remark that for a general element $T\in \L(L^2(\G))$, it was pointed out by Exel \cite{E_2014_incollection_a} that the map $x \mapsto \|\Phi_x(T)\|$ on $\Gz$ might not be continuous. However, Corollary~\refeq{lem.charOfBL2G1} and Lemma~\ref{lem.charOfGammaB} indicate that the map $x\mapsto \Phi_x(T)$ is indeed continuous with respect to the topology in Definition \ref{defn:operator fibre space}. That is the reason for us to consult the notion of operator fibre spaces.
\end{remark}

Thanks to Lemma \ref{lem.charOfGammaB}, it remains to determine the image of $\widetilde{\Phi}$. The following result fulfils the task:


\begin{proposition}\label{lem.charOfBL2G2}
    Let $\G$ be a locally compact \'{e}tale groupoid, and $E$ be the associated operator fibre space.
    For $\sigma\in\Gamma_{b}(E)$, the following are equivalent:
    \begin{enumerate}
        \item\label{item1.lem.charOfBL2G2} $\sigma\in \widetilde{\Phi}\left(\L\left(L^2(\G)\right)\right)$;
        \item\label{item2.lem.charOfBL2G2} for any $\varepsilon>0$ and $\xi\in C_c(\G)$, there exists a compact subset $K\subseteq \G$ such that for any $x\in\Gz$ and $A_{x}\subseteq \G_{x}$ with $A_{x}\cap K =\emptyset$, we have
        \begin{equation}\label{eq.lem.charOfBL2G2}
            \| \chi_{A_{x}} \cdot \big(\sigma(x)(\xi|_{\G_{x}})\big) \|<\varepsilon\quad \mbox{and} \quad \|\chi_{A_{x}} \cdot \big(\sigma(x)^*(\xi|_{\G_{x}})\big)\| < \varepsilon;
        \end{equation}
        \item\label{item3.lem.charOfBL2G2} for any $\varepsilon>0$ and $\xi\in C_c(\G)$, there exists a compact subset $K\subseteq \G$ such that for any $x\in\Gz$ and $A_{x}, B_{x}\subseteq \G_{x}$ with $A_{x}\cap (K B_{x}) =\emptyset$, we have
        \[
            \| \big(\chi_{A_{x}}\sigma(x)\chi_{B_{x}}\big) (\xi|_{\G_{x}}) \| < \varepsilon \text{ and }\| \big(\chi_{A_{x}}\sigma(x)^*\chi_{B_{x}}\big) (\xi|_{\G_{x}}) \| < \varepsilon.
        \]
    \end{enumerate}
\end{proposition}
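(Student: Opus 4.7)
The plan is to establish the two equivalences (1)$\Leftrightarrow$(2) and (2)$\Leftrightarrow$(3). The easier directions are (1)$\Rightarrow$(2) and (3)$\Rightarrow$(2): for (1)$\Rightarrow$(2), given $\xi\in C_c(\G)$ and $\varepsilon>0$ I would approximate $T\xi,T^*\xi\in L^2(\G)$ by $\eta_+,\eta_-\in C_c(\G)$ within $\varepsilon$ and take $K:=\supp(\eta_+)\cup\supp(\eta_-)$, so that $\chi_{A_x}\eta_\pm|_{\G_x}=0$ whenever $A_x\cap K=\emptyset$ and the desired bound follows from the standard $\|\cdot|_{\G_x}\|\leq\|\cdot\|_{L^2(\G)}$ inequality. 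For (3)$\Rightarrow$(2), specialise to $B_x:=\supp(\xi)\cap\G_x$ so that $\chi_{B_x}\xi|_{\G_x}=\xi|_{\G_x}$, and let the new compact set be $K'\cdot\supp(\xi)$, where $K'$ is supplied by (3); then $A_x\cap(K'\cdot\supp(\xi))=\emptyset$ implies $A_x\cap(K'B_x)=\emptyset$ since $K'B_x\subseteq K'\cdot\supp(\xi)$.

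The implication (2)$\Rightarrow$(3) exploits \'etaleness via a bisection decomposition. I would cover the compact set $\supp(\xi)$ by finitely many open precompact bisections $U_1,\dots,U_m$, take a subordinate partition of unity, and write $\xi=\sum_{j=1}^m\xi_j$ with $\xi_j\in C_c(\G)$ supported in $U_j$. Applying (2) to each $\xi_j$ supplies compact $L_j\subseteq\G$ with $\|\chi_{A_x}\sigma(x)(\xi_j|_{\G_x})\|<\varepsilon/m$ whenever $A_x\cap L_j=\emptyset$. Since $U_j$ is a bisection, $\xi_j|_{\G_x}$ is supported on at most one point $\gamma_j(x)\in U_j\cap\G_x$, so $\chi_{B_x}\xi_j|_{\G_x}$ equals $\xi_j|_{\G_x}$ when $\gamma_j(x)\in B_x$ and is $0$ otherwise. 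Set $K:=\bigcup_{j=1}^m L_j\cdot\overline{U_j}^{-1}$, which is compact by continuity of inversion and multiplication together with compactness of the $\overline{U_j}$. The groupoid identity $\alpha=(\alpha\gamma_j(x)^{-1})\gamma_j(x)$ shows that $A_x\cap KB_x=\emptyset$ forces $A_x\cap L_j=\emptyset$ for every $j$ with $\gamma_j(x)\in B_x$, and the triangle inequality delivers $\|\chi_{A_x}\sigma(x)\chi_{B_x}(\xi|_{\G_x})\|\leq\sum_{j:\gamma_j(x)\in B_x}\|\chi_{A_x}\sigma(x)(\xi_j|_{\G_x})\|<\varepsilon$. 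The adjoint clause is handled identically.

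The main work is (2)$\Rightarrow$(1). I would define $T\colon C_c(\G)\to L^2(\G)$ by $T(\xi):=\phi^{-1}\big((\sigma(x)(\xi|_{\G_x}))_{x\in\Gz}\big)$; well-definedness reduces to verifying $(\sigma(x)(\xi|_{\G_x}))_x\in\phi(L^2(\G))$ via Lemma \ref{lem.charOfL2G}\ref{item3.lem.charOfL2G}. Continuity and vanishing at infinity of $x\mapsto\langle\eta|_{\G_x},\sigma(x)(\xi|_{\G_x})\rangle$ follow from $\sigma\in\Gamma_b(E)$ (Lemma \ref{lem.charOfGammaB}) together with Cauchy–Schwarz and $\|\eta|_{\G_x}\|\to 0$, while vanishing at infinity of $x\mapsto\|\sigma(x)(\xi|_{\G_x})\|$ follows from $\|\xi|_{\G_x}\|\to 0$. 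The decisive point is continuity of this norm: writing $\|\sigma(x)(\xi|_{\G_x})\|^2=\sum_{\gamma\in\G_x}f(\gamma)$ with $f(\gamma):=|(\sigma(\s(\gamma))(\xi|_{\G_{\s(\gamma)}}))(\gamma)|^2$ continuous on $\G$ by Lemma \ref{lem.charOfGammaB}\ref{item3.lem.charOfGammaB}, I would multiply by a cut-off $\rho\in C_c(\G)$ equal to $1$ on the compact set supplied by (2); the truncated sum $x\mapsto\sum_{\gamma\in\G_x}\rho(\gamma)f(\gamma)$ is continuous by the standard fibrewise-integration property of \'etale groupoids, and (2) controls the tail uniformly in $x$, so continuity of the full sum follows by uniform approximation. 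A symmetric construction using the adjoint clause of (2) produces $S\colon C_c(\G)\to L^2(\G)$ with $\langle S\eta,\xi\rangle=\langle\eta,T\xi\rangle$ by fibrewise computation, so $T$ extends to an adjointable operator on $L^2(\G)$ with $\widetilde{\Phi}(T)=\sigma$ by construction. The main obstacle is precisely this norm continuity: without the uniform tail encoded in condition (2), continuity of the pointwise norm of a fibrewise continuous section is not automatic, which is exactly the technical role played by (2).
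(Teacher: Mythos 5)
Your proposal is correct and follows essentially the same route as the paper: the same approximation argument for (1)$\Rightarrow$(2), the same specialisation $B_x=\supp(\xi|_{\G_x})$ with $K=\widetilde K\cdot\supp(\xi)$ for (3)$\Rightarrow$(2), the same bisection decomposition plus the identity $\alpha=(\alpha\gamma^{-1})\gamma$ for (2)$\Rightarrow$(3), and the same cut-off $\rho$ equal to $1$ on $K$ to control the tail for (2)$\Rightarrow$(1), concluding via Lemma \ref{lem.compactToL2}. The only cosmetic differences are that the paper reduces to a single compact bisection at the outset by linearity rather than carrying the sum $\xi=\sum_j\xi_j$ through the estimate, and in (2)$\Rightarrow$(1) it phrases the cut-off step as a uniform approximation of $(\sigma(x)(\xi|_{\G_x}))_x$ by $\phi(\zeta)$ with $\zeta\in C_c(\G)$ rather than verifying Lemma \ref{lem.charOfL2G}\ref{item3.lem.charOfL2G} directly.
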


\begin{proof} To save notation, we write $\sigma_x$ instead of $\sigma(x)$ where $x\in \Gz$ for the given section $\sigma \in \Gamma_b(E)$.

    \noindent``\ref{item1.lem.charOfBL2G2} $\Rightarrow $ \ref{item2.lem.charOfBL2G2}'': By assumption, there exists $T \in \L(L^2(\G))$ such that $\sigma = \widetilde{\Phi}(T)$. For any $\xi \in C_c(\G)$, we have $T(\xi) \in L^2(\G)$ and $T^*(\xi) \in L^2(\G)$. Hence for any $\varepsilon>0$, there exist $\eta, \zeta \in C_c(\G)$ such that $\|T(\xi) - \eta\|<\varepsilon$ and $\|T^*(\xi)-\zeta\|<\varepsilon$.

    Take $K\coloneqq \supp(\eta) \cup \supp(\zeta)$. For any $x\in \Gz$ and $A_{x}\subseteq \G_{x}$ with $A_{x}\cap K =\emptyset$, we obtain:
    \[
        \| \chi_{A_{x}} \cdot \big(\sigma_x(\xi|_{\G_{x}})\big) \| = \| \chi_{A_{x}} \cdot \big(T(\xi)|_{\G_x}\big) \| \leq \| \chi_{A_{x}} \cdot \big(\eta|_{\G_x}\big) \| + \varepsilon = \varepsilon.
    \]
    Similarly, we obtain $\|\chi_{A_{x}} \cdot \big(\sigma_x^*(\xi|_{\G_{x}})\big)\| < \varepsilon$.

    \noindent ``\ref{item2.lem.charOfBL2G2} $\Rightarrow $ \ref{item1.lem.charOfBL2G2}'': By assumption, for any $\varepsilon>0$ and $\xi\in C_c(\G)$, there exists a compact subset $K \subseteq \G$ satisfying condition \ref{item2.lem.charOfBL2G2}. Take a function $\rho\in C_{c}(\G)$ such that $\rho|_K \equiv 1$.

    Consider $\zeta: \G \to \CC$ defined by $\zeta(\gamma) \coloneqq \rho(\gamma) \cdot \left(\sigma_{\s(\gamma)}(\xi|_{\G_{\s(\gamma)}}) \right)(\gamma)$ for $\gamma \in \G$. It follows from Lemma \ref{lem.charOfGammaB} that $\zeta \in C_c(\G)$. Moreover, for $x\in \Gz$ we have:
    \[
        \| \sigma_x(\xi|_{\G_x})  - \phi_x(\zeta) \|  = \|(1-\rho)|_{\G_x} \cdot \big(\sigma_x(\xi|_{\G_x})\big)\| = \|(1-\rho)|_{\G_x} \cdot \chi_{\G_x \setminus K} \cdot \big(\sigma_x(\xi|_{\G_x})\big)\|,
    \]
    which is less than $\varepsilon$ by the assumption (\ref{eq.lem.charOfBL2G2}). Letting $\varepsilon \to 0$, we obtain that $\big(\sigma_x(\xi|_{\G_x})\big)_{x\in \Gz}$ belongs to $\phi(L^2(\G))$. Similarly, $\big(\sigma_x^*(\xi|_{\G_x})\big)_{x\in \Gz}$ belongs to $\phi(L^2(\G))$. Therefore, we conclude condition \ref{item1.lem.charOfBL2G2} thanks to Lemma \ref{lem.compactToL2}.

    \noindent ``\ref{item2.lem.charOfBL2G2} $\Rightarrow $ \ref{item3.lem.charOfBL2G2}'': As remarked in the last paragraph of Section \ref{ssec.groupoid}, it suffices to prove condition \ref{item3.lem.charOfBL2G2} for $\xi \in C_c(\G)$ whose support is contained in a compact bisection $K_\xi$. Given $\varepsilon>0$, take a compact subset $K \subseteq \G$ satisfying condition \ref{item2.lem.charOfBL2G2}. We set $\widetilde{K} \coloneqq K K_\xi^{-1}$. Since $\supp(\xi)$ is a bisection, it follows that for any $x\in \Gz$ and $B_x \subseteq \G_x$ we have:
    \[
        \chi_{B_x} \xi|_{\G_x} =
        \begin{cases}
            \xi|_{\G_x}, & \text{ if } K_{\xi}\cap B_{x}\neq\emptyset; \\
            0,           & \text{ if } K_{\xi}\cap B_{x}=\emptyset.
        \end{cases}
    \]

    Now for any $x\in\Gz$ and $A_x, B_x \subseteq \G_x$ with $A_x \cap (\widetilde{K} B_x) = \emptyset$, then $A_x \cap (K K_\xi^{-1} B_x) = \emptyset$. If $K_{\xi}\cap B_{x}=\emptyset$, then:
    \[
        \| \big(\chi_{A_{x}}\sigma_x\chi_{B_{x}}\big) (\xi|_{\G_{x}}) \| = \|\big(\chi_{A_{x}}\sigma_x\big)(\chi_{B_x} \xi|_{\G_x})\|=0
    \]
    and similarly, $\| \big(\chi_{A_{x}}\sigma_x^*\chi_{B_{x}}\big) (\xi|_{\G_{x}}) \|=0$. If $K_{\xi}\cap B_{x}\neq\emptyset$, then
    \[
        K K_\xi^{-1} B_x \supseteq K \cdot \{x\} = K \cap \G_x.
    \]
    Hence $A_x \cap K= A_x \cap (K \cap \G_x) \subseteq A_x \cap (K K_\xi^{-1} B_x) = \emptyset$. Applying condition \ref{item2.lem.charOfBL2G2}, we obtain:
    \[
        \| \big(\chi_{A_{x}}\sigma_x\chi_{B_{x}}\big) (\xi|_{\G_{x}}) \| = \|\big(\chi_{A_{x}}\sigma_x\big)(\chi_{B_x} \xi|_{\G_x})\|=\|\chi_{A_x} \cdot \big(\sigma_x (\xi|_{\G_x})\big)\| < \varepsilon.
    \]
    Similarly, we have $\| \big(\chi_{A_{x}}\sigma_x^*\chi_{B_{x}}\big) (\xi|_{\G_{x}}) \| < \varepsilon$. Therefore, we conclude condition \ref{item3.lem.charOfBL2G2}.

    \noindent ``\ref{item3.lem.charOfBL2G2} $\Rightarrow $ \ref{item2.lem.charOfBL2G2}'': Given $\varepsilon>0$ and $\xi\in C_c(\G)$, let $K_{\xi} \coloneqq \supp(\xi)$. By condition \ref{item3.lem.charOfBL2G2}, there exists a compact subset $\widetilde{K}\subseteq \G$ satisfying the requirement therein. We set $K \coloneqq \widetilde{K} K_{\xi}$ and take $B_{x} = \supp(\xi|_{\G_{x}})$. For any $x\in\Gz$ and $A_x\subseteq \G_x$ with $A_x\cap K=\emptyset$, we have
    \[
        A_x \cap (\widetilde{K} B_x) = A_x \cap \big(\widetilde{K} \cdot  \supp(\xi|_{\G_{x}})\big) \subseteq A_x \cap (\widetilde{K} K_\xi) = A_x \cap K = \emptyset.
    \]
    Hence we obtain:
    \[
        \| \chi_{A_{x}} \cdot \big(\sigma_{x}(\xi|_{\G_{x}})\big)  \| = \| \left(\chi_{A_{x}}\sigma_{x}\chi_{B_{x}}\right)(\xi|_{\G_{x}})\| < \varepsilon
    \]
    and similarly, $\| \chi_{A_{x}} \cdot \big(\sigma_x^*(\xi|_{\G_{x}})\big)  \| < \varepsilon$. Therefore, we conclude the proof.
\end{proof}

\begin{remark}
    Careful readers might already notice from Corollary \refeq{lem.charOfBL2G1}(3) and Lemma \ref{lem.charOfGammaB}(2) that in order to show that a continuous section $\sigma \in \Gamma_b(E)$ belongs to the image of $\widetilde{\Phi}$, it suffices to show that the functions $x \mapsto \|\sigma(x)(\xi|_{\G_x})\|$ and $x \mapsto \|\sigma(x)^*(\xi|_{\G_x})\|$ belong to $C_0(\Gz)$ for any $\xi \in C_c(\G)$. In fact, one can verify these conditions directly using condition \ref{item2.lem.charOfBL2G2} in Proposition \ref{lem.charOfBL2G2}, which suggests an alternative proof for ``\ref{item2.lem.charOfBL2G2} $\Rightarrow$ \ref{item1.lem.charOfBL2G2}''.
\end{remark}

Comparing with the notion of quasi-locality recalled in Definition \ref{defn:quasi-locality}, we note that condition \ref{item3.lem.charOfBL2G2} in Proposition \ref{lem.charOfBL2G2} can be regarded as a vector-wise uniform version of quasi-locality. Hence we introduce the following:

\begin{definition}\label{defn:vectorwise quasiloc}
    Let $\G$ be a locally compact \'{e}tale groupoid with unit space $\Gz$. A family $(T_{x})_{x\in\Gz} \in \prod_{x\in\Gz}\B\left(\ell^2(\G_{x})\right)$ is said to be \emph{vector-wise uniformly quasi-local} if for any $\varepsilon>0$ and $\xi\in C_c(\G)$, there exists a compact subset $K\subseteq \G$ such that for any $x\in\Gz$ and $A_{x}, B_{x}\subseteq \G_{x}$ with $A_{x}\cap (K B_{x}) =\emptyset$, we have
    \[
        \| \big(\chi_{A_{x}}T_x\chi_{B_{x}}\big) (\xi|_{\G_{x}}) \| < \varepsilon.
    \]
\end{definition}

Combining Lemma \ref{lem.charOfGammaB} with Proposition \ref{lem.charOfBL2G2}, we finally reach the following:

\begin{theorem}\label{cor:char for LL2G}
    Let $\G$ be a locally compact \'{e}tale groupoid with unit space $\Gz$. For $(T_{x})_{x\in\Gz} \in \prod_{x\in\Gz}\B\left(\ell^2(\G_{x})\right)$, the following are equivalent:
    \begin{enumerate}
        \item $(T_{x})_{x\in\Gz}$ belongs to $\Phi\big(\L(L^2(\G))\big)$;
        \item the map $x\mapsto T_x$ is a continuous section of $E$, and $(T_{x})_{x\in\Gz}, (T^*_{x})_{x\in\Gz}$ are vector-wise uniformly quasi-local.
    \end{enumerate}
\end{theorem}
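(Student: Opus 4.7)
The plan is to obtain the theorem as an essentially formal consequence of the two main ingredients already assembled in this subsection: Lemma \ref{lem.charOfGammaB}, which characterises the image of the inclusion $\iota\colon \Gamma_b(E) \hookrightarrow \prod_{x\in\Gz}\B(\ell^2(\G_x))$, and Proposition \ref{lem.charOfBL2G2}, which characterises the image of the factored map $\widetilde{\Phi}\colon \L(L^2(\G)) \to \Gamma_b(E)$. Since the commutative diagram displayed just after Lemma \ref{lem.charOfGammaB} gives $\Phi = \iota \circ \widetilde{\Phi}$, membership of a family $(T_x)_{x\in\Gz}$ in $\Phi(\L(L^2(\G)))$ is equivalent to the conjunction of two conditions: (a) $(T_x)_{x\in\Gz}$ lies in $\iota(\Gamma_b(E))$, and (b) the corresponding section is in the image of $\widetilde{\Phi}$.

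For (a) I appeal to the equivalence \ref{item1.lem.charOfGammaB} $\Leftrightarrow$ \ref{item2.lem.charOfGammaB} of Lemma \ref{lem.charOfGammaB}, which identifies membership in $\iota(\Gamma_b(E))$ with continuity of the section $x \mapsto T_x$ in the topology of the operator fibre space $E$ introduced in Definition \ref{defn:operator fibre space}. Uniform norm-boundedness, which is also built into $\Gamma_b(E)$, is automatic since $(T_x)_{x\in\Gz}$ is assumed to lie in the direct product $\prod_{x\in\Gz}\B(\ell^2(\G_x))$ of $C^*$-algebras. For (b) I invoke the equivalence \ref{item1.lem.charOfBL2G2} $\Leftrightarrow$ \ref{item3.lem.charOfBL2G2} of Proposition \ref{lem.charOfBL2G2}; after unpacking, condition \ref{item3.lem.charOfBL2G2} applied to $\sigma(x) = T_x$ is verbatim the statement that both $(T_x)_{x\in\Gz}$ and $(T_x^*)_{x\in\Gz}$ are vector-wise uniformly quasi-local in the sense of Definition \ref{defn:vectorwise quasiloc}. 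Concatenating these two equivalences yields the desired result.

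The argument is essentially book-keeping, since the substantial analytic work has already been done in the proofs of Lemma \ref{lem.charOfL2G} and Proposition \ref{lem.charOfBL2G2}: reconstructing an $L^2(\G)$-vector from its slices via a partition-of-unity argument, and translating quasi-locality conditions across compact bisections. The only minor point worth a line of commentary is the redundancy in condition (2): by the equivalence \ref{item3.lem.charOfGammaB} $\Leftrightarrow$ \ref{item4.lem.charOfGammaB} of Lemma \ref{lem.charOfGammaB}, continuity of $x \mapsto T_x$ in $E$ is automatically accompanied by the analogous continuity of the adjoint slice map, so no separate adjoint-continuity hypothesis is needed and the statement is symmetric as advertised. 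I expect no genuine obstacle; the only care required is ensuring that the "$\xi \in C_c(\G)$" quantifier in Proposition \ref{lem.charOfBL2G2} matches the one implicit in the vector-wise quasi-locality definition, which is immediate from Definition \ref{defn:vectorwise quasiloc}.
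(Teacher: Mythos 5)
Your proposal is correct and follows exactly the route the paper intends: the paper derives this theorem by the single sentence ``Combining Lemma~\ref{lem.charOfGammaB} with Proposition~\ref{lem.charOfBL2G2}, we finally reach the following,'' and your book-keeping (factoring $\Phi = \iota\circ\widetilde{\Phi}$, using Lemma~\ref{lem.charOfGammaB}\ref{item1.lem.charOfGammaB}$\Leftrightarrow$\ref{item2.lem.charOfGammaB} for continuity and Proposition~\ref{lem.charOfBL2G2}\ref{item1.lem.charOfBL2G2}$\Leftrightarrow$\ref{item3.lem.charOfBL2G2} for the two vector-wise quasi-locality conditions) is precisely the omitted argument. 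Your side remarks --- that uniform boundedness is built into the direct product and that adjoint-continuity is automatic by \ref{item3.lem.charOfGammaB}$\Leftrightarrow$\ref{item4.lem.charOfGammaB} --- are also accurate.
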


\subsection{The case of dense subsets}\label{ssec:dense subset}

A lot of examples including the coarse groupoids (see Section \ref{ssec:coarse groupoids pre}) come with natural dense open subsets in their unit spaces. In these cases, we now show that the characterisations established in Section \ref{ssec:operators in LL2G} can be further simplified.

Let $\G$ be a locally compact  \'{e}tale groupoid with unit space $\Gz$. Assume that $X$ is a dense subset of $\Gz$, and we set $\partial X \coloneqq \Gz\setminus X$. Define
\[
    E_X:=\bigsqcup_{x\in X}\mathscr{B}(\ell^2(\G_x)) \subseteq E
\]
with the induced topology from $E$, and denote by $\Gamma_{b}(E_X)$ the $C^*$-algebra consisting of all continuous norm-bounded sections of $E_X$. Consider the restriction map
\[
    \res: \Gamma_{b}(E) \longrightarrow \Gamma_b(E_X), \quad \sigma \mapsto \sigma|_X.
\]
Since $X$ is dense and $\G$ is \'{e}tale, it is easy to verify that the map $\res$ is injective. To determine its image, we introduce the following:

\begin{definition}\label{defn:extendable section}
    A section $\sigma \in \Gamma_{b}(E_X)$ is called \emph{extendable} if for any $\omega\in \partial X$ and $\gamma', \gamma'' \in \G_\omega$, there exists a constant $c_{\gamma', \gamma''} \in \CC$ such that for any $x_i\in X$ with $x_i \to \omega$ and any $\gamma'_i, \gamma''_i \in \G_{x_i}$ with $\gamma'_i\rightarrow \gamma'$ and $\gamma''_i\rightarrow \gamma''$, then $\langle \delta_{\gamma''_i}, \sigma(x_i)(\delta_{\gamma'_i}) \rangle \to c_{\gamma', \gamma''}$.
\end{definition}

The following lemma explains the terminology:

\begin{lemma}\label{densesubset-extension}
    The map $\res: \Gamma_{b}(E) \rightarrow \Gamma_b(E_X)$ is a $C^*$-monomorphism with image consisting of all extendable sections in $\Gamma_b(E_X)$.
\end{lemma}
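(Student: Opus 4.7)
The plan is to decompose the statement into three claims: that $\res$ is a $\ast$-homomorphism (which is immediate since operations on both sides are pointwise), injectivity, and the identification of the image with the extendable sections. For injectivity, I would leverage the interplay between density of $X$ and \'etaleness. Given $\sigma \in \Gamma_b(E)$ with $\res(\sigma) = 0$, fix $\omega \in \partial X$ and $\gamma', \gamma'' \in \G_\omega$. Using \'etaleness, pick open bisections $U' \ni \gamma'$ and $U'' \ni \gamma''$; then $\s(U') \cap \s(U'')$ is an open neighbourhood of $\omega$, so density provides a net $x_i \in X \cap \s(U') \cap \s(U'')$ with $x_i \to \omega$. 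Setting $\gamma'_i$ to be the unique preimage of $x_i$ under $\s|_{U'}$ and likewise $\gamma''_i$ under $\s|_{U''}$, we get $\gamma'_i \to \gamma'$ and $\gamma''_i \to \gamma''$. Continuity of $\sigma$ in the topology of $E$ then forces
$\langle \delta_{\gamma''}, \sigma(\omega)\delta_{\gamma'}\rangle = \lim_i \langle \delta_{\gamma''_i}, \sigma(x_i)\delta_{\gamma'_i}\rangle = 0$,
so $\sigma(\omega) = 0$ and $\sigma \equiv 0$. A parallel argument shows $\res$ is isometric, hence a $C^*$-monomorphism.

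That $\res(\sigma)$ is extendable for any $\sigma \in \Gamma_b(E)$ is immediate by taking $c_{\gamma',\gamma''} = \langle \delta_{\gamma''}, \sigma(\omega)\delta_{\gamma'}\rangle$ and invoking continuity. For the reverse inclusion---the substantive content---starting with an extendable $\sigma \in \Gamma_b(E_X)$ I construct an extension $\tilde\sigma \in \Gamma_b(E)$ in two stages. First, for each $\omega \in \partial X$ declare the matrix entries of $\tilde\sigma(\omega)$ in the standard basis of $\ell^2(\G_\omega)$ to be the constants $c_{\gamma',\gamma''}$. To ensure this defines a bounded operator of norm at most $\|\sigma\|$, fix finite subsets $F', F'' \subseteq \G_\omega$ and, using \'etaleness, pick pairwise disjoint open bisections around each element sharing a common source neighbourhood $V$ of $\omega$; for every $x \in V \cap X$ the corresponding compression of $\sigma(x)$ is a finite matrix of norm at most $\|\sigma\|$ whose entries converge to $c_{\gamma',\gamma''}$ as $x \to \omega$ by extendability, and continuity of the finite matrix norm transfers the bound to the limit. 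Letting $F', F''$ exhaust $\G_\omega$ yields $\|\tilde\sigma(\omega)\| \leq \|\sigma\|$.

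The main obstacle is proving that $\tilde\sigma$ is a continuous section of $E$. My plan is to reduce, via Lemma \ref{lem.charOfGammaB}, to continuity of scalar-valued functions indexed by pairs of open bisections. Given open bisections $U', U''$, set $W = \s(U') \cap \s(U'')$ and define $f \colon W \to \CC$ by $f(x) = \langle \delta_{\gamma''(x)}, \tilde\sigma(x) \delta_{\gamma'(x)}\rangle$, where $\gamma'(x)$ is the unique preimage of $x$ in $U'$ under $\s$ and similarly for $\gamma''(x)$. Continuity of $\sigma$ on $E_X$, together with continuity of the local inverses of $\s$, yields continuity of $f|_{X \cap W}$; at each $\omega \in \partial X \cap W$ the extendability hypothesis says precisely that $f(\omega) = \lim_{x\to\omega,\, x\in X} f(x)$. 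A short $\varepsilon/2$ argument then propagates continuity to all of $W$: for $x_0 \in W$, shrink to a neighbourhood on which $|f-f(x_0)|<\varepsilon/2$ on $X$, and pass to the limit at boundary points to get $|f - f(x_0)| \leq \varepsilon/2$ on $\partial X$ as well. Realising the local sections $\gamma'(x), \gamma''(x)$ via cutoff functions in $C_c(\G)$ (\emph{cf.}\ the construction in Lemma \ref{lem:surjectivity for phi_x}) lets us feed this scalar continuity into Lemma \ref{lem.charOfGammaB}\ref{item3.lem.charOfGammaB}-\ref{item4.lem.charOfGammaB}, showing $\tilde\sigma \in \Gamma_b(E)$. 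Since $\res(\tilde\sigma) = \sigma$ by construction, this completes the identification of the image.
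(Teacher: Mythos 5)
Your proposal is correct and follows essentially the same route as the paper: define $\tilde\sigma(\omega)$ at boundary points via the matrix-entry limits $c_{\gamma',\gamma''}$, verify boundedness, and check that the resulting section is continuous. You simply supply the details the paper leaves to the reader (the finite-compression argument for $\|\tilde\sigma(\omega)\|\leq\|\sigma\|$ and the $\varepsilon/2$ argument for continuity of $\tilde\sigma$ on all of $\Gz$), and these are carried out correctly.
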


\begin{proof}
    It suffices to show that an extendable section $\sigma \in \Gamma_b(E_X)$ can be extended to a section $\tilde{\sigma} \in \Gamma_b(E)$. For $\omega \in \partial X$, define an operator $\sigma_\omega: \ell^2(\G_\omega)\rightarrow \ell^2(\G_\omega)$ by
    \begin{equation*}
        \langle \delta_{\gamma''}, \sigma_\omega(\delta_{\gamma'}) \rangle \coloneqq \lim_i \langle \delta_{\gamma''_i}, \sigma({x_i})(\delta_{\gamma'_i}) \rangle \quad \mbox{for} \quad \gamma', \gamma'' \in \G_\omega,
    \end{equation*}
    where $x_i$ is a net in $X$ converging to $\omega$, and $\gamma'_i, \gamma''_i \in \G_{x_i}$ with $\gamma'_i\rightarrow \gamma'$ and $\gamma''_i\rightarrow \gamma''$. This is well-defined since $\G$ is \'{e}tale and $\sigma$ is extenable. To show that $\sigma_\omega$ is a bounded operator, we note that
    \[
        \|\sigma_\omega\|=\sup\big\{|\langle \eta_\omega, \sigma_\omega(\xi_\omega) \rangle|: \xi_\omega, \eta_\omega \in C_c(\G_\omega) \mbox{~with~} \|\xi_\omega\| = \|\eta_\omega\|=1\big\}.
    \]
    Hence using the \'{e}taleness and $\sup_{x\in X} \|\sigma(x)\|< \infty$, we obtain that $\sigma_\omega$ is bounded (details are left to readers). Finally, we define a map $\tilde{\sigma}: \Gz \to E$ by
    \[
        \tilde{\sigma}(z) \coloneqq
        \begin{cases}
            \sigma(z), & \text{ if } z\in X;          \\
            \sigma_z,  & \text{ if } z\in \partial X,
        \end{cases}
    \]
    which clearly belongs to $\Gamma_b(E)$ and extends $\sigma$. Hence we conclude the proof.
\end{proof}

The following result shows that in some special cases, sections in $\Gamma_b(E_X)$ are always extendable. First recall that for a locally compact Hausdorff space $X$, its \emph{Stone-\v{C}ech compactification} is a compact space $\beta X$ together with a continuous map $i: X\rightarrow \beta X$ such that $i: X\rightarrow i(X)$ is a homeomorphsim with dense image in $\beta X$ and satisfies the universal property: for any $f\in C_b(X)$ there exists a continuous map $f^{\beta}:\beta X\rightarrow \CC$ such that $f^{\beta}\circ i=f$. Note that in this case, $i(X)$ is open in $\beta X$.

\begin{lemma}\label{Stone-Cech-extension}
    Let $\G$ be a locally compact \'{e}tale groupoid with unit space $\Gz$, and $X$ be an open dense subset in $\Gz$ such that $\Gz=\beta X$ is the Stone-\v{C}ech compactification of $X$ (with the inclusion map). Then any section $\sigma \in \Gamma_b(E_X)$ is extendable.
\end{lemma}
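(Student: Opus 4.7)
\smallskip
\noindent\emph{Proof proposal.} The plan is to reduce the existence of the limits $c_{\gamma',\gamma''}$ to the universal property of $\beta X$, using \'{e}taleness to \emph{trivialise} the job of picking specific elements of fibres via bisections.

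Fix $\omega\in\partial X$ and $\gamma',\gamma''\in\G_\omega$. Since $\G$ is \'{e}tale, I would first choose open bisections $A'\ni\gamma'$ and $A''\ni\gamma''$. Shrinking them (via the source homeomorphisms) I may assume that $\s(A')=\s(A'')=V$ for a common open neighbourhood $V$ of $\omega$ in $\Gz$. Write $\tilde A': V\to A'$ and $\tilde A'':V\to A''$ for the continuous inverses of $\s|_{A'}$ and $\s|_{A''}$.

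Next, I define
\[
    h\colon V\cap X\longrightarrow\CC, \quad h(x)\coloneqq \bigl\langle \delta_{\tilde A''(x)},\,\sigma(x)\bigl(\delta_{\tilde A'(x)}\bigr)\bigr\rangle.
\]
Continuity of $\sigma$ as a section of $E_X$, together with continuity of $\tilde A',\tilde A''$, implies that $h$ is continuous, and clearly $|h|\le\|\sigma\|$. I then pick a cut-off $\rho\in C_c(V)$ with $\rho(\omega)=1$ and set
\[
    \tilde h(x)\coloneqq \begin{cases} \rho(x)\,h(x), & x\in V\cap X,\\ 0, & x\in X\setminus\supp(\rho). \end{cases}
\]
The two pieces agree on their overlap (both give $0$), so $\tilde h\in C_b(X)$. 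By the universal property of $\Gz=\beta X$, $\tilde h$ extends to a continuous function $\tilde h^\beta$ on $\Gz$, and I set $c_{\gamma',\gamma''}\coloneqq \tilde h^\beta(\omega)$.

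Finally, for a net as in Definition~\ref{defn:extendable section}, openness of $A'$ and $A''$ and the convergence $\gamma'_i\to\gamma'$, $\gamma''_i\to\gamma''$ give $\gamma'_i\in A'$ and $\gamma''_i\in A''$ eventually; since these are bisections over $V$, eventually $x_i=\s(\gamma'_i)\in V$ with $\gamma'_i=\tilde A'(x_i)$ and $\gamma''_i=\tilde A''(x_i)$. Hence, eventually,
\[
    \bigl\langle\delta_{\gamma''_i},\sigma(x_i)(\delta_{\gamma'_i})\bigr\rangle = h(x_i) = \tilde h(x_i)/\rho(x_i),
\]
and since $\tilde h(x_i)\to \tilde h^\beta(\omega)$ and $\rho(x_i)\to\rho(\omega)=1$ (both by continuity on $\Gz$), the ratio converges to $\tilde h^\beta(\omega)=c_{\gamma',\gamma''}$, as required.

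The argument is essentially routine once the right bisection/cut-off setup is in place; the only mildly delicate point is verifying that $\tilde h$ is \emph{globally} continuous on $X$, which is handled by cutting off inside $V$ so that the two defining formulas agree on the overlap. No deeper obstacle seems to arise, and the value $c_{\gamma',\gamma''}$ is manifestly independent of the choices made, as any two setups produce functions that agree on a neighbourhood of $\omega$ in $V\cap X$ after division by the cut-off.
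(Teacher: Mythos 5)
Your proposal is correct and follows essentially the same route as the paper's proof: use \'{e}taleness to pick bisections through $\gamma'$ and $\gamma''$ over a common neighbourhood of $\omega$, form a cut-off times the matrix-coefficient function to get an element of $C_b(X)$, and invoke the universal property of $\beta X$ to define $c_{\gamma',\gamma''}$. Your final paragraph verifying the net convergence is a welcome extra detail that the paper leaves implicit.
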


\begin{proof}
    Fix a section $\sigma \in \Gamma_b(E_X)$, $\omega\in \partial X$ and $\gamma', \gamma''\in \G_{\omega}$. Using \'{e}taleness of $\G$, we can find two open neighborhoods $U_{\gamma'}$ and $U_{\gamma''}$ of $\gamma'$ and $\gamma''$, respectively, such that $\s|_{U_{\gamma'}}$ and $\s|_{U_{\gamma''}}$ are homeomorphisms. Let $U_{\omega}\coloneqq \s(U_{\gamma'})\cap \s(U_{\gamma''})$, and we take $\rho \in C_b(\mathcal{G}^{(0)})$ such that $\rho(\omega)=1$ and $\rho|_{\Gz\setminus U_{\omega}}=0$. Consider the function $f_{\gamma',\gamma''}: X \rightarrow \mathbb{C}$ defined by
    \[
        f_{\gamma',\gamma''}(x) \coloneqq
        \begin{cases}
            \rho(x) \langle \delta_{\gamma''_x}, \sigma(x)(\delta_{\gamma'_x}) \rangle, & \text{ if } x\in X\cap U_{\omega}; \\
            0,                                                                          & \text{otherwise},
        \end{cases}
    \]
    where $\gamma'_x\in U_{\gamma'}\cap \mathcal{G}_x$ and $\gamma''_x\in U_{\gamma''}\cap \mathcal{G}_x$ are uniquely determined. It is clear that $f_{\gamma',\gamma''}\in C_b(X)$. Thanks to the universal property of $\beta X=\Gz$, $f_{\gamma',\gamma''}$ can be uniquely extended to a function $f^{\beta}_{\gamma',\gamma''} \in C(\Gz)$. Setting $c_{\gamma',\gamma''} \coloneqq f^{\beta}_{\gamma',\gamma''}(\omega)$, we conclude that $\sigma$ is extendable.
\end{proof}

Combining Lemma \ref{densesubset-extension} and \ref{Stone-Cech-extension}, we obtain:

\begin{corollary}\label{cor:beta X case}
    Let $\G$ be a locally compact \'{e}tale groupoid with unit space $\Gz$, and $X$ be an open dense subset in $\Gz$ such that $\Gz=\beta X$ is the Stone-\v{C}ech compactification of $X$ (with the inclusion map). Then the map $\res: \Gamma_{b}(E) \rightarrow \Gamma_b(E_X)$ is a $C^*$-isomorphism.
\end{corollary}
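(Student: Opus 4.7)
The plan is to observe that this corollary is essentially a direct combination of the two preceding lemmas, so no new ideas are needed; the only task is to chain them together cleanly.

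First, by Lemma \ref{densesubset-extension}, the restriction map $\res \colon \Gamma_b(E) \to \Gamma_b(E_X)$ is a $C^*$-monomorphism whose image is precisely the set of extendable sections in $\Gamma_b(E_X)$. Thus to upgrade $\res$ from a monomorphism to an isomorphism, it suffices to show that every section in $\Gamma_b(E_X)$ is extendable.

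Second, this surjectivity is exactly the content of Lemma \ref{Stone-Cech-extension}, whose hypotheses match those of the corollary verbatim: $X$ is an open dense subset of $\Gz$ and $\Gz = \beta X$ via the inclusion. Hence every $\sigma \in \Gamma_b(E_X)$ lies in $\res(\Gamma_b(E))$, and we conclude that $\res$ is a $C^*$-isomorphism.

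There is no real obstacle here; the work has already been done. The only thing one might want to double-check in writing this up is that $\res$ is automatically a $*$-homomorphism (it is, being a restriction of pointwise operations), and that all the hypotheses of Lemma \ref{Stone-Cech-extension} (openness, density, and the Stone-\v{C}ech identification) are being invoked correctly. Beyond that, the proof is simply: apply Lemma \ref{densesubset-extension} to get injectivity with a characterised image, then apply Lemma \ref{Stone-Cech-extension} to see that the image is all of $\Gamma_b(E_X)$.
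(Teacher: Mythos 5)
Your proof is correct and follows exactly the paper's argument: the corollary is obtained by combining Lemma \ref{densesubset-extension} (which identifies the image of $\res$ with the extendable sections) with Lemma \ref{Stone-Cech-extension} (which shows every section of $\Gamma_b(E_X)$ is extendable under the Stone-\v{C}ech hypothesis). Nothing further is needed.
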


Now we are in the position to simplify Proposition~\ref{lem.charOfBL2G2}:

\begin{proposition}\label{lem.charOfBL2G2-densesubset}
    Let $\G$ be a locally compact \'{e}tale groupoid with unit space $\Gz$, and $X$ be a dense subset of $\Gz$.
    For an extendable section $\sigma\in\Gamma_{b}(E_X)$, the following are equivalent:
    \begin{enumerate}
        \item\label{item1.lem.charOfBL2G2-densesubset} $\sigma\in \res \circ\widetilde{\Phi}\left(\mathscr{L}\left(L^2(\G)\right)\right)$;
        \item\label{item2.lem.charOfBL2G2-densesubset} for any $\varepsilon>0$ and $\xi\in C_c(\G)$, there exists a compact subset $K\subseteq \G$ such that for any $x\in X$ and $A_{x}\subseteq \G_{x}$ with $A_{x}\cap K =\emptyset$, we have
        \[
            \| \chi_{A_{x}} \cdot \big(\sigma(x)(\xi|_{\G_{x}})\big) \|<\varepsilon\quad \mbox{and} \quad \|\chi_{A_{x}} \cdot \big(\sigma(x)^*(\xi|_{\G_{x}})\big)\| < \varepsilon;
        \]
        \item\label{item3.lem.charOfBL2G2-densesubset} for any $\varepsilon>0$ and $\xi\in C_c(\G)$, there exists a compact subset $K\subseteq \G$ such that for any $x\in X$ and $A_{x}, B_{x}\subseteq \G_{x}$ with $A_{x}\cap (K B_{x}) =\emptyset$, we have
        \[
            \| \big(\chi_{A_{x}}\sigma(x)\chi_{B_{x}}\big) (\xi|_{\G_{x}}) \| < \varepsilon \text{ and }\| \big(\chi_{A_{x}}\sigma(x)^*\chi_{B_{x}}\big) (\xi|_{\G_{x}}) \| < \varepsilon.
        \]
    \end{enumerate}
\end{proposition}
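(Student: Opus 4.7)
The plan is to reduce to Proposition~\ref{lem.charOfBL2G2} via the extension supplied by Lemma~\ref{densesubset-extension}. Since $\sigma \in \Gamma_b(E_X)$ is extendable, I first invoke Lemma~\ref{densesubset-extension} to obtain the unique $\tilde{\sigma} \in \Gamma_b(E)$ with $\res(\tilde{\sigma}) = \sigma$; the pointwise adjoint $\sigma^*$ on $X$ is likewise extendable, with extension $(\tilde{\sigma})^*$, since conjugating matrix-coefficient limits preserves consistency. Condition \ref{item1.lem.charOfBL2G2-densesubset} then reads $\tilde{\sigma} \in \widetilde{\Phi}(\L(L^2(\G)))$, which by Proposition~\ref{lem.charOfBL2G2} is equivalent to the unrestricted versions of \ref{item2.lem.charOfBL2G2} and \ref{item3.lem.charOfBL2G2} holding for $\tilde{\sigma}$ on all of $\Gz$. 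The implications from $\Gz$ to $X$ being trivial, the problem reduces to showing that the restricted versions \ref{item2.lem.charOfBL2G2-densesubset} and \ref{item3.lem.charOfBL2G2-densesubset} imply their unrestricted counterparts for $\tilde{\sigma}$.

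For \ref{item2.lem.charOfBL2G2-densesubset} $\Rightarrow$ \ref{item2.lem.charOfBL2G2} I fix $\varepsilon > 0$ and $\xi \in C_c(\G)$, take the compact $K$ furnished by the hypothesis applied to $\varepsilon/2$, and verify the bound at each $\omega \in \Gz \setminus X$. Given $A_\omega \subseteq \G_\omega$ with $A_\omega \cap K = \emptyset$, expressing the $\ell^2$-norm as a supremum over finite subsets lets me assume $A_\omega = \{\gamma'_1, \ldots, \gamma'_p\}$ is finite. Using \'{e}taleness, I select mutually disjoint precompact open bisections $V_i \ni \gamma'_i$ with $V_i \cap K = \emptyset$, and set $U := \bigcap_i \s(V_i)$, an open neighbourhood of $\omega$ on which each $x$ admits a unique lift $\gamma'_i(x) \in V_i \cap \G_x$. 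For $x \in U \cap X$ the set $A_x := \{\gamma'_i(x)\}$ is disjoint from $K$, so the hypothesis gives $\|\chi_{A_x}\sigma(x)(\xi|_{\G_x})\| < \varepsilon/2$. Density of $X$ produces a net $x_\lambda \in U \cap X$ with $x_\lambda \to \omega$, and Lemma~\ref{lem.charOfGammaB} guarantees the continuity of $\gamma \mapsto (\tilde{\sigma}(\s(\gamma))(\xi|_{\G_{\s(\gamma)}}))(\gamma)$ on $\G$; passing to the coordinate-wise limit in the finite sum yields $\|\chi_{A_\omega}(\tilde{\sigma}(\omega)\xi|_{\G_\omega})\| \leq \varepsilon/2 < \varepsilon$. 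The adjoint bound is identical, establishing \ref{item1.lem.charOfBL2G2-densesubset} $\Leftrightarrow$ \ref{item2.lem.charOfBL2G2-densesubset}.

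For \ref{item3.lem.charOfBL2G2-densesubset} $\Rightarrow$ \ref{item3.lem.charOfBL2G2} the scheme is the same, but the separation condition $A_\omega \cap (K B_\omega) = \emptyset$ must be transferred to a neighbourhood of $\omega$. I reduce to finite $B_\omega \subseteq \supp(\xi|_{\G_\omega})$ and finite $A'_\omega \subseteq A_\omega$, and pick disjoint precompact bisections $V_i \ni \gamma'_i \in A'_\omega$ and $W_j \ni \alpha'_j \in B_\omega$. Since $K$ is closed and the map $(\gamma,\alpha) \mapsto \gamma\alpha^{-1}$ is continuous on the fibre product with $\gamma'_i(\alpha'_j)^{-1} \notin K$ for every $i,j$, there is a neighbourhood $U$ of $\omega$ on which $\gamma'_i(x)\alpha'_j(x)^{-1} \notin K$ throughout. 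Then for $x \in U \cap X$, setting $A'_x := \{\gamma'_i(x)\}$ and $B'_x := \{\alpha'_j(x)\}$, the hypothesis yields $\|\chi_{A'_x}\sigma(x)\chi_{B'_x}(\xi|_{\G_x})\| < \varepsilon/2$. Combining continuity of $\xi$ at each $\alpha'_j$ with the extendability of $\sigma$ produces the coordinate-wise limit $\|\chi_{A'_\omega}\tilde{\sigma}(\omega)\chi_{B_\omega}(\xi|_{\G_\omega})\| \leq \varepsilon/2 < \varepsilon$, and taking the supremum over finite $A'_\omega \subseteq A_\omega$ supplies the required bound.

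The principal technical obstacle is the topological transfer step: ensuring that the separation conditions ($A_\omega \cap K = \emptyset$ for \ref{item2.lem.charOfBL2G2-densesubset} and $A_\omega \cap (KB_\omega) = \emptyset$ for \ref{item3.lem.charOfBL2G2-densesubset}) propagate to a neighbourhood of $\omega$ so that the hypothesis at nearby points of $X$ becomes usable. This is resolved by combining \'{e}taleness (to parametrise elements of $\G_x$ continuously in $x$ via bisections), closedness of $K$, and continuity of groupoid multiplication and inversion; once this is in place, the remaining step is a routine $\ell^2$ limit argument powered by Lemma~\ref{lem.charOfGammaB}.
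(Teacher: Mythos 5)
Your proposal is correct and follows essentially the same route as the paper: extend $\sigma$ to $\tilde{\sigma}\in\Gamma_b(E)$ via Lemma~\ref{densesubset-extension}, reduce to Proposition~\ref{lem.charOfBL2G2}, and transfer the estimates from the dense subset $X$ to all of $\Gz$ by approximating a boundary point through bisection lifts and passing to the limit. The only difference is cosmetic — the paper phrases the limit step as a uniform pairing estimate against $\eta|_{A_y}$ while you take coordinate-wise limits of finitely many matrix entries — and your explicit treatment of how the separation conditions propagate to a neighbourhood is a detail the paper leaves to the reader.
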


\begin{proof}
    From Lemma \ref{densesubset-extension}, there exists $\tilde{\sigma}\in \Gamma_b(E)$ such that $\res(\tilde{\sigma})=\sigma$. Hence it suffices to show that condition (2) and (3) for $\sigma$ are equivalent to those in Proposition \ref{lem.charOfBL2G2-densesubset} for $\tilde{\sigma}$, respectively. Again to save the notation, we write $\tilde{\sigma}_x$ instead of $\tilde{\sigma}(x)$ for $x\in \Gz$.

    We only present the case for condition (2), while the other is similar. Assume that condition (2) holds for $\sigma$, we would like to verify Proposition \ref{lem.charOfBL2G2-densesubset}(2) for $\tilde{\sigma}$. Given $\varepsilon>0$ and $\xi\in C_c(\G)$, let $K\subseteq \G$ be a compact subset satisfying condition (2). For any $y\in \Gz$ and any finite subset $A_{y}\subset\G_{y}$ with $A_{y}\cap K=\emptyset$, there exist $x\in X$ and a finite subset $A_{x}\subset\G_{x}$ such that $A_{x}\cap K=\emptyset$ and
    \[
        |\langle \tilde{\sigma}_{y}(\xi|_{\G_{y}}), \eta|_{A_y} \rangle - \langle \sigma_{x}(\xi|_{\G_{x}}), \eta|_{A_x} \rangle|<\varepsilon
    \]
    for any $\eta\in  C_c(\G)$ with $\|\eta\|\leq 1$, since $\G$ is \'{e}tale and $\tilde{\sigma}$ is the extension of $\sigma$. Note that $\| \chi_{A_{x}} \big(\sigma_x(\xi|_{\G_{x}})\big) \|<\varepsilon$ and
    \[
        \| \chi_{A_{y}} \left(\tilde{\sigma}_{y}(\xi|_{\G_{y}})\right) \|=\sup_{\|\eta|_{\G_{y}}\|=1} |\langle \tilde{\sigma}_{y}(\xi|_{\G_{y}}), \eta|_{A_{y}} \rangle|.
    \]
    Hence we obtain $\| \chi_{A_{y}} \left(\tilde{\sigma}_{y}(\xi|_{\G_{y}})\right) \| < 2\varepsilon$, and similarly $\| \chi_{A_{y}} \left(\tilde{\sigma}_{y}^*(\xi|_{\G_{y}})\right) \| < 2\varepsilon$. Finally using a standard approximating argument, the above estimates hold for arbitrary $A_{y}\subset\G_{y}$ with $A_{y}\cap K=\emptyset$, and therefore we conclude the proof.
\end{proof}

Analogous to Definition \ref{defn:vectorwise quasiloc}, for a dense subset $X$ of $\Gz$ we say that a family $(T_{x})_{x\in X} \in \prod_{x\in X}\B\left(\ell^2(\G_{x})\right)$ is \emph{vector-wise uniformly quasi-local} if for any $\varepsilon>0$ and $\xi\in C_c(\G)$, there exists a compact subset $K\subseteq \G$ such that for any $x\in X$ and $A_{x}, B_{x}\subseteq \G_{x}$ with $A_{x}\cap (K B_{x}) =\emptyset$, we have $\| \big(\chi_{A_{x}}T_x\chi_{B_{x}}\big) (\xi|_{\G_{x}}) \| < \varepsilon$.

Finally, we conclude the following for the case of dense subsets:

\begin{corollary}\label{cor:char2 for LL2G}
    Let $\G$ be a locally compact \'{e}tale groupoid with unit space $\Gz$, and $X$ be a dense subset of $\Gz$. For $(T_{x})_{x\in X} \in \prod_{x\in X}\B\left(\ell^2(\G_{x})\right)$, the following are equivalent:
    \begin{enumerate}
        \item $(T_{x})_{x\in X}$ belongs to $\res\circ \Phi\big(\L(L^2(\G))\big)$;
        \item the map $x\mapsto T_x$ is a continuous extendable section of $E_X$, and $(T_{x})_{x\in X}, (T^*_{x})_{x\in X}$ are vector-wise uniformly quasi-local.
    \end{enumerate}
    Moreover, we can omit the requirement of extendableness in condition (2) when $\Gz$ is the Stone-\v{C}ech compactification of $X$ (with the inclusion map).
\end{corollary}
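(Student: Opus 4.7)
The plan is to obtain this corollary by a direct assembly of results already established, principally Proposition \ref{lem.charOfBL2G2-densesubset} (characterising when an extendable continuous section of $E_X$ lies in the image of $\res \circ \widetilde{\Phi}$) together with Lemma \ref{densesubset-extension} (identifying $\res(\Gamma_b(E))$ with the extendable sections in $\Gamma_b(E_X)$) and Corollary \ref{cor:beta X case} (for the final ``moreover'' clause). Recall that $\Phi$ factors as $\iota \circ \widetilde{\Phi}$ with $\widetilde{\Phi}\colon \L(L^2(\G)) \to \Gamma_b(E)$; so $\res\circ \Phi$ may be interpreted as the composition $\res \circ \widetilde{\Phi}\colon \L(L^2(\G)) \to \Gamma_b(E_X)$, and this is the viewpoint I would adopt throughout.

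For the implication $(1) \Rightarrow (2)$, I would observe that if $(T_x)_{x\in X} = \res(\widetilde{\Phi}(T))$ for some $T \in \L(L^2(\G))$, then $\widetilde{\Phi}(T) \in \Gamma_b(E)$ is a continuous norm-bounded section of $E$ that restricts to $(T_x)_{x\in X}$; hence by Lemma \ref{densesubset-extension} the latter is a continuous extendable section of $E_X$. Applying the implication $(1) \Rightarrow (3)$ of Proposition \ref{lem.charOfBL2G2-densesubset} to the section $\res(\widetilde{\Phi}(T))$ then yields the vector-wise uniform quasi-locality of both $(T_x)_{x\in X}$ and $(T_x^*)_{x\in X}$, which is exactly what condition (2) here requires.

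For the converse $(2) \Rightarrow (1)$, starting from an extendable continuous section $(T_x)_{x\in X}$ whose slices satisfy the vector-wise uniform quasi-locality, Lemma \ref{densesubset-extension} provides a unique extension $\tilde{\sigma} \in \Gamma_b(E)$, and then the implication $(3) \Rightarrow (1)$ of Proposition \ref{lem.charOfBL2G2-densesubset} delivers $T \in \L(L^2(\G))$ with $\widetilde{\Phi}(T) = \tilde{\sigma}$, whose restriction to $X$ recovers $(T_x)_{x\in X}$. For the final clause, when $\Gz$ is the Stone-\v{C}ech compactification of $X$, Corollary \ref{cor:beta X case} makes $\res$ a $C^*$-isomorphism, so every continuous norm-bounded section of $E_X$ is automatically extendable and the corresponding hypothesis in (2) may be dropped. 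No serious obstacle is anticipated since the work is purely one of assembly; the only point worth verifying is that the notion of vector-wise uniform quasi-locality defined just before the statement coincides verbatim with Proposition \ref{lem.charOfBL2G2-densesubset}(3), which is immediate from direct inspection.
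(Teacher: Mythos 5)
Your proposal is correct and is exactly the assembly the paper intends: the corollary is stated immediately after Proposition \ref{lem.charOfBL2G2-densesubset} and the dense-subset definition of vector-wise uniform quasi-locality precisely so that it follows by combining that proposition with Lemma \ref{densesubset-extension} (for extendability) and Lemma \ref{Stone-Cech-extension}/Corollary \ref{cor:beta X case} (for the ``moreover'' clause). Your identification of $\res\circ\Phi$ with $\res\circ\widetilde{\Phi}$ and your matching of condition (3) of the proposition with the pair of vector-wise uniform quasi-locality conditions (merging the two compact sets by taking their union) are both sound, so there is nothing to add.
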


\subsection{Equivariant operators}\label{ssec.equivariant}
In this subsection, we study an extra condition for operators in $\L(L^2(\G))$ called $\G$-equivariance. It turns out that all operators in the reduced groupoid $C^*$-algebra $C^*_r(\G)$ satisfy this condition, which suggests us to restrict ourselves to the equivariant case when studying quasi-locality in Section \ref{sec.quasi}. This class of operators possess a huge advantage since they can always be represented by certain continuous functions (see Section \ref{ssec:realisation for equiv}), which will play a key role in the proof of our main theorem.

Recall that the left regular representation $\Lambda: C_c(\G) \to \L(L^2(\G))$ is defined in Section \ref{ssec.groupoidAlgebra}. As for groups, we can also consider the right regular representation as follows. Given a function $g \in C_c(\G)$, define the map $\rho(g)\colon C_{c}(\G) \rightarrow C_{c}(\G)$ by
\[
    \big(\rho(g)\xi\big)(\gamma):=(\xi \ast g)(\gamma) = \sum_{\beta\in\G_{\s(\gamma)}}g(\beta)\xi(\gamma\beta^{-1}) =\sum_{\alpha\in \G^{\r(\gamma)}}g(\alpha^{-1}\gamma)\xi(\alpha) \text{  for  }  \xi\in C_{c}(\G).
\]
Note that in general, $\rho(g)$ might \emph{not} be a $C_0(\Gz)$-module homomorphism. However, we have the following:

\begin{lemma}
    For $g\in C_{c}(\G)$, the operator $\rho(g)$ can be extended to a bounded linear operator on $L^{2}(\G)$ (regarded as a Banach space), still denoted by $\rho(g)$.
\end{lemma}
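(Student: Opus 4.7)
The plan is to reduce to the case where $\supp(g)$ lies in a compact bisection via the decomposition remark at the end of Section~\ref{ssec.groupoid}, and then carry out an explicit norm estimate using that $\G$-fibres are matched bijectively under right translation. So first I would use the fact that any $g \in C_c(\G)$ can be written as a finite sum $g = \sum_{i=1}^n g_i$ where each $g_i$ is a continuous function supported in an open precompact bisection $U_i$. Since $\rho$ is linear in $g$, boundedness for each $\rho(g_i)$ on $C_c(\G)$ implies boundedness for $\rho(g)$, so WLOG we may assume $\supp(g) \subseteq U$ for a single open precompact bisection $U$.

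Next, I would exploit the bisection property. For $\xi \in C_c(\G)$ and $\gamma \in \G$, the sum
\[
(\rho(g)\xi)(\gamma) = \sum_{\beta \in \G_{\s(\gamma)}} g(\beta)\xi(\gamma\beta^{-1})
\]
has at most one nonzero term, because $\s|_U$ is injective. Concretely, if $\G_{\s(\gamma)} \cap U = \{\beta_{\s(\gamma)}\}$, then $(\rho(g)\xi)(\gamma) = g(\beta_{\s(\gamma)})\xi(\gamma\beta_{\s(\gamma)}^{-1})$; otherwise the value is $0$. Now fix $x \in \Gz$. If $\G_x \cap U = \emptyset$, the restriction of $\rho(g)\xi$ to $\G_x$ vanishes; else let $\beta := \beta_x$, and observe that the right translation $\gamma \mapsto \gamma\beta^{-1}$ is a bijection from $\G_x$ onto $\G_{\r(\beta)}$ (its inverse being $\gamma' \mapsto \gamma'\beta$). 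Hence by a change of variables,
\[
\sum_{\gamma \in \G_x} |(\rho(g)\xi)(\gamma)|^2 = |g(\beta)|^2 \sum_{\gamma' \in \G_{\r(\beta)}} |\xi(\gamma')|^2 \leq \|g\|_{\infty}^2 \, \|\xi\|^2_{L^2(\G)}.
\]

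Taking the supremum over $x \in \Gz$ yields $\|\rho(g)\xi\|_{L^2(\G)} \leq \|g\|_{\infty}\|\xi\|_{L^2(\G)}$ for $\xi \in C_c(\G)$. Summing over the bisection decomposition then gives a universal bound $\|\rho(g)\xi\| \leq \bigl(\sum_i \|g_i\|_\infty\bigr)\|\xi\|$ for general $g \in C_c(\G)$. Since $C_c(\G)$ is dense in $L^2(\G)$, the bounded operator $\rho(g)$ extends uniquely by continuity to a bounded linear operator on $L^2(\G)$, still denoted $\rho(g)$.

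I don't expect serious obstacles here; the main conceptual point is the reduction to a bisection (so that the convolution becomes a single-term right translation), after which the fact that $\gamma \mapsto \gamma\beta^{-1}$ is a measure-preserving bijection of source fibres does all the work. The only thing to be slightly careful about is that $\rho(g)$ need not be $C_0(\Gz)$-linear, which is precisely why the statement concerns boundedness on $L^2(\G)$ as a Banach space rather than adjointability in $\L(L^2(\G))$; the above argument respects this since it only uses the Banach-space norm.
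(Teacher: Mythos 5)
Your proof is correct, but it takes a different route from the paper's. The paper makes no reduction to bisections: it estimates $\|\rho(g)\xi\|^2$ directly via the Cauchy--Schwarz inequality, writing $|\sum_\beta g(\beta)\xi(\gamma\beta^{-1})|^2 \leq (\sum_\beta |g(\beta)|)(\sum_\beta |g(\beta)||\xi(\gamma\beta^{-1})|^2)$ and then exchanging the order of summation, which yields the intrinsic bound $\|\rho(g)\| \leq \sup_{x\in\Gz}\sum_{\beta\in\G_x}|g(\beta)|$ (finite for $g\in C_c(\G)$ by \'{e}taleness). Your argument instead decomposes $g$ into pieces supported on open precompact bisections and observes that each piece acts, fibrewise, as multiplication by a single value of $g$ followed by the right-translation bijection $\gamma\mapsto\gamma\beta^{-1}$ from $\G_x$ onto $\G_{\r(\beta)}$; this is a valid and somewhat more structural argument, giving the sharp bound $\|\rho(g_i)\|\leq\|g_i\|_\infty$ on each bisection piece at the cost of a final bound $\sum_i\|g_i\|_\infty$ that depends on the chosen decomposition. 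The paper's estimate buys a decomposition-free, canonical bound (essentially the $I$-norm of $g$) in three lines; yours buys a clearer picture of $\rho(g)$ as a finite sum of weighted partial translations of source fibres. Your closing remark about $C_0(\Gz)$-linearity versus mere Banach-space boundedness is also the correct reading of why the lemma is phrased as it is.
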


\begin{proof}
    For any $\xi \in C_c(\G)$, we have:
    \begin{equation*}
        \begin{split}
            \|\rho(g)\xi\|^2
            &=  \sup_{x\in \Gz}\sum_{\gamma\in \G_x}|\sum_{\beta\in\G_x}g(\beta)\xi(\gamma\beta^{-1})|^2      \\
            &\leq \sup_{x\in\Gz} \sum_{\gamma\in \G_x}\big(\sum_{\beta\in \G_x}|g(\beta)|\big)\cdot\big(\sum_{\beta\in\G_x}|g(\beta)|\cdot |\xi(\gamma\beta^{-1})|^2\big)    \\
            &= \sup_{x\in\Gz}\big(\sum_{\beta\in \G_x}|g(\beta)|\big)\cdot \sum_{\beta\in\G_x}\big(|g(\beta)|\cdot \sum_{\gamma\in \G_x}|\xi(\gamma\beta^{-1})|^2 \big)\\
            &\leq \big(\sup_{x\in \Gz}\sum_{\beta\in \G_x}|g(\beta)|\big)^2\cdot\|\xi\|^2.
        \end{split}
    \end{equation*}
    Note that $g\in C_c(\G)$, hence the number $\sup_{x\in \Gz}\sum_{\beta\in \G_x}|g(\beta)|$ is finite. Therefore, $\rho(g)$ can be extended to a bounded operator on $L^2(\G)$.
\end{proof}

As a result, the convolution operator can also be extended to an operator (with the same notation) $\ast: L^2(\G) \times C_c(\G) \to L^2(\G)$ by the same formula (\ref{EQ:convolution}), and $\rho(g)\xi=\xi \ast g$ for $g\in C_c(\G)$ and $\xi \in L^2(\G)$.


\begin{definition}\label{def.GEquivariant}
    Let $\G$ be a locally compact \'{e}tale groupoid. We say that $T\in\L(L^2(\G))$ is \textit{$\G$-equivariant}\footnote{Warning: Note that our definition of $\G$-equivariant operators is not the same as the standard one considered in the literature, \emph{e.g.}, \cite[Definition 4.6]{MR1686846}.} if $T\rho(g)=\rho(g)T$ (as bounded operators on $L^{2}(\G)$) for any $g\in C_{c}(\G)$. Denote by ${\L\left(L^{2}(\G)\right)}^{\G}$ the set of all $\G$-equivariant operators in $\L\left(L^2(\G)\right)$, which clearly forms a $C^*$-subalgebra in $\L(L^2(\G))$.
\end{definition}


The following result shows that operators in $C^*_r(\G)$ are $\G$-equivariant. The proof is straightforward, hence omitted.

\begin{lemma}\label{lem.GEquivariantGroupoidAlgebra}
    Given $f\in C_c(\G)$, the operator $\Lambda({f})$ is $\G$-equivariant, where $\Lambda$ is the left regular representation of $\G$.
    Thus, we have $C^{*}_r(\G)\subseteq {\L\left(L^{2}(\G)\right)}^{\G}$.
\end{lemma}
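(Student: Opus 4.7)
The plan is to reduce the statement to the associativity of the convolution product on $C_c(\G)$, and then pass to the closure. So the first step is to unpack what needs to be proved: for a fixed $f \in C_c(\G)$ and arbitrary $g \in C_c(\G)$, I want $\Lambda(f)\rho(g) = \rho(g)\Lambda(f)$ as bounded operators on the Banach space $L^2(\G)$. Since both $\Lambda(f)$ and $\rho(g)$ are bounded (the former is adjointable, the latter was shown to extend boundedly just above), and since $C_c(\G)$ is dense in $L^2(\G)$, it suffices to verify the equality on vectors $\xi \in C_c(\G)$. By the remarks following (\ref{EQ:Lambda}) and the definition of $\rho$, this reduces to the identity
\[
f \ast (\xi \ast g) \;=\; (f \ast \xi) \ast g \qquad \text{for all } f,\xi,g \in C_c(\G).
\]

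The second step is to establish this associativity. Since $f,\xi,g$ are compactly supported and $\G$ is \'etale, all the sums appearing in (\ref{EQ:convolution}) are finite, so the usual Fubini-type interchange is harmless. I would simply expand
\[
\big(f \ast (\xi \ast g)\big)(\gamma) \;=\; \sum_{\alpha \in \G_{\s(\gamma)}} f(\gamma\alpha^{-1}) \sum_{\beta \in \G_{\s(\alpha)}} \xi(\alpha\beta^{-1}) g(\beta),
\]
reindex using the substitution $\alpha \mapsto \alpha\beta^{-1}$ along the fibre $\G_{\s(\gamma)}$ (using that right multiplication by a fixed $\beta$ gives a bijection between appropriate fibres), and recognise the result as $\big((f\ast\xi)\ast g\big)(\gamma)$. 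This is a purely combinatorial calculation with no subtleties beyond book-keeping of source/range conditions, and it is the step where one sees that $\Lambda$ is genuinely a left action and $\rho$ a right action which commute.

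The final step is the closure statement. The set of $\G$-equivariant operators is defined by the family of conditions $T\rho(g) = \rho(g)T$ for $g \in C_c(\G)$; each condition cuts out a norm-closed subspace of $\L(L^2(\G))$ since $\rho(g)$ is a fixed bounded operator and left/right multiplication by it is norm-continuous. Thus $\L(L^2(\G))^{\G}$ is norm-closed. Combining with the first part, which says $\Lambda(C_c(\G)) \subseteq \L(L^2(\G))^{\G}$, and recalling from Section~\ref{ssec.groupoidAlgebra} that $C^*_r(\G)$ is by definition the norm closure of $\Lambda(C_c(\G))$ inside $\L(L^2(\G))$, the inclusion $C^*_r(\G) \subseteq \L(L^2(\G))^{\G}$ follows immediately.

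I do not anticipate a real obstacle here: the only mildly delicate point is the bookkeeping in the associativity calculation to make sure every reindexing respects the source and range constraints, but finite support removes any analytic difficulty.
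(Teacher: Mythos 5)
Your proof is correct, and it is essentially the argument the paper has in mind: the paper omits the proof as straightforward, but the identical associativity computation $\Lambda(f)\rho(g)\xi = f\ast(\xi\ast g) = (f\ast\xi)\ast g = \rho(g)\Lambda(f)\xi$ appears verbatim in the proof of Proposition \ref{prop.leftConvolverAndGEquivariant}(1), and your closure argument for passing to $C^*_r(\G)$ is the standard one.
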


Using the slicing map $\Phi$ defined in (\ref{eq.PhiMap}), we can characterise $\G$-equivariance in terms of their slices. To start, for $\gamma\in\G$ we consider the operator
\[
    V_{\gamma}:\ell^2(\G_{\s(\gamma)}) \rightarrow\ell^2(\G_{\r(\gamma)}) \quad \text{by} \quad V_{\gamma}(\xi)(\gamma') = \xi(\gamma'\gamma)
\]
for $\xi\in\ell^{2}(\G_{\s(\gamma)})$. We have the following:

\begin{lemma}\label{equivariant-fiber}
    Let $\G$ be a locally compact \'{e}tale groupoid. An operator $T\in\L(L^2(\G))$ is $\G$-equivariant \emph{if and only if} $V_{\gamma}\Phi_{\s(\gamma)}(T)=\Phi_{\r(\gamma)}(T) V_{\gamma}$ for all $\gamma\in \G$.
\end{lemma}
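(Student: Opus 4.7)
The strategy is to reduce both implications to the elementary calculation of $\rho(g)$ fiberwise when $g \in C_c(\G)$ is supported in a small open bisection $U \subseteq \G$. Because any $g \in C_c(\G)$ decomposes as a finite sum of such functions (the last paragraph of Section \ref{ssec.groupoid}), no generality is lost. A direct computation from the definition of $\rho(g)$ gives, for $\xi \in C_c(\G)$ and $y \in \Gz$,
$$
\phi_y(\rho(g)\xi) \;=\; g(\beta_y)\,V_{\beta_y^{-1}}\!\bigl(\phi_{\r(\beta_y)}(\xi)\bigr) \quad \text{if } y \in \s(U),
$$
and $\phi_y(\rho(g)\xi) = 0$ otherwise, where $\beta_y$ denotes the unique element of $U \cap \G_y$; by continuity this extends to all $\xi \in L^2(\G)$. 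So $\rho(g)$ acts at the fiber $y$ essentially as the scalar $g(\beta_y)$ times the intertwiner $V_{\beta_y^{-1}}:\ell^2(\G_{\r(\beta_y)})\to\ell^2(\G_y)$.

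For necessity, fix $\gamma \in \G$ and choose $g \in C_c(\G)$ supported in an open bisection around $\gamma^{-1}$ with $g(\gamma^{-1}) \neq 0$; then at the fiber $y = \r(\gamma)$ we have $\beta_y = \gamma^{-1}$ and $\r(\beta_y) = \s(\gamma)$. Applying $\phi_{\r(\gamma)}$ to both sides of $T\rho(g)\xi = \rho(g)T\xi$ and using the key identity together with $\phi_x(T\xi) = \Phi_x(T)\phi_x(\xi)$, the two sides become $g(\gamma^{-1})\Phi_{\r(\gamma)}(T)V_\gamma\phi_{\s(\gamma)}(\xi)$ and $g(\gamma^{-1})V_\gamma\Phi_{\s(\gamma)}(T)\phi_{\s(\gamma)}(\xi)$, respectively. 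Cancelling $g(\gamma^{-1})$ and letting $\xi$ range over $C_c(\G)$, whose restrictions $\phi_{\s(\gamma)}(\xi)$ are dense in $\ell^2(\G_{\s(\gamma)})$ by Lemma \ref{lem:surjectivity for phi_x}, yields the desired operator identity $V_\gamma \Phi_{\s(\gamma)}(T) = \Phi_{\r(\gamma)}(T) V_\gamma$.

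For sufficiency, assume the intertwining holds for every $\gamma \in \G$ and fix $g \in C_c(\G)$ supported in an open bisection $U$. For $\xi \in C_c(\G)$ and $y \in \s(U)$, applying the key identity to $T\rho(g)\xi$ and to $\rho(g)T\xi$ produces the two fiber-$y$ vectors $g(\beta_y)\,\Phi_y(T)V_{\beta_y^{-1}}\phi_{\r(\beta_y)}(\xi)$ and $g(\beta_y)\,V_{\beta_y^{-1}}\Phi_{\r(\beta_y)}(T)\phi_{\r(\beta_y)}(\xi)$, which coincide by the hypothesis applied to $\gamma = \beta_y^{-1}$; when $y \notin \s(U)$ both vanish. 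Injectivity of $\phi$ then gives $T\rho(g)\xi = \rho(g)T\xi$ on $C_c(\G)$, and boundedness of $T$ and $\rho(g)$ combined with the density of $C_c(\G)$ in $L^2(\G)$ extends the equality to all of $L^2(\G)$.

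The only subtlety, rather than a real obstacle, is that $\rho(g)$ need not be adjointable and hence does not lie in $\L(L^2(\G))$, so one cannot directly invoke the $C^*$-characterisations of Section \ref{ssec:operators in LL2G} to identify $T\rho(g)$ with $\rho(g)T$; the argument side-steps this by comparing the two vectors $T\rho(g)\xi$ and $\rho(g)T\xi$ slice by slice via $\phi$ and appealing to the injectivity of the slicing map.
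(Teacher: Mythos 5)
Your proposal is correct and follows essentially the same route as the paper: both reduce to $g$ supported in an open bisection, identify the fibrewise action of $\rho(g)$ as the scalar $g(\beta_y)$ times the intertwiner $V_{\beta_y^{-1}}$, and then compare the two sides of $T\rho(g)=\rho(g)T$ slice by slice, using density of $\phi_x(C_c(\G))$ in $\ell^2(\G_x)$ for necessity and injectivity of the slicing map for sufficiency. Your closing remark about $\rho(g)$ not being adjointable is a fair observation, but the comparison is made at the level of vectors in $L^2(\G)$ in both arguments, so nothing further is needed.
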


\begin{proof}
    For the necessity, we fix $\gamma \in \G$ and denote $x=\s(\gamma), y=\r(\gamma)$. By \'{e}taleness, choose a pre-compact neighbourhood $U$ of $\gamma^{-1}$ such that $\s|_U: U \to \s(U)$ is a homeomorphism. Choose $g\in C_c(\G)$ such that $g(\gamma^{-1})=1$ and $g(\alpha)=0$ for $\alpha\in \G\setminus U$. For any $\eta\in L^2(\G)$ and $\gamma' \in \G_y$, note that
    \[
        (\rho(g)\eta)(\gamma')=\sum_{\alpha\in \G^{\r(\gamma')}}g(\alpha^{-1}\gamma')\eta(\alpha) = \eta(\gamma'\gamma) = V_{\gamma}(\eta|_{\G_x})(\gamma'),
    \]
    which implies that $(\rho(g)\eta)|_{G_y} = V_{\gamma}(\eta|_{\G_x})$. Hence for any $\xi \in L^2(\G)$, we obtain
    \begin{align*}
        \big(V_{\gamma}\Phi_x(T)\big)(\xi|_{\G_x}) & =V_{\gamma} (T(\xi)|_{\G_x}) = \big( \rho(g)T\xi \big)|_{\G_y} = \big( T \rho(g)\xi \big)|_{\G_y}= \Phi_y(T)\big((\rho(g)\xi)|_{\G_y}\big) \\
                                                   & = \big(\Phi_y(T) V_\gamma \big)(\xi|_{\G_x}).
    \end{align*}

    For the sufficiency, we need to show that $\rho(g)T=T\rho(g)$ for any $g\in C_c(\G)$. Without loss of generality, we can assume that $\supp(g)$ is a bisection (see Section \ref{ssec.groupoid}). Given $\xi\in L^2(\G)$ and $\gamma \in \G$ with $\s(\gamma)=x$ and $\r(\gamma)=y$, the intersection $\G_x \cap \supp(g)$ contains at most one point. If $\G_x \cap \supp(g)=\{\hat{\gamma}\}$, then
    \begin{align*}
        \big( \rho(g)T\xi \big)(\gamma) & = \sum_{\beta \in \G_x} g(\beta) \cdot (T\xi)(\gamma \beta^{-1})=g(\hat{\gamma})\cdot (T\xi)(\gamma \hat{\gamma}^{-1})                                                                                                                                 \\
                                        & = g(\hat{\gamma}) \cdot \Big(\Phi_{\r(\hat{\gamma})}(T)(\xi|_{\G_{\r(\hat{\gamma})}})\Big)(\gamma \hat{\gamma}^{-1}) = g(\hat{\gamma}) \cdot \Big(\big(V_{\hat{\gamma}^{-1}}\Phi_{\r(\hat{\gamma})}(T)\big)(\xi|_{\G_{\r(\hat{\gamma})}})\Big)(\gamma) \\
                                        & = g(\hat{\gamma}) \cdot \Big(\big(\Phi_x(T) V_{\hat{\gamma}^{-1}}\big)(\xi|_{\G_{\r(\hat{\gamma})}})\Big)(\gamma),
    \end{align*}
    where we use $V_{\hat{\gamma}^{-1}}\Phi_{\r(\hat{\gamma})}(T) = \Phi_x(T) V_{\hat{\gamma}^{-1}}$ in the last equality. On the other hand, note that for any $\gamma' \in \G_x$ we have
    \[
        ( \rho(g)\xi )(\gamma') = \sum_{\beta \in \G_x} g(\beta) \cdot \xi(\gamma' \beta^{-1}) =g(\hat{\gamma}) \cdot \xi(\gamma' \hat{\gamma}^{-1}) = g(\hat{\gamma}) \cdot V_{\hat{\gamma}^{-1}}(\xi|_{\G_{\r(\hat{\gamma}))}})(\gamma').
    \]
    Hence we obtain
    \[
        \big( T\rho(g)\xi \big)(\gamma) = \Phi_x(T)\big( (\rho(g)\xi)|_{\G_x} \big)(\gamma) = g(\hat{\gamma}) \cdot \Big(\big(\Phi_x(T) V_{\hat{\gamma}^{-1}}\big)(\xi|_{\G_{\r(\hat{\gamma})}})\Big)(\gamma),
    \]
    which shows that $\rho(g)T = T\rho(g)$. The case of $\G_x \cap \supp(g)=\emptyset$ is similar but much easier, hence omitted.
\end{proof}


%

In the case of dense subsets, Lemma \ref{equivariant-fiber} can be reduced to the following. The proof is straightforward, hence omitted.

\begin{corollary}\label{equivariant-dense}
    Let $\G$ be a locally compact \'{e}tale groupoid and $X$ be a dense invariant subset of $\Gz$. Then an operator $T \in \L(L^2(\G))$ is $\G$-equivariant \emph{if and only if} $V_{\gamma}T_{\s(\gamma)}=T_{\r(\gamma)} V_{\gamma}$ for any $\gamma\in \s^{-1}(X)=\r^{-1}(X)$.
\end{corollary}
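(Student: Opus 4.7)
The necessity is immediate: if $T$ is $\G$-equivariant then by Lemma \ref{equivariant-fiber} the commutation $V_\gamma \Phi_{\s(\gamma)}(T) = \Phi_{\r(\gamma)}(T) V_\gamma$ holds for \emph{every} $\gamma \in \G$, so in particular for $\gamma \in \s^{-1}(X)$. The content is therefore sufficiency, and the plan is to reduce it to the hypothesis by a density-plus-continuity argument.

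For sufficiency, fix an arbitrary $\gamma \in \G$ with $x = \s(\gamma)$, $y = \r(\gamma)$. To verify $V_\gamma T_x = T_y V_\gamma$, it suffices to compare matrix coefficients on basis vectors, i.e.\ to show
\[
\langle \delta_{\beta\gamma},\, T_x \delta_\alpha \rangle \;=\; \langle \delta_\beta,\, T_y \delta_{\alpha\gamma^{-1}} \rangle
\]
for all $\alpha \in \G_x$ and $\beta \in \G_y$, since unwinding the definition of $V_\gamma$ gives $V_\gamma \delta_\alpha = \delta_{\alpha\gamma^{-1}}$ in $\ell^2(\G_y)$ and $(V_\gamma \xi)(\beta) = \xi(\beta\gamma)$. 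Using that $\s: \G \to \Gz$ is a local homeomorphism and that $X$ is dense in $\Gz$, I will pick an open bisection $U \ni \gamma$ and a net $x_i \in X \cap \s(U)$ with $x_i \to x$; then $\gamma_i := (\s|_U)^{-1}(x_i) \in \G$ satisfies $\gamma_i \to \gamma$ and $\s(\gamma_i) = x_i \in X$, so $\gamma_i \in \s^{-1}(X) = \r^{-1}(X)$ by invariance. Analogously, using open bisections around $\alpha$ and $\beta$, I lift to nets $\alpha_i, \beta_i$ with $\alpha_i \to \alpha$, $\beta_i \to \beta$, $\s(\alpha_i) = x_i$, and $\s(\beta_i) = \r(\gamma_i) =: y_i$. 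Continuity of multiplication and inversion in $\G$ then yields $\beta_i \gamma_i \to \beta\gamma$ and $\alpha_i \gamma_i^{-1} \to \alpha\gamma^{-1}$, with all the requisite source/range compatibilities.

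Now the hypothesis applied to each $\gamma_i$ gives
\[
\langle \delta_{\beta_i \gamma_i},\, T_{x_i} \delta_{\alpha_i} \rangle \;=\; \langle \delta_{\beta_i},\, T_{y_i} \delta_{\alpha_i \gamma_i^{-1}} \rangle
\]
for every $i$. The key step is to pass to the limit on both sides. For this I invoke that the slice map $z \mapsto T_z = \Phi_z(T)$ is a continuous section of the operator fibre space $E$ (by the discussion following Corollary \ref{lem.charOfBL2G1}, using Lemma \ref{lem.charOfGammaB}). The convergence requirements in the topology on $E$ (Definition \ref{defn:operator fibre space}) are exactly that the source fibres match and the two bracketing nets of arrows converge; this has been arranged by construction on both sides, so the left-hand sides converge to $\langle \delta_{\beta\gamma}, T_x \delta_\alpha \rangle$ and the right-hand sides to $\langle \delta_\beta, T_y \delta_{\alpha\gamma^{-1}} \rangle$. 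Equality in the limit gives $V_\gamma T_x = T_y V_\gamma$, and Lemma \ref{equivariant-fiber} then yields $\G$-equivariance of $T$.

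The main obstacle I anticipate is purely bookkeeping: ensuring that the approximating nets $\gamma_i, \alpha_i, \beta_i$ can be produced so that all of the source conditions $\s(\alpha_i) = \s(\beta_i \gamma_i) = x_i$ and $\s(\beta_i) = \s(\alpha_i \gamma_i^{-1}) = y_i$ hold simultaneously, so that the continuity of the section in the fibre-space topology actually applies. This is handled by working inside open bisections on which $\s$ is a homeomorphism, and using density of $X$ in $\Gz$ together with invariance $\s^{-1}(X) = \r^{-1}(X)$ to keep the lifted points inside $\s^{-1}(X)$.
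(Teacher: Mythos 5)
Your proof is correct: the paper omits the argument as ``straightforward,'' and what you supply is exactly the intended reasoning --- necessity from Lemma \ref{equivariant-fiber}, and sufficiency by lifting a fixed $\gamma\in\G$ (together with the test arrows $\alpha\in\G_{\s(\gamma)}$, $\beta\in\G_{\r(\gamma)}$) through open bisections to nets over $X$, using invariance to keep $\r(\gamma_i)\in X$, and then passing to the limit in the matrix coefficients via the continuity of $z\mapsto\Phi_z(T)$ as a section of the operator fibre space. The bookkeeping you flag (intersecting the finitely many open sets $\s(U)$, $\s(A)$, $\s(B)$ so all source/range compatibilities hold along the net) is exactly the only point requiring care, and you handle it correctly.
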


Now we apply the tools developed above to calculate the $C^*$-algebra $\L(L^2(\G))^{\G}$ for the coarse groupoid:

\begin{example}\label{eg:coarse groupoid for LL2G}
    Following the notation in Section \ref{ssec:coarse groupoids pre}, let $(X,d)$ be a discrete metric space with bounded geometry and $\G=G(X)$ be the associated coarse groupoid. The unit space of $G(X)$ is the Stone-\v{C}ech compactification $\beta X$ of $X$, with an open invariant dense subset $X$. Note that each $\G_x$ is bijective to $X$ via the map $(y,x) \mapsto y$, and hence Corollary \ref{cor:beta X case} implies that the map
    \[
        \res\circ \Phi: \L(L^2(\G))^{\G} \longrightarrow \prod_{x\in X}\B\left(\ell^2(\G_{x})\right) \cong \prod_{x\in X}\B\left(\ell^2(X)\right)
    \]
    is a $C^\ast$-monomorphism. Moreover, it follows from Corollary \ref{cor:char2 for LL2G} and \ref{equivariant-dense} that $(T_x)_{x\in X}$ belongs to the image of $\res\circ \Phi$ if and only if there exists $T \in \B(\ell^2(X))$ such that the constant family $(T)_{x\in X}$ is vector-wise uniformly quasi-local.

    To provide a more concrete picture, we claim that the family $(T)_{x\in X}$ is vector-wise uniformly quasi-local \emph{if and only if} the following holds:
    \begin{equation}\label{EQ:char for LL2G coarse groupoid}
        \lim_{S \to \infty} \sup_{x\in X} \sum_{y\notin B(x,S)} |T_{y,x}|^2 =0
        \quad \text{and} \quad \lim_{S \to \infty} \sup_{x\in X} \sum_{y\notin B(x,S)} |T_{x,y}|^2 =0.
    \end{equation}
    In fact, by definition we know that $(T)_{x\in X}$ is vector-wise uniformly quasi-local if and only if for any $\varepsilon>0$ and $F \in \CC_u[X]$, there exists $S>0$ such that for any $A,B \subseteq X$ with $d(A,B) \geq S$ then $\|\chi_A T \chi_B (F_x)\| \leq \varepsilon$ and $\|\chi_A T^* \chi_B (F_x)\| \leq \varepsilon$, where $F_x\in \ell^2(X)$ is defined by $F_x(y) \coloneqq F(y,x)$. It is easy to see that this is equivalent to the following: for any $\varepsilon>0$ and $R>0$, there exists $S>0$ such that for any $A,B \subseteq X$ with $d(A,B) \geq S$ and any $\xi \in \ell^2(X)$ with $\diam(\supp(\xi)) \leq R$ and $\|\xi\|_\infty \leq 1$ we have $\|\chi_A T \chi_B \xi\| \leq \varepsilon$ and $\|\chi_A T^* \chi_B \xi\| \leq \varepsilon$. Taking $A$ to be $X \setminus B(x,S)$ and using bounded geometry, this is also equivalent to that for any $\varepsilon>0$, there exists $S>0$ such that for any $B \subseteq X$ and any $x\in X$ we have $\|\chi_{X \setminus \Nd_S(B)} T \chi_B (\delta_{x})\| \leq \varepsilon$ and $\|\chi_{X \setminus \Nd_S(B)} T^* \chi_B (\delta_{x})\| \leq \varepsilon$. Finally, note that it suffices to consider the case when $B=\{x\}$, and hence we conclude the claim.

    In conclusion, we obtain the following:
    \begin{lemma}\label{lem:char for LL2G coarse groupoids}
        Let $(X,d)$ be a discrete metric space with bounded geometry and $G(X)$ be the associated coarse groupoid. Then for some fixed $x_0 \in X$, the map
        \begin{equation}\label{EQ:Theta coarse groupoid}
            \Theta: \L(L^2(G(X)))^{G(X)} \longrightarrow \{T \in \B(\ell^2(X)): T \text{ satisfies Equation (\ref{EQ:char for LL2G coarse groupoid})}\}
        \end{equation}
        given by $T \mapsto \Phi_{x_0}(T)$ is a $C^*$-isomorphism. Note that $\Theta$ is independent of the choice of $x_0$.
    \end{lemma}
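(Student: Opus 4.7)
The plan is to verify that $\Theta$ is well-defined, a $*$-homomorphism, injective, surjective, and independent of $x_0$. The heart of the argument is tracking how $\G$-equivariance interacts with the identifications $\G_x \cong X$ via $(y,x) \mapsto y$.

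My first step is the key computation: for $\gamma = (y, x) \in G(X)$ with $x, y \in X$, the intertwining operator $V_\gamma : \ell^2(\G_x) \to \ell^2(\G_y)$ satisfies
\[
V_\gamma(\xi)(z, y) \;=\; \xi\bigl((z,y)(y,x)\bigr) \;=\; \xi(z, x),
\]
so under the bijections $\G_x \cong X$ and $\G_y \cong X$ it becomes the identity on $\ell^2(X)$. Since $X$ is an open invariant subset of $\Gz = \beta X$, Corollary \ref{equivariant-dense} implies that $T \in \L(L^2(G(X)))$ is $\G$-equivariant if and only if, after these identifications, the family $(\Phi_x(T))_{x \in X}$ is constant, i.e.\ equal to a single operator $T_0 \in \B(\ell^2(X))$. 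This immediately yields both well-definedness of $\Theta$ as a $*$-homomorphism and its independence of $x_0$.

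Combining Corollary \ref{cor:char2 for LL2G} (applied to the open dense subset $X \subseteq \beta X = \Gz$, where Lemma \ref{Stone-Cech-extension} lets us drop the extendability requirement) with the claim established just before the lemma, I obtain that $\res \circ \Phi$ sends $\L(L^2(G(X)))^{G(X)}$ into the set of constant families $(T_0)_{x \in X}$ with $T_0 \in \B(\ell^2(X))$ satisfying Equation (\ref{EQ:char for LL2G coarse groupoid}). Since $\res \circ \Phi$ is a $C^*$-monomorphism (by Corollary \ref{cor:beta X case} and the fact that $\Phi$ itself is an isometric embedding), this proves that $\Theta$ is an injective $*$-homomorphism with image contained in the target set.

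For surjectivity, given any $T_0 \in \B(\ell^2(X))$ satisfying (\ref{EQ:char for LL2G coarse groupoid}), I form the constant family $(T_0)_{x \in X}$ under the identifications. Since $X$ is discrete as a subspace of $\beta X$, the section $x \mapsto T_0$ is automatically a continuous bounded section of $E_X$; extendability comes for free from Lemma \ref{Stone-Cech-extension}; and vector-wise uniform quasi-locality of the constant family is exactly the claim proved in the surrounding text. Corollary \ref{cor:char2 for LL2G} then supplies a unique $\widetilde{T} \in \L(L^2(G(X)))$ with $\Phi_x(\widetilde{T}) = T_0$ for all $x \in X$, and the initial computation forces $\widetilde{T}$ to be $\G$-equivariant, giving $\Theta(\widetilde{T}) = T_0$. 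The main obstacle is purely bookkeeping: pinning down the identifications $\G_x \cong X$ so that $V_\gamma$ is literally the identity and confirming the invariance/density hypotheses needed to invoke the tools from Section \ref{ssec:dense subset}. All analytic content has already been packaged in Corollary \ref{cor:char2 for LL2G}, the translation between vector-wise uniform quasi-locality and Equation (\ref{EQ:char for LL2G coarse groupoid}), and Corollary \ref{equivariant-dense}.
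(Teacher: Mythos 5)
Your argument is correct and follows essentially the same route as the paper's: identify each $\G_x$ with $X$, observe that the intertwiners $V_\gamma$ become identities so that equivariance (via Corollary \ref{equivariant-dense}) is exactly constancy of the family of slices, and then apply Corollary \ref{cor:char2 for LL2G} (with Lemma \ref{Stone-Cech-extension} removing extendability) together with the equivalence between vector-wise uniform quasi-locality of a constant family and Equation (\ref{EQ:char for LL2G coarse groupoid}). Your remark that continuity of the section is automatic because $X$ is discrete in $\beta X$ is a small point the paper leaves implicit, but otherwise the two proofs coincide.
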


    As we will see in the next subsection, the map $\Theta$ in (\ref{EQ:Theta coarse groupoid}) extends the isomorphism $\Theta: C^*_r(G(X)) \to C^*_u(X)$ in (\ref{EQ:Theta for coarse groupoid}). Hence we take the liberty of abusing the notation.
\end{example}

\subsection{Realisations for equivariant operators and left convolvers}\label{ssec:realisation for equiv}
This subsection is devoted to providing a concrete description for equivariant operators. Let us start with the case of groups, \emph{i.e.}, the unit space consists of a single point.

Let $\G=G$ be a discrete group, and hence $\L(L^2(G))=\B(\ell^2(G))$. For $f, g \in \ell^2(\G)$, the convolution $f \ast g$ is a well-defined bounded function on $G$ thanks to the Cauchy-Schwarz inequality. Recall that $f\in \ell^2(G)$ is a \emph{left convolver} (see \cite[Section 1.3]{AP2019a}) if $f \ast g \in \ell^2(G)$ whenever $g\in \ell^2(G)$. In this case,  the map $g\mapsto f \ast g$ is a bounded operator on $\ell^2(G)$ (by the closed graph theorem), denoted by $\Lambda(f)$. It is shown in \cite[Theorem 1.3.6]{AP2019a} that an operator $T \in \B(\ell^2(G))$ is $G$-equivariant \emph{if and only if} there exists a left convolver $f_T \in \ell^2(G)$ such that $T=\Lambda(f_T)$.

In the following, we would like to explore an analogous description for equivariant operators on groupoids. Let $\G$ be a locally compact \'{e}tale groupoid with unit space $\Gz$. Similar to the assumption of square-summability in the group case, we consider functions $f\in C_{b}(\G)$ satisfying the following:
\begin{enumerate}[label= (c.\arabic*)]
    \item\label{item.leftConvolver1} $f|_{\G_{x}}\in\ell^{2}(\G_{x})$ and $f^{*}|_{\G_{x}}\in\ell^{2}(\G_{x})$ for every $x\in\Gz$;
    \item\label{item.leftConvolver2} $\sup_{x\in\Gz}\|f|_{\G_{x}}\|_{\ell^{2}(\G_{x})}<\infty$, and $\sup_{x\in\Gz}\|f^{*}|_{\G_{x}}\|_{\ell^{2}(\G_{x})}<\infty$,
\end{enumerate}
where $f^*$ is defined by the same formula (\ref{EQ:star}).
Then for $g\in L^2(\G)$, the Cauchy-Schwarz inequality implies that the convolution $f\ast g$ and $f^* \ast g$ by (\ref{EQ:convolution}) are well-defined, and are bounded functions on $\G$.

\begin{definition}\label{def.left convolver}
    Let $\G$ be a locally compact \'{e}tale groupoid. A function $f\in C_{b}(\G)$ is called a \textit{left convolver} if $f$ satisfies \ref{item.leftConvolver1} and \ref{item.leftConvolver2}, and both $f*g$ and $f^{*}*g$ belong to $L^{2}(\G)$ for every $g\in L^{2}(\G)$.
\end{definition}

For a left convolver $f \in C_b(\G)$, it is routine to check that the map $g \mapsto f \ast g$ is an adjointable operator on $L^2(\G)$, still denoted by $\Lambda(f)$. It is clear that $\Lambda(f)^*=\Lambda(f^*)$. Note that when $f \in C_c(\G)$, this coincides with the left regular representation defined by (\ref{EQ:Lambda}).

\begin{remark}
    Comparing with the case of groups, readers might wonder why we need requirements on both $f$ and $f^*$ in \ref{item.leftConvolver1}, \ref{item.leftConvolver2} and Definition \ref{def.left convolver}. The reason lies in the fact that on Hilbert $C^*$-modules, a bounded module morphism might \emph{not} be adjointable in general. We provide some examples below.
\end{remark}

\begin{example}
    Let $\G = \mathbb{N}\times\mathbb{N}$ be the pair groupoid from Section \ref{sssec:pair groupoids}.
    Consider the function $f \in C_b(\G)$ given by $f((m,n)):=m^{-1}$. Direct calculations show that $f|_{\G_{(n,n)}}\in\ell^{2}(\NN)$ for each $n\in \NN$, and $\sup_{n\in\NN} \|f|_{\G_{(n,n)}}\|_2 <\infty$. On the other hand, note that $f^*$ is given by $f^*((m,n)) = n^{-1}$, and hence $f^*|_{\G_{(n,n)}} \notin \ell^2(\G_{(n,n)})$ for any $n\in \NN$. Moreover, note that for the function $g:\G \to \RR$ defined by $g((m,n)) = (nm)^{-1}$, it is clear that $g\in L^{2}(\G)$ while
    \[
        f*g ((m,n)) = \sum_{m_{1}\in\mathbb{N}}f((m,m_{1}))g((m_{1},n)) = (nm)^{-1}\sum_{m_{1}\in\mathbb{N}} m_{1}^{-1} = \infty.
    \]
    This illustrates that the requirements on $f^*$ in \ref{item.leftConvolver1} and \ref{item.leftConvolver2} \emph{cannot} be deduced from those on $f$, and both of them are used to ensure that the convolutions by $f$ and $f^*$ are well-defined.
\end{example}

The following example shows that even if a function $f \in C_b(\G)$ satisfies \ref{item.leftConvolver1} and \ref{item.leftConvolver2}, it might occur that the convolution by $f$ is a bounded operator on $L^2(\G)$ while \emph{not} adjointbale. This example essentially comes from \cite[Example 2.1.2]{MarkovichTroitsky2005}, and explains that we need to consider both $f$ and $f^*$ in Definition \ref{def.left convolver}.

\begin{example}
    We consider the groupoid $\G = \NN \times \NN \times [0,1]$ with source and range maps given by $\s((m,n,x)) = (n,n,x)$ and $\r((m,n,x)) = (m,m,x)$ for $(m,n,x) \in \G$, respectively. The composition map is given by $(m,n,x) \cdot (n,k,x)=(m,k,x)$, and the inverse map is by $(m,n,x) = (n,m,x)$ for $m,n,k \in \NN$ and $x\in [0,1]$.

    We take the product topology on $\G$, and then identify $C_b(\mathcal{G})$ with
    \[
        \{(f_{m,n})_{m,n\in\NN} : f_{m,n}\in C([0,1]) \text{ and } \sup_{(m,n) \in \NN \times \NN} \|f_{m,n}\|_\infty < \infty\},
    \]
    whose element can be regarded as an $\NN$-by-$\NN$ matrix with entries in $C([0,1])$. Elements in the subspace $C_c(\G)$ can be written as $(f_{m,n})_{m,n\in\NN}$ such that $f_{m,n}\neq 0$ for only finitely many $(m,n) \in \NN \times \NN$.

    Consider the function $f\in C_{b}(\mathcal{G})$ given by
    \[
        f = \begin{pmatrix}
            \varphi_{1} & \varphi_{2} & \cdots \\
            0           & 0           & \cdots \\
            \vdots      & \vdots      & \ddots
        \end{pmatrix}\text{ where }\varphi_{i} = \begin{cases}
            0,             & \text{on } [0,\frac{1}{i+1}]\text{ and }[\frac{1}{i},1];                                        \\
            1,             & \text{at the point } \frac{1}{2}(\frac{1}{i}+\frac{1}{i+1});                                    \\
            \text{linear}, & \text{on } [\frac{1}{i+1}, \frac{2i+1}{2i(i+1)}]\text{ and }[\frac{2i+1}{2i(i+1)},\frac{1}{i}].
        \end{cases}
    \]
    It is easy to see that $f^{*} = \begin{pmatrix}
            \varphi_{1} & 0      & \cdots \\
            \varphi_{2} & 0      & \cdots \\
            \vdots      & \vdots & \ddots
        \end{pmatrix}$ and $f^{*}\notin L^{2}(\mathcal{G})$ since the first coordinate of $\langle f^{*}, f^{*}\rangle$ is $\sum_{n\in\mathbb{N}}\varphi_{n}^{2}$, which is not in $C([0,1])$. It is routine to check that $f$ satisfies \ref{item.leftConvolver1} and \ref{item.leftConvolver2}.

    Given $(\xi_{i,j})\in L^{2}(\mathcal{G})$, we have:
    \[
        f*\xi = \begin{pmatrix}
            \sum_{k\in\mathbb{N}}\varphi_{k}\xi_{k,1} & \sum_{k\in\mathbb{N}}\varphi_{k}\xi_{k,2} & \cdots \\
            0                                         & 0                                         & \cdots \\
            \vdots                                    & \vdots                                    & \ddots
        \end{pmatrix},
    \]
    which belongs to $L^{2}(\G)$ using a standard approximating argument. This implies that the convolution $\Lambda(f)$ is a linear operator on $L^2(\G)$, which is also bounded by the closed graph theorem. On the other hand, consider $\eta = \begin{pmatrix}
            \chi_{[0,1]} & 0      & \cdots \\
            0            & 0      & \cdots \\
            \vdots       & \vdots & \ddots
        \end{pmatrix}\in L^{2}(\mathcal{G})$, and then $f^{*}*\eta =f^{*}\notin L^{2}(\mathcal{G})$.
\end{example}

Now we are in the position to provide the realisation for equivariant operators:

\begin{proposition}\label{prop.leftConvolverAndGEquivariant}
    Let $\G$ be a locally compact \'{e}tale groupoid.
    \begin{enumerate}
        \item For any left convolver $f \in C_b(\G)$, we have $\Lambda(f)\in \L\left(L^{2}(\G)\right)^{\G}$.
        \item Conversely, for any $T \in \L\left(L^{2}(\G)\right)^{\G}$ there exists a unique left convolver $f_T\in C_b(\G)$ such that $T = \Lambda(f_T)$.
    \end{enumerate}
\end{proposition}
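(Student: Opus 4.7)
Part (1) is essentially the associativity of the convolution operation. Given a left convolver $f\in C_b(\G)$ and $g \in C_c(\G)$, I would first check by a direct double-sum computation that $(f*\xi)*g = f*(\xi*g)$ for every $\xi \in C_c(\G)$; the summations involved are finite so no analytic subtlety arises. Rewriting this identity as $\Lambda(f)\rho(g)\xi = \rho(g)\Lambda(f)\xi$ and using density of $C_c(\G)$ in $L^2(\G)$ together with the boundedness of $\Lambda(f), \rho(g)$ promotes the identity to all of $L^2(\G)$, which is exactly $\G$-equivariance.

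For part (2), the idea is to reconstruct $f_T$ from the matrix coefficients of the slices $\Phi_x(T)$ on the ``unit vector''. Explicitly, for each $\gamma \in \G$ set
\[
f_T(\gamma) := \bigl(\Phi_{\s(\gamma)}(T)\,\delta_{\s(\gamma)}\bigr)(\gamma),
\]
where $\delta_{\s(\gamma)}\in\ell^2(\G_{\s(\gamma)})$ is the indicator of the unit $\s(\gamma)$ (viewed as a point of $\G_{\s(\gamma)}$). Boundedness is immediate: $|f_T(\gamma)|\le \|\Phi_{\s(\gamma)}(T)\|\le \|T\|$. For continuity at a point $\gamma_0$, I would use \'{e}taleness to build $\eta\in C_c(\G)$ supported inside $\Gz$ and equal to $1$ on a compact neighbourhood of $\s(\gamma_0)$; for $\gamma$ in a suitable neighbourhood of $\gamma_0$ one has $\eta|_{\G_{\s(\gamma)}} = \delta_{\s(\gamma)}$, so continuity of $f_T$ near $\gamma_0$ follows from Lemma~\ref{lem.charOfGammaB}\ref{item3.lem.charOfGammaB} applied to $\widetilde{\Phi}(T)\in \Gamma_b(E)$.

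The heart of the argument is to verify $T = \Lambda(f_T)$. Using Lemma~\ref{equivariant-fiber}, the identity $V_\alpha\Phi_x(T) = \Phi_{\r(\alpha)}(T)V_\alpha$ together with $V_\alpha\delta_\alpha = \delta_{\r(\alpha)}$ yields, after evaluating the operator equation on $\delta_\alpha$ and pairing with $\delta_\eta$, the key identity
\[
\bigl(\Phi_x(T)\delta_\alpha\bigr)(\gamma) \;=\; f_T(\gamma\alpha^{-1}) \qquad\text{for all }\alpha,\gamma\in\G_x.
\]
For $\xi\in C_c(\G)$ the restriction $\xi|_{\G_x}=\sum_{\alpha\in\G_x}\xi(\alpha)\delta_\alpha$ is a finite sum, so linearity of $\Phi_x(T)$ gives
\[
(T\xi)(\gamma) \;=\; \bigl(\Phi_{\s(\gamma)}(T)(\xi|_{\G_{\s(\gamma)}})\bigr)(\gamma) \;=\; \sum_{\alpha\in\G_{\s(\gamma)}} \xi(\alpha)\, f_T(\gamma\alpha^{-1}) \;=\; (f_T * \xi)(\gamma),
\]
which is $T\xi = \Lambda(f_T)\xi$. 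Since $T$ is bounded on $L^2(\G)$ and $C_c(\G)$ is dense, this forces $f_T*g\in L^2(\G)$ for every $g\in L^2(\G)$; applying the same construction to $T^*$ (which is also $\G$-equivariant) yields the analogous statement for $f_T^*$ and simultaneously identifies $(f_T)^*$ with the function associated to $T^*$, which in turn gives conditions \ref{item.leftConvolver1} and \ref{item.leftConvolver2} via $\|f_T|_{\G_x}\|_2 = \|\Phi_x(T)\delta_x\|_2 \le \|T\|$. Hence $f_T$ is a left convolver and $T=\Lambda(f_T)$. Uniqueness is straightforward: if $\Lambda(f_1)=\Lambda(f_2)$, then $(f_1-f_2)*\eta = 0$ for any $\eta\in C_c(\G)$ supported in $\Gz$ with $\eta\equiv 1$ near a given $x\in\Gz$, and evaluating at $\gamma\in\G_x$ recovers $(f_1-f_2)(\gamma)=0$.

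\textbf{Main obstacle.} The only delicate point is the careful bookkeeping around sources, ranges and inverses that underlies the identity $(\Phi_x(T)\delta_\alpha)(\gamma) = f_T(\gamma\alpha^{-1})$; once this is correctly extracted from Lemma~\ref{equivariant-fiber}, the rest of the argument is a routine density-and-boundedness exercise paralleling the classical group case.
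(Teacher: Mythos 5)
Your proposal is correct and follows essentially the same route as the paper: part (1) via associativity of convolution plus density, and part (2) by defining $f_T(\gamma) = \bigl(\Phi_{\s(\gamma)}(T)\delta_{\s(\gamma)}\bigr)(\gamma)$, extracting the identity $\bigl(\Phi_x(T)\delta_\alpha\bigr)(\gamma) = f_T(\gamma\alpha^{-1})$ from equivariance (the paper's equation for $f_T$ via $V_\gamma$-intertwining), and expanding $\xi|_{\G_x}$ over the $\delta_\alpha$ to get $T\xi = f_T*\xi$, with $T^*$ handled symmetrically to obtain conditions \ref{item.leftConvolver1} and \ref{item.leftConvolver2}. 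The minor presentational differences (your explicit continuity argument via Lemma~\ref{lem.charOfGammaB} and the density step for general $g\in L^2(\G)$) fill in details the paper leaves implicit.
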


\begin{proof}
    (1). It is clear that $\Lambda(f) \in \L(L^2(\G))$ for any left convolver $f \in C_b(\G)$. Moreover, note that for any $g\in C_c(\G)$ and $\xi \in L^2(\G)$, we have $\Lambda(f)\rho(g)\xi = f \ast (\xi \ast g) = (f \ast \xi) \ast g= \rho(g)\Lambda(f)\xi$. Hence $\Lambda(f)$ is $\G$-equivariant.

    (2). Given $T \in \L\left(L^{2}(\G)\right)^{\G}$, we consider the function $f_T:\G \to \CC$ defined by
    \begin{equation}\label{EQ:function fT}
        f_{T}(\gamma) \coloneqq \big(\Phi_{\s(\gamma)}(T)(\delta_{\s(\gamma)})\big)(\gamma) \quad \text{for} \quad \gamma\in\G.
    \end{equation}
    Since $T$ is $\G$-equivariant, then for any $\gamma, \alpha \in \G$ with $\r(\alpha)=\s(\gamma)$ we have
    \begin{equation}\label{EQ:function fT equiv}
        f_T(\gamma) = \big(\Phi_{\s(\gamma)}(T)(\delta_{\s(\gamma)})\big)(\gamma) = \big(\Phi_{\s(\alpha)}(T)(\delta_{\alpha})\big)(\gamma\alpha).
    \end{equation}
    On the other hand, for $\gamma\in \G$ we have
    \begin{align*}
        f_{T^{*}}(\gamma) & = \Phi_{\s(\gamma)}(T^*)(\delta_{\s(\gamma)})(\gamma) = \left(V_{\gamma^{-1}}\Phi_{\r(\gamma)}(T)^*V_{\gamma}\right) (\delta_{\s(\gamma)})(\gamma)                                                             \\
                          & = \left(\Phi_{\r(\gamma)}(T)^*V_{\gamma}\right) (\delta_{\s(\gamma)})(\gamma\gamma^{-1})  = \left\langle \delta_{\r(\gamma)}, \left(\Phi_{\r(\gamma)}(T)^*V_{\gamma}\right) (\delta_{\s(\gamma)})\right\rangle \\
                          & = \langle \Phi_{\r(\gamma)}(T)(\delta_{\r(\gamma)}),  V_{\gamma}(\delta_{\s(\gamma)})\rangle = \langle \Phi_{\r(\gamma)}(T)(\delta_{\r(\gamma)}),  \delta_{\gamma^{-1}}\rangle                                 \\
                          & =\overline{\Phi_{\r(\gamma)}(T)(\delta_{\r(\gamma)})(\gamma^{-1})}=\overline{f_{T}(\gamma^{-1})} = (f_{T})^{*}(\gamma),
    \end{align*}
    where $T$ being $\G$-equivariant is used in the second equality. This shows that $(f_{T})^{*}=f_{T^{*}}$. Hence for any $x\in \Gz$, we have $f_T|_{\G_x} = \Phi_{x}(T)(\delta_x)$ and $(f_T)^*|_{\G_x} = \Phi_{x}(T^*)(\delta_x)$, which implies that \ref{item.leftConvolver1} and \ref{item.leftConvolver2} hold for $f_T$.

    Since $x \mapsto \Phi_x(T)$ is a bounded continuous section in $\Gamma_b(E)$, it follows that $f_T$ is a bounded continuous function on $\G$. For any $\xi \in L^2(\G)$, note that $f_T\ast \xi$ is well-defined since $f_T$ satisfies \ref{item.leftConvolver1} and \ref{item.leftConvolver2}. Moreover, for any $\gamma \in \G$ with $\s(\gamma)=x$ we have
    \begin{align*}
        (f_T \ast \xi)(\gamma) & = \sum_{\alpha \in \G_x} f_T(\gamma \alpha^{-1}) \xi(\alpha) = \sum_{\alpha \in \G_x} \big(\Phi_{\r(\alpha)}(T)(\delta_{\r(\alpha)})\big)(\gamma \alpha^{-1}) \xi(\alpha) \\
                               & =\sum_{\alpha \in \G_x} \big(\Phi_x(T)(\delta_\alpha)\big)(\gamma) \xi(\alpha) = \big(\Phi_x(T)(\xi|_{\G_x})\big)(\gamma) = (T\xi)(\gamma),
    \end{align*}
    where we use (\ref{EQ:function fT equiv}) in the third equality. This implies that $f_T \ast \xi \in L^2(\G)$ and $\Lambda(f_T) = T$. Similarly, we have $(f_T)^* \ast \xi \in L^2(\G)$ and $\Lambda((f_T)^*) = T^*$. Therefore, $f_T$ is a left convolver and we conclude the proof.
\end{proof}

Thanks to Proposition \ref{prop.leftConvolverAndGEquivariant}, we provide an alternative picture for the isomorphism $\Theta$ in (\ref{EQ:Theta coarse groupoid}):

\begin{example}\label{eg:coarse groupoid left convolver}
    Following the notation in Example \ref{eg:coarse groupoid for LL2G}, for $T\in \L(L^2(G(X)))^{G(X)}$ Proposition \ref{prop.leftConvolverAndGEquivariant} implies that there exists $f_{T} \in C_b(G(X))$ such that $T=\Lambda(f_{T})$.  Considering the restriction of $f_T$ on $X \times X$, it is easy to see that the isomorphism $\Theta$ in (\ref{EQ:Theta coarse groupoid}) can be written as follows:
    \[
        \big(\Theta(T)(\xi)\big)(x):=\sum_{y\in X} f_{T}(x,y)\xi(y) \quad \text{for} \quad \xi\in \ell^2(X) \text{ and } x\in X.
    \]
    Therefore as claimed in Example \ref{eg:coarse groupoid for LL2G}, $\Theta$ in (\ref{EQ:Theta coarse groupoid}) extends the isomorphism $\Theta: C^*_r(G(X)) \to C^*_u(X)$ in (\ref{EQ:Theta for coarse groupoid}), which explains the notation.
\end{example}

We end this section with some discussions on multiplication operators.
Recall that for $g\in C_b(\G)$, the multiplication operator $M(g): L^2(\G) \to L^2(\G)$ is defined by $(M(g)\xi)(\gamma)=g(\gamma)\xi(\gamma)$ for $\xi \in L^2(\G)$ and $\gamma\in \G$. 

To simplify the statement, we say that a function $g\in C_b(\G)$ is \emph{$\G$-equivariant} if $g(\beta)=g(\beta \gamma^{-1})$ for any $\beta, \gamma \in \G$ with $\s(\beta) = \s(\gamma)$.


\begin{lemma}\label{lem:equiv function}
    Let $\G$ be a locally compact \'{e}tale groupoid and $g\in C_b(\G)$. Then the multiplication operator $M(g)$ is $\G$-equivariant \emph{if and only if} $g$ is $\G$-equivariant.
\end{lemma}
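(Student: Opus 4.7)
The plan is to prove both implications by directly evaluating $M(g)\rho(h)$ and $\rho(h)M(g)$ on arbitrary sections, using the convolution formulas recalled in Section~\ref{ssec.equivariant}. For the ``if'' direction, I would fix a $\G$-equivariant $g \in C_b(\G)$ and, for any $h \in C_c(\G)$, $\xi \in L^2(\G)$ and $\gamma \in \G$, compute
\[
(M(g)\rho(h)\xi)(\gamma) = g(\gamma)\!\!\sum_{\beta \in \G_{\s(\gamma)}}\!\! h(\beta)\,\xi(\gamma\beta^{-1}), \quad (\rho(h)M(g)\xi)(\gamma) = \!\!\sum_{\beta \in \G_{\s(\gamma)}}\!\! h(\beta)\,g(\gamma\beta^{-1})\,\xi(\gamma\beta^{-1}).
\]
Since $\s(\beta) = \s(\gamma)$ in every summand, the equivariance of $g$ gives $g(\gamma) = g(\gamma\beta^{-1})$ term by term, so the two sums agree and $M(g)\rho(h) = \rho(h)M(g)$.

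For the ``only if'' direction, suppose $M(g)$ is $\G$-equivariant. Given $\beta, \gamma \in \G$ with $\s(\beta) = \s(\gamma) = x$, I would set $\alpha \coloneqq \beta\gamma^{-1}$ and use \'{e}taleness to pick an open bisection $U_\gamma$ containing $\gamma$ and an open bisection $U_\alpha$ containing $\alpha$, together with $h \in C_c(U_\gamma)$ with $h(\gamma) = 1$ and $\xi \in C_c(U_\alpha) \subseteq L^2(\G)$ with $\xi(\alpha) = 1$. Because $U_\gamma$ is a bisection, $U_\gamma \cap \G_x = \{\gamma\}$, so in both convolution sums above only the term $\beta' = \gamma$ can be nonzero when evaluating at $\beta$. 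This yields $(M(g)\rho(h)\xi)(\beta) = g(\beta)\,h(\gamma)\,\xi(\alpha) = g(\beta)$ and $(\rho(h)M(g)\xi)(\beta) = h(\gamma)\,g(\alpha)\,\xi(\alpha) = g(\beta\gamma^{-1})$, and equivariance of $M(g)$ then forces $g(\beta) = g(\beta\gamma^{-1})$, as required.

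The main obstacle is essentially nonexistent: the only subtlety is arranging that a single term survives in the convolution sum, which is handled by the standard fact (recalled at the end of Section~\ref{ssec.groupoid}) that open bisections form a basis for the topology of a locally compact \'{e}tale groupoid. The rest is a direct unraveling of the definitions from Section~\ref{ssec.groupoidAlgebra} and Section~\ref{ssec.equivariant}.
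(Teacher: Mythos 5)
Your proof is correct. The ``if'' direction is exactly the paper's computation: expand both compositions via the convolution formula for $\rho(h)$ and use $g(\gamma)=g(\gamma\beta^{-1})$ term by term. For the ``only if'' direction you take a genuinely different (though closely related) route: you test the operator identity $M(g)\rho(h)=\rho(h)M(g)$ directly against bump functions $h\in C_c(U_\gamma)$ and $\xi\in C_c(U_\alpha)$ supported in open bisections, so that a single term survives in each convolution sum and evaluation at $\beta$ yields $g(\beta)=g(\beta\gamma^{-1})$. The paper instead invokes Lemma~\ref{equivariant-fiber} to convert equivariance of $M(g)$ into the slice relations $V_{\gamma}\Phi_{\s(\gamma)}(M(g))=\Phi_{\r(\gamma)}(M(g))V_{\gamma}$ and then applies these to $\delta_\beta$. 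The two arguments are morally the same --- Lemma~\ref{equivariant-fiber} is itself proved by the very bump-function trick you use --- so your version is more self-contained but re-derives a special case of machinery the paper already has, while the paper's version is shorter given that lemma. One cosmetic remark: the bisection property of $U_\alpha$ plays no role in your argument (only $\xi(\alpha)=1$ matters, since the support condition on $h$ already isolates the single surviving term), so you could drop that requirement; everything else, including the existence of the test functions via Urysohn's lemma and the fact that $U_\gamma\cap\G_x=\{\gamma\}$, is justified.
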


\begin{proof}
    Assume that $g$ is $\G$-equivariant, then for $h\in C_c(\G)$, $\xi\in L^2(\G)$ and $\gamma\in \G$, we have:
    \begin{align*}
        \big( \rho(h)M(g)\xi \big)(\gamma) & = \sum_{\beta\in\G_{\s(\gamma)}}h(\beta)g(\gamma\beta^{-1})\xi(\gamma\beta^{-1}) = \sum_{\beta\in\G_{\s(\gamma)}}h(\beta)g(\gamma)\xi(\gamma\beta^{-1}) \\
                                           & = g(\gamma) \cdot \sum_{\beta\in\G_{\s(\gamma)}}h(\beta)\xi(\gamma\beta^{-1}) = \big( M(g)\rho(h)\xi \big)(\gamma).
    \end{align*}
    Conversely, it follows from Lemma \ref{equivariant-fiber} that $V_{\gamma}\Phi_{\s(\gamma)}(M(g))=\Phi_{\r(\gamma)}(M(g)) V_{\gamma}$ for all $\gamma\in \G$. Hence for $\beta \in \G_{\s(\gamma)}$, we have
    \[
        g(\beta)\delta_{\beta \gamma^{-1}}= V_\gamma(g(\beta)\delta_\beta) = V_{\gamma}\Phi_{\s(\gamma)}(M(g))(\delta_{\beta})= \Phi_{\r(\gamma)}(M(g)) V_{\gamma} (\delta_{\beta})=g(\beta \gamma^{-1}) \delta_{\beta \gamma^{-1}},
    \]
    which shows that $g$ is $\G$-equivariant.
\end{proof}

On the other hand, the following lemma explores when an equivariant operator can be realised as a multiplication operator:

\begin{lemma}\label{cor.equivariantMultiplier}
    Let $\G$ be a locally compact \'{e}tale groupoid. For $T \in \L(L^2(\G))^{\G}$, let $f_T$ be the associated left convolver from Proposition \ref{prop.leftConvolverAndGEquivariant}. Then $T=M(g)$ for some $g\in C_b(\G)$ \emph{if and only if} $\supp(f_T) \subseteq \Gz$. In this case, $f_T = g|_{\Gz}$ and hence $M(g) = \Lambda(g|_{\Gz})$.
\end{lemma}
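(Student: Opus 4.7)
The proof splits naturally into the two implications, and both sides hinge on the explicit formula $f_T(\gamma) = \bigl(\Phi_{\s(\gamma)}(T)(\delta_{\s(\gamma)})\bigr)(\gamma)$ from Equation (\ref{EQ:function fT}), combined with the concrete description of $\Lambda(f_T)$ given in the proof of Proposition \ref{prop.leftConvolverAndGEquivariant}. Neither direction should require heavy machinery; it is essentially a direct computation once we unravel how multiplication operators interact with the slicing map $\Phi$.

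For the forward direction, suppose $T = M(g)$ with $g \in C_b(\G)$. I first observe that for any $x \in \Gz$ the slice $\Phi_x(M(g))$ is simply pointwise multiplication by $g|_{\G_x}$: indeed, for $\xi \in C_c(\G)$ we have $\Phi_x(M(g))(\xi|_{\G_x}) = (M(g)\xi)|_{\G_x} = (g\xi)|_{\G_x}$. Applying this to $\delta_{\s(\gamma)} \in \ell^2(\G_{\s(\gamma)})$ yields $\Phi_{\s(\gamma)}(M(g))(\delta_{\s(\gamma)}) = g(\s(\gamma))\,\delta_{\s(\gamma)}$, whose value at $\gamma$ is zero unless $\gamma = \s(\gamma) \in \Gz$. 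Thus $f_T$ is supported on $\Gz$ and $f_T(x) = g(x)$ for $x \in \Gz$, proving $f_T = g|_{\Gz}$.

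For the converse, assume $\supp(f_T) \subseteq \Gz$. By Proposition \ref{prop.leftConvolverAndGEquivariant}(2), $T = \Lambda(f_T)$, and for $\xi \in L^2(\G)$ and $\gamma \in \G$ we compute
\[
    (T\xi)(\gamma) = \sum_{\alpha \in \G_{\s(\gamma)}} f_T(\gamma\alpha^{-1})\,\xi(\alpha).
\]
Since $f_T(\gamma\alpha^{-1}) \neq 0$ forces $\gamma\alpha^{-1} \in \Gz$, equivalently $\alpha = \gamma$ (so $\gamma\alpha^{-1} = \r(\gamma)$), the sum collapses to $(T\xi)(\gamma) = f_T(\r(\gamma))\,\xi(\gamma)$. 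Therefore, defining $g \in C_b(\G)$ by $g(\gamma) \coloneqq f_T(\r(\gamma))$ (continuous and bounded since $\r$ is continuous and $f_T|_{\Gz}$ is bounded continuous), we obtain $T = M(g)$; uniqueness of $g$ follows from density of $C_c(\G)$ in $L^2(\G)$. Finally, for $x \in \Gz$ we have $g(x) = f_T(\r(x)) = f_T(x)$, so $f_T = g|_{\Gz}$ and consequently $M(g) = \Lambda(g|_{\Gz})$, which is the last assertion of the lemma.

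I do not anticipate a serious obstacle: the only place requiring a little care is the observation that $\Phi_x(M(g))$ is genuinely pointwise multiplication by $g|_{\G_x}$, and the collapse of the convolution sum in the converse direction, both of which are immediate once the definitions are unwound.
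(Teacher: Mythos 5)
Your proof is correct and follows essentially the same route as the paper: both directions are the same direct computation, evaluating $f_T$ via Equation (\ref{EQ:function fT}) to see that $\Phi_{\s(\gamma)}(M(g))(\delta_{\s(\gamma)})=g(\s(\gamma))\delta_{\s(\gamma)}$ vanishes at $\gamma\notin\Gz$, and collapsing the convolution sum $(f_T\ast\xi)(\gamma)=f_T(\r(\gamma))\xi(\gamma)$ when $\supp(f_T)\subseteq\Gz$. No gaps.
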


\begin{proof}
    If $T=M(g)$ for some $g\in C_b(\G)$, from (\ref{EQ:function fT}) we have:
    \[
        f_T(\gamma)=\big(\Phi_{\s(\gamma)}(M(g))(\delta_{\s(\gamma)})\big)(\gamma) = g(\s(\gamma))\delta_{\s(\gamma)}(\gamma) =
        \begin{cases}
            g(\gamma), & \text{ if } \gamma \in \Gz; \\
            0,         & \text{otherwise}.
        \end{cases}
    \]
    Hence we obtain that $\supp(f_T) \subseteq \Gz$ and $f_T = g|_{\Gz}$. Conversely, assume that $\supp(f_T) \subseteq \Gz$. Then for any $\xi \in L^2(\G)$ and $\gamma \in \G$, we have
    \[
        (f_T\ast \xi)(\gamma) = \sum_{\beta \in \G_{\s(\gamma)}} f_T(\gamma \beta^{-1}) \xi(\beta) = f_T(\r(\gamma)) \xi(\gamma).
    \]
    Setting $g(\gamma) = f_T(\r(\gamma))$ for $\gamma\in \G$, it follows that $g\in C_b(\G)$ is $\G$-equivariant and $\Lambda(f_T)=M(g)$. Hence we conclude the proof.
\end{proof}

\section{Compactly supported and quasi-local operators for groupoids}\label{sec.quasi}

In this section, we introduce the key notion of this paper, \emph{i.e.}, compactly supported and quasi-local operators for groupoids.
Moreover, we show that the reduced groupoid $C^*$-algebra can be recovered by equivariant and compactly supported operators.


\begin{definition}\label{defn:separated}
    Let $\G$ be a locally compact \'{e}tale groupoid and $K\subseteq \G$.
    For $f,g\in C_{b}(\G)$, we say that $f$ and $g$ are \emph{$K$-separated} if $\big(K\cdot \supp(f)\big)\cap \supp(g)=\emptyset$ and $\supp(f)\cap \big(K\cdot \supp(g)\big)=\emptyset$, \emph{i.e.}, $\gamma_{2}\gamma_{1}^{-1}\notin K$ and $\gamma_{1}\gamma_{2}^{-1}\notin K$ for any $\gamma_1 \in \supp(f)$ and $\gamma_2 \in \supp(g)$ with $\s(\gamma_1) = \s(\gamma_2)$.
\end{definition}

Recall that in the case of the coarse groupoids, compact subsets are always contained in closures of entourages. This inspires us to introduce the following:

\begin{definition}\label{def.operatorCompactSupportAndQuasiLocal}
    Let $\G$ be a locally compact \'etale groupoid and $T\in \L(L^2(\G))$. We define the following:
    \begin{enumerate}
        \item $T$ is \emph{supported in $K \subseteq \G$} if $gTf=0$ for any $K$-separated functions $f,g\in C_{b}(\G)$.
        \item  $T$ is \emph{compactly supported} if $T$ is supported in some compact subset of $\G$.
        \item $T$ is \textit{quasi-local} if for any $\varepsilon>0$, there exists a compact subset $K\subset \G$ such that for any $K$-separated functions $f,g\in C_{b}(\G)$ we have $\|gTf\| < \varepsilon\|g\|_{\infty}\|f\|_{\infty}$.
    \end{enumerate}
\end{definition}


It is clear that compactly supported operators are always quasi-local. Also note that $\L(L^2(\G))$ admits approximating units consisting of elements in $C_c(\G)$, hence the following is straightforward:

\begin{lemma}\label{compact-supported-functions}
    For an operator $T\in \L(L^2(\G))$, we have the following:
    \begin{enumerate}
        \item \label{compact-supported-functions-1} $T$ is compactly supported \emph{if and only if} there exists a compact subset $K\subset \G$ such that $gTf=0$ for any $K$-separated functions $f,g\in C_c(\G)$;
        \item \label{compact-supported-functions-2} $T$ is quasi-local \emph{if and only if} for any $\varepsilon>0$, there exists a compact subset $K\subset \G$ such that for any $K$-separated functions $f,g\in C_c(\G)$ we have $\|gTf\| < \varepsilon\|g\|_{\infty}\|f\|_{\infty}$.
    \end{enumerate}
\end{lemma}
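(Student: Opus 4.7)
The plan is to reduce the converse directions of both (1) and (2) to testing against compactly supported vectors, at which point the hypothesis becomes directly applicable. The forward directions are immediate, since $C_c(\G) \subseteq C_b(\G)$.

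For the converse of each part, I would fix arbitrary $\xi, \eta \in C_c(\G)$ and use local compactness of $\G$ together with Urysohn's lemma to pick cutoffs $\tilde{f}, \tilde{g} \in C_c(\G)$ taking values in $[0,1]$ such that $\tilde{f} \equiv 1$ on $\supp(\xi)$ and $\tilde{g} \equiv 1$ on $\supp(\eta)$. The key calculation is
\[
\langle \eta, gTf\xi\rangle = \langle \overline{g}\eta, T(f\xi)\rangle = \langle (\tilde{g}\overline{g})\eta, T((\tilde{f}f)\xi)\rangle = \langle \eta, (\tilde{g}g)T(\tilde{f}f)\xi\rangle,
\]
which uses the pointwise identities $f\xi = (\tilde{f}f)\xi$ and $\overline{g}\eta = (\tilde{g}\overline{g})\eta$ (valid because $\supp(f\xi)\subseteq \supp(\xi)$ and similarly for $\eta$), together with $M(g)^* = M(\overline{g})$ and that $\tilde{g}$ is real-valued. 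The truncated functions $\tilde{f}f, \tilde{g}g$ now lie in $C_c(\G)$, have supports contained in $\supp(f), \supp(g)$ respectively (so remain $K$-separated whenever $f,g$ are, since we only shrink supports), and satisfy $\|\tilde{f}f\|_\infty \leq \|f\|_\infty$ and $\|\tilde{g}g\|_\infty \leq \|g\|_\infty$.

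At this point, for (1) the hypothesis gives $(\tilde{g}g)T(\tilde{f}f)=0$, hence $\langle \eta, gTf\xi\rangle = 0$ for all $\xi, \eta \in C_c(\G)$; density of $C_c(\G)$ in $L^2(\G)$ then concludes $gTf = 0$. For (2), given $\varepsilon>0$ I would apply the hypothesis with $\varepsilon/2$ to obtain a compact $K$; the above identity yields $|\langle \eta, gTf\xi\rangle| \leq (\varepsilon/2)\|g\|_\infty\|f\|_\infty\|\eta\|\|\xi\|$, and taking the supremum over norm-one $\xi, \eta \in C_c(\G)$ produces $\|gTf\| \leq (\varepsilon/2)\|g\|_\infty\|f\|_\infty < \varepsilon\|g\|_\infty\|f\|_\infty$. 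I do not anticipate a serious obstacle; the only delicate point is the cutoff identity, which reduces to $M(g)^* = M(\overline{g})$ and the observation that support-shrinking preserves $K$-separation.
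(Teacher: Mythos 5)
Your argument is correct and is precisely the truncation argument the paper alludes to when it says the lemma is "straightforward" because $\L(L^2(\G))$ admits approximate units from $C_c(\G)$: testing $gTf$ against $\xi,\eta\in C_c(\G)$ and absorbing compactly supported cutoffs into $f$ and $g$, noting that shrinking supports preserves $K$-separation. The only points worth making explicit in a write-up are the ones you already flag — $M(g)^*=M(\overline g)$ under the module inner product, and that $\|S\|=\sup\|\langle\eta,S\xi\rangle\|$ over unit vectors in the dense subspace $C_c(\G)$ — both of which are standard.
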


%

Using the slicing map $\Phi$ from (\ref{eq.PhiMap}), we provide the following ``fibre-wise'' characterisation for compactly supported and quasi-local operators:

\begin{proposition}\label{compactfiberwisely}
    Let $\G$ be a locally compact \'{e}tale groupoid and $T \in \L\left(L^{2}(\G)\right)$. We have the following:
    \begin{enumerate}
        \item\label{compactfiberwisely-1} $T$ is supported in a compact set $K\subset \G$ \emph{if and only if} $\chi_{A_x}\Phi_x(T)\chi_{B_x}=0$ for any $x\in\Gz$ and subsets $A_x, B_x\subset \G_x$ with $A_{x}\cap (K \cdot B_x)=\emptyset$ and $(K\cdot A_x) \cap B_{x}=\emptyset$;
        \item\label{compactfiberwisely-2} $T$ is a quasi-local operator \emph{if and only if} for any $\varepsilon>0$ there exists a compact subset $K\subset \G$ such that $\|\chi_{A_x}\Phi_x(T)\chi_{B_x}\| < \varepsilon$ for any $x\in\Gz$ and subsets $A_x, B_x\subset \G_x$ with $A_{x}\cap (K \cdot B_x)=\emptyset$ and $(K\cdot A_x) \cap B_{x}=\emptyset$.
    \end{enumerate}
\end{proposition}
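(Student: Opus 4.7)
The plan is to translate compact support and quasi-locality of $T$ directly into the corresponding fibrewise conditions using the slicing map $\Phi$ from Section~\ref{ssec:operators in LL2G}. The key identity is that for $f,g\in C_{b}(\G)$ and $x\in\Gz$,
\[
\Phi_{x}(gTf) \;=\; M(g|_{\G_{x}})\,\Phi_{x}(T)\,M(f|_{\G_{x}}),
\]
and since $\Phi$ is an isometric $C^{*}$-embedding, $\|gTf\|=\sup_{x\in\Gz}\|M(g|_{\G_{x}})\Phi_{x}(T)M(f|_{\G_{x}})\|$. Everything below will be deduced from this identity together with the definitions.

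For the ``if'' directions of both (1) and (2), I would invoke Lemma~\ref{compact-supported-functions} to restrict attention to $K$-separated $f,g\in C_{c}(\G)$, set $A_{x}:=\supp(g)\cap\G_{x}$ and $B_{x}:=\supp(f)\cap\G_{x}$, and observe that by Definition~\ref{defn:separated} the $K$-separation of $f,g$ translates exactly into $A_{x}\cap(K\cdot B_{x})=\emptyset$ and $(K\cdot A_{x})\cap B_{x}=\emptyset$ on each fibre. Writing $g|_{\G_{x}}=g|_{\G_{x}}\chi_{A_{x}}$ and $f|_{\G_{x}}=\chi_{B_{x}}f|_{\G_{x}}$, the hypothesised fibrewise condition then yields
\[
\|M(g|_{\G_{x}})\Phi_{x}(T)M(f|_{\G_{x}})\| \;\leq\; \|g\|_{\infty}\cdot\varepsilon\cdot\|f\|_{\infty},
\]
(with $\varepsilon=0$ in case~(1)), which gives the required bound on $\|gTf\|$ after taking the supremum over $x$.

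For the ``only if'' directions, given $x\in\Gz$ and $A_{x},B_{x}\subseteq\G_{x}$ satisfying the separation condition, I would reduce to arbitrary finite subsets $A_{x}'\subseteq A_{x}$ and $B_{x}'\subseteq B_{x}$, since $\|\chi_{A_{x}}\Phi_{x}(T)\chi_{B_{x}}\|$ equals the supremum of $\|\chi_{A_{x}'}\Phi_{x}(T)\chi_{B_{x}'}\|$ over such finite pairs. For a fixed finite pair $(A_{x}',B_{x}')$ I would construct $f,g\in C_{c}(\G)$ with $\|f\|_{\infty}=\|g\|_{\infty}=1$, with $f|_{\G_{x}}=\chi_{B_{x}'}$ and $g|_{\G_{x}}=\chi_{A_{x}'}$ (extended by zero), and crucially $K$-separated in the sense of Definition~\ref{defn:separated}. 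The construction uses \'{e}taleness to pick pairwise disjoint open precompact bisections around each point of $A_{x}'\cup B_{x}'$ and standard bump functions supported in them; the ambient $K$-separation is then arranged by further shrinking each bisection, using that the map $(\alpha,\beta)\mapsto\beta\alpha^{-1}$ is continuous on composable pairs, that $K$ is closed, and that by hypothesis $\gamma_{2}\gamma_{1}^{-1}\notin K$ and $\gamma_{1}\gamma_{2}^{-1}\notin K$ for each of the finitely many pairs $\gamma_{1}\in B_{x}'$, $\gamma_{2}\in A_{x}'$. Once this is done, the identity $\chi_{A_{x}'}\Phi_{x}(T)\chi_{B_{x}'}=\Phi_{x}(gTf)$ gives in case~(1) the vanishing of every matrix coefficient of $\chi_{A_{x}}\Phi_{x}(T)\chi_{B_{x}}$ (applied to singletons $\{\gamma_{1}\},\{\gamma_{2}\}$), and in case~(2) the bound $\|\chi_{A_{x}'}\Phi_{x}(T)\chi_{B_{x}'}\|\leq\|gTf\|\leq\varepsilon$, uniformly in $x$ and in the choice of finite $A_{x}',B_{x}'$.

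The main obstacle is the explicit construction of the $K$-separated pair $(f,g)$ with prescribed values on the fixed slice $\G_{x}$ and with sup-norm exactly $1$; this is a delicate but routine bump-function/partition-of-unity argument using \'etaleness together with the closedness of $K$, in the same spirit as (but slightly more global than) the construction appearing in the proof of Lemma~\ref{lem:surjectivity for phi_x}. Some care is needed to ensure simultaneous disjointness of the bisections supporting $f$ and those supporting $g$, and that the shrinkings preserve the separation condition not only pointwise on $\G_{x}$ but on entire compact bisection neighbourhoods in $\G$.
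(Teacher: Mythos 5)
Your proposal is correct and follows essentially the same route as the paper's proof: the sufficiency direction takes $A_x,B_x$ to be the fibrewise supports of the separating functions, and the necessity direction reduces to finite subsets of $\G_x$ and builds $K$-separated functions $f,g\in C_b(\G)$ restricting to the desired characteristic functions on $\G_x$ (the paper does this by choosing open sets $U\supseteq A_x$, $V\supseteq B_x$ with $UV^{-1}\cap K=VU^{-1}\cap K=\emptyset$ and applying Urysohn, which is exactly the shrinking-of-bisections argument you sketch). The only cosmetic difference is that you spell out the compactness/continuity argument for obtaining those neighbourhoods, which the paper leaves implicit.
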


\begin{proof}
    Here we only provide the proof for \ref{compactfiberwisely-2}, since the other is similar and simpler.

    \emph{Necessity}: Given $\varepsilon>0$, we take a compact subset $K\subseteq \G$ satisfying the condition in Definition \ref{def.operatorCompactSupportAndQuasiLocal}(3). Fix an $x\in \Gz$. For any finite subsets $A_x, B_x\subset \G_x$ with $A_{x}\cap (K \cdot B_x)=\emptyset$ and $(K\cdot A_x) \cap B_{x}=\emptyset$, take open subsets $U$ and $V$ in $\G$ containing $A_x$ and $B_x$, respectively, such that $UV^{-1}\cap K=\emptyset$ and $VU^{-1}\cap K=\emptyset$. By Urysohn's lemma, there exist $f,g\in C_b(\G)$ with range in $[0,1]$, $\supp(g)\subset U$ and $\supp(f)\subset V$ such that $g|_{\G_x} = \chi_{A_x}$ and $f|_{\G_x} = \chi_{B_x}$. It is then clear that $f$ and $g$ are $K$-separated. Hence we obtain:
    \[
        \|\chi_{A_x}\Phi_x(T)\chi_{B_x}\| = \|\Phi_x(gTf)\| \leq \|gTf\| < \varepsilon.
    \]
    Using an approximating argument, the estimate above holds for any subsets $A_x, B_x$ in $\G_x$ with $A_{x}\cap (K \cdot B_x)=\emptyset$ and $(K\cdot A_x) \cap B_{x}=\emptyset$.

    \emph{Sufficiency}: Given $\varepsilon>0$, we take a compact subset $K\subseteq \G$ satisfying the assumption in (2). For any $K$-separated functions $f,g\in C_b(\G)$, we have
    \[
        \|gTf\| = \sup_{x\in \Gz} \|\Phi_x(gTf)\| = \sup_{x\in \Gz} \|g|_{\G_x} \cdot \Phi_x(T) \cdot f|_{\G_x}\|.
    \]
    For $x\in \Gz$, denote $A_x:=\supp(g|_{\G_x})$ and $B_x:=\supp(f|_{\G_x})$. It is clear that $A_{x}\cap (K \cdot B_x)=\emptyset$ and $(K\cdot A_x) \cap B_{x}=\emptyset$. Therefore, the assumption implies that
    \[
        \|g|_{\G_x} \cdot \Phi_x(T) \cdot f|_{\G_x}\| = \|g|_{\G_x} \cdot \chi_{A_x}\Phi_x(T) \chi_{B_x}\cdot f|_{\G_x}\| < \varepsilon \|g\|_\infty\|f\|_\infty,
    \]
    which shows that $\|gTf\| \leq \varepsilon \|g\|_\infty\|f\|_\infty$.
\end{proof}

Inspired by Proposition \ref{compactfiberwisely}, we introduce the following:

\begin{definition}\label{defn: unif. quasiloc}
    Let $\G$ be a locally compact \'{e}tale groupoid with unit space $\Gz$. A family $(T_{x})_{x\in\Gz} \in \prod_{x\in\Gz}\B\left(\ell^2(\G_{x})\right)$ is said to be \emph{compactly uniformly quasi-local} if for any $\varepsilon>0$ there exists a compact subset $K\subseteq \G$ such that for any $x\in\Gz$ and $A_{x}, B_{x}\subseteq \G_{x}$ with $A_{x}\cap (K \cdot B_x)=\emptyset$ and $(K\cdot A_x) \cap B_{x}=\emptyset$, we have $\| \chi_{A_{x}}T_x\chi_{B_{x}}\| < \varepsilon$.
\end{definition}

It is clear that the notion of compactly uniform quasi-locality implies vector-wise uniform quasi-locality introduced in Definition \ref{defn:vectorwise quasiloc}. Hence combining Theorem \ref{cor:char for LL2G} with Proposition \ref{compactfiberwisely}, we reach the following:

\begin{corollary}\label{cor:char for quasi-locality}
    Let $\G$ be a locally compact \'{e}tale groupoid with unit space $\Gz$. For $(T_{x})_{x\in\Gz} \in \prod_{x\in\Gz}\B\left(\ell^2(\G_{x})\right)$, the following are equivalent:
    \begin{enumerate}
        \item there exists a quasi-local operator $T \in \L(L^2(\G))$ such that $\Phi(T)=(T_x)_{x\in \Gz}$;
        \item the map $x\mapsto T_x$ is a continuous section of $E$ introduced in (\ref{EQ:E}) and $(T_x)_{x\in \Gz}$ is compactly uniformly quasi-local.
    \end{enumerate}
    A similar result also holds for the case of compactly supported operators.
\end{corollary}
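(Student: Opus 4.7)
The plan is to read this corollary as a direct merger of the two characterisations already established: Theorem \ref{cor:char for LL2G}, which identifies the image of $\Phi$ with continuous sections that are vector-wise uniformly quasi-local (together with their adjoints), and Proposition \ref{compactfiberwisely}\ref{compactfiberwisely-2}, which translates quasi-locality of an operator $T\in\L(L^2(\G))$ into compactly uniform quasi-locality of its slices. So no substantial new analysis is needed; the task is to bridge these two statements and verify that the symmetry hidden in the definition of compactly uniform quasi-locality supplies the adjoint hypothesis of Theorem \ref{cor:char for LL2G}.

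For the direction (1)$\Rightarrow$(2), I would start with a quasi-local $T\in\L(L^2(\G))$ with $\Phi(T)=(T_x)_{x\in\Gz}$. Since $\Phi(T)$ in particular lies in $\Phi(\L(L^2(\G)))$, Theorem \ref{cor:char for LL2G} immediately gives that $x\mapsto T_x$ is a continuous section of $E$. Applying Proposition \ref{compactfiberwisely}\ref{compactfiberwisely-2} to the quasi-local $T$ then yields exactly that $(T_x)_{x\in\Gz}$ is compactly uniformly quasi-local in the sense of Definition \ref{defn: unif. quasiloc}.

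For the direction (2)$\Rightarrow$(1), the first step is to deduce existence of $T\in\L(L^2(\G))$ with $\Phi(T)=(T_x)_{x\in\Gz}$ via Theorem \ref{cor:char for LL2G}. Here the key observation, which is the only substantive point in the whole argument, is that compactly uniform quasi-locality is self-adjoint: the defining condition $A_x\cap(K\cdot B_x)=\emptyset$ and $(K\cdot A_x)\cap B_x=\emptyset$ is symmetric in $A_x$ and $B_x$, and $\|\chi_{A_x}T_x\chi_{B_x}\|=\|\chi_{B_x}T_x^*\chi_{A_x}\|$, so $(T_x^*)_{x\in\Gz}$ is compactly uniformly quasi-local with the same $K$. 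As noted in the excerpt just before the corollary, compactly uniform quasi-locality is stronger than vector-wise uniform quasi-locality (of Definition \ref{defn:vectorwise quasiloc}); indeed one only needs to pick $K$ independently of $\xi\in C_c(\G)$ and then bound $\|\chi_{A_x}T_x\chi_{B_x}(\xi|_{\G_x})\|\le\|\chi_{A_x}T_x\chi_{B_x}\|\cdot\|\xi\|$. Thus both $(T_x)$ and $(T_x^*)$ satisfy the hypotheses of Theorem \ref{cor:char for LL2G}, producing $T\in\L(L^2(\G))$ with $\Phi(T)=(T_x)_{x\in\Gz}$. The second step is to invoke Proposition \ref{compactfiberwisely}\ref{compactfiberwisely-2} in its sufficiency direction: the compact uniform quasi-locality of $(\Phi_x(T))_{x\in\Gz}$ forces $T$ itself to be quasi-local, finishing the argument.

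Since the proof is essentially a two-line composition, I do not expect any genuine obstacle. The only place that deserves a sentence of care is verifying that compactly uniform quasi-locality of $(T_x)$ alone suffices to produce the adjoint version required by Theorem \ref{cor:char for LL2G}; the last sentence of the corollary (``A similar result also holds for the case of compactly supported operators.'') will be handled in exactly the same way, using Proposition \ref{compactfiberwisely}\ref{compactfiberwisely-1} in place of \ref{compactfiberwisely-2} and the evident compactly supported analogue of Definition \ref{defn: unif. quasiloc}.
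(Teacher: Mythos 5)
Your proposal is correct and follows exactly the paper's intended argument: the corollary is obtained by combining Theorem \ref{cor:char for LL2G} with Proposition \ref{compactfiberwisely}, using the (symmetric, hence self-adjoint) nature of compactly uniform quasi-locality to supply the vector-wise uniform quasi-locality of both $(T_x)$ and $(T_x^*)$. The only detail worth keeping explicit is the one you already flag — enlarging $K$ to $K\cup K^{-1}$ so that the single separation condition in Definition \ref{defn:vectorwise quasiloc} implies the two-sided one in Definition \ref{defn: unif. quasiloc} — which the paper dismisses as ``clear.''
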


As revealed in Section \ref{ssec:dense subset}, we can simplify Proposition \ref{compactfiberwisely} for dense subsets as follows. The proof is similar to that for Proposition \ref{lem.charOfBL2G2-densesubset}, hence omitted.


\begin{lemma}\label{quasi-dense}
    Let $\G$ be a locally compact \'{e}tale groupoid with unit space $\Gz$, $X$ a dense subset of $\Gz$ and $T\in \L(L^2(\G))$. Then we have the following:
    \begin{enumerate}
        \item $T$ is supported in a compact set $K\subset \G$ \emph{if and only if} $\chi_{A_x}\Phi_x(T)\chi_{B_x}=0$ for any $x\in X$ and subsets $A_x, B_x\subset \G_x$ with $A_{x}\cap (K \cdot B_x)=\emptyset$ and $(K\cdot A_x) \cap B_{x}=\emptyset$;
        \item $T$ is quasi-local \emph{if and only if} for any $\varepsilon>0$ there exists a compact subset $K\subset \G$ such that $\|\chi_{A_x}\Phi_x(T)\chi_{B_x}\| < \varepsilon$ for any $x\in X$ and subsets $A_x, B_x\subset \G_x$ with $A_{x}\cap (K \cdot B_x)=\emptyset$ and $(K\cdot A_x) \cap B_{x}=\emptyset$.
    \end{enumerate}
\end{lemma}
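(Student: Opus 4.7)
The plan is to deduce both claims from Proposition \ref{compactfiberwisely} by extending the hypothesis from indices $x\in X$ to every $y\in \Gz$ using density together with continuity of slices. Necessity in both (1) and (2) will be immediate, since the assertion for $x\in X$ is just the restriction of the criterion in Proposition \ref{compactfiberwisely} to the dense subset $X\subseteq \Gz$.

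For sufficiency I will focus on (2); part (1) follows by the same scheme with $\varepsilon = 0$. Given $\varepsilon > 0$, I take the compact $K\subseteq \G$ provided by the hypothesis on $X$. The goal is to establish $\|\chi_{A_y}\Phi_y(T)\chi_{B_y}\| \leq \varepsilon$ for every $y\in \Gz$ and every pair of \emph{finite} subsets $A_y, B_y\subseteq \G_y$ with $A_y\cap (K\cdot B_y) = \emptyset$ and $(K\cdot A_y)\cap B_y = \emptyset$. The general case then follows via the standard strong-operator approximation $\chi_{A'_y}\nearrow \chi_{A_y}$ (the separation conditions are inherited by subsets), after which Proposition \ref{compactfiberwisely}(2) yields the quasi-locality of $T$.

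The technical heart will be a density-and-continuity construction. Using \'{e}taleness, I will cover each element of $A_y\cup B_y$ by a small open bisection whose source image is a common open neighbourhood $U$ of $y$. Shrinking $U$ with the aid of compactness of $K$, I would verify that the separation conditions are \emph{open} in the base point, so that for every $z\in U$ the lifted finite subsets $A_z, B_z\subseteq \G_z$ obtained through the bisections still satisfy $A_z\cap (K\cdot B_z) = \emptyset$ and $(K\cdot A_z)\cap B_z = \emptyset$. By density pick $x\in X\cap U$, for which the hypothesis gives $\|\chi_{A_x}\Phi_x(T)\chi_{B_x}\| < \varepsilon$. To transfer this bound to $y$, I would view $\chi_{A_z}\Phi_z(T)\chi_{B_z}$ as a finite matrix of fixed size with entries $\langle \delta_{\gamma''_z}, \Phi_z(T)(\delta_{\gamma'_z}) \rangle$ indexed by the chosen bisections; Lemma \ref{lem.charOfGammaB} implies each such entry is continuous in $z$, hence so is the operator norm, and letting $x\to y$ within $X\cap U$ yields $\|\chi_{A_y}\Phi_y(T)\chi_{B_y}\| \leq \varepsilon$. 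The main obstacle I anticipate is precisely the stability of the $K$-separation under perturbation of the base point; compactness of $K$ combined with \'{e}taleness make this condition open in $z$, which is what allows $U$ to be shrunk without loss.
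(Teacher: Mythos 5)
Your proposal is correct and follows essentially the same route the paper intends: the paper omits the proof of Lemma \ref{quasi-dense}, pointing to the argument for Proposition \ref{lem.charOfBL2G2-densesubset}, which is exactly this density-plus-continuity transfer (reduce to finite $A_y,B_y$, lift them to nearby fibres through bisections, use closedness of the compact $K$ to see that the separation conditions are open in the base point, and pass to the limit of the matrix entries using continuity of the section $x\mapsto\Phi_x(T)$ before invoking Proposition \ref{compactfiberwisely}). The only cosmetic point is that the limiting argument yields $\leq\varepsilon$ rather than $<\varepsilon$, which is absorbed by starting from $\varepsilon/2$.
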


Analogous to Definition \ref{defn: unif. quasiloc}, for a dense subset $X$ of $\Gz$ we say that a family $(T_{x})_{x\in X} \in \prod_{x\in X}\B\left(\ell^2(\G_{x})\right)$ is \emph{compactly uniformly quasi-local} if for any $\varepsilon>0$ there exists a compact subset $K\subseteq \G$ such that for any $x\in X$ and $A_{x}, B_{x}\subseteq \G_{x}$ with $A_{x}\cap (K \cdot B_x)=\emptyset$ and $(K\cdot A_x) \cap B_{x}=\emptyset$, we have $\| \chi_{A_{x}}T_x\chi_{B_{x}}\| < \varepsilon$.

It is clear that in the case of dense subsets, the notion of compactly uniform quasi-locality also implies vector-wise uniform quasi-locality. Hence combining Corollary \ref{cor:char2 for LL2G} with Lemma \ref{quasi-dense}, we reach the following:

\begin{corollary}\label{rem:char for dense case}
    Let $\G$ be a locally compact \'{e}tale groupoid with unit space $\Gz$ and $X$ be a dense subset of $\Gz$. For $(T_{x})_{x\in X} \in \prod_{x\in X}\B\left(\ell^2(\G_{x})\right)$, the following are equivalent:
    \begin{enumerate}
        \item there exists a quasi-local operator $T \in \L(L^2(\G))$ such that $\res\circ\Phi(T)=(T_x)_{x\in X}$;
        \item the map $x\mapsto T_x$ is a continuous extendable section of $E_X$ introduced in (\ref{EQ:E}) and $(T_x)_{x\in X}$ is compactly uniformly quasi-local.
    \end{enumerate}
    Moreover, we can omit the requirement of extendableness in condition (2) when $\Gz$ is the Stone-\v{C}ech compactification of $X$ (with the inclusion map).

    A similar result also holds for the case of compactly supported operators.
\end{corollary}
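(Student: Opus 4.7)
The plan is to merge Corollary \ref{cor:char2 for LL2G}, which characterises membership of $(T_x)_{x\in X}$ in $\res\circ\Phi(\L(L^2(\G)))$, with Lemma \ref{quasi-dense}, which reformulates quasi-locality of an operator $T\in\L(L^2(\G))$ as a fibre-wise bound on the slices $\Phi_x(T)$ indexed by the dense set $X$.

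Direction (1) $\Rightarrow$ (2) is a direct reading: Corollary \ref{cor:char2 for LL2G} yields continuity and extendability of the section $x\mapsto T_x$, while Lemma \ref{quasi-dense}(2) applied to $T$ gives precisely compactly uniform quasi-locality of $(T_x)_{x\in X}$.

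For the converse, the step I need to verify is that compactly uniform quasi-locality of $(T_x)_{x\in X}$ implies vector-wise uniform quasi-locality of both $(T_x)_{x\in X}$ and $(T_x^*)_{x\in X}$, so that Corollary \ref{cor:char2 for LL2G} applies. Given $\varepsilon>0$ and $\xi\in C_c(\G)$, I would pick a compact $K_0\subseteq\G$ supplied by compactly uniform quasi-locality for the threshold $\varepsilon/\|\xi\|$, and set $K \coloneqq K_0\cup K_0^{-1}$. For any $A_x,B_x\subseteq\G_x$ with $A_x\cap(KB_x)=\emptyset$, truncating $B_x$ to $B'_x\coloneqq B_x\cap\supp(\xi|_{\G_x})$ preserves $A_x\cap(K_0 B'_x)=\emptyset$, and the other symmetric condition $(K_0 A_x)\cap B'_x=\emptyset$ is equivalent to $A_x\cap(K_0^{-1}B'_x)=\emptyset$, which also holds since $K_0^{-1}\subseteq K$. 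Both symmetric separation hypotheses for $K_0$ then apply and produce the vector-wise estimate $\|(\chi_{A_x}T_x\chi_{B_x})\xi|_{\G_x}\|\leq \|\chi_{A_x}T_x\chi_{B'_x}\|\cdot\|\xi\|<\varepsilon$. The identity $\|\chi_{A_x}T_x^*\chi_{B_x}\|=\|\chi_{B_x}T_x\chi_{A_x}\|$ together with the symmetry of the compactly uniform condition in $A_x,B_x$ gives the analogous bound for $(T_x^*)_{x\in X}$. Corollary \ref{cor:char2 for LL2G} now yields $T\in\L(L^2(\G))$ with $\res\circ\Phi(T)=(T_x)_{x\in X}$, and Lemma \ref{quasi-dense}(2) confirms that $T$ is quasi-local.

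The moreover clause for $\Gz=\beta X$ is immediate from Corollary \ref{cor:beta X case}, which renders extendability automatic. The compactly supported analogue is proved identically, using Lemma \ref{quasi-dense}(1) and replacing the $\varepsilon$-bounds by equality to zero. I do not anticipate a serious obstacle; the only technical point is the symmetrisation $K=K_0\cup K_0^{-1}$ used to pass from the asymmetric separation in Definition \ref{defn:vectorwise quasiloc} to the symmetric one in Definition \ref{defn: unif. quasiloc}, together with the truncation $B_x\mapsto B_x\cap\supp(\xi|_{\G_x})$ that converts operator-norm control into vector-wise control.
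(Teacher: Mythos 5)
Your proposal is correct and follows exactly the paper's route: the paper derives this corollary by combining Corollary \ref{cor:char2 for LL2G} with Lemma \ref{quasi-dense}, after noting that compactly uniform quasi-locality implies vector-wise uniform quasi-locality. The only difference is that you spell out this last implication (the symmetrisation $K=K_0\cup K_0^{-1}$ and the operator-norm-to-vector estimate), which the paper simply declares ``clear''; your verification of it is sound.
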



The following example shows that our definition of quasi-locality for groupoids recovers Definition \ref{defn:quasi-locality} in the case of metric spaces:

\begin{example}\label{ex:coarse groupoid again}
    Let $(X,d)$ be a discrete metric space with bounded geometry and $G(X)$ the associated coarse groupoid. Combining Example \ref{eg:coarse groupoid for LL2G} and Corollary \ref{rem:char for dense case}, we obtain that a $G(X)$-equivariant operator $T \in \L(L^2(G(X)))$ is quasi-local \emph{if and only if} there exists a quasi-local operator (in the sense of Definition \ref{defn:quasi-locality}) $T_0 \in \B(\ell^2(X))$ such that $\res\circ \Phi(T) = (T_0)_{x\in X}$.
\end{example}


Analogous to the uniform Roe and quasi-local algebras for metric spaces introduced in Definition \ref{defn:unif. Roe} and \ref{defn:unif. quasilocal}, we define the following algebras for groupoids:

\begin{definition}\label{defn:unif. Roe and quasilocal for groupoids}
    Let $\G$ be a locally compact \'{e}tale groupoid.
    \begin{enumerate}
        \item Denote by $\mathbb{C}_{u}[\G]$ the set of all compactly supported operators in $\L(L^2(\G))$, and define the \emph{uniform Roe algebra $C^*_u(\G)$ of $\G$} to be its norm closure in $\L(L^2(\G))$.
        \item Denote by $C^{*}_{uq}(\G)$ the set of all quasi-local operators in $\L(L^2(\G))$, which forms a $C^*$-algebra and is called the \emph{uniform quasi-local algebra of $\G$}.
    \end{enumerate}
\end{definition}

We also consider the equivariant counterpart. For a locally compact \'{e}tale groupoid $\G$, denote by $\CC_u[\G]^{\G}$ the $\ast$-subalgebra in $\CC_u[\G]$ consisting of $\G$-equivariant operators. Also denote by $C^*_u(\G)^{\G}$ and $C^{*}_{uq}(\G)^{\G}$ the $C^*$-subalgebras in $C^*_u(\G)$ and $C^{*}_{uq}(\G)$, respectively,  consisting of $\G$-equivariant operators.

\begin{remark}\label{rem:coincidence}
    Recall that Anantharaman-Delaroche introduced the notion of uniform $C^*$-algebra of a groupoid in \cite[Definition 6.1]{ADExact} with the same notation $C^*_u(\G)$. We will show in Section \ref{ssec:compact supp case AD} that it coincides with Definition \ref{defn:unif. Roe and quasilocal for groupoids}(1).
\end{remark}

The following proposition connects the algebra $\CC_u[\G]^{\G}$ with $C_c(\G)$, which parallels the result that the equivariant part of the algebraic uniform Roe algebra for a group coincides with the group algebra.

\begin{proposition}\label{Roereduced}
    Let $\G$ be a locally compact \'{e}tale groupoid. Then $\mathbb{C}_u[\G]^{\G}$ is $\ast$-isomorphic to $C_c(\G)$. As a consequence, the norm closure of $\CC_u[\G]^{\G}$ in $\L(L^2(\G))$ is $C^*$-isomorphic to the reduced groupoid $C^{*}$-algebra $C^{*}_r(\G)$.
\end{proposition}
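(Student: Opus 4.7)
The plan is to show that the left regular representation $\Lambda \colon C_c(\G) \to \L(L^2(\G))$ is injective with image exactly $\CC_u[\G]^{\G}$. That $\Lambda(f)$ is $\G$-equivariant is Lemma \ref{lem.GEquivariantGroupoidAlgebra}; that it is compactly supported (in fact supported in $K := \supp(f)$) follows from a direct expansion of $M(f_2)\Lambda(f)M(f_1)\xi$ via the convolution formula, where a nonvanishing summand would require $\gamma\alpha^{-1} \in \supp(f)$ with $\gamma \in \supp(f_2)$ and $\alpha \in \supp(f_1)$ sharing the same source, contradicting $K$-separation of $f_1, f_2$. Injectivity is automatic, since $\|\Lambda(f)\|_{\L(L^2(\G))} = \|f\|_r$ is a norm on $C_c(\G)$.

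For surjectivity, given $T \in \CC_u[\G]^{\G}$, Proposition \ref{prop.leftConvolverAndGEquivariant} supplies a unique left convolver $f_T \in C_b(\G)$ with $T = \Lambda(f_T)$ and $f_T(\gamma) = \langle \delta_\gamma, \Phi_{\s(\gamma)}(T)\delta_{\s(\gamma)}\rangle$. The task is to upgrade this to $f_T \in C_c(\G)$. Let $K$ be a compact set supporting $T$; replacing $K$ by the still-compact $K \cup K^{-1}$, I may assume $K$ is symmetric, and then $T$ is still supported in $K$. The key step is to apply the fibrewise criterion Proposition \ref{compactfiberwisely}(1) to the singletons $A_x := \{\gamma\}$ and $B_x := \{x\}$ with $x = \s(\gamma)$ for an arbitrary $\gamma \notin K$: a direct check gives $A_x \cap (K\cdot B_x) = \{\gamma\} \cap (K \cap \G_x) = \emptyset$, while an intersection $(K\cdot A_x)\cap \{x\}$ would force $\gamma^{-1} \in K$, ruled out by symmetry. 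Hence $\chi_{\{\gamma\}}\Phi_x(T)\chi_{\{x\}} = 0$, so $f_T(\gamma) = 0$. Continuity of $f_T$ (built into Proposition \ref{prop.leftConvolverAndGEquivariant}) and closedness of $K$ then yield $\supp(f_T) \subseteq K$, so $f_T \in C_c(\G)$.

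Once the $\ast$-isomorphism $\Lambda \colon C_c(\G) \to \CC_u[\G]^{\G}$ is in hand, the consequence is routine: the restriction of the operator norm on $\L(L^2(\G))$ to $\Lambda(C_c(\G))$ coincides with the reduced norm (Section \ref{ssec.groupoidAlgebra}), so taking closures on both sides identifies the closure of $\CC_u[\G]^{\G}$ with the image of $C^*_r(\G)$ under the (faithful) regular representation $\Lambda$. The main obstacle is the surjectivity step, namely transferring the operator-level notion of compact support of $T$ to the function-level compact support of $f_T$. This is a genuinely nontrivial transfer, but the fibrewise machinery developed in Section \ref{sec.charForOperator} reduces it to the singleton test-set argument above, with the symmetrisation trick $K \mapsto K \cup K^{-1}$ as the only small piece of bookkeeping.
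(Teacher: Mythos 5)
Your proposal is correct and takes essentially the same route as the paper's proof: both directions rest on Proposition \ref{prop.leftConvolverAndGEquivariant} (to realise an equivariant $T$ as $\Lambda(f_T)$) and on the fibrewise criterion of Proposition \ref{compactfiberwisely} to conclude $\supp(f_T)\subseteq K\cup K^{-1}$. Your singleton test-set computation with $A_x=\{\gamma\}$, $B_x=\{\s(\gamma)\}$ simply spells out the step the paper leaves implicit.
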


\begin{proof}
    Recall that we have the faithful representation $\Lambda: C_c(\G) \to \L(L^2(\G))$ defined in (\ref{EQ:Lambda}). Hence it suffices to show that the image $\Lambda(C_c(\G))$ coincides with the $\ast$-algebra $\CC_u[\G]^{\G}$.

    Direct calculations show that if $h \in C_c(\G)$ has support in a compact set $K \subseteq \G$, then the operator $\Lambda(h)$ has support in $K$. Together with Proposition \ref{prop.leftConvolverAndGEquivariant}(1), this implies that $\Lambda(C_c(\G)) \subseteq \CC_u[\G]^{\G}$. On the other hand, for $T\in \mathbb{C}_u[\G]^{\G}$ with support in a compact subset $K \subseteq \G$, Proposition \ref{prop.leftConvolverAndGEquivariant}(2) implies that there exists a unique $f_T \in C_b(\G)$ defined in (\ref{EQ:function fT}) such that $T=\Lambda(f_{T})$. Recall that
    \[
        f_{T}(\gamma) = \big(\Phi_{\s(\gamma)}(T)(\delta_{\s(\gamma)})\big)(\gamma)= \langle \delta_{\gamma}, \Phi_{\s(\gamma)}(T) (\delta_{\s(\gamma)}) \rangle \quad \text{for} \quad \gamma\in\G.
    \]
    Hence Proposition \ref{compactfiberwisely} implies that $f_T$ has support in $K\cup K^{-1}$, which concludes that $f_T \in C_c(\G)$.
\end{proof}

\begin{remark}
    Note that $\CC_u[\G]^{\G}$ is contained in $C^*_u(\G)^{\G}$, hence doing completing we obtain that the norm closure of $\CC_u[\G]^{\G}$ in $\L(L^2(\G))$ is contained in $C^*_u(\G)^{\G}$. However in general, it is unclear whether these two algebras are identical. We provide a sufficient condition in the next section.
\end{remark}

To conclude this section, we discuss a relative commutant picture for quasi-local operators on groupoids, which generalises the metric space case proved in \cite[Theorem 2.8 ``(i) $\Leftrightarrow$ (ii)'']{SpakulaTikuisis} (see also~\cite[Proposition 3.3]{BaoChenZhang}). Recall the following notion:


\begin{definition}\label{defn:variation}
    Let $\G$ be a groupoid, $K$ a symmetric compact subset of $\G$ and $\delta>0$. A function $h\in C_b(\G)$ is said to have \textit{$(K,\delta)$--variation}, if for any $\alpha, \beta\in \G$ with $\alpha\beta^{-1}\in K$, we have $|h(\alpha)-h(\beta)|< \delta$.
\end{definition}

\begin{proposition}\label{prop:relative picture}
    Let $\G$ be a locally compact \'{e}tale groupoid and $T\in \L(L^2(\G))$. Then the following are equivalent:
    \begin{enumerate}
        \item $T$ is quasi-local;
        \item for any $\varepsilon>0$, there exist $\delta>0$ and a symmetric
              compact $K \subseteq \G$ such that for any $h\in C_b(\G)$ with
              $\|h\|_\infty=1$ and $(K, \delta)$-variation, we have $\|[T,h]\|<\varepsilon$.
    \end{enumerate}
\end{proposition}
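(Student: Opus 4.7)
The plan is to analyse the commutator $[T, M(h)]$ fibrewise via the slicing map $\Phi$ of (\ref{eq.PhiMap}). For each $x\in\Gz$, the operator $[\Phi_{x}(T), M(h|_{\G_{x}})]$ on $\ell^{2}(\G_{x})$ has matrix entries $(h(\beta)-h(\gamma))\,\langle\delta_{\gamma},\Phi_{x}(T)\delta_{\beta}\rangle$, and $\|[T, M(h)]\|=\sup_{x\in\Gz}\|[\Phi_{x}(T), M(h|_{\G_{x}})]\|$. Proposition \ref{compactfiberwisely} then bridges these fibrewise estimates to the quasi-locality condition of Definition \ref{def.operatorCompactSupportAndQuasiLocal}.

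For $(1)\Rightarrow(2)$, given $\varepsilon>0$ I apply quasi-locality of $T$ to obtain a symmetric compact $K\subseteq\G$. Since $K$ is compact in the \'{e}tale groupoid $\G$, it is covered by finitely many, say $N$, open precompact bisections. On each fibre, I split the commutator matrix into a \emph{near part} (entries with $\gamma\beta^{-1}\in K$) and a \emph{far part}. The $(\delta,K)$-variation bounds each near-part entry by $\delta\|T\|$; since the bisection cover forces at most $N$ non-zero entries per row and per column, a Cauchy--Schwarz estimate gives a near-part norm at most $\delta\sqrt{N}\|T\|$. The far part equals $[T_{x}^{\mathrm{far}}, M(h|_{\G_{x}})]$, where $T_{x}^{\mathrm{far}}$ is the block of $\Phi_{x}(T)$ indexed by $(\gamma,\beta)$ with $\gamma\beta^{-1}\notin K$; quasi-locality through Proposition \ref{compactfiberwisely} bounds its norm uniformly in $x$ after a spatial partitioning of $\G_x$ into cells controlled by $K$. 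Choosing $\delta$ small in terms of $\varepsilon$, $N$ and $\|T\|$ yields $\|[T, M(h)]\|<\varepsilon$.

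For $(2)\Rightarrow(1)$, given $\varepsilon>0$, invoke (2) to produce $\delta>0$ and a symmetric compact $K\subseteq\G$. By Lemma \ref{compact-supported-functions} it suffices to test quasi-locality on $f,g\in C_{c}(\G)$ with $\|f\|_{\infty},\|g\|_{\infty}\leq 1$. Choose an integer $N$ with $3/N<\delta$ and set $K':=\bigcup_{j=0}^{N}K^{j}$ (compact symmetric by continuity of composition). For $K'$-separated $f,g\in C_{c}(\G)$, I construct a hat function $h\in C_{b}(\G)$ with $\|h\|_{\infty}=1$, $h\equiv 1$ on $\supp(f)$, $h\equiv 0$ on $\supp(g)$, and $(\delta,K)$-variation. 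The identities $hf=f$ and $gh=0$ then yield
\[
    gTf=gT(hf)=g(hT+[T,h])f=ghTf+g[T,h]f=g[T,h]f,
\]
so $\|gTf\|\leq\|g\|_{\infty}\|[T,h]\|\|f\|_{\infty}<\varepsilon\|f\|_{\infty}\|g\|_{\infty}$, which is Definition \ref{def.operatorCompactSupportAndQuasiLocal}(3).

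The hat function is built iteratively: set $F_{0}:=\supp(f)$ and $F_{m+1}:=F_{m}\cup(K\cdot F_{m})$, which are compact and increasing with $F_{N}\cap\supp(g)=\emptyset$ by $K'$-separation. Using the openness of the composition map in \'{e}tale groupoids, each $F_{m}$ is sandwiched in a nested open precompact neighbourhood contained in $F_{m+1}$, and Urysohn's lemma yields continuous $h_{m}\colon\G\to[0,1]$ with $h_{m}\equiv 1$ on a neighbourhood of $F_{m}$ and $h_{m}\equiv 0$ outside a neighbourhood of $F_{m+1}$. Then $h:=\frac{1}{N}\sum_{m=0}^{N-1}h_{m}$ satisfies the value conditions, and the $(\delta,K)$-variation follows from the observation that $\alpha\beta^{-1}\in K$ and $\beta\in F_{m}\setminus F_{m-1}$ force $\alpha\in F_{m+1}$, so the ``layers'' of $\alpha$ and $\beta$ differ by at most one, yielding $|h(\alpha)-h(\beta)|\leq 3/N<\delta$. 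The main obstacle is carrying out this nested Urysohn construction cleanly in a general locally compact \'{e}tale groupoid, ensuring that open thickenings of each $F_{m}$ fit inside $F_{m+1}$ with enough room to accommodate all $K$-translations, together with the fibrewise norm bound on $T_{x}^{\mathrm{far}}$ needed in the $(1)\Rightarrow(2)$ direction.
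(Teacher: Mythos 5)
Your ``$(2)\Rightarrow(1)$'' direction is essentially the paper's argument: take $K'=K\cup K^2\cup\cdots\cup K^N$, build a hat function $h$ that is $1$ on $\supp(f)$, $0$ on $\supp(g)$ and has $(\delta,K)$-variation, and conclude from $gTf=g[T,h]f$. The paper invokes Urysohn's lemma without detail; your iterated-thickening construction $F_{m+1}=F_m\cup(K\cdot F_m)$ is a reasonable way to make it explicit (modulo the care needed to keep the open collars compatible with $K$-translation, which you acknowledge).

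The ``$(1)\Rightarrow(2)$'' direction, however, has a genuine gap, and the decomposition you chose cannot be repaired. You split the commutator matrix into the band $\{(\gamma,\beta):\gamma\beta^{-1}\in K\}$ and its complement, and you need $\|T_x^{\mathrm{far}}\|$ (or at least $\|[T_x^{\mathrm{far}},M(h_x)]\|$) to be small uniformly in $x$. But $T_x^{\mathrm{far}}=\Phi_x(T)-(\text{band truncation of }\Phi_x(T))$, and the assertion that quasi-locality forces this off-band truncation to have small norm is \emph{not} a consequence of Proposition \ref{compactfiberwisely}: quasi-locality controls $\|\chi_{A}\Phi_x(T)\chi_{B}\|$ for a \emph{single} pair of $K$-separated sets, whereas $T_x^{\mathrm{far}}$ is a sum over infinitely many such pairs (plus partial truncations of nearby cell pairs), and no partition of $\G_x$ into cells lets you control that sum by finitely many applications of quasi-locality. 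Worse, if the claim were true it would show that every quasi-local operator is within $\varepsilon$ of a compactly supported one, i.e.\ $C^*_{uq}(\G)=C^*_u(\G)$ with no amenability hypothesis at all -- precisely the open problem the paper is about. The paper avoids this by decomposing according to the \emph{level sets} of $h$ rather than by distance bands: with $A_i:=h_x^{-1}\bigl(((i-1)/N,i/N]\bigr)$ one approximates $h_x$ by $\sum_i\frac{i}{N}\chi_{A_i}$ and writes the commutator as $\sum_{i,j}\frac{i-j}{N}\chi_{A_j}\Phi_x(T)\chi_{A_i}$; the $(\delta,K)$-variation gives $(A_iA_j^{-1})\cap K=\emptyset$ for $|i-j|>1$, so those finitely many ($\le N^2$) terms are each of norm $\le\varepsilon/(2N^2)$ by quasi-locality, while the $|i-j|\le1$ terms carry a coefficient $\le 1/N$ and are summed using the pairwise disjointness of the $A_i$. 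The key point your decomposition misses is that pairs $(\gamma,\beta)$ which are metrically far but lie in the same or adjacent level sets contribute a small factor $|h(\beta)-h(\gamma)|\le 2/N$; grouping by level sets exploits this, whereas grouping by bands throws it away. You should replace your near/far splitting by this level-set argument.
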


\begin{proof}
    We follow the outline of the proof for~\cite[Theorem 2.8 ``(i) {$\Leftrightarrow$} (ii)'']{SpakulaTikuisis}.

    ``$(2)\Rightarrow (1)$'':
    Given $\varepsilon>0$, choose $n \in \NN$ such that $1/n\leq \delta$ and
    take $K'=K\cup K^2\cup\cdots\cup K^n$. For any $K'$-separated $f, g\in
        C_c(\G)$ with norm $1$, there exists $h\in C_b(\G)$ with $(K,\delta)$-variation and norm $1$ such that $h$ equals $1$ on $\supp(f)$ and $0$ on $\supp(g)$. This can be achieved by Urysohn's lemma. Then we have:
    \[
        \|gTf\|=\|gThf\|=\|g[T,h]f\|<\varepsilon,
    \]
    which implies that $T$ is a quasi-local operator by Lemma~\ref{compact-supported-functions}(2).

    ``$(1)\Rightarrow (2)$'': For simplicity, we assume that $\|T\|=1$. For any $0<\varepsilon<1$, take $\delta \leq \varepsilon/16$ and choose $N \in \NN$ such that $\delta<1/N<\varepsilon/8$. Due to quasi-locality, there exists a symmetric compact subset $K \subseteq \G$ such that for any $K$-separated $f, g\in C_b(\G)$ with norm $1$, we have $\|gTf\|\leq \varepsilon/(2N^2)$.

    For $h\in C_b(\G)$ with norm $1$ and $(K, \delta)$-variation, we claim that $\|[T, h]\|<\varepsilon$. Without loss of generality, we assume that $h\geq 0$. Fixing $x\in \Gz$, denote by $h_x$ the restriction of $h$ on $\G_x$. We set
    \[
        A_1:=h_x^{-1}\big([0,\frac{1}{N}]\big), \quad A_i:=h_x^{-1}\big((\frac{i-1}{N},\frac{i}{N}]\big) \quad \text{for} \quad  i=2,\ldots, N.
    \]
    It is clear that $(A_i A^{-1}_j)\cap K = \emptyset$ for $|i-j|>1$. Applying Proposition \ref{compactfiberwisely}(2), we have
    \[
        \|\chi_{A_i} \Phi_x(T) \chi_{A_j}\|\leq \frac{\varepsilon}{2N^2}.
    \]
    On the other hand, note that
    \[
        \|h_x-\sum^N_{i=1}\frac{i}{N}\chi_{A_i}\|_\infty \leq 1/N.
    \]
    Hence we obtain:
    \begin{align*}
        \|[\Phi_x(T), h_x]\|
         & \leq \frac{2}{N}+\|[\Phi_x(T), \sum^N_{i=1}\frac{i}{N}\chi_{A_i}]\|                                                                                         \\
         & = \frac{2}{N}+\|(\sum^N_{j=1}\chi_{A_j})(\sum^N_{i=1}\frac{i}{N}\Phi_x(T)\chi_{A_i})-(\sum^N_{j=1}\frac{j}{N}\chi_{A_j}\Phi_x(T))(\sum^N_{i=1}\chi_{A_i})\| \\
         & = \frac{2}{N}+\|\sum^N_{i,j=1}(\frac{i-j}{N})\chi_{A_j} \Phi_x(T) \chi_{A_i}\|                                                                              \\
         & \leq \frac{2}{N}+ \sum_{|i-j|>1}\|\chi_{A_j} \Phi_x(T) \chi_{A_i}\|+ \|\sum_{|i-j|\leq 1}(\frac{i-j}{N})\chi_{A_j} \Phi_x(T) \chi_{A_i}\|.
    \end{align*}
    Each item in the first sum is dominated by $\varepsilon/(2N^2)$, so the first item is less than $\varepsilon/2$. The second sum can be divided into two parts for $j=i+1$ and $j=i-1$, each of which is dominated by $1/N$. Hence we have
    \[
        \|[\Phi_x(T), h_x]\| \leq \frac{2}{N}+ \frac{\varepsilon}{2}+ \frac{2}{N} \leq \varepsilon,
    \]
    which implies that
    \[
        \|[T,h]\|=\sup_{x\in \Gz}\|[\Phi_x(T),h_x]\|<\varepsilon.
    \]
    Hence we conclude the proof.
\end{proof}

\section{Main theorem}\label{sec.mainThm}

In the previous section, we introduced the notion of quasi-locality for groupoids and constructed several associated $C^*$-algebras (see Definition \ref{defn:unif. Roe and quasilocal for groupoids}).
Now we are in the position to prove the following main result of this paper, showing that they coincide with each other under the assumption of amenability.

\begin{theorem}\label{Main-theorem}
    Let $\G$ be a locally compact, $\sigma$-compact and \'{e}tale groupoid.
    If $\G$ is amenable, then we have $C^{*}_r(\G)= C^*_u(\G)^{\G} = C^{*}_{uq}(\G)^{\G}$.
\end{theorem}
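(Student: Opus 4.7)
The inclusion $C^*_r(\G) \subseteq C^*_u(\G)^{\G}$ is observed right after Proposition \ref{Roereduced}, and $C^*_u(\G)^{\G} \subseteq C^*_{uq}(\G)^{\G}$ is automatic since compactly supported operators are quasi-local. The substantive content of the theorem thus lies in the reverse inclusion $C^*_{uq}(\G)^{\G} \subseteq C^*_r(\G)$: every equivariant quasi-local operator must be shown to be a norm limit of operators from $\Lambda(C_c(\G))$. My plan is to implement the Schur-multiplier strategy of \cite[Theorem 3.3]{SpakulaZhang}, with amenability of $\G$ replacing Property A. As a preliminary, $\sigma$-compactness lets me write $\G = \bigcup_n K_n$ as an increasing union of compact symmetric subsets containing $\Gz$, yielding a proper length function $\ell(\gamma) := \min\{n : \gamma \in K_n\}$ and a family of left-invariant extended metrics $d_x(\alpha, \beta) := \ell(\alpha \beta^{-1})$ on the source fibres. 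Corollary \ref{cor:char for quasi-locality} then recasts quasi-locality of $T$ as uniform fibrewise vanishing of the blocks $\|\chi_A \Phi_x(T) \chi_B\|$ as $d_x(A, B) \to \infty$.

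Given $T \in C^*_{uq}(\G)^{\G}$, Proposition \ref{prop.leftConvolverAndGEquivariant} produces a unique left convolver $f_T \in C_b(\G)$ with $T = \Lambda(f_T)$, so that the matrix entry of $\Phi_x(T)$ at $(\alpha, \beta) \in \G_x \times \G_x$ equals $f_T(\alpha \beta^{-1})$. For any positive-type $h \in C_c(\G)$ supplied by Lemma \ref{lem:p.d.functions for amenability}, I would define the Herz--Schur-type multiplier $M_h(T) := \Lambda(h \cdot f_T)$, which lies in $\Lambda(C_c(\G)) \subseteq C^*_r(\G)$ since $h f_T \in C_c(\G)$. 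A standard fibrewise computation based on positive-typeness of $h$ and $h|_{\Gz} \leq 1$ yields the contractive bound $\|M_h(T)\| \leq \|T\|$. The remaining task is, for each $\varepsilon > 0$, to choose $h$ so that $\|T - M_h(T)\| < \varepsilon$. Notice that $\Phi_x(T - M_h(T))$ is the Schur product of the kernel $(\alpha, \beta) \mapsto 1 - h(\alpha \beta^{-1})$ with $\Phi_x(T)$; decomposing $1 - h = (1-h)\chi_K + (1-h)\chi_{\G \setminus K}$ for a suitably large compact $K$ splits this into a near-diagonal contribution, small because $|1 - h| < \delta$ on $K$ by $(\delta, K)$-variation of $h$, and a far contribution that is small by quasi-locality of $T$ combined with contractivity of $M_h$.

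The main obstacle I anticipate is pushing this Schur-multiplier bookkeeping through uniformly in $x \in \Gz$ when $\Gz$ is non-compact. In the discrete bounded-geometry setting of \cite{SpakulaZhang} the near-diagonal piece admits a direct norm bound from a Property A kernel, but in the groupoid case the positive-type function from Lemma \ref{lem:p.d.functions for amenability} must be combined with the finer approximation scheme of Lemma \ref{lem:equiv. for amenability}, realising $h$ essentially as a convolution square of suitable cutoffs, so that each summand in the Schur decomposition can be controlled via the characterisation of adjointability in Theorem \ref{cor:char for LL2G}. Ensuring that the continuity of the resulting sections is preserved outside a compact exhaustion of $\Gz$, and that all estimates are uniform across source fibres, is precisely the technical step that, as already noted in the introduction, sets this argument apart from a routine family version of \cite[Theorem 3.3]{SpakulaZhang}.
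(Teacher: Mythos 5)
Your setup coincides with the paper's: the two easy inclusions, the exhaustion $\{K_n\}$ and fibre metrics $d_x$ from $\sigma$-compactness, the realisation $T=\Lambda(f_T)$ via Proposition \ref{prop.leftConvolverAndGEquivariant}, and the approximant $\Lambda(h\cdot f_T)$ built from a positive-type $h\in C_c(\G)$ supplied by Lemma \ref{lem:p.d.functions for amenability} are all exactly what the paper uses, and the contractivity of the Schur multiplier $m_{k_x}$ for $k_x(\gamma,\beta)=h(\gamma\beta^{-1})$ is fine since $h$ is of positive type with $h|_{\Gz}\le 1$. The gap is in the error estimate for $\|T-\Lambda(h\cdot f_T)\|$. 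The splitting $1-h=(1-h)\chi_K+(1-h)\chi_{\G\setminus K}$ does not deliver either of the two bounds you claim. For the near-diagonal piece, a Schur multiplier by a kernel of small supremum norm need not have small operator norm: the bound $\|m_k(T)\|\le\|T\|\cdot\sup_\gamma k(\gamma,\gamma)$ is special to positive-type kernels, and $(1-h)\chi_K$ is not of positive type. For the far piece, quasi-locality of $T$ controls individual blocks $\chi_{A_x}\Phi_x(T)\chi_{B_x}$ with $A_x\cap(K\cdot B_x)=\emptyset$, but $m_{\chi_{\G\setminus K}}(\Phi_x(T))=\Phi_x(T)-m_{\chi_K}(\Phi_x(T))$ involves the band truncation $m_{\chi_K}$, which is not a bounded operation with controlled norm, and showing that the far band truncation of a quasi-local operator is small in norm is essentially equivalent to the theorem you are trying to prove (it is the content of \cite[Theorem 3.3]{SpakulaZhang}). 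So this step is circular: if the naive splitting worked, Property A/amenability would play no role beyond producing the kernel $h$.

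What is actually needed, and what the paper does, is a second and essential use of amenability: it yields uniform Property A of the fibre family $\{(\G_x,d_x)\}_{x\in\Gz}$ (Lemma \ref{lem:amen implies unif. A}), which feeds the norm-concentration Lemma \ref{lem:Lemma 5.2 in SZ20}. Concretely, the commutant characterisation of quasi-locality (as in the proof of \cite[Theorem 2.8]{SpakulaTikuisis}) puts $T'_x:=m_{\widetilde{k}_x}(\Phi_x(T))-\Phi_x(T)$ into $Commut_{\G_x}(L,\varepsilon/24)$ uniformly in $x$; Lemma \ref{lem:Lemma 5.2 in SZ20} then produces a unit vector $v_x$ of uniformly bounded support diameter nearly attaining $\|T'_x\|$, and $\|T'_xv_x\|$ is estimated entrywise using the $(\widetilde{n},\widetilde{\varepsilon})$-variation of $\widetilde{k}_x$, Cauchy--Schwarz and uniformly bounded geometry. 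A further point your sketch misses: when $\Gz$ is non-compact the kernel $k_x(\gamma,\beta)=h(\gamma\beta^{-1})$ is \emph{not} normalised (its diagonal vanishes off $\r^{-1}(\r(\supp h))$), so the paper passes to a modified normalised kernel $\widetilde{k}_x$ and must separately bound $\|m_{k_x}(\Phi_x(T))-m_{\widetilde{k}_x}(\Phi_x(T))\|$; it is only in this last comparison that quasi-locality of $T$ is used in the direct block-wise manner you describe, because the region where $k_x$ and $\widetilde{k}_x$ differ genuinely consists of $K_{\widetilde{n}}$-separated blocks. In short, you have identified the right approximant but replaced the core of the argument with a step that fails.
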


Note that when $\G$ is the coarse groupoid associated to a discrete metric space with bounded geometry, Theorem \ref{Main-theorem} recovers the main result of \cite{SpakulaZhang} in the Hilbert space case (see Proposition \ref{prop:SZ}). See Section \ref{ssec:coarse groupoids} for more details.

The proof of Theorem \ref{Main-theorem} occupies the rest of this section. As mentioned in Section \ref{ssec:intro main contributions}, it relies heavily on the coarse geometry of groupoids. Roughly speaking, we assign a length function on the groupoid thanks to the $\sigma$-compactness, which induces a metric on each source fibre. Hence we can appeal to the weapon of coarse geometry for a family of metric spaces, and consult the idea of \cite[Theorem 3.3]{SpakulaZhang}. To start, let us recall some notion for metric families (comparing with Section \ref{ssec.motivation}).

A family of metric spaces $\{(X_i,d_{i})\}_{i\in I}$ is said to have \textit{uniformly bounded geometry} if for any $r>0$, we have
\[
    \sup_{i\in I}\sup_{x\in X_{i}}\sharp B_{X_{i}}(x,r)<\infty.
\]
For each $i\in I$, let $k_i$ be a kernel on $X_i$. We say that the family $(k_i)_{i\in I}$ has \emph{uniformly finite propagation} if there exists $S>0$ such that $k_i(x,y)=0$ whenever $d_i(x,y)>S$ for $x,y\in X_i$ and $i\in I$.



\begin{definition}\label{defn:uniform A}
    Let $\{(X_i,d_{i})\}_{i\in I}$ be a family of discrete metric spaces with uniformly bounded geometry.
    We say that $\{(X_i,d_{i})\}_{i\in I}$ has \textit{uniform Property A} if for any $R>0, \varepsilon>0$ and $i\in I$, there exists a normalised and symmetric kernel $k_i$ on $X_i$ of positive type and $(R,\varepsilon)$-variation such that the family $(k_i)_{i\in I}$ has uniformly finite propagation.
\end{definition}


\begin{definition}\label{defn:operators family version}
    Let $\{(X_i,d_{i})\}_{i\in I}$ be a family of metric spaces with uniformly bounded geometry, and $T_i$ be an operator in $\B(\ell^2(X_i))$ for each $i \in I$.
    \begin{enumerate}
        \item Say that $(T_i)_{i\in I}$ has \textit{uniformly finite propagation} if there exists $R>0$ such that for any $f,g\in\ell^{\infty}(X_{i})$ with $d_{i}(\supp(f),\supp(g))>R$, then $fT_{i}g = 0$.
        \item Given $\varepsilon>0$, $(T_i)_{i\in I}$ is said to have \textit{uniformly finite $\varepsilon$-propagation} if there exists $R>0$ such that for any $f,g\in\ell^{\infty}(X_{i})$ with $d_{i}(\supp(f),\supp(g))>R$, then $\|fT_{i}g\|<\varepsilon\|f\|_{\infty}\|g\|_{\infty}$.
        \item Say that $(T_i)_{i\in I}$ is \textit{uniformly quasi-local} if for any $\varepsilon>0$, the family $(T_i)_{i\in I}$ has uniformly finite $\varepsilon$-propagation.
    \end{enumerate}
\end{definition}

Recall that we defined the notion of compactly uniform quasi-locality for operators on source fibres of a given groupoid in Definition \ref{defn: unif. quasiloc}. In fact, this has close relation with Definition \ref{defn:operators family version}(3) since we can endow each source fibre with a metric as follows\footnote{We remark that these metrics have also been considered in \cite{MaWu} to study amenability for certain groupoids, using the language of length functions on groupoids.}. As explained above, this is the key ingredient to attack Theorem \ref{Main-theorem} where the coarse geometry of the underlying groupoid plays an important role.





Throughout the rest of this section, let $\G$ be a locally compact, $\sigma$-compact and \'{e}tale groupoid with unit space $\Gz$. We also fix a sequence of subsets $\{K_{n}\}_{n\in \NN}$ in $\G$ satisfying the following:
\begin{enumerate}[label= (M.\arabic*)]
    \item\label{item.m1} $K_{0}=\Gz$, and $K_{n}$ is compact and symmetric for $n\geq 1$.
    \item\label{item.m2} $K_n\subset K^{\circ}_{n+1}$ for $n\geq 1$, where $K^{\circ}_{n+1}$ denotes the interior of $K_{n+1}$.
    \item\label{item.m3} $K_n K_m\subset K_{n+m}$ for $n,m\geq 1$ and $\G=\bigcup_{n\geq 1} K_n$.
\end{enumerate}
Note that we do not require $K_0 \subset K^{\circ}_1$ in general to deal with the case that $K_{0}=\Gz$ might not be compact. However when $\Gz$ is compact, we further require that $K_0 \subset K^{\circ}_1$ for convenience.
Using $\{K_n\}_n$, we can endow a function $d_x$ on $\G_x \times \G_x$ for each $x\in \Gz$ as follows:
\begin{equation}\label{EQ:fibre metric}
    d_x(\gamma_1,\gamma_2)\coloneqq\inf\{n \in \NN: \gamma_1\gamma^{-1}_2\in K_n\} \quad \text{ for } \quad  \gamma_1,\gamma_2\in\G_{x}.
\end{equation}
It follows from \ref{item.m1}-\ref{item.m3} that $d_x$ is indeed a metric on $\G_x$.


\begin{lemma}\label{lem.uniformlyBoundedGeometr}
    With the same notation as above, the family $\{(\G_x,d_x)\}_{x\in\Gz}$ has uniformly bounded geometry.
\end{lemma}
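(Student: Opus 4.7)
The plan is to reduce the problem to a statement about how compact sets intersect source fibres, and then exploit \'etaleness.

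First I would unpack the metric. Since the family $\{K_n\}_n$ is nested by \ref{item.m2}, for any $r\geq 0$ and $\gamma\in\G_x$ we have
\[
    B_{\G_x}(\gamma,r)=\{\gamma'\in\G_x : \gamma'\gamma^{-1}\in K_{\lfloor r\rfloor}\}.
\]
Next I would note that the right-translation map $\gamma'\mapsto \gamma'\gamma^{-1}$ is a bijection from $\G_x$ onto $\G_{\r(\gamma)}$: if $\gamma'\in\G_x$ then $\s(\gamma'\gamma^{-1})=\r(\gamma)$, and conversely any $\beta\in\G_{\r(\gamma)}$ can be written as $(\beta\gamma)\gamma^{-1}$ with $\beta\gamma\in\G_x$. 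Consequently
\[
    \# B_{\G_x}(\gamma,r) \;=\; \#\bigl(K_{\lfloor r\rfloor}\cap \G_{\r(\gamma)}\bigr),
\]
so it suffices to show that for every $n\in\NN$ the quantity $\sup_{y\in\Gz}\#(K_n\cap\G_y)$ is finite.

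This is where \'etaleness enters. Recall from Section~\ref{ssec.groupoid} that the open bisections of $\G$ form a basis for its topology. For fixed $n$, cover $K_n$ by open bisections; by compactness of $K_n$ (condition \ref{item.m1}) a finite subcollection $U_1,\dots,U_{N_n}$ already covers $K_n$. Since each $U_i$ is a bisection, the source map restricted to $U_i$ is injective, so $U_i\cap\G_y$ contains at most one element for every $y\in\Gz$. Therefore
\[
    \#(K_n\cap\G_y)\;\leq\;\sum_{i=1}^{N_n}\#(U_i\cap\G_y)\;\leq\;N_n,
\]
uniformly in $y$.

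Putting the two parts together yields $\sup_{x\in\Gz}\sup_{\gamma\in\G_x}\#B_{\G_x}(\gamma,r)\leq N_{\lfloor r\rfloor}<\infty$, which is exactly uniformly bounded geometry of the family $\{(\G_x,d_x)\}_{x\in\Gz}$. No real obstacle is expected here; the only point requiring minor care is the elementary verification that $\gamma'\mapsto \gamma'\gamma^{-1}$ lands in (and exhausts) $\G_{\r(\gamma)}$, which is a direct consequence of the axioms for source and range maps.
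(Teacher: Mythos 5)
Your argument is correct and follows essentially the same route as the paper's proof: identify the ball $B_{\G_x}(\gamma,n)$ with a subset of a fibre of $K_n$ via right translation, then bound the intersection of $K_n$ with any fibre by covering $K_n$ with finitely many open bisections. The only cosmetic difference is that you land in the source fibre $\G_{\r(\gamma)}$ while the paper uses the range fibre $\G^{\r(\gamma)}$ via $\beta\mapsto\gamma\beta^{-1}$; both work since a bisection meets each source fibre and each range fibre in at most one point.
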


\begin{proof}
    Since $\G$ is \'{e}tale, for each $n\in \NN$ there exists a constant $C_n>0$ such that $K_n$ can be covered by at most $C_n$-many open bisections, which implies that $\sharp (\G^{y}\cap K_{n})\leq C_{n}$ for each $y\in \Gz$. Note that for each $n\in \NN$, $x\in \Gz$ and $\alpha \in \G_x$, an element $\beta \in \G_x$ belongs to the ball $B_{\G_x}(\alpha, n)$ if and only if $\alpha\beta^{-1}\in K_{n}\cap \G^{\r(\alpha)}$. Hence the ball $B_{\G_x}(\alpha, n)$ contains at most $C_n$ points, which concludes the proof.
\end{proof}



The following result indicates that the amenability of $\G$ affects the coarse geometry of the metric family $\{(\G_x, d_x)\}_{x\in \Gz}$. This will be crucial in the proof of Theorem \ref{Main-theorem}.

\begin{lemma}\label{lem:amen implies unif. A}
    With the same notation as above, the family $\{(\G_x, d_x)\}_{x\in \Gz}$ has uniform Property A provided $\G$ is amenable.
\end{lemma}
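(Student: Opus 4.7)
Given $R,\varepsilon > 0$, my plan is to use amenability to build a single non-negative positive-type function $h \in C_c(\G)$, and then convert it fibre-wise into the required kernels. First I would apply Lemma \ref{lem:p.d.functions for amenability} with compact set $K_{2R}$ and accuracy $\varepsilon$, yielding $h \in C_c(\G)$ of positive type with $h \leq 1$ on $\Gz$, $h \equiv 1$ on $K_{2R} \cap \Gz$, $|1-h(\gamma)| < \varepsilon$ for $\gamma \in K_{2R}$, and support contained in some $K_T$. The naive candidate $k_x(\gamma_1,\gamma_2) := h(\gamma_1\gamma_2^{-1})$ is then a positive-type, symmetric, finite-propagation (bound $T$) kernel on each $\G_x$; the only defect is normalisation, which fails on $\gamma$ with $\r(\gamma) \notin K_{2R} \cap \Gz$.

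To repair this defect, I would partition $\G_x = A_x \sqcup B_x$ where $A_x := \{\gamma \in \G_x : \r(\gamma) \in K_{2R} \cap \Gz\}$, and set
\[
    k_x(\gamma_1,\gamma_2) := \begin{cases} h(\gamma_1\gamma_2^{-1}), & (\gamma_1,\gamma_2) \in A_x \times A_x; \\ \delta_{\gamma_1,\gamma_2}, & \text{otherwise}. \end{cases}
\]
The key point is that whenever $\gamma_1 \in A_x$ and $\gamma_2 \in B_x$ one has $\r(\gamma_1) \neq \r(\gamma_2)$ and hence $\gamma_1 \neq \gamma_2$, so both off-diagonal blocks of the matrix $k_x$ vanish identically. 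The block-diagonal form then exhibits $k_x$ as the direct sum of the positive-type kernel inherited from $h$ (on $A_x$) and the identity (on $B_x$). Normalisation on $A_x$ comes from $h \equiv 1$ on $K_{2R} \cap \Gz$, and on $B_x$ it is built in; symmetry follows from $h(\gamma^{-1}) = h(\gamma)$; and the uniform propagation bound $T$ is inherited from $\supp(h) \subseteq K_T$, uniformly in $x$.

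Verifying $(R,\varepsilon)$-variation is where the choice of the larger compact $K_{2R}$ pays off. The algebraic identities $\r(\alpha) = \alpha\alpha^{-1}$ and $\s(\alpha) = \alpha^{-1}\alpha$, combined with symmetry of $K_R$ and property \ref{item.m3}, give $\r(\alpha), \s(\alpha) \in K_R \cdot K_R \subseteq K_{2R}$ for any $\alpha \in K_R$. Applied to $\alpha = \gamma_1\gamma_2^{-1}$ with $d_x(\gamma_1,\gamma_2) < R$, this forces $\r(\gamma_1), \r(\gamma_2) \in K_{2R} \cap \Gz$, placing both $\gamma_1$ and $\gamma_2$ in $A_x$. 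Thus $k_x(\gamma_1,\gamma_2) = h(\gamma_1\gamma_2^{-1})$, and the required estimate follows directly from the variation of $h$ on $K_{2R}$.

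The conceptual obstacle throughout is that when $\Gz$ is non-compact one cannot arrange $h \equiv 1$ on all of $\Gz$ with $h \in C_c(\G)$, so the naive kernel $h(\gamma_1\gamma_2^{-1})$ fails to be normalised on fibre elements with range far from the support of $h$. The block-diagonal patching above, coupled with the range/source compression $\r(K_R), \s(K_R) \subseteq K_{2R}$, is precisely the device that reconciles the fibre-wise conditions with uniformity in $x$.
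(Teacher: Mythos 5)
Your proposal is correct and follows essentially the same route as the paper: invoke Lemma \ref{lem:p.d.functions for amenability} to get a compactly supported positive-type $h$, form the naive kernels $h(\gamma_1\gamma_2^{-1})$ on each fibre, and repair the failure of normalisation by a block-diagonal patch with the Kronecker delta on the fibre elements whose range lies outside the relevant compact set (the paper uses $\r(\gamma)\in\r(K_n)$ where you use $\r(\gamma)\in K_{2R}\cap\Gz$, and it verifies the variation by noting that such elements are metrically far from everything else, which is the mirror image of your observation that $\r(K_R)\subseteq K_{2R}$). The differences are purely in bookkeeping, not in substance.
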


\begin{proof}
    Since $\G$ is amenable, Lemma~\ref{lem:p.d.functions for amenability} implies that for any $\varepsilon>0$ and positive $n\in\mathbb{N}$, there exists a non-negative function $h\in C_{c}(\G)$ of positive type such that
    \begin{itemize}
        \item $h(x)\leq 1$ for any $x\in \Gz$, and $h(x)=1$ for any $x\in \r(K_{n})$;
        \item $|1- h(\gamma)|<\varepsilon$ for any $\gamma \in K_{n}$.
    \end{itemize}
    For each $x\in\Gz$, we define a function $k_{x}\colon\G_{x}\times\G_{x}\rightarrow [0,1]$ by
    \[
        k_{x}(\gamma,\beta) = h(\gamma\beta^{-1})\quad \text{for} \quad \gamma,\beta \in\G_{x}.
    \]
    It follows that $k_x$ is a symmetric kernel on $\G_x$ of positive type such that:
    \begin{itemize}
        \item the family $(k_x)_{x\in \Gz}$ has uniformly finite propagation;
        \item for $\gamma \in\G_{x}$ with $\r(\gamma)\in \r(K_{n})$, then $k_{x}(\gamma,\gamma)=1$;
        \item for $\gamma,\beta\in\G_{x}$ with $0<d_{x}(\gamma,\beta)\leq n$, then $|k_{x}(\gamma,\beta) - 1|<\varepsilon$.
    \end{itemize}
    Note that the kernel $k_{x}$ might \emph{not} have $(n,\varepsilon)$-variation, hence we consider another kernel $\widetilde{k}_{x}$ on $\G_{x}$ defined by
    \[
        \widetilde{k}_{x}(\gamma,\beta) = \left\{\begin{array}{ll}
            k_{x}(\gamma,\beta),   & \text{if } \r(\gamma),\r(\beta)\in \r(K_{n}); \\
            \delta_{\gamma,\beta}, & \text{otherwise}.
        \end{array}
        \right.
    \]
    Clearly $\widetilde{k}_{x}$ is a normalised symmetric kernel on $\G_x$ of positive type such that
    \begin{itemize}
        \item the family $(\widetilde{k}_{x})_{x\in \Gz}$ has uniformly finite propagation;
        \item $\widetilde{k}_{x}$ has $(n,\varepsilon)$-variation.
    \end{itemize}
    The last item comes from the fact that for any $\gamma\in\G_{x}$ with $\r(\gamma)\notin \r(K_n)$ and $\beta\in\G_{x}$ with $\beta \neq \gamma$, we have $d_{x}(\gamma,\beta)>n$. Hence we conclude the proof.
\end{proof}

The following result is basic and the proof is straightforward, hence omitted.

\begin{lemma}\label{lem:relation btw top and metric sep}
    Given $\alpha, \beta \in \G_x$ for $x\in \Gz$ and an integer $n>0$, we have that $\alpha \beta^{-1} \notin K_n$ \emph{if and only if} either $d_x(\alpha, \beta) > n$, or $\alpha = \beta$ with $\r(\alpha) \notin K_n$. If additionally $\Gz$ is compact, then $\alpha \beta^{-1} \notin K_n$ \emph{if and only if} $d_x(\alpha, \beta) > n$.
\end{lemma}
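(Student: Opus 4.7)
The plan is to reduce everything to the behaviour of the filtration $\{K_n\}$ on units versus non-units of $\G$, since the inverse $\alpha\beta^{-1}$ lands in $\Gz$ precisely when $\alpha=\beta$.

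First I would establish the basic dichotomy: for $\alpha,\beta\in\G_x$, one has $\alpha\beta^{-1}\in\Gz$ if and only if $\alpha=\beta$. Indeed, $\alpha\beta^{-1}\in\Gz$ forces $\r(\alpha)=\r(\beta)$ and $\alpha\beta^{-1}=\r(\alpha)$, which in a groupoid gives $\alpha=\beta$. Consequently $d_x(\alpha,\alpha)=0$ since $\r(\alpha)\in K_0=\Gz$, whereas if $\alpha\ne\beta$ then $\alpha\beta^{-1}\notin K_0$, so $d_x(\alpha,\beta)\ge 1$. Next, from \ref{item.m2} I would record the monotonicity $K_m\subseteq K_n$ whenever $1\le m\le n$, which will be used without further comment.

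For the main equivalence (non-compact case), assume $\alpha\beta^{-1}\notin K_n$ with $n>0$. If $\alpha=\beta$, then $\alpha\beta^{-1}=\r(\alpha)\in\Gz$ and the hypothesis becomes $\r(\alpha)\notin K_n$, giving the second alternative. If $\alpha\ne\beta$, then $\alpha\beta^{-1}\notin K_0$ by the first paragraph, and $\alpha\beta^{-1}\notin K_m$ for every $1\le m\le n$ by monotonicity; hence $d_x(\alpha,\beta)>n$. Conversely, either alternative clearly forces $\alpha\beta^{-1}\notin K_n$: in the first case by the very definition of $d_x$, and in the second case directly from $\r(\alpha)\notin K_n$.

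For the compact case, the additional assumption $K_0\subseteq K_1^\circ$ yields $\Gz\subseteq K_n$ for every $n\ge 1$. This rules out the second alternative entirely (since $\r(\alpha)\in\Gz\subseteq K_n$), so the equivalence collapses to $\alpha\beta^{-1}\notin K_n\iff d_x(\alpha,\beta)>n$. I do not anticipate any real obstacle: the only delicate point is the need for the extra clause ``$\alpha=\beta$ with $\r(\alpha)\notin K_n$'' in the non-compact setting, which is forced precisely because $K_0=\Gz$ need not be contained in the compact set $K_n$ when $\Gz$ is non-compact.
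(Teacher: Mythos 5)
Your proof is correct: the paper explicitly omits this argument as ``straightforward,'' and what you have written is precisely the intended case analysis (units versus non-units of $\alpha\beta^{-1}$, monotonicity of $\{K_n\}_{n\ge 1}$ from \ref{item.m2}, and the extra clause $K_0\subseteq K_1^\circ$ in the compact case). Nothing is missing.
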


By virtue of the metrics from (\ref{EQ:fibre metric}) together with Proposition~\ref{compactfiberwisely} and Lemma \ref{lem:relation btw top and metric sep}, we reach the following:

\begin{lemma}\label{prop.uniformlyFibreWiseQuasiLocal}
    With the same notation as above and given $T \in \L(L^2(\G))$, we have:
    \begin{enumerate}
        \item if $T \in \CC_u[\G]$, then $(\Phi_{x}(T))_{x\in \Gz}$ has uniformly finite propagation.
        \item if $T \in C^{*}_{uq}(\G)$, then $(\Phi_{x}(T))_{x\in \Gz}$ is uniformly quasi-local.
    \end{enumerate}
    If additionally $\Gz$ is compact, then both of the converse hold as well.
\end{lemma}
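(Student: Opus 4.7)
The plan is to combine Proposition~\ref{compactfiberwisely} with Lemma~\ref{lem:relation btw top and metric sep}, which translates the topological condition ``$\alpha\beta^{-1} \notin K_n$'' into the metric condition ``$d_x(\alpha,\beta)>n$'' (up to a diagonal exception when $\Gz$ is non-compact). Once this dictionary is set up, both directions are essentially just a translation after choosing the right $n$.

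For the forward direction of (1), suppose $T$ is supported in a compact $K \subseteq \G$. Since $\G = \bigcup_{n\geq 1} K_n^{\circ}$ by \ref{item.m2} and \ref{item.m3}, compactness of $K$ provides an $n$ with $K \subseteq K_n$. I would then show that $(\Phi_x(T))_{x\in\Gz}$ has uniform propagation bounded by $n$. Indeed, fix $x\in\Gz$ and $f,g\in\ell^\infty(\G_x)$ with $d_x(\supp(f), \supp(g)) > n$, and set $A_x := \supp(f)$, $B_x := \supp(g)$. If $\alpha \in A_x$ and $\beta \in B_x$ satisfied $\alpha \in K\cdot \beta$, then $\alpha\beta^{-1} \in K \subseteq K_n$, so Lemma~\ref{lem:relation btw top and metric sep} would force either $d_x(\alpha,\beta)\leq n$ or $\alpha = \beta$, both contradicting the separation hypothesis. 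Hence $A_x \cap (K\cdot B_x) = \emptyset$ and symmetrically $(K\cdot A_x)\cap B_x = \emptyset$, so Proposition~\ref{compactfiberwisely}\ref{compactfiberwisely-1} yields $f\Phi_x(T)g = 0$. Part (2) runs verbatim with $\varepsilon$ threaded through: given $\varepsilon>0$, quasi-locality produces a compact $K\subseteq\G$ via Proposition~\ref{compactfiberwisely}\ref{compactfiberwisely-2}, one buries $K$ in some $K_n$, and the same argument shows $\|f\Phi_x(T)g\| < \varepsilon\|f\|_\infty\|g\|_\infty$ whenever $d_x(\supp(f),\supp(g))>n$.

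For the converses under the extra hypothesis that $\Gz$ is compact, the plan is to reverse the arrows. Suppose $(\Phi_x(T))_{x\in\Gz}$ has uniformly finite propagation $R$; I claim $T$ is supported in $K:=K_R$. Given $A_x, B_x \subseteq \G_x$ with $A_x \cap (K_R \cdot B_x)=\emptyset$ and $(K_R \cdot A_x)\cap B_x = \emptyset$, every $\alpha\in A_x$, $\beta \in B_x$ satisfies $\alpha\beta^{-1}\notin K_R$; the compact-$\Gz$ version of Lemma~\ref{lem:relation btw top and metric sep} then gives $d_x(\alpha,\beta)>R$, so $d_x(A_x,B_x)>R$ and uniformly finite propagation forces $\chi_{A_x}\Phi_x(T)\chi_{B_x} = 0$. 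Proposition~\ref{compactfiberwisely}\ref{compactfiberwisely-1} then implies $T\in\CC_u[\G]$. The quasi-local converse is completely analogous, using Proposition~\ref{compactfiberwisely}\ref{compactfiberwisely-2}.

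I expect the main obstacle to be conceptual rather than technical: bookkeeping the asymmetry between the topological and metric pictures when $\Gz$ is non-compact. Concretely, in the non-compact case a diagonal pair $\alpha=\beta$ with $\r(\alpha)\notin K_n$ satisfies $\alpha\beta^{-1} = \r(\alpha)\notin K_n$ topologically but has metric distance $0$, so compactly-supported-in-$K_n$ is strictly stronger than metric propagation $\leq n$. This is precisely what obstructs the converses without the compactness of $\Gz$, and is the reason Lemma~\ref{lem:relation btw top and metric sep} was split into a general and a compact version in the first place.
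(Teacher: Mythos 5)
Your argument is correct and is exactly the route the paper intends: it omits a written proof and simply notes that the lemma follows from Proposition~\ref{compactfiberwisely} combined with Lemma~\ref{lem:relation btw top and metric sep}, which is precisely the dictionary you set up (including the diagonal exception $\alpha=\beta$ with $\r(\alpha)\notin K_n$ that blocks the converses for non-compact $\Gz$). The only cosmetic points are that $\G=\bigcup_{n\geq 2}K_n^{\circ}$ follows from \ref{item.m2} and \ref{item.m3} together, and that a non-integer propagation bound $R$ should be replaced by $\lceil R\rceil$ before invoking $K_{\lceil R\rceil}$; neither affects the argument.
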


\begin{remark}\label{20230510} 
When $\Gz$ is compact, the above result shows
  that the notion of compactly uniform quasi-locality from Definition \ref{defn:
    unif. quasiloc} coincides with that of uniform quasi-locality after endowing the
  metrics as above. However, note that this does not hold in general when $\Gz$ is
  non-compact. An easy example is to consider the identity operator, where each
  slice $\Phi_x(T)$ is again the identity. However, it is direct to show that the
  identity does not belong to $C^{*}_{uq}(\G)$ when $\Gz$ is non-compact.
  
To provide a more concrete example, let us consider that $X$ is a locally compact Hausdorff space which is not compact and regard it as a groupoid by setting $\G = \Gz = X$ with source and range maps being the identities on $X$. It is clear that $C^{*}_{uq}(\G)=C^{*}_{uq}(\G)^{\G}=C_{0}(X)$, and the constant function $1$ on $X$ is uniformly quasi-local, but not compactly uniformly quasi-local.
\end{remark}


\begin{remark}
Up till now, we have introduced various figurations of quasi-locality and the terminologies bear a striking resemblance. As suggested by the anonymous referee, we provide the following diagram (Figure 1) to conclude these notions and their relations. The right vertical implication can be checked directly using the properties of the metrics from (\ref{EQ:fibre metric}). Note that the converse to each direction is incorrect. 
\begin{figure}[h!]
  \centering
  \begin{tikzcd}[row sep=10ex, column sep = 10ex]
    \begin{tabular}{c}
      \text{Quasi-locality} \\
      \text{(Definition~\ref{def.operatorCompactSupportAndQuasiLocal})}
    \end{tabular}
    \arrow[d,Rightarrow,sloped, "\text{Corollary~\ref{cor:char for quasi-locality}}"]&
    \begin{tabular}{c}
      \text{Vector-wise uniform quasi-locality} \\
      \text{(Definition~\ref{defn:vectorwise quasiloc})}
    \end{tabular} \\
    \begin{tabular}{c}
      \text{Compactly uniform quasi-locality} \\
      \text{(Definition~\ref{defn: unif. quasiloc})}
    \end{tabular}
    \arrow[r, Rightarrow,"\text{Lemma \ref{prop.uniformlyFibreWiseQuasiLocal}}"]
    &\begin{tabular}{c}
      \text{Uniform quasi-locality} \\
      \text{(Definition~\ref{defn:operators family version})}
    \end{tabular}
    \arrow[u, Rightarrow]
  \end{tikzcd}
 \caption{A summary for different notions of quasi-locality}
\end{figure}
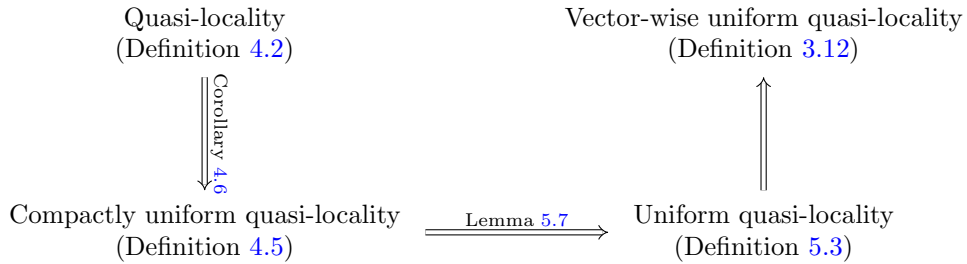
\end{remark}


To prove Theorem \ref{Main-theorem}, we need another description for uniform quasi-locality in terms of relative commutants introduced in \cite{SpakulaTikuisis} (see also Proposition \ref{prop:relative picture}). Let us start with some extra notation. Let $(X,d)$ be a discrete metric space. A function $f\in \ell^{\infty}(X)$ is called \textit{$C$-Lipschitz} for some constant $C>0$ if $|f(x)-f(y)|\leq C d(x,y)$ for any $x,y\in X$.
Given $L>0$ and $\varepsilon>0$, we write $T\in Commut_{X}(L,\varepsilon)$ if $\|[T,f]\|< \varepsilon$ for any $L$-Lipschitz $f\in \ell^{\infty}(X)$ with norm $1$.

Tracking the parameters in the proof of \cite[Lemma 5.2]{SpakulaZhang}, we obtain the following version for a metric family:

\begin{lemma}\label{lem:Lemma 5.2 in SZ20}
    Let $\{(X_i,d_i)\}_{i\in I}$ be a family of discrete metric spaces with uniformly bounded geometry and uniform Property A. For $\varepsilon>0$, $L>0$ and $M>0$, there exists $s>0$ such that for any $i\in I$ and $T_i\in\B(\ell^2(X_i))$ with $T_i\in Commut_{X_i}(L,\frac{\varepsilon}{12})$ and $\|T_i\|\leq 2M$, there is a unit vector $v_i\in\ell^{2}(X_i)$ with $\diam(\supp(v_i))\leq s$ such that
    \[
        \|T_iv_i\|\geq \|T_i\| - \frac{\varepsilon}{2}.
    \]
\end{lemma}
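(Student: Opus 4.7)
The plan is to rerun the proof of \cite[Lemma 5.2]{SpakulaZhang} while tracking every constant to confirm it depends only on $\varepsilon$, $L$, $M$ and on the uniform Property~A parameters, never on the individual index $i \in I$. First I would fix, for each $i$, a unit vector $w_i \in \ell^2(X_i)$ with $\|T_i w_i\| \geq \|T_i\| - \eta$ for a tolerance $\eta \ll \varepsilon$ to be determined. Next, applying uniform Property~A with parameters $(R, \delta)$ (also to be chosen in terms of $\varepsilon, L, M$), I obtain a family of normalised, symmetric, positive-type kernels $(k_i)_{i \in I}$ on $X_i$ with $(R, \delta)$-variation and a \emph{common} finite propagation $S = S(R, \delta)$.

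Using the standard GNS/Schur-multiplier realisation, each kernel factors as $k_i(x, y) = \langle \xi_i^x, \xi_i^y \rangle$ with $\xi_i^x \in \ell^2(X_i)$ a unit vector supported in $B_{X_i}(x, S)$. Transposing, the functions $g_{i,\alpha}(x) \coloneqq \xi_i^x(\alpha)$ satisfy $\sum_\alpha g_{i,\alpha}(x)^2 = k_i(x, x) = 1$ and $\supp(g_{i,\alpha}) \subseteq B_{X_i}(\alpha, S)$, so they form a partition of unity with supports of diameter at most $2S$. For $x, y$ with $d_i(x, y) \leq R$, the $(R, \delta)$-variation of $k_i$ yields
\[
    \sum_\alpha \bigl(g_{i,\alpha}(x) - g_{i,\alpha}(y)\bigr)^2 = 2 - 2 k_i(x, y) \leq 2\delta,
\]
and combining this with uniformly bounded geometry at scale $S$ (so that only boundedly many $\alpha$ contribute at any point), one checks each $g_{i,\alpha}$ is $L$-Lipschitz with respect to $d_i$, provided $\delta$ is chosen small enough relative to $L$ and the bounded-geometry constant at scale $S$.

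With each $g_{i,\alpha}$ now $L$-Lipschitz and $\|g_{i,\alpha}\|_\infty \leq 1$, the hypothesis $T_i \in Commut_{X_i}(L, \varepsilon/12)$ gives $\|[T_i, g_{i,\alpha}]\| < \varepsilon/12$. Combined with $\sum_\alpha g_{i,\alpha}^2 = 1$ and $\|T_i\| \leq 2M$, a routine computation yields
\[
    \Bigl| \|T_i w_i\|^2 - \sum_\alpha \|T_i(g_{i,\alpha} w_i)\|^2 \Bigr| \leq C \varepsilon M
\]
for a universal constant $C$, while $\sum_\alpha \|g_{i,\alpha} w_i\|^2 = 1$. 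A pigeonhole argument on this decomposition then produces an index $\alpha$ such that $v_i \coloneqq g_{i,\alpha} w_i / \|g_{i,\alpha} w_i\|$ is a unit vector satisfying $\|T_i v_i\| \geq \|T_i\| - \varepsilon/2$, with $\diam(\supp(v_i)) \leq 2S \eqqcolon s$.

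The main obstacle is the delicate coupling between the three parameters in the middle step: the propagation $S$ controls $\diam(\supp(v_i))$, the variation $\delta$ controls the Lipschitz constant of each $g_{i,\alpha}$, and that Lipschitz constant must be at most $L$ in order to invoke the commutation hypothesis. Thus $\delta$ must be chosen small in terms of $L$ and the bounded geometry at scale $S$, while $S$ itself is determined by uniform Property~A from $R$ and $\delta$, and $R$ has to be large enough that the deviation estimate is meaningful. All these choices remain uniform across $i \in I$ thanks precisely to uniform Property~A together with uniformly bounded geometry, which is what allows a single $s$ to serve the entire family.
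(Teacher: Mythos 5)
The paper itself does not reprove this lemma---it simply cites \cite[Lemma 5.2]{SpakulaZhang} and asserts that the constants there can be tracked uniformly---so your sketch has to be judged on whether it would actually reconstruct that proof. The overall strategy (pass from the positive-type kernels to a square partition of unity $\sum_\alpha g_{i,\alpha}^2=1$ with localized supports and small Lipschitz constants, decompose a near-norming vector, and pigeonhole) is the right one. But the step you label a ``routine computation'' is precisely where the work is, and as written it has two genuine problems. First, expanding $\|T_iw_i\|^2-\sum_\alpha\|T_i(g_{i,\alpha}w_i)\|^2=\sum_\alpha\langle g_{i,\alpha}w_i,[T_i^*T_i,g_{i,\alpha}]w_i\rangle$ and applying Cauchy--Schwarz leaves you needing a bound on $\sum_\alpha\|[T_i,g_{i,\alpha}]w_i\|^2$. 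The hypothesis only gives $\|[T_i,g_{i,\alpha}]\|<\varepsilon/12$ \emph{term by term}, and the index set is infinite; the only a priori bound on the sum is $O(M^2)$, which is useless. Bounded multiplicity of the supports does not fix this, because the commutators $[T_i,g_{i,\alpha}]$ are not localized. The standard repair (applying the commutant hypothesis to $\pm 1$-combinations $\sum_\alpha\lambda_\alpha g_{i,\alpha}$ and averaging) forces you to renormalize by the multiplicity constant $N$ at scale $S$, which both inflates the required Lipschitz bound and feeds $N$ back into the choice of the Property~A parameters; this circular dependence has to be resolved explicitly and is not a formality.

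Second, even granting your bound, the shape $C\varepsilon M$ is too weak for the pigeonhole. From $\sum_\alpha\|T_i(g_{i,\alpha}w_i)\|^2\geq\|T_iw_i\|^2-C\varepsilon M$ and $\sum_\alpha\|g_{i,\alpha}w_i\|^2=1$ you get some $\alpha$ with $\|T_iv_i\|^2\geq\|T_i\|^2-2\eta\|T_i\|-C\varepsilon M$, and to conclude $\|T_iv_i\|\geq\|T_i\|-\varepsilon/2$ you need the error to be at most roughly $\varepsilon\|T_i\|$. When $\|T_i\|\ll M$ the term $C\varepsilon M$ swamps this---and that is exactly the regime in which the lemma is used in the proof of Theorem \ref{Main-theorem}, where it is applied to $T_x'=m_{\widetilde{k}_x}(\Phi_x(T))-\Phi_x(T)$, whose norm is the very quantity being shown to be small. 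The correct error bound must carry a factor of $\|T_i\|$ (it can, since $\|[T_i^*T_i,g_{i,\alpha}]\|\leq 2\|T_i\|\cdot\|[T_i,g_{i,\alpha}]\|$), but your statement and the ensuing pigeonhole do not reflect this. A smaller point: the passage from the kernel form of uniform Property A to unit vectors $\xi_i^x\in\ell^2(X_i)$ \emph{supported in} $B_{X_i}(x,S)$ is not just ``GNS''---the GNS space carries no support condition---and the standard equivalence you are invoking needs to be checked to be quantitative, hence uniform over $i\in I$.
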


Now we are in the position to prove Theorem \ref{Main-theorem}. Before we dive into the details, let us explain the outline of the proof. The main goal is to approximate a given equivariant quasi-local operator $T$ using elements in $\Lambda(C_c(\G))$. As indicated in the proof of Lemma \ref{lem:amen implies unif. A}, the amenability of $\G$ produces a family of kernels $(k_x)_{x\in \Gz}$ which naturally produce a function $g \in C_c(\G)$. Moreover, these kernels can be modified to provide another family $(\widetilde{k}_x)_{x\in \Gz}$ with nice behaviour. Conducting a ``uniform'' proof of \cite[Theorem 3.3]{SpakulaZhang}, the family of operators determined by $(\widetilde{k}_x)_{x\in \Gz}$ indeed approximate the given operator $T$. Finally thanks to the quasi-locality of $T$, this approximating family of operators is simultaneously uniformly close to the slices of $\Lambda(g)$, which concludes the proof.

\begin{proof}[Proof of Theorem \ref{Main-theorem}]
    It is clear that $C^{*}_r(\G) \subseteq  C^*_u(\G)^{\G} \subseteq C^{*}_{uq}(\G)^{\G}$, hence it suffices to prove $C^{*}_{uq}(\G)^{\G} \subseteq C^{*}_r(\G)$. Fix an operator $T \in C^{*}_{uq}(\G)^{\G}$ and $\varepsilon>0$, we aim to construct an operator $S \in \Lambda(C_c(\G))$ such that $\|T-S\| \leq \varepsilon$.
    Throughout the proof, we also fix a sequence of subsets $\{K_{n}\}_{n \in \NN}$ of $\G$ satisfying \ref{item.m1}-\ref{item.m3}.
    Endow a metric $d_{x}$ on $\G_{x}$ defined by (\ref{EQ:fibre metric}) for each $x\in \Gz$.

    By Proposition \ref{compactfiberwisely}, there exists $n_0 \in \NN$ such that for any $x\in \Gz$ and $A_x, B_x \subseteq \G_x$ with $A_x \cap (K_{n_0} \cdot B_x) = \emptyset$, then $\| \chi_{A_{x}}\Phi_{x}(T)\chi_{B_{x}}\| \leq \frac{\varepsilon}{12}$. Moreover, it follows from Lemma \ref{prop.uniformlyFibreWiseQuasiLocal}(2) that the family $(\Phi_{x}(T))_{x\in \Gz}$ is uniformly quasi-local. Hence following the proof of \cite[Theorem 2.8, ``(ii) $\Rightarrow$ (i)'']{SpakulaTikuisis}, there exists $L>0$ (which only depends on $T$ and $\varepsilon$) such that $\Phi_{x}(T)\in Commut_{\G_{x}}(L,\frac{\varepsilon}{48})$ for any $x\in \Gz$.

    Thanks to Lemma \ref{lem.uniformlyBoundedGeometr} and Lemma \ref{lem:amen implies unif. A}, we can apply Lemma \ref{lem:Lemma 5.2 in SZ20} for $\frac{\varepsilon}{2}$, $L$ and $M \coloneqq \|T\|$ and obtain a constant $s>0$ satisfying the condition therein.
    We set $N \coloneqq \sup_{x\in\Gz}\sup_{\gamma\in\G_{x}}\sharp B_{\G_{x}}(\gamma, s+\frac{1}{L})$ and $\widetilde{\varepsilon} = \min\{\frac{\varepsilon}{8MN},1\}$, and choose an integer $\widetilde{n}>s+\frac{1}{L}+n_0$.


    Since $\G$ is amenable, it follows from Lemma~\ref{lem:p.d.functions for amenability} that there exists a non-negative function $h\in C_{c}(\G)$ of positive type such that
    \begin{itemize}
        \item $h(x)\leq 1$ for any $x\in \Gz$, and $h(x)=1$ for any $x\in \r(K_{\widetilde{n}})$;
        \item $|1- h(\gamma)|<\widetilde{\varepsilon}$ for any $\gamma \in K_{\widetilde{n}}$.
    \end{itemize}
    For each $x\in\Gz$ and $\gamma, \beta \in \G_x$, we define $k_x(\gamma,\beta) = h(\gamma\beta^{-1})$ and
    \[
        \widetilde{k}_{x}(\gamma,\beta) = \left\{\begin{array}{ll}
            h(\gamma\beta^{-1}),   & \text{if } \r(\gamma),\r(\beta)\in \r(K_{\widetilde{n}}); \\
            \delta_{\gamma,\beta}, & \text{otherwise}.
        \end{array}
        \right.
    \]
    The proof of Lemma \ref{lem:amen implies unif. A} shows that $\widetilde{k}_{x}$ is a normalised symmetric kernel on $\G_x$ of positive type and $(\widetilde{n},\widetilde{\varepsilon})$-variation such that the family $(\widetilde{k}_{x})_{x\in \Gz}$ has uniformly finite propagation.

    For each $x\in \Gz$, we consider the operator $T_{x}' = m_{\widetilde{k}_{x}}(\Phi_{x}(T))-\Phi_{x}(T)$ where $m_{\widetilde{k}_{x}}$ is the Schur multiplier.
    It is clear that
    \[
        \sup_{x\in\Gz}\| T_{x}' \|\le 2M \quad \text{and} \quad T_{x}'\in Commut_{\G_{x}}(L, \frac{\varepsilon}{24}).
    \]
    For the family $\{T'_{x}\}_x$ and the parameters above, Lemma \ref{lem:Lemma 5.2 in SZ20} implies that for each $x\in \Gz$ there exists a unit vector $v_{x}\in\ell^{2}(\G_{x})$ with $\diam(\supp(v_{x}))\leq s$ such that
    \begin{equation}\label{ineq1}
        \|T'_{x} v_{x}\|\geq \|T'_{x}\| - \frac{\varepsilon}{4}.
    \end{equation}

    Setting $V_{x} = \{ \gamma\in \G_{x}: d_{x}(\supp(v_{x}),\gamma)<\frac{1}{L}\}$ for each $x\in \Gz$, we have
    \begin{equation}\label{ineq2}
        \|T'_{x} v_{x} \|\leq \|\chi_{V_{x}}(T'_{x} v_{x})\|+\|\chi_{\G_{x}\backslash V_{x}}(T'_{x}v_{x})\|\leq \|\chi_{V_{x}}(T'_{x}v_{x})\|+\frac{\varepsilon}{24},
    \end{equation}
    where the second inequality comes from $T'_{x}\in Commut_{\G_{x}}(L,\frac{\varepsilon}{24})$.
    For $\gamma\in V_{x}$, we have
    \begin{align*}
        |(T'_{x}v_{x})(\gamma)|
         & = \left|\sum_{\alpha\in \supp(v_{x})} \left((\widetilde{k}_{x}(\gamma,\alpha) - 1 )\Phi_{x}(T)_{\gamma,\alpha}\right)v_{x}(\alpha)\right| \\
         & \leq \left(\sum_{\alpha\in \supp(v_{x})}\left|(\widetilde{k}_{x}(\gamma,\alpha) - 1 )\right|^{2}\right)^{1/2}\|\Phi_{x}(T)v_{x}\|
        \le \widetilde{\varepsilon}\sqrt{N} M \leq\frac{\varepsilon}{8\sqrt{N}},
    \end{align*}
    where the second inequality comes from the Cauchy-Schwarz inequality and the third one is due to that $\widetilde{k}_{x}$ has $(\widetilde{n},\widetilde{\varepsilon})$-variation. Here $\Phi_{x}(T)_{\gamma,\alpha} = \langle\delta_\gamma, \Phi_{x}(T)(\delta_\alpha)\rangle$, as defined in Section \ref{ssec.motivation}.
    Hence
    \begin{equation}\label{ineq3}
        \|\chi_{V_{x}}(T'_{x}v_{x})\|^{2}=\sum_{\gamma\in V_{x}}|(T'_{x}v_{x})(\gamma)|^2\leq \frac{\varepsilon^{2}}{64}.
    \end{equation}
    Combining (\ref{ineq1}), (\ref{ineq2}) and (\ref{ineq3}) together, we obtain that for any $x\in \Gz$,
    \begin{equation}\label{EQ:estimate2}
        \|m_{\widetilde{k}_{x}}(\Phi_{x}(T))-\Phi_{x}(T)\| = \|T'_{x}\|\leq \|T'_{x}v_{x}\|+\frac{\varepsilon}{4}\leq \|\chi_{V_{x}}(T'_{x}v_{x})\|+\frac{\varepsilon}{24}+\frac{\varepsilon}{4}< \frac{\varepsilon}{2}.
    \end{equation}

    On the other hand, it follows from Proposition~\ref{prop.leftConvolverAndGEquivariant} that $T=\Lambda(f_{T})$ for some $f_{T}\in C_{b}(\G)$.
    Hence for $x\in \Gz$ and $\gamma,\alpha\in\G_{x}$ we have
    \[
        (m_{k_{x}}(\Phi_{x}(T)))_{\gamma,\alpha}  = k_{x}(\gamma,\alpha)f_{T}(\gamma\alpha^{-1})
        = h(\gamma\alpha^{-1})f_{T}(\gamma\alpha^{-1}),
    \]
    which implies that
    \begin{equation}\label{EQ:realisation for kx}
        m_{k_{x}}(\Phi_{x}(T)) = \Phi_{x}(\Lambda(h\cdot f_{T})).
    \end{equation}
    Set $C_{x}\coloneqq\{ \gamma\in\G_{x}: \r(\gamma)\in \r(K_{\widetilde{n}})\}$ and $D_{x}\coloneqq\G_{x}\backslash C_{x}$, and decompose the operator $m_{k_{x}}(\Phi_{x}(T)) - m_{\widetilde{k}_{x}}(\Phi_{x}(T))$ as follows:
    \begin{align*}
         & m_{k_{x}}(\Phi_{x}(T)) - m_{\widetilde{k}_{x}}(\Phi_{x}(T))                                                                                                                       = (\chi_{C_{x}} +\chi_{D_{x}})\left(m_{k_{x}-\widetilde{k}_{x}}(\Phi_{x}(T))\right)(\chi_{C_{x}} +\chi_{D_{x}}) \\
         & = m_{k_{x}-\widetilde{k}_{x}}\left(\chi_{C_{x}}\Phi_{x}(T)\chi_{C_{x}}+\chi_{D_{x}}\Phi_{x}(T)\chi_{D_{x}}+\chi_{C_{x}}\Phi_{x}(T)\chi_{D_{x}}+\chi_{D_{x}}\Phi_{x}(T)\chi_{C_{x}}\right)                                                                                                         \\
         & = m_{k_{x}-\widetilde{k}_{x}}\left(\chi_{D_{x}}\Phi_{x}(T)\chi_{D_{x}}+\chi_{C_{x}}\Phi_{x}(T)\chi_{D_{x}}+\chi_{D_{x}}\Phi_{x}(T)\chi_{C_{x}}\right),
    \end{align*}
    where we use the fact that $k_x$ and $\widetilde{k}_x$ coincide on $C_x$ in the last equality.

    Note that for $\gamma,\beta\in D_{x}$, we have $\gamma\beta^{-1}\notin K_{\widetilde{n}}$ (otherwise, we would have $\r(\gamma) = \r(\gamma\beta^{-1}) \in \r(K_{\widetilde{n}})$, which contradicts to that $\gamma \in D_x$).
    Since $\widetilde{n}>n_0$, we have $\|\chi_{D_{x}}\Phi_{x}(T)\chi_{D_{x}}\|<\frac{\varepsilon}{12}$. Similarly for $\gamma\in D_{x}$ and $\beta\in C_{x}$, we have $\gamma\beta^{-1}\notin K_{\widetilde{n}}$. This implies that $\|\chi_{D_{x}}\Phi_{x}(T)\chi_{C_{x}}\|<\frac{\varepsilon}{12}$ and $\|\chi_{C_{x}}\Phi_{x}(T)\chi_{D_{x}}\|<\frac{\varepsilon}{12}$. Hence
    \[
        \|\chi_{D_{x}}\Phi_{x}(T)\chi_{D_{x}}+\chi_{C_{x}}\Phi_{x}(T)\chi_{D_{x}}+\chi_{D_{x}}\Phi_{x}(T)\chi_{C_{x}}\| < \frac{\varepsilon}{4},
    \]
    which implies that
    \begin{equation}\label{EQ:estimate3}
        \|m_{k_{x}}(\Phi_{x}(T)) - m_{\widetilde{k}_{x}}(\Phi_{x}(T))                                                                                                   \| < \frac{\varepsilon}{2}.
    \end{equation}

    Finally combining (\ref{EQ:estimate2}), (\ref{EQ:realisation for kx}) and (\ref{EQ:estimate3}), we obtain
    \[
        \|\Lambda(h \cdot f_T) - T\| = \sup_{x\in \Gz} \|m_{k_{x}}(\Phi_{x}(T)) - \Phi_{x}(T)\| < \varepsilon.
    \]
    Note that $h \cdot f_T \in C_c(\G)$, hence we conclude that $T \in C^*_r(\G)$.
\end{proof}

\section{Examples}\label{sec.example}

In this section, we apply Theorem \ref{Main-theorem} to the examples mentioned in Section \ref{ssec:pre ex}.

\subsection{Discrete groups}

Let $\Gamma$ be a countable discrete group with unit $1_\Gamma$. As mentioned in Section \ref{sssec:groups}, the reduced groupoid $C^*$-algebra of $\Gamma$ is the same as the reduced group $C^*$-algebra $C^*_r(\Gamma)$, and $\L(L^2(\Gamma)) = \B(\ell^2(\Gamma))$.

Writing in the matrix form, obviously an operator $T\in \B(\ell^2(\Gamma))$ is $\Gamma$-equivariant \emph{if and only if} $T_{\alpha \gamma, \beta\gamma} = T_{\alpha, \beta}$ for any $\alpha,\beta,\gamma\in \Gamma$.
Equipping $\Gamma$ with an arbitrary proper word length metric, it is clear that $T$ is compactly supported \emph{if and only if} $T$ has finite propagation in the sense of Definition \ref{defn:finite ppg}.
Moreover, the notion of quasi-locality for $T$ from Definition \ref{defn:quasi-locality} coincides with that from Definition \ref{def.operatorCompactSupportAndQuasiLocal}(3). Hence in this case, Theorem \ref{Main-theorem} recovers the following known result:
\begin{theorem}
    Let $\Gamma$ be a countable discrete amenable group. Then for an operator $T\in \B(\ell^2(\Gamma))$, the following are equivalent:
    \begin{enumerate}
        \item $T$ belongs to $C^{*}_r(\Gamma)$;
        \item $T$ is $\Gamma$-invariant and belongs to $C^*_u(\Gamma)$;
        \item $T$ is $\Gamma$-invariant and quasi-local.
    \end{enumerate}
\end{theorem}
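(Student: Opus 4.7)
The approach is to deduce this theorem as a direct specialization of Theorem \ref{Main-theorem} to the case where the unit space is a single point. The plan is to set up $\Gamma$ as a locally compact \'{e}tale groupoid with $\Gz = \{1_\Gamma\}$ (as in Section \ref{sssec:groups}), verify the hypotheses of Theorem \ref{Main-theorem}, and then check that each classical notion appearing in the statement matches its groupoid counterpart developed in the paper.

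First, I would verify the hypotheses. Since $\Gamma$ is countable and discrete, it is automatically locally compact, $\sigma$-compact and \'{e}tale. As noted in Section \ref{sssec:groups}, $\Gamma$ is amenable as a discrete group \emph{if and only if} it is amenable as a groupoid, so Theorem \ref{Main-theorem} applies. Moreover, the Hilbert $C^*$-module $L^2(\Gamma)$ coincides with $\ell^2(\Gamma)$, hence $\L(L^2(\Gamma)) = \B(\ell^2(\Gamma))$, and $C^*_r(\Gamma)$ in the group-theoretic sense equals $C^*_r(\G)$ for $\G = \Gamma$.

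Next, I would translate each of the three classical notions into its groupoid form. For equivariance: when $\Gz$ is a single point, $\rho(g)$ from Section \ref{ssec.equivariant} is simply right convolution by $g \in C_c(\Gamma) = \CC[\Gamma]$, so $\G$-equivariance in Definition \ref{def.GEquivariant} becomes commutation with all right translations, equivalent to the matrix condition $T_{\alpha\gamma,\beta\gamma}=T_{\alpha,\beta}$. For compact support and quasi-locality: fix any proper left-invariant word metric on $\Gamma$; the associated fibre metric $d$ from (\ref{EQ:fibre metric}) is exactly this word metric, and compact subsets of $\Gamma$ are finite (hence bounded). Applying Proposition \ref{compactfiberwisely} and Lemma \ref{prop.uniformlyFibreWiseQuasiLocal} (noting $\Gz$ is compact), an operator $T \in \B(\ell^2(\Gamma))$ is compactly supported in the sense of Definition \ref{def.operatorCompactSupportAndQuasiLocal} \emph{if and only if} it has finite propagation in the sense of Definition \ref{defn:finite ppg}, and similarly $T$ is quasi-local in Definition \ref{def.operatorCompactSupportAndQuasiLocal}(3) \emph{if and only if} it is quasi-local in Definition \ref{defn:quasi-locality}. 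Consequently $C^*_u(\Gamma)^\Gamma$ and $C^*_{uq}(\Gamma)^\Gamma$ from Definition \ref{defn:unif. Roe and quasilocal for groupoids} agree with the classical $\Gamma$-invariant uniform Roe and uniform quasi-local algebras appearing in (2) and (3).

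Having matched all notions, the theorem follows immediately from the conclusion $C^*_r(\G) = C^*_u(\G)^{\G} = C^*_{uq}(\G)^{\G}$ of Theorem \ref{Main-theorem}. Since this argument is purely a dictionary translation, there is no genuine obstacle; the only care required is in checking that the groupoid-theoretic definitions of equivariance, compact support, and quasi-locality collapse to their classical counterparts when $\Gz$ is a point, which is exactly what the preparatory sections of the paper were designed to make routine.
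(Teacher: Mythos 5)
Your proposal is correct and is essentially identical to the paper's own argument: the paper likewise derives this theorem as a direct specialization of Theorem \ref{Main-theorem} to the one-point unit space, after observing that $\L(L^2(\Gamma))=\B(\ell^2(\Gamma))$, that groupoid amenability coincides with group amenability, and that equivariance, compact support and quasi-locality collapse to the classical notions for a proper word length metric. No substantive differences.
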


\begin{remark}
    The above theorem also holds for groups with the \emph{approximation property} from \cite{MR1220905} (see also \cite{BrownOzawa}). In fact, it was shown in \cite{Zacharias2006} that $C^*_u(\Gamma)^{\Gamma}=C^{*}_r(\Gamma)$ if $\Gamma$ has the approximation property. Moreover, it follows from \cite{MR1220905, MR1763912} that amenable groups have the approximation property, and the approximation property implies Property A. Therefore combining with \cite[Theorem 3.3]{SpakulaZhang}, we obtain that if $\Gamma$ has the approximation property, then $C^{*}_r(\Gamma)=C^*_u(\Gamma)^{\Gamma}=C^*_{uq}(\Gamma)^{\Gamma}$.
\end{remark}

\subsection{Pair groupoids}

Let $X$ be a set and we consider the pair groupoid $X \times X$ from Section \ref{sssec:pair groupoids}. By Theorem \ref{cor:char for LL2G}, we know that the map
\[
    \Phi: \L(L^2(X \times X))^{X \times X} \longrightarrow \prod_{x\in X} \B(\ell^2(X))
\]
is a $C^*$-monomorphism with image consisting of constant families $(T')_{x\in X}$ which are vector-wise uniformly quasi-local. It is not hard (similar to, but much easier than, the analysis in Example \ref{eg:coarse groupoid for LL2G}) to see that $(T')_{x\in X}$ is vector-wise uniformly quasi-local for any $T' \in \B(\ell^2(X))$. Hence similar to Lemma \ref{lem:char for LL2G coarse groupoids}, we obtain the following:

\begin{lemma}
    Let $X$ be a set and $X \times X$ be the pair groupoid. Then for some fixed $x_0 \in X$, the map
    \begin{equation}\label{EQ:Theta pair again}
        \Theta: \L(L^2(X \times X))^{X \times X} \longrightarrow \B(\ell^2(X)), \quad T \mapsto \Phi_{x_0}(T)
    \end{equation}
    is a $C^*$-isomorphism. Note that $\Theta$ is independent of the choice of $x_0$.
\end{lemma}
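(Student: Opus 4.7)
The plan is to combine Theorem~\ref{cor:char for LL2G} with the fibre-wise description of equivariance from Lemma~\ref{equivariant-fiber}. For each $x \in X$ let $U_x \colon \ell^2(\G_x) \to \ell^2(X)$ be the canonical unitary $\delta_{(y,x)} \mapsto \delta_y$, so that $\Theta(T)$ is $\Phi_{x_0}(T)$ viewed through $U_{x_0}$. A direct computation shows that for every $\gamma = (y, x) \in \G = X \times X$, the translation operator $V_\gamma \colon \ell^2(\G_x) \to \ell^2(\G_y)$ satisfies $U_y V_\gamma U_x^{-1} = \mathrm{Id}_{\ell^2(X)}$. Combined with Lemma~\ref{equivariant-fiber}, this means that a $\G$-equivariant $T \in \L(L^2(\G))$ is completely determined by the single operator $T' := U_x \Phi_x(T) U_x^{-1} \in \B(\ell^2(X))$, which does not depend on $x$. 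Injectivity of $\Theta$ follows at once, since $\Phi$ itself is a $C^*$-monomorphism; the same observation shows $\Theta$ is independent of $x_0$.

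For surjectivity, starting from $T' \in \B(\ell^2(X))$, I would set $T_x := U_x^{-1} T' U_x$ for each $x \in X$ and show the constant family $(T_x)_{x\in X}$ lies in $\Phi(\L(L^2(\G))^{\G})$ by verifying the hypotheses of Theorem~\ref{cor:char for LL2G}: (a) $x \mapsto T_x$ is a continuous section of the operator fibre space $E$, and (b) both $(T_x)_{x\in X}$ and $(T_x^*)_{x\in X}$ are vector-wise uniformly quasi-local. Once these are in hand, Lemma~\ref{equivariant-fiber} together with the identification $V_\gamma = U_y^{-1} U_x$ yields the $\G$-equivariance of the resulting operator $T$, and $\Theta(T) = T'$ by construction. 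Item (a) is automatic since $\Gz = X$ carries the discrete topology.

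The main technical step is (b). Given $\xi \in C_c(\G)$, the support $\supp(\xi) \subseteq X \times X$ is finite, so its first-coordinate projection $F := \{y \in X : (y, x') \in \supp(\xi) \text{ for some } x' \in X\}$ is a fixed finite set that contains (the $U_x$-image of) $\supp(\xi|_{\G_x})$ for every $x \in X$. Since each of $T' \delta_y$ and $(T')^* \delta_y$ lies in $\ell^2(X)$ for $y \in F$, for any $\varepsilon > 0$ one can choose a finite set $E \subseteq X$ whose complement carries the $\ell^2$-tail of every $T' \delta_y$ and $(T')^* \delta_y$ with $y \in F$ below the required threshold. The finite, hence compact, subset $K := \{(z, y) \in \G : z \in E, \, y \in F\}$ then does the job: if $A_x \cap (K B_x) = \emptyset$, a direct check shows that either $B_x$ misses $F$ (so $\chi_{B_x}(\xi|_{\G_x})$ vanishes), or else $A_x$ is forced to miss $E$, in which case the tail estimate bounds $\|\chi_{A_x} T_x \chi_{B_x}(\xi|_{\G_x})\|$ by $\varepsilon$. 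The same set $K$ handles $(T_x^*)_{x \in X}$ symmetrically. This step is the only place where one does work, but it is considerably simpler than the analogous analysis in Example~\ref{eg:coarse groupoid for LL2G}, precisely because compact subsets of $X \times X$ are finite and no compactification of $\Gz$ enters the picture.
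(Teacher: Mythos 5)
Your proposal is correct and follows essentially the same route as the paper: identify each fibre $\ell^2(\G_x)$ with $\ell^2(X)$, use equivariance to reduce to constant families, and then apply Theorem~\ref{cor:char for LL2G}, where the only real work is checking that the constant family attached to an arbitrary $T'\in\B(\ell^2(X))$ is vector-wise uniformly quasi-local. The paper leaves that last verification as ``not hard''; your finite-set tail estimate with $K=E\times F$ is a correct way to fill it in.
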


Analogous to the discussion in Example \ref{eg:coarse groupoid left convolver}, the map $\Theta$ in (\ref{EQ:Theta pair again}) extends the map $\Theta: C^*_r(X \times X) \cong \K(\ell^2(X))$ in (\ref{EQ:Theta pair}). Hence we abuse the notation.

Direct calculations show that an operator $T \in \L(L^2(X \times X))^{X \times X}$ is quasi-local in the sense of Definition \ref{def.operatorCompactSupportAndQuasiLocal}(3) if and only if $\Phi_0(T) \in \K(\ell^2(X))$, which implies that $C^*_{uq}(X \times X)^{X \times X} = C^*_r(X \times X)$. Hence Theorem \ref{Main-theorem} holds trivially for the case of pair groupoids.

\subsection{Coarse groupoids}\label{ssec:coarse groupoids}

This example is our motivation of the whole work, and details have already been spread in Section \ref{sec.charForOperator} and Section \ref{sec.quasi}. Here we recall the whole picture again.

Let $(X,d)$ be a discrete metric space with bounded geometry, and $G(X)$ be the associated coarse groupoid. Recall from Lemma \ref{lem:char for LL2G coarse groupoids} that for a fixed $x_0 \in X$, the map
\[
    \Theta: \L(L^2(G(X)))^{G(X)} \longrightarrow \{T \in \B(\ell^2(X)): T \text{ satisfies Equation (\ref{EQ:char for LL2G coarse groupoid})}\}
\]
given by $T \mapsto \Phi_{x_0}(T)$ is a $C^*$-isomorphism. Example \ref{eg:coarse groupoid left convolver} implies that $\Theta$ extends the isomorphism
\[
    C^*_r(G(X)) \cong C^*_u(X)
\]
from (\ref{EQ:Theta for coarse groupoid}). Furthermore, Example \ref{ex:coarse groupoid again} implies that the restriction
\[
    \Theta: C^*_{uq}(G(X))^{G(X)} \longrightarrow C^*_{uq}(X)
\]
is also an isomorphism. Therefore, our main result recovers \cite[Theorem 3.3]{SpakulaZhang} in the Hilbert space case.

\subsection{Transformation groupoids}\label{ssec:transformation groupoid}
Let $X$ be a locally compact $\sigma$-compact space, and $\Gamma$ be a countable discrete group acting on $X$. We consider the transformation groupoid $X\rtimes \Gamma$ in Section \ref{sssec:transformation}.


First note that $(X \rtimes \Gamma)_x$ is identified with $\Gamma$ by $(\alpha x, \alpha) \mapsto \alpha$ for $x\in X$ and $\alpha \in \Gamma$. Under these identifications and equipping $\Gamma$ with a proper word length metric, we have the following. The proof is straightforward, hence omitted.

\begin{lemma}\label{lem:trans char}
    For a family $(T_x)_{x\in X}\in \prod_{x\in X}\B(\ell^2(\Gamma))$, we have:
    \begin{enumerate}
        \item Writing in the matrix form, $(T_x)_{x\in X}$ is $(X \rtimes \Gamma)$-equivariant \emph{if and only if} the following holds:
              \begin{equation}\label{EQ:equiv for trans}
                  (T_{\gamma x})_{\alpha, \beta} = (T_x)_{\alpha \gamma, \beta \gamma} \quad \text{for any }  x \in X \text{ and }\alpha, \beta, \gamma \in \Gamma.
              \end{equation}
        \item $(T_x)_{x\in X}$ belongs to the image of $\iota$ from (\ref{EQ:iota}) \emph{if and only if} the map $X \to \B(\ell^2(\Gamma)), x \mapsto T_x$ is WOT-continuous.
        \item When $X$ is compact, $(T_x)_{x\in X}$ is compactly uniformly quasi-local \emph{if and only if} $(T_x)_{x\in X}$ is uniformly quasi-local in the sense of Definition \ref{defn:operators family version}(3).
    \end{enumerate}
\end{lemma}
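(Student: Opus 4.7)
For part (1), I will apply Lemma \ref{equivariant-fiber} directly, since the unit space $\Gz = X$ coincides with our parameter space. Writing a general $\gamma = (x,\gamma_0) \in X\rtimes\Gamma$, we have $\s(\gamma) = \gamma_0^{-1}x$ and $\r(\gamma)=x$, and under the identifications $(X\rtimes\Gamma)_y \cong \Gamma$ via $(\alpha y,\alpha)\mapsto \alpha$, a direct computation using the composition rule $(\alpha x, \alpha)(x,\gamma_0) = (\alpha x, \alpha\gamma_0)$ shows that the shift $V_\gamma$ becomes right multiplication by $\gamma_0^{-1}$, \emph{i.e.}, $V_\gamma \delta_\beta = \delta_{\beta\gamma_0^{-1}}$. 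Substituting into $V_\gamma T_{\s(\gamma)} = T_{\r(\gamma)} V_\gamma$ and computing matrix coefficients in the $\delta_\alpha$-basis then yields $(T_{\gamma_0^{-1}x})_{\beta\gamma_0,\alpha} = (T_x)_{\beta,\alpha\gamma_0^{-1}}$, which after reindexing ($y = \gamma_0^{-1}x$ and renaming) is equivalent to (\ref{EQ:equiv for trans}).

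For part (2), I will apply Lemma \ref{lem.charOfGammaB}. Since $\Gamma$ is discrete, any convergent net $\gamma_i' \to (\alpha x,\alpha)$ in $X\rtimes\Gamma$ must have $\Gamma$-coordinate eventually equal to $\alpha$; hence the topology on $E$ (Definition \ref{defn:operator fibre space}) translates into the statement that for all $\alpha,\beta \in \Gamma$, the map $x \mapsto (T_x)_{\beta,\alpha} = \langle \delta_\beta, T_x\delta_\alpha\rangle$ is continuous. Because $(T_x)_{x\in X}$ is automatically uniformly norm-bounded (by membership in the product) and finite linear combinations of $\delta_\alpha$ are dense in $\ell^2(\Gamma)$, continuity of every matrix entry is equivalent via a standard $\varepsilon/3$-approximation to continuity of $x \mapsto \langle \xi, T_x\eta\rangle$ for all $\xi,\eta \in \ell^2(\Gamma)$, \emph{i.e.}, WOT-continuity.

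For part (3), I will exploit that with $X$ compact, a subset $K \subseteq X\rtimes\Gamma$ is compact if and only if $K \subseteq X\times F$ for some finite symmetric $F \subseteq \Gamma$. Under the identification $(X\rtimes\Gamma)_x \cong \Gamma$, the formula above for composition shows that $K\cdot B_x$ corresponds to $F\cdot B_x \subseteq \Gamma$ in the group multiplication sense. Fixing a proper word metric $d_\Gamma$ and using the fact that balls are finite and any finite set lies in some ball, the conditions $A_x\cap (K\cdot B_x) = \emptyset = (K\cdot A_x)\cap B_x$ (for $K = X\times F$ symmetric) are equivalent to $d_\Gamma(A_x,B_x) > R$ for a suitable $R$ depending only on $F$. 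A standard $\chi_A/\chi_B$ versus $f/g$ approximation (identical to the one used in the sufficiency direction of Proposition \ref{compactfiberwisely}) then matches our notion of compactly uniform quasi-locality with Definition \ref{defn:operators family version}(3).

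None of the three parts presents a genuine conceptual obstacle; the main care required is in part (1), where the group element $\gamma_0^{-1}$ appears on the right because $V_\gamma$ is a right shift, and one must keep track of inverses and left/right multiplications when passing between the equivariance on fibres and the matrix identity (\ref{EQ:equiv for trans}).
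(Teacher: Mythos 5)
The paper omits this proof as routine, and your argument is exactly the intended verification: translate the general fibrewise criteria (Lemma \ref{equivariant-fiber}, Lemma \ref{lem.charOfGammaB}, Definition \ref{defn: unif. quasiloc}) through the identification $(X\rtimes\Gamma)_x\cong\Gamma$. All three parts check out, including the bookkeeping in part (1) — $V_{(x,\gamma_0)}\delta_\beta=\delta_{\beta\gamma_0^{-1}}$ and the reindexing $y=\gamma_0^{-1}x$ do recover (\ref{EQ:equiv for trans}) — so nothing further is needed.
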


Note that condition (3) above also holds for certain non-compact $X$, while for simplicity we only focus on the compact case in the sequel. Combining Corollary \ref{cor:char for quasi-locality} with Lemma \ref{lem:trans char}, we reach the following:

\begin{corollary}
    When $X$ is compact, we have a $C^*$-monomorphism
    \[
        \Phi: C^*_{uq}(X \rtimes \Gamma)^{X \rtimes \Gamma} \longrightarrow \prod_{x\in X}\B(\ell^2(\Gamma)), \quad T \mapsto (\Phi_x(T))_{x\in X}
    \]
    with range consisting of elements $(T_x)_{x\in X}$ which is uniformly quasi-local and satisfies (\ref{EQ:equiv for trans}) such that the map $x\mapsto T_x$ is WOT-continuous.
\end{corollary}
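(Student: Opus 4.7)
The plan is to deduce this corollary directly by combining the general characterisation from Corollary \ref{cor:char for quasi-locality} with the transformation-groupoid specific facts collected in Lemma \ref{lem:trans char}. Since the slicing map $\Phi$ has already been established as a $C^*$-monomorphism on all of $\L(L^2(\G))$, its restriction to $C^*_{uq}(X \rtimes \Gamma)^{X \rtimes \Gamma}$ is again a $C^*$-monomorphism; so the only substantive task is to identify its image.

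For the forward direction, take $T \in C^*_{uq}(X \rtimes \Gamma)^{X \rtimes \Gamma}$ and set $T_x \coloneqq \Phi_x(T)$. First, $T$ being $(X \rtimes \Gamma)$-equivariant translates under Lemma \ref{lem:trans char}(1) into condition (\ref{EQ:equiv for trans}) for the family $(T_x)_{x\in X}$. Second, Corollary \ref{cor:char for quasi-locality} applied to $T$ tells us that $x \mapsto T_x$ is a continuous section of the operator fibre space $E$ and that $(T_x)_{x\in X}$ is compactly uniformly quasi-local. Under the identification $(X \rtimes \Gamma)_x \cong \Gamma$, Lemma \ref{lem:trans char}(2) says that being a continuous section of $E$ is equivalent to WOT-continuity of $x \mapsto T_x$, and Lemma \ref{lem:trans char}(3) says (crucially using compactness of $X$) that compact uniform quasi-locality coincides with uniform quasi-locality in the sense of Definition \ref{defn:operators family version}(3). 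This gives all three claimed properties.

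For the reverse direction, suppose $(T_x)_{x\in X} \in \prod_{x\in X} \B(\ell^2(\Gamma))$ satisfies (\ref{EQ:equiv for trans}), is uniformly quasi-local, and has WOT-continuous $x \mapsto T_x$. Applying Lemma \ref{lem:trans char}(2) and (3) in the reverse direction, we obtain that $x \mapsto T_x$ is a continuous section of $E$ and is compactly uniformly quasi-local. Corollary \ref{cor:char for quasi-locality} then produces a quasi-local $T \in \L(L^2(X \rtimes \Gamma))$ with $\Phi(T) = (T_x)_{x\in X}$, and Lemma \ref{lem:trans char}(1) upgrades (\ref{EQ:equiv for trans}) to the $(X \rtimes \Gamma)$-equivariance of $T$. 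Hence $T \in C^*_{uq}(X \rtimes \Gamma)^{X \rtimes \Gamma}$ and maps onto the prescribed family.

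There is no genuine obstacle, since each ingredient is already in place; the only point that requires care is verifying that Lemma \ref{lem:trans char}(3)'s compactness hypothesis on $X$ is indeed needed exactly at the step where compact uniform quasi-locality must be exchanged for the Definition \ref{defn:operators family version}(3) version, which is why the corollary is stated only for compact $X$.
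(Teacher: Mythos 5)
Your proposal is correct and follows exactly the route the paper intends: the corollary is stated there as an immediate consequence of combining Corollary \ref{cor:char for quasi-locality} with Lemma \ref{lem:trans char}, and your forward/reverse unpacking of the three parts of that lemma (equivariance, WOT-continuity versus continuity of the section of $E$, and the exchange of compactly uniform quasi-locality for uniform quasi-locality under compactness of $X$) is precisely what the paper leaves implicit.
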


Hence applying Theorem \ref{Main-theorem}, we reach the following characterisation for the reduced crossed product $C(X) \rtimes_r \Gamma$:

\begin{theorem}
    Let $X$ be a compact space and $\Gamma$ be a countable discrete group with an amenable action on $X$. Then $C(X) \rtimes_r \Gamma$ is $C^*$-isomorphic to the $C^*$-subalgebra in $\prod_{x\in X}\B(\ell^2(\Gamma))$ consisting of elements $(T_x)_{x\in X}$ which is uniformly quasi-local and satisfies (\ref{EQ:equiv for trans}) such that the map $x\mapsto T_x$ is WOT-continuous.
\end{theorem}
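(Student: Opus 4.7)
The plan is to combine the three ingredients already assembled in this subsection: the isomorphism \eqref{iso reduced product} identifying $C^*_r(X\rtimes\Gamma)$ with $C_0(X)\rtimes_r\Gamma$, the main theorem (Theorem \ref{Main-theorem}) applied to the transformation groupoid, and the corollary immediately preceding our statement which provides a concrete picture of $C^*_{uq}(X\rtimes\Gamma)^{X\rtimes\Gamma}$ inside $\prod_{x\in X}\B(\ell^2(\Gamma))$.

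First I would verify that the transformation groupoid $\G \coloneqq X\rtimes\Gamma$ satisfies the hypotheses of Theorem \ref{Main-theorem}. Since $X$ is compact and $\Gamma$ is countable, $\G = X\times\Gamma$ is second countable, locally compact, $\sigma$-compact, and \'{e}tale (as recalled in Section \ref{sssec:transformation}). The hypothesis that the action $\Gamma\curvearrowright X$ is amenable is equivalent to amenability of $\G$ (see, \emph{e.g.}, \cite[Chapter 4]{BrownOzawa}). Consequently, Theorem \ref{Main-theorem} yields the chain of equalities
\[
C^*_r(X\rtimes\Gamma) \;=\; C^*_u(X\rtimes\Gamma)^{X\rtimes\Gamma} \;=\; C^*_{uq}(X\rtimes\Gamma)^{X\rtimes\Gamma}.
\]

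Next I would identify each side with its intended description. Via \eqref{iso reduced product}, the left-hand side is $C^*$-isomorphic to $C_0(X)\rtimes_r\Gamma = C(X)\rtimes_r\Gamma$ (using compactness of $X$). For the right-hand side, the corollary preceding our statement gives a $C^*$-monomorphism
\[
\Phi\colon C^*_{uq}(X\rtimes\Gamma)^{X\rtimes\Gamma} \longrightarrow \prod_{x\in X}\B(\ell^2(\Gamma)), \qquad T\mapsto (\Phi_x(T))_{x\in X},
\]
whose image consists precisely of families $(T_x)_{x\in X}$ that are uniformly quasi-local and satisfy \eqref{EQ:equiv for trans}, with $x\mapsto T_x$ WOT-continuous. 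Composing the isomorphism from Theorem \ref{Main-theorem} with $\Phi$ and with the inverse of \eqref{iso reduced product} produces the desired $C^*$-isomorphism.

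The only genuinely verifying step is the identification of the image of $\Phi$, which has already been packaged into the corollary via Lemma \ref{lem:trans char}. In particular, the WOT-continuity assertion comes from the characterisation of continuous sections of the operator fibre space (Lemma \ref{lem.charOfGammaB}\ref{item2.lem.charOfGammaB}) specialised to the trivial bundle $\{x\}\times\ell^2(\G_x)\cong\ell^2(\Gamma)$ over compact $X$; the equivariance condition \eqref{EQ:equiv for trans} is a direct rewriting of Corollary \ref{equivariant-dense} using the canonical bijection $(X\rtimes\Gamma)_x\cong\Gamma$; and the uniform quasi-locality condition is exactly the content of Lemma \ref{lem:trans char}(3). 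The only place some care is needed is to check that these three conditions are jointly equivalent to membership in the image of $\Phi$, which follows by combining Corollary \ref{cor:char for quasi-locality} with Lemma \ref{lem:trans char}; this is also the step I would expect to require the most bookkeeping, though no new idea beyond what has been developed earlier in the paper.
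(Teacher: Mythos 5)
Your proposal is correct and follows exactly the route the paper intends: the theorem is stated as an immediate consequence of combining Theorem \ref{Main-theorem} (applied to the amenable, $\sigma$-compact transformation groupoid $X\rtimes\Gamma$), the isomorphism \eqref{iso reduced product}, and the corollary characterising the image of $\Phi$ on $C^*_{uq}(X\rtimes\Gamma)^{X\rtimes\Gamma}$. Your additional remarks tracing the three conditions back to Lemma \ref{lem:trans char} and Corollary \ref{cor:char for quasi-locality} match the paper's own bookkeeping.
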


\section{Beyond equivariance}\label{sec:beyond equivariance}

In Section \ref{sec.mainThm}, we study the equivariant parts in uniform Roe and quasi-local algebras for groupoids, and our main result (Theorem \ref{Main-theorem}) provides a sufficient condition to ensure that they are the same. In this section, we will turn to these $C^*$-algebras themselves without the restriction of equivariance. Our approach is to consider the semi-direct product groupoids and transfer the general case to the equivariant one, inspired by the discussions in \cite[Section 6]{ADExact}.

\subsection{Semi-direct product groupoids}\label{sssec:groupoid actions}
Here we collect some prelimenary knowledge on groupoid actions and semi-direct product groupoids, and guide readers to \cite{ADExact} for more details.

Let $X$ be a locally compact space. A \emph{fibre space over $X$} is a pair $(Y,p)$, where $Y$ is a locally compact space and $p\colon Y\rightarrow X$ is a continuous surjective map. For two fibre spaces $(Y_1,p_1)$ and $(Y_2,p_2)$ over $X$, we form their \emph{fibred product} to be
\[
    Y_1\prescript{}{p_1}{\ast}_{p_2}Y_2 \coloneqq \{(y_1, y_2) \in Y_1 \times Y_2: p_1(y_1) = p_2(y_2)\},
\]
equipped with the topology induced by the product topology.


\begin{definition}[{\cite[Definition 1.2]{ADExact}}]
    Let $\G$ be a locally compact groupoid.
    A \textit{left $\G$-space} is a fibre space $(Y,p)$ over $\Gz$, equipped with a continuous map $(\gamma,y)\mapsto \gamma y$ from $\G\prescript{}{\s}{*}_{p}Y$ to $Y$ (called a \emph{ $\G$-action on $(Y,p)$}) which satisfies the following:
    \begin{itemize}
        \item $p(\gamma y) = \r(\gamma)$ for $(\gamma,y)\in \G\prescript{}{\s}{*}_{p}Y$, and $p(y)y = y$;
        \item $\gamma_{2}(\gamma_{1}y) = (\gamma_{2}\gamma_{1})y$ for $(\gamma_{1},y)\in \G\prescript{}{\s}{*}_{p}Y$ and $\s(\gamma_{2}) = \r(\gamma_{1})$.
    \end{itemize}
\end{definition}

Given a $\G$-space $(Y, p)$, the associated \textit{semi-direct product groupoid} $Y\rtimes\G$ is defined to be $Y\prescript{}{p}{*}_{\r}\G$ as a topological space. For $(y,\gamma) \in Y\rtimes\G$, define its range to be $(y,\r(\gamma))$ and its source to be $(\gamma^{-1}y,\s(\gamma))$. The product and inverse are given by
\[
    (y,\gamma)(\gamma^{-1}y,\gamma')=(y,\gamma\gamma') \quad \text{and} \quad (y,\gamma)^{-1} = (\gamma^{-1}y,\gamma^{-1}).
\]
Clearly $(y,p(y))\mapsto y$ is a homeomorphism from the unit space of $Y\rtimes\G$ onto $Y$. Hence from now on, we regard Y as the unit space of the groupoid $Y\rtimes\G$. As shown in \cite[Proposition 1.4 and Proposition 1.6]{ADExact}, the range map $\r : Y\rtimes\G \to Y$ is always open, and $Y\rtimes\G$ is \'{e}tale when the groupoid $\G$ itself is \'{e}tale.


The class of groupoid actions we are interested in come from certain compactification of the given groupoid. For completeness, let us recall the following:

\begin{definition}\label{defn:fibrewise cpt}
    Let $(Y,p)$ be a fibre space over a locally compact space $X$.
    \begin{enumerate}
        \item We say that $(Y,p)$ is \textit{fibrewise compact} if $p$ is proper, \emph{i.e.}, $p^{-1}(K)$ is compact for any compact $K \subseteq X$.
        \item A \textit{fibrewise compactification} of $(Y,p)$ is a fibrewise compact fibre space $(Z,q)$ together with a continuous map $\varphi\colon Y\rightarrow Z$, which is a homeomorphism onto an open dense subset of $Z$ and satisfies $q \circ \varphi = p$. Usually we regard $Y$ as a subset of $Z$ in this case, and the morphism $\varphi$ is just the inclusion.
    \end{enumerate}
\end{definition}

To introduce concrete fibrewise compactifications, we need refer to the Gelfand spectra. Let $(Y,p)$ be a fibre space over  a locally compact space $X$. Denote
\[
    p^*(C_0(X)) \coloneqq \{ f\circ p:  f\in C_{0}(X) \}
\]
and $C_{0}(Y,p)$ to be the closure of the following set in $C_{b}(Y)$:
\[
    C_{c}(Y,p) = \{ g\in C_{b}(Y): \text{ there is a compact } K\subset X,\,\supp(g)\subset p^{-1}( K )\}.
\]
It was shown in \cite[Proposition A.2]{ADExact} that the fibrewise compactifications of $(Y,p)$ are in one-to-one correspondence with the $C^{*}$-subalgebras of $C_{0}(Y,p)$ containing $p^{*}\left(C_{0}(X)\right) + C_{0}(Y)$. The fibrewise compactification associated to $C_{0}(Y,p)$ is called the \textit{fibrewise Stone-\v{C}ech compactification of $(Y,p)$}, denoted by $(\beta_{p}Y,p_{\beta})$. 

As a special case, we consider the following: Let $\G$ be a locally compact \'{e}tale groupoid. Then the groupoid multiplication of $\G$ provides a $\G$-action on the fibre space $(\G,\r)$, \emph{i.e.}, $\G\prescript{}{\s}{*}_{\r}\G \to \G$ defined by $(\gamma_{1},\gamma_{2})\mapsto \gamma_{1}\gamma_{2}$. Moreover, it was shown in \cite[Proposition 2.5]{ADExact} that this $\G$-action extends in a unique way to a $\G$-action on the fibrewise Stone-\v{C}ech compactification $\left(\beta_{\r} \G, \r_{\beta}\right)$. Hence we can consider the semi-direct product groupoid $\beta_{\r} \G\rtimes\G$.

Following the terminology in \cite{ADExact}, a locally compact \'{e}tale groupoid $\G$ is said to be \emph{strongly amenable at infinity} if $\beta_{\r} \G\rtimes\G$ is amenable. As shown in \cite{ADExact}, this has a close relation with the notion of exactness for groupoids. By definition, $\G$ is called \emph{$C^*$-exact} if the reduced groupoid $C^*$-algebra $C^*_r(\G)$ is exact. Also recall that $\G$ is said to be \emph{weakly inner amenable} if for any compact subset $K \subseteq \G$ and any $\varepsilon>0$, there exists a continuous bounded positive definite function $f$ on the product groupoid $\G \times \G$ which is properly supported such that $|f(\gamma,\gamma)-1|<\varepsilon$ for all $\gamma \in K$.


\begin{proposition}[{\cite[Corollary 7.4 and Theorem 8.6]{ADExact}}]\label{exact prop 1}
    Let $\G$ be a locally compact, second countable and \'{e}tale groupoid. If $\G$ is strongly amenable at infinity, then $\G$ is $C^*$-exact. Conversely, if $\G$ is weakly inner amenable and $C^*$-exact, then $\G$ is strongly amenable at infinity.
\end{proposition}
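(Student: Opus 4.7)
The plan is to attack the two implications of Proposition~\ref{exact prop 1} separately, since they rely on quite different aspects of the relationship between $\G$ and its semi-direct product $\beta_{\r}\G \rtimes \G$.

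For the forward direction (strong amenability at infinity implies $C^*$-exactness), my first move would be to recall the standard fact that the reduced groupoid $C^*$-algebra of an amenable locally compact \'etale groupoid is nuclear; this follows from Lemma~\ref{lem:p.d.functions for amenability}, because the positive type cutoff functions there induce approximating c.p.\ maps factoring through matrix algebras over $C_0(\Gz)$. Thus $C^*_r(\beta_{\r}\G \rtimes \G)$ is nuclear, and in particular exact. To descend this to $C^*_r(\G)$, I would exhibit the latter as a $C^*$-subalgebra of the former in a way that exactness is inherited. Concretely, the natural inclusion $C_0(\Gz) \hookrightarrow C_0(\beta_{\r}\G, \r_\beta)$ and the equivariance of the $\G$-action on $(\beta_{\r}\G,\r_\beta)$ yield an embedding at the level of $C_c$, and via the regular representation this should extend to a faithful $\ast$-homomorphism $C^*_r(\G) \hookrightarrow \L(L^2(\beta_{\r}\G \rtimes \G))$ whose image sits inside $C^*_r(\beta_{\r}\G \rtimes \G)$. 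Since $C^*$-subalgebras of exact $C^*$-algebras are exact, this concludes the first direction.

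For the converse, where $\G$ is second countable, weakly inner amenable and $C^*$-exact, the strategy should be to use exactness to produce approximate factorisations through nuclear $C^*$-algebras, and then use weak inner amenability as a bridge to transport these factorisations into honest positive type cutoff functions on $\beta_{\r}\G \rtimes \G$. More precisely, weak inner amenability supplies continuous bounded properly supported positive definite functions on the product groupoid $\G \times \G$ which are close to $1$ on the diagonal over any prescribed compact set; such a function can, via the fibrewise $\G$-action on $\beta_{\r}\G$, be extended or pushed forward to a positive type function on $\beta_{\r}\G \rtimes \G$. Combining these with the approximating c.p.\ maps coming from exactness of $C^*_r(\G)$, one should be able to construct, for any $\varepsilon>0$ and compact $K \subseteq \beta_{\r}\G \rtimes \G$, a non-negative compactly supported positive type function on $\beta_{\r}\G \rtimes \G$ verifying the criterion of Lemma~\ref{lem:p.d.functions for amenability}. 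Second countability ensures that we can perform the requisite sequential/metric arguments to pass from approximate to actual cutoffs.

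The hardest part will certainly be the converse. The conceptual obstacle is that exactness is a purely $C^*$-theoretic notion about a single algebra, whereas amenability of $\beta_{\r}\G \rtimes \G$ requires explicit geometric data: positive type functions with near-invariance under the $\G$-action and compact support. Bridging these two worlds is where weak inner amenability enters in a delicate way, as it is precisely the ingredient that converts abstract Stinespring-type factorisations into properly supported positive definite kernels on $\G$. I expect the proof to proceed by a careful estimate: given a compact $K$ and $\varepsilon$, one chooses an exactness-witness matching the scale of $K$, convolves it against a weak inner amenability function, and then verifies the two conditions of Lemma~\ref{lem:p.d.functions for amenability} through norm estimates that couple the c.p.\ approximation bound with the propagation control provided by proper support. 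The interplay of these two estimates, and their compatibility with the fibrewise compactification, will be the main technical point.
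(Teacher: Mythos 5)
The paper offers no proof of this proposition: it is quoted verbatim from Anantharaman-Delaroche \cite{ADExact} (Corollary 7.4 and Theorem 8.6), so there is no internal argument to measure you against. Judged on its own terms, your forward direction is essentially the argument of the cited source and is sound: amenability of $\beta_{\r}\G\rtimes\G$ gives nuclearity of $C^*_r(\beta_{\r}\G\rtimes\G)$, this algebra is isomorphic to $C^*_u(\G)$, and $C^*_r(\G)$ sits inside $C^*_u(\G)$ (in the notation of the present paper this is visible from Proposition \ref{Roereduced} and Lemma \ref{lem:AD=Cu}, since $\Lambda(C_c(\G))\subseteq\CC_u[\G]$), so exactness passes to the subalgebra. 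The only things you should not wave at are the standard fact that amenable \'{e}tale groupoids have nuclear reduced $C^*$-algebras, and the check that your map $C_c(\G)\to C_c(\beta_{\r}\G\rtimes\G)$, $f\mapsto\big((y,\gamma)\mapsto f(\gamma)\big)$, is isometric for the reduced norms; both are true and routine.

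The converse contains a genuine gap: the sentence ``combining these with the approximating c.p.\ maps coming from exactness of $C^*_r(\G)$, one should be able to construct \dots a non-negative compactly supported positive type function on $\beta_{\r}\G\rtimes\G$'' \emph{is} the theorem, not a proof of it. Concretely, exactness hands you c.c.p.\ factorisations $C^*_r(\G)\to M_n(\CC)\to\B(H)$ of a nuclear embedding; nothing in your outline explains how such an abstract factorisation produces a \emph{function on the groupoid} at all. The Ozawa-type mechanism that does this evaluates the composition of the factorisation with the multiplication map $a\otimes b\mapsto\Lambda(a)\rho(b)$ against the vectors $\delta_\gamma$, and the entire difficulty is that for groupoids the resulting map from the algebraic tensor product of the left and right convolution algebras into $\L(L^2(\G))$ need not be bounded for the minimal tensor norm; weak inner amenability is precisely the hypothesis that controls this, not a device for ``convolving against'' the exactness witness afterwards. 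Even granting that construction, two steps are still missing: (i) the kernels so obtained live on $\G$ (or on $\G\ast_{\s}\G$), and one must show they belong to the relevant subalgebra of $C_0(\G,\r)$ so that they extend to the fibrewise Stone-\v{C}ech compactification and define positive type functions on $\beta_{\r}\G\rtimes\G$ satisfying the criterion of Lemma \ref{lem:p.d.functions for amenability}; (ii) one must land on $\beta_{\r}\G$ specifically, and the usual route first produces an amenable action on \emph{some} fibrewise compact $\G$-space and then invokes the universal property of $\beta_{\r}\G$ to upgrade to strong amenability at infinity, a reduction your outline skips. As written, the converse is a plausible table of contents rather than an argument.
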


Throughout the rest of this section, let $\G$ be a locally compact \'{e}tale groupoid and $(\beta_\r \G, \r_\beta)$ be the fibrewise Stone-\v{C}ech compactification of $(\G,\r)$. Consider the $\G$-action on $(\beta_\r \G, \r_\beta)$ induced by the groupoid multiplication in $\G$, and the associated semi-direct product groupoid $\beta_\r \G \rtimes \G$. In the next few subsections, we will transfer the uniform Roe and quasi-local algebras for $\G$ to the equivariant part of the corresponding algebras for $\beta_\r \G \rtimes \G$.

\subsection{Compactly supported case}\label{ssec:compact supp case AD}
Here we deal with the case of compactly supported operators and the uniform Roe algebra, which was originally studied in \cite{ADExact}.

First we show that our notion of the uniform Roe algebra $C^*_u(\G)$ from Definition \ref{defn:unif. Roe and quasilocal for groupoids}(1) coincides with that of the uniform $C^*$-algebra from \cite[Definition 6.1]{ADExact}. To start, let us recall an extra algebra $C_t(\G \ast_{\s} \G)$ from \cite{ADExact} (see also \cite{AZ_2020_article_a}). By definition,
\[
    \G \ast_{\s} \G \coloneqq \G \prescript{}{\s}{*}_{\s} \G = \{(\gamma, \alpha) \in \G \times \G: \s(\gamma) = \s(\alpha)\}.
\]
A \emph{tube} is a subset of $\G \ast_{\s} \G$ whose image by the map $(\gamma, \alpha) \mapsto \gamma\alpha^{-1}$ is relatively compact in $\G$. Denote by $C_t(\G \ast_{\s} \G)$ the space of continuous bounded functions on $\G \ast_{\s} \G$ with support in a tube. Recall from \cite[Section 6]{ADExact} that there is a $\ast$-monomorphism $\mathcal{T}: C_t(\G \ast_{\s} \G) \rightarrow \L(L^2(\G))$ given by\footnote{Note that the formula given in \cite{ADExact} is slightly different from the above, where the set $\G \ast_{\r} \G$ is considered instead of $\G \ast_{\s} \G$. Here we follow the formula from \cite[Section 6.5]{AZ_2020_article_a}, which is more convenient and compatible to our setting.}:
\[
    ((\mathcal{T} f)\xi)(\gamma):=\sum_{\alpha\in \G_{\s(\gamma)}} f(\gamma, \alpha) \xi(\alpha) \quad \mbox{where} \quad \xi \in L^2(\G) \mbox{~and~}\gamma \in \G,
\]
and the \emph{uniform $C^*$-algebra of $\G$} is defined to be the norm closure of $\mathcal{T}(C_t(\G \ast_{\s} \G))$ in $\L(L^2(\G))$.

The following lemma relates the algebra $C_t(\G \ast_{\s} \G)$ to our notion of compact supported operators:

\begin{lemma}\label{lem:AD=Cu}
    The image $\mathcal{T}(C_t(\G \ast_{\s} \G))$ consists of all compactly supported operators in the sense of Definition \ref{def.operatorCompactSupportAndQuasiLocal}(2). Hence the uniform $C^*$-algebra of $\G$ from \cite[Definition 6.1]{ADExact} coincides with the uniform Roe algebra of $\G$ from Definition \ref{defn:unif. Roe and quasilocal for groupoids}(1).
\end{lemma}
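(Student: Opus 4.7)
The plan is to prove that the image $\mathcal{T}(C_t(\G \ast_{\s} \G))$ coincides with $\mathbb{C}_u[\G]$ on the nose, from which the second sentence follows by taking norm closures. I would prove the two inclusions separately.

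For the inclusion $\mathcal{T}(C_t(\G \ast_{\s} \G)) \subseteq \mathbb{C}_u[\G]$, take $f \in C_t(\G \ast_{\s} \G)$ whose support is contained in the tube $\{(\gamma,\alpha) : \gamma\alpha^{-1} \in K\}$ for some compact $K \subseteq \G$. For any $K$-separated $g_1, g_2 \in C_b(\G)$ and $\xi \in L^2(\G)$ a direct computation yields
\[
\big(g_2 \mathcal{T}(f) g_1 \xi\big)(\gamma) = g_2(\gamma) \sum_{\alpha \in \G_{\s(\gamma)}} f(\gamma,\alpha) g_1(\alpha) \xi(\alpha).
\]
If a summand is nonzero then $\gamma \in \supp(g_2)$, $\alpha \in \supp(g_1)$, $\s(\gamma)=\s(\alpha)$, and $\gamma\alpha^{-1} \in K$, which directly contradicts $K$-separation. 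Hence $\mathcal{T}(f) \in \mathbb{C}_u[\G]$.

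For the converse, let $T \in \mathbb{C}_u[\G]$ be supported in a compact $K$. Replacing $K$ by $K \cup K^{-1}$ we may assume $K$ is symmetric. Define
\[
f_T \colon \G \ast_{\s} \G \longrightarrow \CC, \qquad f_T(\gamma,\alpha) := \big\langle \delta_\gamma,\, \Phi_{\s(\gamma)}(T)(\delta_\alpha) \big\rangle.
\]
Then $|f_T(\gamma,\alpha)| \leq \|T\|$, so $f_T$ is bounded. To show $f_T \in C_t(\G \ast_{\s} \G)$ and $\mathcal{T}(f_T)=T$, I would verify three points:
\begin{enumerate}
\item \emph{Tube support.} If $\gamma\alpha^{-1} \notin K$, then by symmetry $\alpha\gamma^{-1} \notin K$, so applying Proposition \ref{compactfiberwisely}\ref{compactfiberwisely-1} with $A_x = \{\gamma\}$, $B_x = \{\alpha\}$, $x=\s(\gamma)$ gives $\chi_{\{\gamma\}} \Phi_x(T) \chi_{\{\alpha\}} = 0$, whence $f_T(\gamma,\alpha)=0$.
\item \emph{Continuity.} Fix $(\gamma_0,\alpha_0) \in \G \ast_{\s} \G$ and choose an open bisection $V \ni \alpha_0$ with $\s|_V$ a homeomorphism, together with $\rho \in C_c(\s(V))$ satisfying $\rho(\s(\alpha_0))=1$. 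Define $\xi \in C_c(V) \subseteq C_c(\G)$ by $\xi(\beta) = \rho(\s(\beta))$ on $V$ and $0$ elsewhere. For $(\gamma,\alpha)$ in a sufficiently small neighbourhood of $(\gamma_0,\alpha_0)$ we have $\alpha \in V$ and $\alpha$ is the unique element of $V$ whose source is $\s(\gamma)=\s(\alpha)$, so $\xi|_{\G_{\s(\gamma)}} = \rho(\s(\alpha)) \delta_\alpha$ and therefore $\big(\Phi_{\s(\gamma)}(T)(\xi|_{\G_{\s(\gamma)}})\big)(\gamma) = \rho(\s(\alpha)) f_T(\gamma,\alpha)$. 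Lemma \ref{lem.charOfGammaB}\ref{item3.lem.charOfGammaB} guarantees that the left-hand side is continuous in $\gamma$, and composing with the continuous map $(\gamma,\alpha) \mapsto \gamma$ recovers joint continuity on a neighbourhood of $(\gamma_0,\alpha_0)$.
\item \emph{Recovery of $T$.} For $\xi \in C_c(\G)$ and $\gamma \in \G$, the sum $\sum_{\alpha \in \G_{\s(\gamma)}} f_T(\gamma,\alpha)\xi(\alpha)$ is finite (only finitely many $\alpha \in \G_{\s(\gamma)}$ satisfy $\gamma\alpha^{-1} \in K$ by the tube condition and \'etaleness), and it equals $\big\langle \delta_\gamma, \Phi_{\s(\gamma)}(T)(\xi|_{\G_{\s(\gamma)}}) \big\rangle = (T\xi)(\gamma)$. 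Density of $C_c(\G)$ in $L^2(\G)$ gives $\mathcal{T}(f_T) = T$.
\end{enumerate}

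Taking norm closures on both sides of the equality $\mathcal{T}(C_t(\G \ast_{\s} \G)) = \mathbb{C}_u[\G]$ inside $\L(L^2(\G))$ yields the second statement. The main obstacle is Step (2), the continuity of $f_T$ on $\G \ast_{\s} \G$; the bisection-cutoff construction above reduces it to the continuity of sections in $\Gamma_b(E)$ as packaged in Lemma \ref{lem.charOfGammaB}, and once that is in hand the rest of the proof is bookkeeping.
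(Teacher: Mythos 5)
Your proposal is correct and follows essentially the same route as the paper: the forward inclusion by a direct computation with $K$-separated multipliers, and the converse by defining $f_T(\gamma,\alpha)=\langle\delta_\gamma,\Phi_{\s(\gamma)}(T)\delta_\alpha\rangle$ and checking tube support, continuity and recovery via the fibrewise characterisations (Proposition \ref{compactfiberwisely} and Lemma \ref{lem.charOfGammaB}, which is exactly what the paper's appeal to Corollary \ref{cor:char for quasi-locality} packages). You have merely written out in full the steps the paper labels as ``clear'' and ``easy to show''.
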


\begin{proof}
    First it is clear that given $f \in C_t(\G \ast_{\s} \G)$ with support in $\{(\gamma, \alpha) \in \G \ast_{\s} \G: \gamma\alpha^{-1} \in K\}$ for some compact $K \subseteq \G$, then the operator $\mathcal{T}f$ has support in $K$. Conversely, for an operator $T \in \L(L^2(\G))$ with support in some compact $K \subseteq \G$, we define a function $f: \G \ast_{\s} \G \to \CC$ as follows: for $(\gamma, \alpha) \in \G \ast_{\s} \G$ with $\s(\gamma) = \s(\alpha) =x$, set $f(\gamma, \alpha)\coloneqq \langle \delta_\gamma, \Phi_x(T)\delta_\alpha \rangle_{\ell^2(\G_x)}$. Thanks to Corollary \ref{cor:char for quasi-locality}, it is easy to show that $f \in C_t(\G \ast_{\s} \G)$ and $\mathcal{T}(f)=T$. Hence we conclude the proof.
\end{proof}

Next, we recall from \cite[Lemma 6.3]{ADExact} that there is a $\ast$-algebraic isomorphism $\vartheta: C_c(\beta_{\r} \G \rtimes \G) \to C_t(\G \ast_{\s} \G)$ given by
\[
    \vartheta(f)(\gamma,\alpha):=f(\gamma,\gamma\alpha^{-1}) \quad \mbox{for} \quad f \in C_c(\beta_{\r} \G \rtimes \G).
\]
Hence we obtain a $\ast$-monomorphism
\[
    \theta\coloneqq \mathcal{T} \circ \vartheta: C_c(\beta_{\r} \G \rtimes \G) \to \L(L^2(\G)).
\]
Direct calculations show that
\begin{equation}\label{EQ:theta2}
    \big(\theta(f)\xi\big)(\gamma) = \sum_{\alpha\in\G_{\s(\gamma)}} f(\gamma,\gamma\alpha^{-1})\xi(\alpha)  \text{ for } f \in C_c(\beta_{\r} \G \rtimes \G) \text{ and } \xi\in L^{2}(\G).
\end{equation}
It is proved in \cite[Theorem 6.4]{ADExact} that $\theta$ extends to a $C^*$-isomorphism
\begin{equation*}
    \Theta: C^*_r(\beta_{\r} \G \rtimes \G) \stackrel{\cong}{\longrightarrow} C^*_u(\G).
\end{equation*}
Combining with Proposition \ref{Roereduced}, we obtain a $\ast$-isomorphism (still denoted by $\Theta$):
\begin{equation}\label{EQ:Theta}
    \Theta: \overline{\CC_u[\beta_{\r} \G \rtimes \G]^{\beta_{\r} \G \rtimes \G}}^{\L(L^2(\beta_{\r} \G \rtimes \G))} \stackrel{\cong}{\longrightarrow} C^*_u(\G)
\end{equation}
such that $\Theta(\Lambda(f)) = \theta(f)$ for $f\in C_c(\beta_{\r} \G \rtimes \G)$.


\subsection{From equivariance to non-equivariance}\label{ssec:eq to neq}

Now we aim to extend the map $\Theta$ from (\ref{EQ:Theta}) to all $(\beta_{\r} \G \rtimes \G)$-equivariant operators in $\L(L^2(\beta_{\r} \G \rtimes \G))$. The main observation is that the operator $\theta(f)$ for $f \in C_c(\beta_{\r} \G \rtimes \G)$ defined in (\ref{EQ:theta2}) can be written as a composition of three maps, which will be explained in details.

First recall that the unit space of $\beta_{\r} \G \rtimes \G$ is $\beta_{\r} \G$ and note that the source fibre at $z\in\beta_{\r}\G$ is
\[
    (\beta_{\r}\G\rtimes\G)_{z} = \{(\gamma z, \gamma): \gamma \in \G \text{ such that }\s(\gamma) = \r_{\beta}(z)\},
\]
which is bijectively mapped onto $\G_{\r_{\beta}(z)}$ via the map
\[
    p_z: (\beta_{\r}\G\rtimes\G)_{z} \longrightarrow \G_{\r_{\beta}(z)}, \quad (\gamma z, \gamma)\mapsto \gamma.
\]
Hence in the following, we will write $(\gamma z, \gamma) \in \beta_{\r}\G\times\G$ such that $\s(\gamma) = \r_{\beta}(z)$ for a general element in $\beta_{\r} \G \rtimes \G$. Furthermore, $p_z$ induces a unitary
\[
    P_z: \ell^2((\beta_{\r}\G\rtimes\G)_{z}) \longrightarrow \ell^2(\G_{\r_{\beta}(z)}) \quad \text{by} \quad P_z(\delta_{(\gamma z, \gamma)}) = \delta_\gamma,
\]
which further induces a $\ast$-isomorphism
\begin{equation}\label{EQ:Ad}
    \Ad_{P_z}: \B(\ell^2((\beta_{\r}\G\rtimes\G)_{z})) \longrightarrow \B(\ell^2(\G_{\r_{\beta}(z)})), \quad T \mapsto P_z T P_z^*.
\end{equation}

Let us consider the map $\iota: C_c(\G) \to C_c(\beta_{\r} \G \rtimes \G)$ defined by
\[
    (\iota \xi)(\gamma z, \gamma) \coloneqq \xi(\gamma) \quad \text{for} \quad (\gamma z, \gamma) \in \beta_{\r} \G \rtimes \G \quad \text{and} \quad \xi \in C_c(\G).
\]
This map is well-defined since for $\xi \in C_c(\G)$ with support in a compact set $K \subseteq \G$, then $\iota \xi$ has support in $\r_\beta^{-1}(\r(K)) \times K$, which is compact in $\beta_{\r} \G \rtimes \G$. Moreover for $\xi\in C_c(\G)$, we have
\[
    \|\iota \xi\|^2_{L^2(\beta_{\r} \G \rtimes \G)} = \sup_{z\in \beta_\r \G} \|(\iota \xi)|_{(\beta_{\r} \G \rtimes \G)_z}\|^2 = \sup_{z\in \beta_{\r} \G} \sum_{\gamma \in \G_{\r_{\beta}(z)}} |\xi(\gamma)|^2 = \|\xi\|^2_{L^2(\G)}.
\]
Hence $\iota$ can be extended to an isometry (with the same notation):
\[
    \iota: L^2(\G) \longrightarrow L^2(\beta_{\r} \G \rtimes \G).
\]

On the other hand, we define another map
\[
    \kappa: C_c(\beta_{\r} \G \rtimes \G) \longrightarrow C_c(\G) \quad \text{by} \quad (\kappa \eta)(\gamma) \coloneqq \eta(\gamma, \gamma)
\]
for $\eta \in C_c(\beta_{\r} \G \rtimes \G)$ and $\gamma \in \G$. It is clear that $\kappa$ is well-defined. Moreover, we have:
\begin{align*}
    \|\kappa \eta\|^2_{L^2(\G)} & = \sup_{x\in \Gz} \sum_{\gamma \in \G_x} |(\kappa \eta)(\gamma)|^2 = \sup_{x\in \Gz} \sum_{\gamma \in \G_x} |\eta(\gamma,\gamma)|^2         \\
                                & \leq \sup_{z\in \beta_{\r} \G} \sum_{\gamma \in \G_{\r_{\beta}(z)}} |\eta(\gamma z,\gamma)|^2 = \|\eta\|^2_{L^2(\beta_{\r} \G \rtimes \G)}.
\end{align*}
Hence $\kappa$ can be extended to a contractive map (with the same notation):
\[
    \kappa: L^2(\beta_{\r} \G \rtimes \G) \longrightarrow L^2(\G).
\]

Now we are in the position to extend the map $\Theta$ from (\ref{EQ:Theta}) to all $(\beta_{\r} \G \rtimes \G)$-equivariant operators in $\L(L^2(\beta_{\r} \G \rtimes \G))$. More precisely, we define
\begin{equation*}
    \Theta: \L(L^2(\beta_\r \G \rtimes \G))^{\beta_\r \G \rtimes \G} \to \L(L^2(\G))
\end{equation*}
by
\begin{equation}\label{EQ:formula for Theta(T)}
    \Theta(T):=\kappa \circ T \circ \iota \quad \text{for} \quad T \in \L(L^2(\beta_\r \G \rtimes \G))^{\beta_\r \G \rtimes \G}.
\end{equation}
In other words, the operator $\Theta(T)$ makes the following diagram commutes:
\[
    \begin{tikzcd}
        L^{2}(\G)\arrow[d,"\Theta(T)"'] \arrow[r,"\iota"] &
        L^{2}(\beta_{\mathrm{r}}\G\rtimes\G)\arrow[d,"T"] \\
        L^{2}(\G)                                        &
        L^{2}(\beta_{\mathrm{r}}\G\rtimes\G).\arrow[l,"\kappa"']
    \end{tikzcd}
\]
We will show in Lemma \ref{lem:Theta being a homomorphism} below that the image of $\Theta$ is indeed contained in $\L(L^2(\G))$. First, note that Proposition \ref{prop.leftConvolverAndGEquivariant} implies that $T \in \L(L^2(\beta_\r \G \rtimes \G))^{\beta_\r \G \rtimes \G}$ can be written as $T=\Lambda(f_T)$ where $f_T$ is the associated left convolver. Direct calculations show that $\Theta(T)$ satisfies the following:
\begin{equation}\label{EQ:Theta(T)}
    \big(\Theta(T)\xi\big)(\gamma) = \sum_{\alpha\in\G_{\s(\gamma)}} f_T(\gamma,\gamma\alpha^{-1})\xi(\alpha)  \quad \text{ for } \quad \xi\in L^{2}(\G).
\end{equation}
Comparing (\ref{EQ:theta2}) with (\ref{EQ:Theta(T)}), it is clear that $\Theta$ in (\ref{EQ:formula for Theta(T)}) indeed extends the isomorphism $\Theta: C^*_r(\beta_{\r} \G \rtimes \G) \stackrel{\cong}{\longrightarrow} C^*_u(\G)$ in (\ref{EQ:Theta}). Hence again we use the same notation.

\begin{remark}\label{rem:Theta0512}
It is pointed out by the anonymous referee that the map $\Theta$ from (\ref{EQ:formula for Theta(T)}) can be defined in a more tidy way using interior tensor products of Hilbert $C^*$-modules as follows. Let $\tau: \Gz \to \beta_{\r}\G$ be the inclusion map, which induces a $*$-homomorphism $\tau^*:C_{0}(\beta_{\r}\G)\rightarrow C_{0}(\Gz)$. It is easy to see that $L^{2}(\beta_{\r}\G\rtimes\G)$ is isomorphic to the interior tensor product $L^2(\G) \otimes_{\r_\beta^*} C_0(\beta_\r \G)$ as Hilbert $C_0(\beta_\r \G)$-modules. Hence the interior tensor product $L^{2}(\beta_{\r}\G\rtimes\G)\otimes_{\tau^*}C_{0}(\Gz)$ is isomorphic to $L^{2}(\G)$ as Hilbert $C_{0}(\Gz)$-modules due to that the composition $\Gz \stackrel{\tau}{\to} \beta_{\r}\G \stackrel{\r_\beta}{\to} \Gz$ is the identity. Moreover, direct calculations show that $\Theta(T)$ in (\ref{EQ:formula for Theta(T)}) is identified with $T\otimes_{\tau^*}1$ under the isomorphism above. 
%
\end{remark}

On the other hand, using (\ref{EQ:Theta(T)}) it is straightforward to prove:

\begin{lemma}\label{lem:Theta being a homomorphism}
    With the same notation as above, we have:
    \begin{enumerate}
        \item for $T \in \L(L^2(\beta_\r \G \rtimes \G))^{\beta_\r \G \rtimes \G}$, the operator $\Theta(T)$ is an adjointable operator on $L^2(\G)$ with adjoint $\Theta(T^*)$;
        \item for $T,S \in \L(L^2(\beta_\r \G \rtimes \G))^{\beta_\r \G \rtimes \G}$, we have $\Theta(TS) = \Theta(T)\Theta(S)$;
        \item $\Theta$ is injective.
    \end{enumerate}
\end{lemma}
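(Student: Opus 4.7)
The plan is to reduce all three assertions to a single fiberwise identity: for every $x\in\Gz$, viewed as a unit of $\beta_\r\G\rtimes\G$ via the inclusion $\Gz\hookrightarrow\G\hookrightarrow\beta_\r\G$,
\[
  \Phi_x(\Theta(T))=\Ad_x(\Phi_x(T)),
\]
where $\Ad_x$ is the isomorphism from (\ref{EQ:Ad}). I would prove this by direct calculation from (\ref{EQ:Theta(T)}): observe that $(\beta_\r\G\rtimes\G)_x=\{(\gamma,\gamma):\gamma\in\G_x\}$ and $P_x^*\delta_\alpha=\delta_{(\alpha,\alpha)}$, so that both sides have matrix entry $f_T(\gamma,\gamma\alpha^{-1})$ at $(\gamma,\alpha)\in\G_x\times\G_x$; the right-hand side uses Proposition \ref{prop.leftConvolverAndGEquivariant} combined with the equivariance formula (\ref{EQ:function fT equiv}) for $\beta_\r\G\rtimes\G$. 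Since the slicing map $\Phi$ for $\G$ is a $C^*$-monomorphism, elements of $\L(L^2(\G))$ are determined by their slices over $\Gz$, and this identity becomes the backbone of what follows.

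Assertions (1) and (2) will then be short. For (1), boundedness of $\Theta(T):L^2(\G)\to L^2(\G)$ is immediate from $\Theta(T)=\kappa\circ T\circ\iota$, and I would exhibit the adjoint by verifying $\langle\eta,\Theta(T)\xi\rangle=\langle\Theta(T^*)\eta,\xi\rangle$ for $\xi,\eta\in C_c(\G)$. This is a short calculation using (\ref{EQ:Theta(T)}) together with the identity $f_{T^*}(\gamma,\gamma\alpha^{-1})=\overline{f_T(\alpha,\alpha\gamma^{-1})}$, which in turn follows from $(\alpha,\alpha\gamma^{-1})^{-1}=(\gamma,\gamma\alpha^{-1})$ in $\beta_\r\G\rtimes\G$ and the relation $(f_T)^*=f_{T^*}$ from the proof of Proposition \ref{prop.leftConvolverAndGEquivariant}; density of $C_c(\G)$ then extends the identity to all of $L^2(\G)$. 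For (2), the fiberwise identity together with multiplicativity of $\Ad_x$ gives $\Phi_x(\Theta(T)\Theta(S))=\Ad_x(\Phi_x(TS))=\Phi_x(\Theta(TS))$ for every $x\in\Gz$, and injectivity of $\Phi$ finishes the argument.

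The one delicate point is (3). If $\Theta(T)=0$, then testing (\ref{EQ:Theta(T)}) against $\xi\in C_c(\G)$ whose slice at $\s(\alpha)$ equals $\delta_\alpha$ (available by \'etaleness, as in Lemma \ref{lem:surjectivity for phi_x}) forces $f_T(\gamma,\gamma\alpha^{-1})=0$ for every composable pair $\gamma,\alpha\in\G$. Hence $f_T$ vanishes on the subset $\{(y,\delta)\in\beta_\r\G\rtimes\G:y\in\G\}$, and the main obstacle I expect is verifying that this subset is dense in $\beta_\r\G\rtimes\G$ so that continuity of $f_T$ forces it to vanish identically. For this I would use that, since $\G$ is \'etale and each $\r$-fiber $\G^x$ is therefore discrete, the fibrewise Stone-\v{C}ech compactification $\beta_\r\G$ restricts to the ordinary Stone-\v{C}ech compactification on each fiber of $\r$; so $\G^{\r(\delta_0)}$ is dense in $\r_\beta^{-1}(\r(\delta_0))$, and for any $(y_0,\delta_0)\in\beta_\r\G\rtimes\G$ a net $y_i\in\G^{\r(\delta_0)}$ with $y_i\to y_0$ produces $(y_i,\delta_0)\to(y_0,\delta_0)$ with $(y_i,\delta_0)$ in the relevant subset, completing the proof.
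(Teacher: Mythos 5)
Your treatment of parts (1) and (2) is sound and consistent with what the paper intends: the fibrewise identity $\Phi_x(\Theta(T))=\Ad_x(\Phi^{\rtimes}_x(T))$ you take as backbone is exactly Lemma \ref{lem:slice char for Theta}, and both the adjoint computation via $f_{T^*}=(f_T)^*$ and the multiplicativity argument via injectivity of the slicing map $\Phi$ check out.

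Part (3), however, contains a genuine gap. Reducing injectivity to the density of $\{(y,\delta)\in\beta_\r\G\rtimes\G: y\in\G\}$ is fine, but your proof of that density rests on the claim that $\G^{\r(\delta_0)}$ is dense in $\r_\beta^{-1}(\r(\delta_0))$, i.e.\ that the fibrewise Stone-\v{C}ech compactification restricts to the ordinary Stone-\v{C}ech compactification of each $\r$-fibre. This is false when $\Gz$ is not discrete. For instance, for the transformation groupoid $\G=[0,1]\rtimes\mathbb{Z}$ one has $C_0(\G,\r)=C_b([0,1]\times\mathbb{Z})$, and the function $g(y,n)=\min(|ny|,1)$ separates the character $\omega=\lim_{\mathcal{U}}\mathrm{ev}_{(1/n,n)}$ (which lies in $\r_\beta^{-1}(0)$ since $1/n\to 0$, and satisfies $\omega(g)=1$) from the closure of $\G^{0}=\{0\}\times\mathbb{Z}$, on which $g$ vanishes identically; so $\r_\beta^{-1}(0)\supsetneq\overline{\G^{0}}\cong\beta\mathbb{Z}$. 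In general one only gets $\overline{\G^x}\cong\beta(\G^x)\subseteq\r_\beta^{-1}(x)$, possibly strictly. The conclusion is still true and there are two easy repairs. (a) Prove the density directly: given $(y_0,\delta_0)$, take a net $y_i\in\G$ with $y_i\to y_0$ (density of $\G$ in $\beta_\r\G$), note $\r(y_i)\to\r_\beta(y_0)=\r(\delta_0)$, and use that $\r$ is a local homeomorphism near $\delta_0$ to choose $\delta_i\to\delta_0$ with $\r(\delta_i)=\r(y_i)$; then $(y_i,\delta_i)\to(y_0,\delta_0)$ inside the set in question. (b) Avoid this set entirely: if $\Theta(T)=0$, your fibrewise identity gives $\Phi^{\rtimes}_x(T)=0$ for all $x\in\Gz$; the equivariance relation $V_{(z,z)}\Phi^{\rtimes}_{\s(z)}(T)=\Phi^{\rtimes}_z(T)V_{(z,z)}$ then gives $\Phi^{\rtimes}_z(T)=0$ for all $z\in\G$; and since $\G$ is dense in the unit space $\beta_\r\G$, the restriction map of Lemma \ref{densesubset-extension} is injective, whence $T=0$. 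This second route is the one implicitly exploited in Lemma \ref{lem:p is bijective}.
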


Note that item (1) and (2) can be deduced directly from the alternative viewpoint of $\Theta(T)$ given in Remark \ref{rem:Theta0512}. However, we still need (\ref{EQ:Theta(T)}) to conclude item (3).

Combining the analysis above, we reach the following:

\begin{corollary}\label{cor:Theta conclusion}
    The map $\Theta: \L(L^2(\beta_\r \G \rtimes \G))^{\beta_\r \G \rtimes \G} \to \L(L^2(\G))$ given by (\ref{EQ:formula for Theta(T)}) is a $C^*$-monomorphism, which extends the isomorphism in (\ref{EQ:Theta}).
\end{corollary}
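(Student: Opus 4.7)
The plan is to observe that all the hard work has already been done in Lemma \ref{lem:Theta being a homomorphism}, and Corollary \ref{cor:Theta conclusion} is essentially a matter of assembling those three pieces together with the matching of formulas \eqref{EQ:theta2} and \eqref{EQ:Theta(T)}.

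First, I would verify the $\ast$-algebraic structure. Linearity of $\Theta$ is immediate from its defining formula $\Theta(T)=\kappa\circ T\circ \iota$, since both $\iota$ and $\kappa$ are linear. Lemma \ref{lem:Theta being a homomorphism}(1) furnishes the range condition (that $\Theta(T)$ is indeed adjointable on $L^2(\G)$) along with the identity $\Theta(T)^*=\Theta(T^*)$, so $\Theta$ preserves the involution. Multiplicativity is then exactly Lemma \ref{lem:Theta being a homomorphism}(2), and injectivity is Lemma \ref{lem:Theta being a homomorphism}(3). An injective $\ast$-homomorphism between $C^*$-algebras is automatically isometric (and thus bounded), so we conclude that $\Theta$ is a $C^*$-monomorphism.

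Second, for the extension claim, I would work on the dense subalgebra and invoke continuity. Fix $f\in C_c(\beta_\r\G\rtimes\G)$, so that $\Lambda(f)$ belongs to $\CC_u[\beta_\r\G\rtimes\G]^{\beta_\r\G\rtimes\G}$ by Lemma \ref{lem.GEquivariantGroupoidAlgebra} and Proposition \ref{Roereduced}. A direct computation via \eqref{EQ:function fT} (or simply by inspection, since $\Lambda(f)\xi=f*\xi$ on $C_c$) shows that the left convolver of $\Lambda(f)$ produced by Proposition \ref{prop.leftConvolverAndGEquivariant} is $f$ itself. Substituting $f_T=f$ into formula \eqref{EQ:Theta(T)} gives
\[
    \bigl(\Theta(\Lambda(f))\xi\bigr)(\gamma)=\sum_{\alpha\in\G_{\s(\gamma)}} f(\gamma,\gamma\alpha^{-1})\xi(\alpha),
\]
which is precisely formula \eqref{EQ:theta2} for $\theta(f)$. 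Hence $\Theta(\Lambda(f))=\theta(f)$ on the image of $C_c(\beta_\r\G\rtimes\G)$ under $\Lambda$. Since this image is dense in $\overline{\CC_u[\beta_\r\G\rtimes\G]^{\beta_\r\G\rtimes\G}}$ (by Proposition \ref{Roereduced}) and both maps are continuous, the two versions of $\Theta$ agree on the whole subalgebra, justifying the abuse of notation.

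There is no serious obstacle to overcome at this stage: the potentially delicate points — checking that $\Theta(T)$ is genuinely adjointable (not merely bounded) on $L^2(\G)$ when $T$ is equivariant, and that the composition $\kappa\circ T\circ \iota$ behaves well under $\ast$ and multiplication — are exactly the content of Lemma \ref{lem:Theta being a homomorphism}, and they in turn ultimately rely on the realisation of equivariant operators as convolutions by left convolvers (Proposition \ref{prop.leftConvolverAndGEquivariant}) together with the explicit formula \eqref{EQ:Theta(T)}. The role of this corollary is thus to package these facts into a single clean statement that will be used in the sequel to transfer non-equivariant phenomena on $\G$ to equivariant ones on $\beta_\r\G\rtimes\G$.
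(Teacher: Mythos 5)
Your proposal is correct and follows essentially the same route as the paper: the corollary is obtained by assembling Lemma \ref{lem:Theta being a homomorphism} (adjointability with $\Theta(T)^*=\Theta(T^*)$, multiplicativity, injectivity, whence isometry for an injective $\ast$-homomorphism) together with the comparison of formulas (\ref{EQ:theta2}) and (\ref{EQ:Theta(T)}) on the dense subalgebra $\Lambda(C_c(\beta_\r\G\rtimes\G))$ to justify the extension claim. No gaps.
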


\begin{remark}
    A careful reader might already notice that the map $\Theta$ can be further extended to $\L(L^2(\beta_\r \G \rtimes \G))$ using the same formula (\ref{EQ:formula for Theta(T)}). However in this case, it is unclear whether Lemma \ref{lem:Theta being a homomorphism} still holds since (\ref{EQ:Theta(T)}) is no longer available. Due to the same reason, it is unclear whether $\Theta(T)$ coincides with $T\otimes_{\tau^*}1$ given in Remark \ref{rem:Theta0512} for a general $T \in \L(L^2(\beta_\r \G \rtimes \G))$.
\end{remark}

Finally, we provide an extra viewpoint on $\Theta$ in terms of the slicing maps from Section \ref{ssec:operators in LL2G}, which will be used later to characterise the quasi-local algebra. Recall from (\ref{eq.PhiMap}) that we have the slicing maps:
\begin{equation}\label{EQ:Phi cross}
    \Phi^{\rtimes} \colon \L(L^2(\beta_\r \G \rtimes \G))^{\beta_\r \G \rtimes \G} \longrightarrow \prod_{z\in \beta_\r \G}\B(\ell^2((\beta_\r \G \rtimes \G)_z)), \quad T \mapsto (\Phi^{\rtimes}_z(T))_{z\in \beta_\r \G}
\end{equation}
and
\begin{equation}\label{EQ:Phi general}
    \Phi \colon \L(L^2(\G)) \longrightarrow \prod_{x\in \Gz}\B(\ell^2(\G_x)), \quad T' \mapsto (\Phi_x(T'))_{x\in \Gz}.
\end{equation}
For $T \in \L(L^2(\beta_\r \G \rtimes \G))^{\beta_\r \G \rtimes \G}$ and $x\in \Gz$, we consider the operator $\Ad_{P_x}(\Phi^{\rtimes}_{x}(T)) \in \B(\ell^2(\G_x))$ where $\Ad_{P_x}$ is from (\ref{EQ:Ad}).
Using (\ref{EQ:Theta(T)}), it is straightforward to prove:

\begin{lemma}\label{lem:slice char for Theta}
    With the same notation as above, for $T \in \L(L^2(\beta_\r \G \rtimes \G))^{\beta_\r \G \rtimes \G}$ and $x\in \Gz$ we have
    \[
        \Phi_x(\Theta(T)) = \Ad_{P_x}(\Phi^{\rtimes}_{x}(T)).
    \]
\end{lemma}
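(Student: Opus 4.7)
The plan is to verify the claimed identity by comparing matrix coefficients on both sides against a pair of ``basis'' vectors $\delta_\gamma, \delta_\alpha \in \ell^2(\G_x)$, reducing everything to an evaluation of the left convolver $f_T$ associated to $T$ via Proposition~\ref{prop.leftConvolverAndGEquivariant}. Since both $\Phi_x(\Theta(T))$ and $\Ad_x(\Phi^{\rtimes}_x(T))$ are bounded operators on $\ell^2(\G_x)$, matching their coefficients on all pairs of such $\delta$-vectors will suffice.

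First I would compute the left-hand side. By formula~\eqref{EQ:Theta(T)}, for any $\xi \in L^2(\G)$ and any $\gamma \in \G_x$ we have
\[
\big(\Theta(T)\xi\big)(\gamma) = \sum_{\alpha\in\G_x} f_T(\gamma,\gamma\alpha^{-1})\xi(\alpha).
\]
Taking $\xi = \delta_\alpha$ and using that $\Phi_x(\Theta(T))(\delta_\alpha) = \Theta(T)\delta_\alpha|_{\G_x}$, this yields
\[
\langle \delta_\gamma, \Phi_x(\Theta(T))\,\delta_\alpha \rangle = f_T(\gamma, \gamma\alpha^{-1}).
\]

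Next I would compute the right-hand side. Recall that $x \in \Gz$ is viewed as a unit of $\beta_\r\G \rtimes \G$ via the inclusion $\Gz \hookrightarrow \G \hookrightarrow \beta_\r\G$, and for $\gamma \in \G_x$ the action of $\gamma$ on $x$ (regarded as a point of $\beta_\r\G$) is simply $\gamma \cdot x = \gamma$. Thus the source fibre at $x$ in $\beta_\r\G \rtimes \G$ equals $\{(\gamma x, \gamma) : \gamma \in \G_x\}$ and the unitary $P_x$ sends $\delta_{(\gamma x,\gamma)} \mapsto \delta_\gamma$. Using $T = \Lambda(f_T)$ and the formula for the right regular convolution together with the composition rule $(\gamma x, \gamma)(x, \alpha^{-1}) = (\gamma x, \gamma\alpha^{-1})$ in $\beta_\r\G\rtimes\G$, I would compute
\[
\Phi^{\rtimes}_x(T)(\delta_{(\alpha x,\alpha)})\bigl((\gamma x,\gamma)\bigr) = f_T\bigl((\gamma x,\gamma)(\alpha x,\alpha)^{-1}\bigr) = f_T(\gamma x, \gamma\alpha^{-1}) = f_T(\gamma, \gamma\alpha^{-1}),
\]
where in the last step we used $\gamma x = \gamma$ inside $\beta_\r\G$. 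Therefore
\[
\langle \delta_\gamma, \Ad_x(\Phi^{\rtimes}_x(T))\,\delta_\alpha \rangle
= \langle \delta_{(\gamma x,\gamma)}, \Phi^{\rtimes}_x(T)\,\delta_{(\alpha x,\alpha)} \rangle
= f_T(\gamma,\gamma\alpha^{-1}).
\]

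Comparing the two computations shows that both operators have identical matrix entries in the canonical basis of $\ell^2(\G_x)$, and hence they agree. The argument is essentially bookkeeping, and the only potentially delicate point is the correct identification of the source fibre $(\beta_\r\G\rtimes\G)_x$ and the action $\gamma\cdot x = \gamma$ which makes $(\gamma x,\gamma)$ collapse to the ``diagonal'' element used in the definition of $\Theta$; once this is unwound, the matching is immediate.
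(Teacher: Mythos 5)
Your proposal is correct and is exactly the computation the paper has in mind: the paper omits the proof, stating only that it is ``straightforward using (\ref{EQ:Theta(T)})'', and your argument supplies precisely that verification by matching matrix coefficients of both sides against $\delta_\gamma,\delta_\alpha$ via the left convolver $f_T$ from Proposition~\ref{prop.leftConvolverAndGEquivariant}. The identifications $(\beta_\r\G\rtimes\G)_x=\{(\gamma,\gamma):\gamma\in\G_x\}$, the inverse/product computation $(\gamma x,\gamma)(\alpha x,\alpha)^{-1}=(\gamma,\gamma\alpha^{-1})$, and the collapse $\gamma x=\gamma$ are all handled correctly, so nothing is missing.
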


\begin{remark}\label{rem:Theta}
    In fact, the map $\Theta$ can be alternatively defined using the slicing maps above. More precisely, consider the map
    \begin{equation}\label{EQ:projection}
        \mathfrak{p}: \prod_{z\in \beta_\r \G}\B(\ell^2((\beta_\r \G \rtimes \G)_z)) \longrightarrow \prod_{x\in \Gz}\B(\ell^2(\G_x)), \quad (T_z)_{z\in \beta_\r \G} \mapsto (\Ad_{P_x}(T_x))_{x\in \Gz}.
    \end{equation}
    Given $T \in \L(L^2(\beta_\r \G \rtimes \G))^{\beta_\r \G \rtimes \G}$, we consider the family $\mathfrak{p} \circ \Phi^\rtimes (T)$. Then we can apply Theorem \ref{cor:char for LL2G} to verify that $\mathfrak{p} \circ \Phi^\rtimes (T)$ belongs to $\Phi(\L(L^2(\G)))$, and hence it provides an operator $\Phi^{-1}(\mathfrak{p} \circ \Phi^\rtimes (T))$, which is defined to be $\Theta(T)$. More details will be provided in the next subsection, where Lemma \ref{lem:slice char for Theta} shows that these two definitions are the same.
\end{remark}

\subsection{Quasi-local case}

In this subsection, we consider the quasi-local algebras and the main result is the following:

\begin{proposition}\label{prop:Theta quasi-local}
    The restriction of the map $\Theta$ given by (\ref{EQ:formula for Theta(T)}) provides a $C^*$-isomorphism:
    \[
        \Theta: C^*_{uq}(\beta_\r \G \rtimes \G)^{\beta_\r \G \rtimes \G} \stackrel{\cong}{\longrightarrow} C^*_{uq}(\G).
    \]
\end{proposition}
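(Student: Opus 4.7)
The plan is to use Corollary~\ref{cor:Theta conclusion}, by which $\Theta$ is already a $C^*$-monomorphism on $\L(L^2(\beta_{\r}\G \rtimes \G))^{\beta_{\r}\G \rtimes \G}$, and verify two things: that the restriction to $C^*_{uq}(\beta_{\r}\G \rtimes \G)^{\beta_{\r}\G \rtimes \G}$ has image contained in $C^*_{uq}(\G)$, and that every element of $C^*_{uq}(\G)$ arises as $\Theta(T)$ for some such $T$.

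The forward containment will rely on the slice-wise characterisation of quasi-locality (Proposition~\ref{compactfiberwisely}) together with Lemma~\ref{lem:slice char for Theta}. Given $T \in C^*_{uq}(\beta_{\r}\G \rtimes \G)^{\beta_{\r}\G \rtimes \G}$ and $\varepsilon > 0$, I take a compact $K' \subseteq \beta_{\r}\G \rtimes \G$ controlling the slices of $T$ and enlarge it to $\widetilde{K}' = \{(y,\gamma) \in \beta_{\r}\G \rtimes \G : \gamma \in K\}$, where $K \subseteq \G$ is the compact projection of $K'$ onto the second coordinate; this remains compact by fibrewise compactness of $\beta_{\r}\G$. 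At a unit slice $x \in \Gz$, elements of $(\beta_{\r}\G \rtimes \G)_x$ have the form $(\gamma,\gamma)$, so the separation $(\gamma_1,\gamma_1)(\gamma_2,\gamma_2)^{-1} = (\gamma_1,\gamma_1\gamma_2^{-1}) \in \widetilde{K}'$ reduces to $\gamma_1\gamma_2^{-1} \in K$. Since $P_x$ intertwines $\Phi^{\rtimes}_x(T)$ with $\Phi_x(\Theta(T)) = \Ad_x(\Phi^{\rtimes}_x(T))$, the quasi-local bound transports directly from $T$ to $\Theta(T)$.

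For surjectivity, fix $S \in C^*_{uq}(\G)$ and construct a lift as follows. On the dense invariant subset $\G \subseteq \beta_{\r}\G$ of the unit space, I define $T_z$ via the matrix coefficients
\[
\langle \delta_{(\beta z,\beta)},\, T_z\,\delta_{(\gamma z,\gamma)}\rangle = \langle \delta_{\beta z},\, \Phi_{\s(z)}(S)\,\delta_{\gamma z}\rangle \quad \text{for } \gamma,\beta \in \G_{\r(z)},
\]
equivalently $\Ad_z(T_z) = V_z \Phi_{\s(z)}(S) V_{z^{-1}}$. The family $(T_z)_{z \in \G}$ is uniformly bounded by $\|S\|$; is continuous as a section over $\G$, thanks to continuity of $x \mapsto \Phi_x(S)$ in the operator fibre space and of groupoid multiplication; satisfies the equivariance identity $V_g T_{\s(g)} = T_{\r(g)} V_g$ for $g \in \beta_{\r}\G \rtimes \G$ with source and range in $\G$, through the composition rule $V_\gamma V_{\gamma^{-1}y} = V_y$; and inherits compactly uniform quasi-locality from $S$ via the intertwining $\chi_A V_z \Phi_{\s(z)}(S) V_{z^{-1}} \chi_B = V_z\, \chi_{Az}\, \Phi_{\s(z)}(S)\, \chi_{Bz}\, V_{z^{-1}}$, combined with the same compact-set translation as in the forward direction.

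The principal technical step is establishing the extendability condition of Definition~\ref{defn:extendable section} needed to invoke Corollary~\ref{rem:char for dense case} with $X = \G \subseteq \beta_{\r}\G$. Given $\omega \in \beta_{\r}\G \setminus \G$ and prescribed $g' = (\gamma\omega,\gamma),\, g'' = (\beta\omega,\beta)$ in $(\beta_{\r}\G \rtimes \G)_\omega$, I select open bisections $A,B \subseteq \G$ containing $\gamma,\beta$ with $\s(A),\s(B)$ sharing a common open neighbourhood $U$ of $\r_\beta(\omega)$ in $\Gz$, and denote by $\gamma_A(z),\beta_B(z)$ the unique lifts in $A,B$ over $\r(z) \in U$. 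The function $z \mapsto \langle \delta_{\beta_B(z)\,z},\, \Phi_{\s(z)}(S)\, \delta_{\gamma_A(z)\,z}\rangle$ is bounded and continuous on $\r^{-1}(U) \subseteq \G$; multiplying by a cutoff $\rho \circ \r$ with $\rho \in C_c(U)$ equal to $1$ near $\r_\beta(\omega)$ places it in $C_c(\G,\r)$, so the universal property of $\beta_{\r}\G$ yields a continuous extension supplying the required limit at $\omega$ independent of the approximating net. With extendability in hand, Corollary~\ref{rem:char for dense case} produces a quasi-local $T \in \L(L^2(\beta_{\r}\G \rtimes \G))$, Corollary~\ref{equivariant-dense} lifts the equivariance from $\G$-slices to all of $\beta_{\r}\G$, and Lemma~\ref{lem:slice char for Theta} at units together with the injectivity of $\Phi$ gives $\Theta(T) = S$. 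The hardest part will be managing the interaction between the fibrewise Stone-\v{C}ech extension and the \'{e}taleness of $\G$ when verifying extendability uniformly in the choice of net.
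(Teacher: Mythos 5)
Your proposal is correct and follows essentially the same route as the paper: the paper characterises both quasi-local algebras by their slices over the dense invariant subsets $\G\subseteq\beta_\r\G$ and $\Gz$ (Corollaries \ref{cor:char for quasi-local cross} and \ref{cor:char for quasi-local}) and then shows the map $\mathfrak{p}$ of (\ref{EQ:projection}) matches the two images via exactly the three verifications you carry out, namely the bijection coming from equivariance, the equivalence of continuity of sections under $\Ad_x$, and the transfer of compactly uniform quasi-locality through the compact-set translation between $K\subseteq\G$ and $\tilde{K}\subseteq\beta_\r\G\rtimes\G$ (Lemmas \ref{lem:p is bijective}, \ref{lem:continuity of sections} and \ref{lem:cuql of sections}). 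Your explicit construction of the lift with $\Ad_z(T_z)=V_z\Phi_{\s(z)}(S)V_{z^{-1}}$ and your hand-made extendability argument are precisely the content of the paper's $\mathfrak{p}^{-1}$ and of Lemma \ref{lem:fibre SC extension}, so the two proofs differ only in packaging.
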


The proof is divided into several parts, making use of the slicing maps at the end of Section \ref{ssec:eq to neq}.
First recall from Section \ref{ssec:operators in LL2G} and \ref{ssec:dense subset}, the operator fibre spaces are given by
\[
    E^{\rtimes}\coloneqq \bigsqcup_{z\in \beta_\r \G} \B(\ell^2((\beta_\r \G \rtimes \G)_z)), \quad E^{\rtimes}_{\G}\coloneqq \bigsqcup_{z\in \G} \B(\ell^2((\beta_\r \G \rtimes \G)_z))
\]
and
\[
    E\coloneqq \bigsqcup_{x\in \Gz} \B(\ell^2(\G_x)),
\]
equipped with the topology from Definition \ref{defn:operator fibre space} (note that $\G$ is a dense subset in $\beta_\r \G$). Also denote by $\Gamma_b(E^{\rtimes}), \Gamma_b(E^{\rtimes}_{\G})$ and $\Gamma_b(E)$ the associated algebras of continuous bounded sections. Similar to Lemma \ref{Stone-Cech-extension}, we have the following:

\begin{lemma}\label{lem:fibre SC extension}
    Every section $\sigma \in \Gamma_b(E^{\rtimes}_{\G})$ is extendable.
\end{lemma}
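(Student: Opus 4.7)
The plan is to follow the proof of Lemma \ref{Stone-Cech-extension} essentially verbatim, with the universal property of the ordinary Stone-\v{C}ech compactification $\beta X$ replaced by the corresponding property of the fibrewise version $\beta_\r \G$: by construction (see Section \ref{sssec:groupoid actions}), every function in the algebra $C_0(\G, \r)$ extends uniquely to a continuous function on $\beta_\r \G$. With this substitution, most of the work becomes careful bookkeeping with bisections and a cutoff.

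Fix a section $\sigma \in \Gamma_b(E^{\rtimes}_{\G})$, a point $\omega \in \beta_\r \G \setminus \G$, and basis vectors $(\gamma' \omega, \gamma'), (\gamma'' \omega, \gamma'') \in (\beta_\r \G \rtimes \G)_\omega$ with $\gamma', \gamma'' \in \G_{\r_\beta(\omega)}$. Using \'{e}taleness of $\G$, I choose open bisections $U_{\gamma'} \ni \gamma'$ and $U_{\gamma''} \ni \gamma''$, set $U := \s(U_{\gamma'}) \cap \s(U_{\gamma''})$, and pick $\rho \in C_c(U)$ with $\rho(\r_\beta(\omega)) = 1$. For $z \in \G$ with $\r(z) \in U$, let $\gamma'_z \in U_{\gamma'}$ and $\gamma''_z \in U_{\gamma''}$ denote the unique elements with source $\r(z)$, and define
\[
f_{\gamma', \gamma''}(z) := \rho(\r(z)) \cdot \big\langle \delta_{(\gamma''_z z,\, \gamma''_z)}, \, \sigma(z)\, \delta_{(\gamma'_z z,\, \gamma'_z)} \big\rangle
\]
(and zero elsewhere). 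Continuity of $\sigma$ as a section of $E^{\rtimes}_{\G}$, combined with continuous dependence of $(\gamma'_z z, \gamma'_z) \in \beta_\r \G \rtimes \G$ on $z$, yields continuity of $f_{\gamma', \gamma''}$ on $\G$; the cutoff confines its support to $\r^{-1}(\supp \rho)$, placing $f_{\gamma', \gamma''}$ in $C_c(\G, \r) \subseteq C_0(\G, \r)$. It therefore extends uniquely to some $f^\beta_{\gamma', \gamma''} \in C(\beta_\r \G)$, and we set $c_{\gamma', \gamma''} := f^\beta_{\gamma', \gamma''}(\omega)$.

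To verify Definition \ref{defn:extendable section}, let $z_i \to \omega$ in $\G$ and $\gamma'_i, \gamma''_i \in \G_{\r(z_i)}$ with $\gamma'_i \to \gamma'$ and $\gamma''_i \to \gamma''$. For $i$ large, $\gamma'_i \in U_{\gamma'}$ and $\gamma''_i \in U_{\gamma''}$, so uniqueness in the bisections gives $\gamma'_i = \gamma'_{z_i}$ and $\gamma''_i = \gamma''_{z_i}$; continuity of $\rho \circ \r_\beta$ at $\omega$ gives $\rho(\r(z_i)) \to 1$; and continuity of $f^\beta_{\gamma', \gamma''}$ at $\omega$ then delivers
\[
\big\langle \delta_{(\gamma''_i z_i, \gamma''_i)}, \sigma(z_i) \delta_{(\gamma'_i z_i, \gamma'_i)} \big\rangle = f_{\gamma', \gamma''}(z_i)/\rho(\r(z_i)) \longrightarrow c_{\gamma', \gamma''}.
\]
The only real point of departure from the metric-space case is the fibrewise support condition built into $C_0(\G, \r)$, and this is immediately enforced by the choice $\rho \in C_c(\Gz)$; no substantial obstacle is anticipated beyond this translation.
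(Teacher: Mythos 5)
Your proof is correct and follows essentially the same route the paper intends: it transplants the argument of Lemma \ref{Stone-Cech-extension} to the semi-direct product, with the compactly supported cutoff $\rho\in C_c\big(\s(U_{\gamma'})\cap \s(U_{\gamma''})\big)$ forcing $f_{\gamma',\gamma''}$ into $C_c(\G,\r)\subseteq C_0(\G,\r)$ so that the extension property of $\beta_{\r}\G$ applies; this is exactly the modification the paper's one-line proof alludes to. The verification step (identifying $\gamma'_i$ with $\gamma'_{z_i}$ for large $i$ via the bisections and dividing out by $\rho\circ\r_\beta$) is also carried out correctly.
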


The proof is almost the same as the one for Lemma \ref{Stone-Cech-extension}. In fact, borrowing the notation therein, we can shrink the neighbourhoods $U_{\gamma'}$ and $U_{\gamma''}$ to ensure that the function $f_{\gamma', \gamma''} \in C_0(\G,r)$ (see Section \ref{sssec:groupoid actions} to recall the definition). Hence we omit the details.

On the other hand, note that for the slicing maps (\ref{EQ:Phi cross}) and (\ref{EQ:Phi general}), given $T \in \L(L^2(\beta_\r \G \rtimes \G))^{\beta_\r \G \rtimes \G}$ the property of equivariance means the following:
\begin{equation*}
    V_{(\gamma z, \gamma)} \Phi^{\rtimes}_z(T) = \Phi^{\rtimes}_{\gamma z}(T) V_{(\gamma z, \gamma)} \quad \text{for} \quad (\gamma z, \gamma) \in \beta_\r \G \rtimes \G,
\end{equation*}
where $V_{(\gamma z, \gamma)}: \ell^2((\beta_\r \G \rtimes \G)_z) \to \ell^2((\beta_\r \G \rtimes \G)_{\gamma z})$ is given by $V_{(\gamma z, \gamma)}(\delta_{(\alpha z,\alpha)}) = \delta_{(\alpha z, \alpha \gamma^{-1})}$ (see Section \ref{ssec.equivariant}). In the following, we will consider families $(T_z)_{z\in \G}$ satisfying:
\begin{equation}\label{EQ:equiv for Phi}
    V_{(\gamma z, \gamma)} T_z = T_{\gamma z} V_{(\gamma z, \gamma)} \quad \text{for} \quad (\gamma z, \gamma) \in \G \rtimes \G.
\end{equation}

Applying Corollary \ref{rem:char for dense case} together with Lemma \ref{lem:fibre SC extension}, we obtain the following characterisations for the equivariant quasi-local algebras:

\begin{corollary}\label{cor:char for quasi-local cross}
    The map
    \[
        \res \circ \Phi^{\rtimes}: C^*_{uq}(\beta_\r \G \rtimes \G)^{\beta_\r \G \rtimes \G} \longrightarrow \prod_{z\in \G}\B(\ell^2((\beta_\r \G \rtimes \G)_z)), \quad T \mapsto (\Phi^{\rtimes}_z(T))_{z \in \G}
    \]
    is a $C^*$-isomorphism with image consisting of $(T_z)_{z\in \G}$ such that $z \mapsto T_z$ is a continuous section of $E_\G^{\rtimes}$, $(T_z)_{z\in \G}$ is compactly uniformly quasi-local and satisfies (\ref{EQ:equiv for Phi}).
\end{corollary}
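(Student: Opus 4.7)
The plan is to specialize Corollary \ref{rem:char for dense case} to the groupoid $\beta_\r\G\rtimes\G$ (whose unit space is $\beta_\r\G$) with dense subset $\G \subseteq \beta_\r\G$, and then layer the equivariance condition on top using Corollary \ref{equivariant-dense}.

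First I would check the preliminaries needed to invoke those two results. Density of $\G$ in $\beta_\r\G$ is immediate from the construction of the fibrewise Stone-\v{C}ech compactification. Invariance of $\G$ inside $\beta_\r\G$ under the groupoid $\beta_\r\G\rtimes\G$ holds because the $\G$-action on $\beta_\r\G$ restricts on $\G$ to the groupoid multiplication, so $\r^{-1}(\G)=\s^{-1}(\G)$ inside $\beta_\r\G\rtimes\G$. Also, the source fibres of $\beta_\r\G\rtimes\G$ over points of $\G$ are exactly those indexing the target $\prod_{z\in \G}\B(\ell^2((\beta_\r\G\rtimes\G)_z))$.

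Next, Corollary \ref{rem:char for dense case} applied to $\beta_\r\G\rtimes\G$ with dense subset $\G$ identifies $\res\circ \Phi^\rtimes\big(C^*_{uq}(\beta_\r\G\rtimes\G)\big)$ with the $C^*$-algebra of families $(T_z)_{z\in\G}$ such that $z\mapsto T_z$ is a continuous extendable section of $E^\rtimes_\G$ and $(T_z)_{z\in\G}$ is compactly uniformly quasi-local. By Lemma \ref{lem:fibre SC extension}, extendability of any such continuous bounded section is automatic, so this condition can be discarded and the resulting image is exactly as claimed, minus the equivariance constraint. Injectivity of $\res\circ \Phi^\rtimes$ on this algebra is inherited from injectivity of $\Phi^\rtimes$ (a $C^*$-monomorphism by construction) and of $\res$ (see the paragraph before Definition \ref{defn:extendable section}, using density of $\G$ and \'etaleness).

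Finally, to restrict the description to $C^*_{uq}(\beta_\r\G\rtimes\G)^{\beta_\r\G\rtimes\G}$, I would use Corollary \ref{equivariant-dense}: since $\G$ is a dense invariant subset of the unit space, an operator $T\in\L(L^2(\beta_\r\G\rtimes\G))$ is $(\beta_\r\G\rtimes\G)$-equivariant if and only if $V_{(\gamma z,\gamma)}\Phi^\rtimes_z(T)=\Phi^\rtimes_{\gamma z}(T) V_{(\gamma z,\gamma)}$ for every $(\gamma z,\gamma) \in \beta_\r\G\rtimes\G$ with $z\in \G$, which is precisely condition (\ref{EQ:equiv for Phi}). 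Combining the two characterizations yields the corollary. No genuine obstacle is expected: the argument is essentially a repackaging of the dense-subset versions of the results developed in Sections \ref{sec.charForOperator} and \ref{sec.quasi}, and the only place requiring care is the verification that $\G \subseteq \beta_\r\G$ is invariant in the groupoid sense, which follows directly from the definition of the semi-direct product.
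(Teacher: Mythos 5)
Your proposal is correct and follows essentially the same route as the paper, which obtains the corollary by applying Corollary \ref{rem:char for dense case} to $\beta_\r\G\rtimes\G$ with the dense (invariant) subset $\G\subseteq\beta_\r\G$, discarding extendability via Lemma \ref{lem:fibre SC extension}, and encoding equivariance over the dense invariant subset as condition (\ref{EQ:equiv for Phi}). Your added verifications (density, invariance of $\G$, and the appeal to Corollary \ref{equivariant-dense}) are exactly the details the paper leaves implicit.
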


\begin{corollary}\label{cor:char for quasi-local}
    The map
    \[
        \Phi: C^*_{uq}(\G) \longrightarrow \prod_{x\in \Gz}\B(\ell^2(\G_x)), \quad T \mapsto (\Phi_x(T))_{x \in \Gz}
    \]
    is a $C^*$-isomorphism with image consisting of $(T_x)_{x\in \Gz}$ such that $x \mapsto T_x$ is a continuous section of $E$ and $(T_x)_{x\in \Gz}$ is compactly uniformly quasi-local.
\end{corollary}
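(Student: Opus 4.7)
The plan is to deduce this statement as an essentially immediate repackaging of Corollary \ref{cor:char for quasi-locality} combined with the properties of the slicing map $\Phi$ established in Section \ref{ssec:operators in LL2G}. Recall that $\Phi\colon \L(L^2(\G)) \to \prod_{x\in\Gz}\B(\ell^2(\G_x))$ is a $C^*$-monomorphism (it is an isometric $\ast$-embedding, as noted right after (\ref{eq.PhiMap})). Since $C^*_{uq}(\G)$ is a $C^*$-subalgebra of $\L(L^2(\G))$, the restriction of $\Phi$ to $C^*_{uq}(\G)$ is again a $C^*$-monomorphism, so injectivity comes for free.

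For the image characterization, the plan is to apply Corollary \ref{cor:char for quasi-locality} in both directions. First, if $T \in C^*_{uq}(\G)$, then by definition $T$ is a quasi-local element of $\L(L^2(\G))$, so Corollary \ref{cor:char for quasi-locality} yields that $(\Phi_x(T))_{x\in\Gz}$ defines a continuous section of $E$ and is compactly uniformly quasi-local. Conversely, given a family $(T_x)_{x\in\Gz}$ satisfying these two conditions, Corollary \ref{cor:char for quasi-locality} produces a quasi-local $T \in \L(L^2(\G))$ with $\Phi(T) = (T_x)_{x\in\Gz}$; this $T$ lies in $C^*_{uq}(\G)$ by definition. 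Putting these together, the image of $\Phi$ restricted to $C^*_{uq}(\G)$ is precisely the set described in the statement, and $\Phi$ is a $C^*$-isomorphism onto this image.

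There is no serious obstacle here; the entire content has been packaged in Corollary \ref{cor:char for quasi-locality} and the basic properties of $\Phi$ from Section \ref{ssec:operators in LL2G}. The only thing worth being careful about is that the image, as a set of families, naturally inherits a $C^*$-algebra structure from $\prod_{x\in\Gz}\B(\ell^2(\G_x))$ via pointwise operations, and $\Phi$ respects these operations since it is a $\ast$-homomorphism; this ensures the bijection in question is indeed a $C^*$-isomorphism rather than merely a bijective linear map.
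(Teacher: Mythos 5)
Your proposal is correct and matches the paper's treatment: the paper states this corollary without a separate proof, precisely because it is the immediate combination of Corollary \ref{cor:char for quasi-locality} (applied in both directions to identify the image) with the fact that the slicing map $\Phi$ is a $C^*$-monomorphism, which is exactly your argument.
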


Recall from Lemma \ref{lem:slice char for Theta} and Remark~\ref{rem:Theta} that we have the following commutative diagram:
\[
    \begin{tikzcd}[row sep = huge, column sep = huge]
        \L(L^2(\beta_\r \G \rtimes \G))^{\beta_\r \G \rtimes \G}\arrow[d,"\Theta"'] \arrow[r,"\res \circ \Phi^{\rtimes}"] &
        \prod_{z\in \G}\B(\ell^2((\beta_\r \G \rtimes \G)_z))  \arrow[d,"\mathfrak{p}"] \\
        \L(L^{2}(\G))     \arrow[r,"\Phi"]                                    &
        \prod_{x\in \Gz}\B(\ell^2(\G_x))
    \end{tikzcd}
\]
Hence to prove Proposition \ref{prop:Theta quasi-local}, it suffices to verify whether the map $\mathfrak{p}$ provides a bijection between the images of quasi-local algebras characterised by Corollary \ref{cor:char for quasi-local cross} and Corollary \ref{cor:char for quasi-local}. First note that the equivariance condition (\ref{EQ:equiv for Phi}) directly implies the following:

\begin{lemma}\label{lem:p is bijective}
    The following restriction of $\mathfrak{p}$ is a bijection:
    \[
        \mathfrak{p}: \{(T_z)_{z\in \G} \in \prod_{z\in \G}\B(\ell^2((\beta_\r \G \rtimes \G)_z)) \text{ satisfying } (\ref{EQ:equiv for Phi})\} \longrightarrow \prod_{x\in \Gz}\B(\ell^2(\G_x)).
    \]
\end{lemma}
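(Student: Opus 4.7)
The key observation is that for every $z \in \G$, the pair $(z, z)$ is a well-defined element of $\beta_\r \G \rtimes \G$ (since $z \in \G \subseteq \beta_\r \G$ and $\r_\beta(z) = \r(z) = \r(z)$) with range equal to the unit $z \in \beta_\r \G$ and source equal to the unit $\s(z) \in \Gz$. Applying the equivariance condition (\ref{EQ:equiv for Phi}) with ``$z$'' $= \s(z)$ and ``$\gamma$'' $= z$ (so that $\gamma \cdot \s(z) = z$) yields
\[
    V_{(z,z)}\, T_{\s(z)} = T_z\, V_{(z,z)},
\]
and since $V_{(z,z)}$ is a unitary this determines $T_z$ uniquely from the unit values via
\begin{equation*}
    T_z = V_{(z,z)}\, T_{\s(z)}\, V_{(z,z)}^{-1}.
\end{equation*}
This immediately gives injectivity of the restricted map $\mathfrak{p}$: if two equivariant families agree after applying $\Ad_x$ on each unit fibre, then (since each $\Ad_x$ is itself an isomorphism) they agree at units and hence agree on all of $\G$ by the forced extension formula.

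For surjectivity, given $(S_x)_{x\in \Gz} \in \prod_{x\in \Gz}\B(\ell^2(\G_x))$, the plan is to define $T_x := \Ad_x^{-1}(S_x)$ for each $x\in \Gz$ and extend to $z\in \G \setminus \Gz$ by $T_z := V_{(z,z)} T_{\s(z)} V_{(z,z)}^{-1}$. The family is automatically uniformly norm-bounded (the $V_{(z,z)}$ are unitaries), so it remains to verify that $(T_z)_{z \in \G}$ satisfies (\ref{EQ:equiv for Phi}). For any $(\gamma w, \gamma) \in \G \rtimes \G$ with $x := \s(w) = \s(\gamma w)$, I will invoke the representation property $V_g V_h = V_{gh}$ on $\beta_\r \G \rtimes \G$ (which follows directly from the defining formula $V_g(\xi)(\gamma') = \xi(\gamma' g)$) together with the groupoid identity
\[
    (\gamma w, \gamma) \cdot (w, w) = (\gamma w, \gamma w),
\]
checked by unwinding the semi-direct product composition $(y_1, \gamma_1)(\gamma_1^{-1} y_1, \gamma_2) = (y_1, \gamma_1 \gamma_2)$. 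This gives $V_{(\gamma w, \gamma)} V_{(w,w)} = V_{(\gamma w, \gamma w)}$ and, after taking inverses, $V_{(\gamma w, \gamma w)}^{-1} V_{(\gamma w, \gamma)} = V_{(w,w)}^{-1}$. Substituting into both sides of (\ref{EQ:equiv for Phi}) shows
\[
    V_{(\gamma w, \gamma)} T_w = V_{(\gamma w, \gamma w)} T_x V_{(w,w)}^{-1} = T_{\gamma w}\, V_{(\gamma w, \gamma)},
\]
as required.

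The only mild obstacle is bookkeeping the explicit range, source, composition, and inverse formulas for $\beta_\r \G \rtimes \G$ recalled in Section \ref{sssec:groupoid actions}, which is routine but error-prone. Once the identity $(\gamma w, \gamma)\cdot (w,w) = (\gamma w, \gamma w)$ is in hand, the verification of equivariance is a one-line cancellation, and the bijection claim follows. Note finally that when $z = x \in \Gz$ the formula $T_z = V_{(x,x)} T_x V_{(x,x)}^{-1}$ reduces to $T_x$ because $(x,x)$ is a unit of $\beta_\r \G \rtimes \G$, so the extension is genuinely an extension and $\mathfrak{p}$ indeed sends the constructed family to $(S_x)_{x \in \Gz}$.
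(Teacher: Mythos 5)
Your proof is correct and is precisely the natural fleshing-out of the paper's argument: the paper states the lemma with no proof beyond the remark that it follows directly from the equivariance condition, and your identity $T_z = V_{(z,z)}\,T_{\s(z)}\,V_{(z,z)}^{-1}$ (forced by taking $\gamma = z$ acting on the unit $\s(z)$) is exactly the mechanism that makes that remark true. The cocycle verification via $(\gamma w,\gamma)\cdot(w,w)=(\gamma w,\gamma w)$ and multiplicativity of $V$ is sound, so nothing is missing.
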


Now we turn to the condition of continuity for sections:

\begin{lemma}\label{lem:continuity of sections}
    For a family $(T_z)_{z\in \G} \in \prod_{z\in \G}\B(\ell^2((\beta_\r \G \rtimes \G)_z))$ satisfying (\ref{EQ:equiv for Phi}), the map $z\mapsto T_z$ is a continuous section of $E^{\rtimes}_{\G}$ \emph{if and only if} the map $x\mapsto \Ad_{P_x}(T_x)$ is a continuous section of $E$.
\end{lemma}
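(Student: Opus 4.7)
The strategy is to exploit the equivariance condition (\ref{EQ:equiv for Phi}) at a carefully chosen element of $\G\rtimes\G$ in order to translate the matrix coefficients of $T_z$ into those of $T_{\s(z)}$. For each $z\in\G$ with $x:=\s(z)\in\Gz$, the element $(z,z)\in\G\rtimes\G$ is a morphism from $x$ to $z$ (viewed as units of the semi-direct product), and a direct computation with the formula $V_\gamma(\delta_\eta)=\delta_{\eta\gamma^{-1}}$ together with $\gamma\cdot\s(\gamma)=\gamma$ gives
\begin{equation*}
V_{(z,z)}\bigl(\delta_{(\beta,\beta)}\bigr)=\delta_{(\beta,\beta z^{-1})}\quad\text{for every }\beta\in\G_{\s(z)}.
\end{equation*}
Substituting $\beta=\gamma'z$ (so that $\beta z^{-1}=\gamma'\cdot\s(\gamma')=\gamma'$) and similarly on the other side, equivariance at $(z,z)$ yields what I will call the \emph{translation identity}
\begin{equation*}
(T_z)_{(\gamma''z,\gamma''),(\gamma'z,\gamma')}=\Ad_{\s(z)}\bigl(T_{\s(z)}\bigr)_{\gamma''z,\,\gamma'z}\quad\text{for all }\gamma',\gamma''\in\G_{\r(z)}.
\end{equation*}

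For the $(\Leftarrow)$ direction I would consider a convergent net $z_i\to z$ in $\G$ together with $\gamma'_i\to\gamma'$ and $\gamma''_i\to\gamma''$ in $\G$ satisfying $\s(\gamma'_i)=\s(\gamma''_i)=\r(z_i)$. Continuity of the source map and of the groupoid multiplication provide $\s(z_i)\to\s(z)$ in $\Gz$ and $\gamma'_iz_i\to\gamma'z$, $\gamma''_iz_i\to\gamma''z$ in $\G$ with $\s(\gamma'_iz_i)=\s(\gamma''_iz_i)=\s(z_i)$. Applying the assumed continuity of $x\mapsto\Ad_x(T_x)$ on $E$ to these auxiliary nets and invoking the translation identity then gives the required convergence of matrix coefficients of $T_{z_i}$ to those of $T_z$, which is exactly the continuity of $z\mapsto T_z$ on $E^{\rtimes}_\G$. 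For the $(\Rightarrow)$ direction I would specialise the translation identity to $z=x\in\Gz$ (where $\r(z)=\s(z)=x$ and $\gamma'z=\gamma'$), obtaining $\Ad_x(T_x)_{\gamma'',\gamma'}=(T_x)_{(\gamma'',\gamma''),(\gamma',\gamma')}$. A net $x_i\to x$ in $\Gz$ together with compatible $\gamma'_i\to\gamma',\gamma''_i\to\gamma''$ then produces diagonal pairs $(\gamma'_i,\gamma'_i),(\gamma''_i,\gamma''_i)$ converging to $(\gamma',\gamma'),(\gamma'',\gamma'')$ in $\G\rtimes\G$ (since $\Gz\subseteq\G\subseteq\beta_\r\G$), so the assumed continuity of $z\mapsto T_z$ applied at $z=x$ yields the continuity of $x\mapsto\Ad_x(T_x)$.

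The only real obstacle is purely organisational: keeping straight the source/range bookkeeping in the semi-direct product, the identification of the unit space via $(y,p(y))\mapsto y$, and the fact that the action degenerates to $\gamma\cdot x=\gamma$ when $\s(\gamma)=x\in\Gz$. Once the translation identity is in place, both implications reduce mechanically to the continuity of the structure maps of $\G$ combined with the definition of the topology on the operator fibre spaces from Definition \ref{defn:operator fibre space}.
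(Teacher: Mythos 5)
Your proposal is correct and follows essentially the same route as the paper: both rest on the identity $T_z=V_{(z,z)}T_{\s(z)}V_{(z,z)}^*$ obtained from (\ref{EQ:equiv for Phi}) at $(z,z)$, which converts matrix coefficients of $T_z$ at $(\gamma'z,\gamma')$ into those of $\Ad_{\s(z)}(T_{\s(z)})$ at $\gamma'z$, after which continuity on either fibre space is transported via the continuity of $\s$ and of the groupoid multiplication. The only cosmetic difference is that you separate the two implications (specialising to $z\in\Gz$ for the forward direction) whereas the paper phrases the argument as a single chain of equivalent convergence conditions.
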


\begin{proof}
    By definition, the map $z\mapsto T_z$ is continuous if and only if for any $z_i \to z$, $\gamma_i' \to \gamma'$ and $\gamma''_i \to \gamma''$ in $\G$ with $\r(z_i) = \s(\gamma'_i) = \s(\gamma''_i)$ and $\r(z) = \s(\gamma') = \s(\gamma'')$, then
    \begin{equation}\label{EQ:equiv convg}
        \langle \delta_{(\gamma''_i z_i, \gamma''_i)}, T_{z_i}(\delta_{(\gamma'_i z_i, \gamma'_i)}) \rangle \to \langle \delta_{(\gamma'' z, \gamma'')}, T_{z}(\delta_{(\gamma' z, \gamma')}) \rangle
    \end{equation}
    Setting $\s(z_i)= x_i$ and $\s(z)=x$, then (\ref{EQ:equiv for Phi}) implies that
    \[
        V_{(z_i,z_i)} T_{x_i} V_{(z_i,z_i)}^* = T_{z_i} \quad \text{and} \quad V_{(z,z)} T_{x} V_{(z,z)}^* = T_{z}.
    \]
    Hence (\ref{EQ:equiv convg}) can be rewritten as:
    \[
        \langle \delta_{(\gamma''_i z_i, \gamma''_i z_i)}, T_{x_i}(\delta_{(\gamma'_i z_i, \gamma'_i z_i)}) \rangle \to \langle \delta_{(\gamma'' z, \gamma'' z)}, T_{x}(\delta_{(\gamma' z, \gamma'z)}) \rangle.
    \]
    Applying the isomorphisms $\Ad_{P_x}$, the above is also equivalent to:
    \[
        \langle \delta_{\gamma''_i z_i}, \Ad_{P_{x_i}}(T_{x_i})(\delta_{\gamma'_i z_i}) \rangle \to \langle \delta_{\gamma'' z}, \Ad_{P_x}(T_{x})(\delta_{\gamma' z}) \rangle,
    \]
    which is (by definition) nothing but the continuity of the map $x\mapsto \Ad_{P_x}(T_x)$.
\end{proof}

Finally, we consider the condition of compactly uniform quasi-locality.

\begin{lemma}\label{lem:cuql of sections}
    A family $(T_z)_{z\in \G} \in \prod_{z\in \G}\B(\ell^2((\beta_\r \G \rtimes \G)_z))$ satisfying (\ref{EQ:equiv for Phi}) is compactly uniformly quasi-local \emph{if and only if} the family $(\Ad_{P_x}(T_x))_{x\in \Gz}$ is compactly uniformly quasi-local.
\end{lemma}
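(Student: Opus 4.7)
The plan is to leverage two structural facts: (a) compact subsets of $\beta_{\r} \G \rtimes \G$ can be enlarged to ``cylindrical'' ones of the form $\hat K := \{(y,\gamma) \in \beta_{\r} \G \rtimes \G : \gamma \in K\}$ where $K \subseteq \G$ is compact (this uses fibrewise compactness of $\beta_{\r} \G$, ensuring $\r_\beta^{-1}(\r(K))$ is compact and hence $\hat K$ is contained in a compact product); and (b) the bijection $p_z : (\beta_{\r}\G\rtimes\G)_z \to \G_{\r_\beta(z)}$, $(\gamma z,\gamma)\mapsto \gamma$, intertwines the groupoid ``multiplication'' on subsets in the precise sense that $p_z(\hat K B_z) = K \cdot p_z(B_z)$ for any $B_z \subseteq (\beta_{\r}\G\rtimes\G)_z$, as a short calculation with the semi-direct product formula $(y,\gamma_1)(\gamma_1^{-1}y,\gamma_2)=(y,\gamma_1\gamma_2)$ shows. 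Writing $A'_z = p_z(A_z)$ etc., this yields $A_z \cap \hat K B_z = \emptyset \iff A'_z \cap K B'_z = \emptyset$.

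For the ``only if'' direction, I would simply specialise $z=x\in\Gz$: given $\varepsilon>0$ and a compact $\widetilde K \subseteq \beta_{\r}\G \rtimes \G$ witnessing compactly uniform quasi-locality of $(T_z)_z$, set $K=\pi(\widetilde K)$ which is compact in $\G$ (where $\pi(y,\gamma)=\gamma$), so $\widetilde K \subseteq \hat K$. For $A_x,B_x \subseteq \G_x$ that are $K$-separated, lift to $A_z := p_x^{-1}(A_x), B_z := p_x^{-1}(B_x)$ in $(\beta_\r \G \rtimes \G)_x$; they are $\hat K$-separated (hence $\widetilde K$-separated) by fact (b), so $\|\chi_{A_z} T_x \chi_{B_z}\|<\varepsilon$, and the unitary $P_x$ conjugating $T_x$ to $\Ad_x(T_x)$ transports this to $\|\chi_{A_x}\Ad_x(T_x)\chi_{B_x}\|<\varepsilon$.

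For the ``if'' direction, given $\varepsilon>0$ and a compact $K \subseteq \G$ witnessing the property for $(\Ad_x(T_x))_{x\in\Gz}$, I would take $\widetilde K := \hat K$ and reduce a general source fibre at $z \in \G$ back to $\s(z) \in \Gz$ via equivariance. Concretely, for $z\in\G$ with $y=\s(z), x=\r(z)$, the equivariance (\ref{EQ:equiv for Phi}) applied at $(\gamma z,\gamma) = (z,z)$ implies $\Ad_z(T_z) = W_z \Ad_y(T_y) W_z^{-1}$, where $W_z : \ell^2(\G_y) \to \ell^2(\G_x)$ is the unitary $\delta_\alpha \mapsto \delta_{\alpha z^{-1}}$. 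This unitary intertwines characteristic functions via $\chi_{A'_z} W_z = W_z \chi_{A'_z \cdot z}$ (with $A'_z \cdot z := \{\gamma z : \gamma \in A'_z\} \subseteq \G_y$), yielding the isometric identity
\[
\bigl\|\chi_{A'_z}\Ad_z(T_z)\chi_{B'_z}\bigr\| = \bigl\|\chi_{A'_z\cdot z}\Ad_y(T_y)\chi_{B'_z\cdot z}\bigr\|.
\]
The key check is that right multiplication by $z$ preserves $K$-separation: if $\alpha \in A'_z\cdot z$ and $\alpha = \gamma\beta$ with $\gamma \in K, \beta \in B'_z\cdot z$, writing $\alpha = \alpha_0 z, \beta = \beta_0 z$ and cancelling $z$ on the right gives $\alpha_0 = \gamma\beta_0 \in A'_z \cap K B'_z = \emptyset$, so indeed $A'_z\cdot z \cap K (B'_z\cdot z) = \emptyset$. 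The hypothesis applied at $y$ then yields the desired estimate, and one transports this back through $P_z$ to conclude $\|\chi_{A_z}T_z\chi_{B_z}\|<\varepsilon$.

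The only mildly technical step is the bookkeeping around the two unitary identifications $P_z$ and the ``right-translation'' $W_z$ and tracking how they interact with characteristic projections; none of this is deep, but the main obstacle is simply to set up notation cleanly enough that the cancellation-by-$z$ argument on the $\G$-side is unambiguous, especially since $z$ may not itself be a unit.
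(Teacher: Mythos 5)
Your proposal is correct and follows essentially the same route as the paper's proof: the same cylindrical compact set $\hat K=\{(y,\gamma):\gamma\in K\}$, the same reduction of the fibre at $z$ to the fibre at the unit $\s(z)$ via equivariance at $(z,z)$, and the same cancellation-by-$z$ argument showing that $K$-separation is preserved. The only cosmetic difference is that you factor the conjugation through $\ell^2(\G_{\r(z)})$ using the extra unitary $W_z = P_z V_{(z,z)} P_{\s(z)}^*$, whereas the paper conjugates directly by $V_{(z,z)}$ and then applies $\Ad_{\s(z)}$; the resulting sets $A'_z\cdot z$ coincide with the paper's $A_x=\{\alpha:(\alpha,\alpha z^{-1})\in\tilde A_z\}$.
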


\begin{proof}
    \emph{Necessity}: By definition, given $\varepsilon>0$ there exists a compact subset $\tilde{K}\subseteq \beta_\r \G \rtimes \G$ such that for any $z\in \G$ and $\tilde{A}_{z}, \tilde{B}_{z}\subseteq (\beta_\r \G \rtimes \G)_z$ with $\tilde{A}_{z}\cap (\tilde{K} \cdot \tilde{B}_z)=\emptyset$ and $(\tilde{K}\cdot \tilde{A}_z) \cap \tilde{B}_{z}=\emptyset$, we have $\| \chi_{\tilde{A}_{z}}T_z\chi_{\tilde{B}_{z}}\| < \varepsilon$. Take
    \[
        K\coloneqq \{\gamma\in \G: \text{ there exists } \gamma' \in \G \text{ such that }(\gamma', \gamma) \in \tilde{K}\},
    \]
    which is compact. Then for any $x\in \Gz$ and $A_x, B_x \subseteq \G_x$ with $A_x \cap (K \cdot B_x) = \emptyset$ and $(K\cdot A_x) \cap B_x=\emptyset$, then we have $p_x^{-1}(A_x) \cap (\tilde{K} \cdot p_x^{-1}(B_x)) = \emptyset$ and $(\tilde{K} \cdot p_x^{-1}(A_x)) \cap p_x^{-1}(B_x) = \emptyset$. Hence
    \[
        \varepsilon \geq \|\chi_{p_x^{-1}(A_x)} T_{x} \chi_{p_x^{-1}(B_x)}\| = \|\Ad_{P_x}(\chi_{p_x^{-1}(A_x)} T_{x} \chi_{p_x^{-1}(B_x)})\| = \|\chi_{A_x} \Ad_{P_x}(T_{x}) \chi_{B_x}\|,
    \]
    which shows that the family $(\Ad_{P_x}(T_x))_{x\in \Gz}$ is compactly uniformly quasi-local.

    \emph{Sufficiency}: Given $\varepsilon>0$ there exists a compact subset $K\subseteq \G$ such that for any $x\in \Gz$ and $A_{x}, B_{x}\subseteq \G_x$ with $A_{x}\cap (K \cdot B_x)=\emptyset$ and $(K\cdot A_x) \cap B_{x}=\emptyset$, we have $\| \chi_{A_{x}}\Ad_{P_x}(T_x)\chi_{B_{x}}\| < \varepsilon$. Take
    \[
        \tilde{K} \coloneqq \{(\gamma', \gamma) \in \beta_\r \G \rtimes \G: \gamma \in K\},
    \]
    which is compact. Then for any $z\in \G$ and $\tilde{A}_{z}, \tilde{B}_{z}\subseteq (\beta_\r \G \rtimes \G)_z$ with $\tilde{A}_{z}\cap (\tilde{K} \cdot \tilde{B}_z)=\emptyset$ and $(\tilde{K}\cdot \tilde{A}_z) \cap \tilde{B}_{z}=\emptyset$, we set $x=\s(z)$ and note that $V_{(z,z)}T_xV^*_{(z,z)} = T_{z}$. Hence we have
    \begin{align}\label{EQ:calculation ql}
      \|\chi_{\tilde{A}_{z}} T_z \chi_{\tilde{B}_{z}}\| & = \|(V^*_{(z,z)} \chi_{\tilde{A}_{z}} V_{(z,z)}) \cdot T_x \cdot (V^*_{(z,z)} \chi_{\tilde{B}_{z}} V_{(z,z)})\| \nonumber \\
                                                        & = \|\chi_{\tilde{A}_{z} \cdot (z,z)} \cdot T_x \cdot \chi_{\tilde{B}_{z} \cdot (z,z)}\| \nonumber                         \\
                                                        & = \|\Ad_{P_x}(\chi_{\tilde{A}_{z} \cdot (z,z)}) \cdot \Ad_{P_x}(T_x) \cdot \Ad_{P_x}(\chi_{\tilde{B}_{z} \cdot (z,z)})\|.
    \end{align}

    It is straightforward to check that $\Ad_{P_x}(\chi_{\tilde{A}_{z} \cdot (z,z)}) = \chi_{A_x}$ and $\Ad_{P_x}(\chi_{\tilde{B}_{z} \cdot (z,z)}) = \chi_{B_x}$, where
    \[
        A_x \coloneqq \{\alpha \in \G_x: (\alpha, \alpha) \in \tilde{A}_{z} \cdot (z,z)\} = \{\alpha \in \G_x: (\alpha, \alpha z^{-1}) \in \tilde{A}_{z}\}
    \]
    and similarly, $B_x\coloneqq \{\beta \in \G_x: (\beta, \beta z^{-1}) \in \tilde{A}_{z}\}$. If $A_x \cap (K \cdot B_x)  \neq \emptyset$, then there exists $\alpha \in A_x$, $\beta \in B_x$ and $\gamma \in K$ such that $\alpha = \gamma \beta$. Then we have
    \[
        (\alpha, \alpha z^{-1}) = (\alpha, \gamma) \cdot (\beta, \beta z^{-1}),
    \]
    which is a contradiction to that $\tilde{A}_{z}\cap (\tilde{K} \cdot \tilde{B}_z)=\emptyset$. Similarly, we have $(K\cdot A_x) \cap B_{x}=\emptyset$. Hence combining with (\ref{EQ:calculation ql}), we obtain:
\begin{eqnarray*}
        \|\chi_{\tilde{A}_{z}} T_z \chi_{\tilde{B}_{z}}\| &=& \|\Ad_{P_x}(\chi_{\tilde{A}_{z} \cdot (z,z)}) \cdot \Ad_{P_x}(T_x) \cdot \Ad_{P_x}(\chi_{\tilde{B}_{z} \cdot (z,z)})\| \\
        &=& \|\chi_{A_x} \Ad_{P_x}(T_x) \chi_{B_x}\| \\
         &\leq & \varepsilon,
\end{eqnarray*}
    which concludes the proof.
\end{proof}

\begin{proof}[Proof of Proposition \ref{prop:Theta quasi-local}]
    Combining Corollary \ref{cor:char for quasi-local cross}, Corollary \ref{cor:char for quasi-local}, Lemma \ref{lem:p is bijective}, Lemma \ref{lem:continuity of sections} and Lemma \ref{lem:cuql of sections}, we conclude the proof.
\end{proof}

\begin{remark}
    In fact, the above procedure is also available for the case of compactly supported operators. Hence combining with Lemma \ref{lem:AD=Cu}, we obtain an alternative proof for \cite[Theorem 6.4]{ADExact}.
\end{remark}

\begin{remark}
    Readers might wonder whether Lemma \ref{lem:cuql of sections} still holds for vector-wise uniform quasi-locality. If fact using a similar idea in the proof of Lemma \ref{lem:cuql of sections}, we can show that if a family $(T_z)_{z\in \G} \in \prod_{z\in \G}\B(\ell^2((\beta_\r \G \rtimes \G)_z))$ satisfying (\ref{EQ:equiv for Phi}) is vector-wise uniformly quasi-local, then the family $(\Ad_{P_x}(T_x))_{x\in \Gz}$ is vector-wise uniformly quasi-local. Hence applying Theorem \ref{cor:char for LL2G} together with Lemma \ref{lem:continuity of sections}, we obtain that $\mathfrak{p} \circ \Phi^\rtimes (T)$ belongs to $\Phi(\L(L^2(\G)))$ for $T \in \L(L^2(\beta_\r \G \rtimes \G))^{\beta_\r \G \rtimes \G}$. As already mentioned in Remark \ref{rem:Theta}, this provides an alternative approach to define the map $\Theta$.

    However, it is unclear whether the opposite direction still holds for vector-wise uniform quasi-locality. If it is true, then $\Theta$ will provide a $C^*$-isomorphism between $\L(L^2(\beta_\r \G \rtimes \G))^{\beta_\r \G \rtimes \G}$ and $\L(L^2(\G))$.
\end{remark}

\subsection{Conclusion}

Combining the discussions in the previous subsections, we obtain that the $C^*$-monomorphism
\[
    \Theta: \L(L^2(\beta_\r \G \rtimes \G))^{\beta_\r \G \rtimes \G} \to \L(L^2(\G))
\]
provides $C^*$-isomorphisms
\[
    \overline{\CC_u[\beta_{\r} \G \rtimes \G]^{\beta_{\r} \G \rtimes \G}}^{\L(L^2(\beta_{\r} \G \rtimes \G))} \big(\cong C^*_r(\beta_{\r} \G \rtimes \G) \big) \cong C^*_u(\G)
\]
and
\[
    C^*_{uq}(\beta_{\r} \G \rtimes \G)^{\beta_{\r} \G \rtimes \G} \cong C^*_{uq}(\G).
\]
Hence applying Theorem~\ref{Main-theorem}, we reach the following quasi-local characterisation for the uniform Roe algebra $C^*_u(\G)$:

\begin{theorem}\label{thm:general case}
    Let $\G$ be a locally compact, $\sigma$-compact and \'{e}tale groupoid. Suppose $\G$ is either strongly amenable at infinity, or secondly countable weakly inner amenable and $C^*$-exact. Then we have $C^*_u(\G) = C^*_{uq}(\G)$.
\end{theorem}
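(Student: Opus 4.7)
The plan is to reduce the theorem to the equivariant case (Theorem~\ref{Main-theorem}) applied to the semi-direct product groupoid $\beta_{\r}\G \rtimes \G$, and then transport the resulting equality across the $C^*$-isomorphism $\Theta$ constructed in Section~\ref{sec:beyond equivariance}. Concretely, I would structure the argument in the following three steps.

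\textbf{Step 1: Reduce both hypotheses to amenability of the semi-direct product.} By definition, strong amenability at infinity of $\G$ means exactly that $\beta_{\r}\G \rtimes \G$ is amenable. Under the second hypothesis (second countable, weakly inner amenable, $C^*$-exact), Proposition~\ref{exact prop 1} supplies the same conclusion. Thus in both cases $\beta_{\r}\G \rtimes \G$ is an amenable locally compact \'etale groupoid.

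\textbf{Step 2: Verify the remaining hypotheses of Theorem~\ref{Main-theorem} for $\beta_{\r}\G \rtimes \G$.} \'Etaleness of $\beta_{\r}\G \rtimes \G$ follows from \'etaleness of $\G$ via \cite[Proposition~1.6]{ADExact}. Local compactness is inherited from the topological structure of the fibred product. For $\sigma$-compactness: since $\G$ is $\sigma$-compact, so is $\Gz$, write $\Gz = \bigcup_{n} L_n$ with $L_n$ compact; because $(\beta_{\r}\G, \r_\beta)$ is fibrewise compact (Definition~\ref{defn:fibrewise cpt}), the preimages $\r_\beta^{-1}(L_n)$ are compact and exhaust $\beta_{\r}\G$, so $\beta_{\r}\G$ is $\sigma$-compact. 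Combined with $\sigma$-compactness of $\G$ and the fact that $\beta_{\r}\G \rtimes \G = \beta_{\r}\G \prescript{}{\r_\beta}{*}_{\r}\G$ carries the subspace topology of $\beta_{\r}\G \times \G$, the semi-direct product $\beta_{\r}\G \rtimes \G$ is $\sigma$-compact.

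\textbf{Step 3: Apply Theorem~\ref{Main-theorem} and transport via $\Theta$.} By Steps 1--2, Theorem~\ref{Main-theorem} applies to $\beta_{\r}\G \rtimes \G$, yielding
\[
   C^*_r(\beta_{\r}\G \rtimes \G) = C^*_u(\beta_{\r}\G \rtimes \G)^{\beta_{\r}\G \rtimes \G} = C^*_{uq}(\beta_{\r}\G \rtimes \G)^{\beta_{\r}\G \rtimes \G}.
\]
Proposition~\ref{Roereduced} identifies the first term with the norm closure of $\CC_u[\beta_{\r}\G \rtimes \G]^{\beta_{\r}\G \rtimes \G}$, which by Lemma~\ref{lem:AD=Cu} together with the isomorphism~(\ref{EQ:Theta}) is mapped by $\Theta$ onto $C^*_u(\G)$. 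On the other hand, Proposition~\ref{prop:Theta quasi-local} shows that $\Theta$ restricts to a $C^*$-isomorphism from $C^*_{uq}(\beta_{\r}\G \rtimes \G)^{\beta_{\r}\G \rtimes \G}$ onto $C^*_{uq}(\G)$. Applying $\Theta$ to the chain of equalities above therefore gives $C^*_u(\G) = C^*_{uq}(\G)$, as desired.

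The main conceptual obstacle has already been overcome before the theorem: constructing $\Theta$ and proving that it intertwines the equivariant quasi-local algebra on $\beta_{\r}\G \rtimes \G$ with the full (non-equivariant) quasi-local algebra on $\G$ (Proposition~\ref{prop:Theta quasi-local}). The only remaining technical point above is the verification of $\sigma$-compactness of $\beta_{\r}\G \rtimes \G$, which relies crucially on the fibrewise compactness of the fibrewise Stone-\v{C}ech compactification; all other ingredients are now formal consequences of the theory developed in the preceding sections.
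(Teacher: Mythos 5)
Your proposal is correct and follows essentially the same route as the paper: reduce both hypotheses to amenability of $\beta_{\r}\G\rtimes\G$, apply Theorem~\ref{Main-theorem} to that groupoid, and transport the resulting equalities through the monomorphism $\Theta$ using the isomorphism~(\ref{EQ:Theta}) and Proposition~\ref{prop:Theta quasi-local}. Your explicit verification of $\sigma$-compactness of $\beta_{\r}\G\rtimes\G$ via properness of $\r_\beta$ is a detail the paper leaves implicit, but it is a correct and welcome addition rather than a different argument.
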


\bibliographystyle{plain}

\bibliography{QLGC}


\end{document}